\documentclass[english]{amsart}
\usepackage[T1]{fontenc}
\usepackage[utf8]{inputenc}
\usepackage{xcolor}
\usepackage{babel}
\usepackage{amsbsy}
\usepackage{amstext}
\usepackage{amsthm}
\usepackage{amssymb}
\usepackage[pdfusetitle,
 bookmarks=true,bookmarksnumbered=false,bookmarksopen=false,
 breaklinks=false,pdfborder={0 0 0},pdfborderstyle={},backref=false,colorlinks=true]
 {hyperref}
\hypersetup{
 linkcolor=brown, citecolor=blue, pdfstartview={FitH}, hyperfootnotes=false, unicode=true}

\makeatletter
\numberwithin{equation}{section}
\numberwithin{figure}{section}
\theoremstyle{plain}
\newtheorem{thm}{\protect\theoremname}[section]
\theoremstyle{definition}
\newtheorem{defn}[thm]{\protect\definitionname}
\theoremstyle{definition}
\newtheorem{example}[thm]{\protect\examplename}
\theoremstyle{plain}
\newtheorem{lem}[thm]{\protect\lemmaname}
\theoremstyle{plain}
\newtheorem{prop}[thm]{\protect\propositionname}
\theoremstyle{plain}
\newtheorem{cor}[thm]{\protect\corollaryname}
\theoremstyle{definition}
\newtheorem{problem}[thm]{\protect\problemname}

\usepackage{tikz-cd}
\usepackage{bbm}
\usepackage{tikz}
\usetikzlibrary{positioning, fit}
\newtheorem{theoremalpha}{Theorem}
\newtheorem{problemalpha}[theoremalpha]{Problem}

\newtheorem{propalpha}[theoremalpha]{Proposition}

\makeatother

\providecommand{\corollaryname}{Corollary}
\providecommand{\definitionname}{Definition}
\providecommand{\examplename}{Example}
\providecommand{\lemmaname}{Lemma}
\providecommand{\problemname}{Problem}
\providecommand{\propositionname}{Proposition}
\providecommand{\theoremname}{Theorem}

\begin{document}
\title[Residual properties in the Weihrauch lattice]{Residual properties of finitely generated groups in the Weihrauch
lattice}
\author{Emmanuel Rauzy}
\address{Universit\'e Paris-Est Cr\'eteil\\ LACL}
\email{emmanuel.rauzy@u-pec.fr}
\thanks{The present work stems from a question raised by Vincent Guirardel
during the defense of my PhD thesis. It took me several years to understand
the question and its relevance. Thanks are also due to Vasco Brattka,
whose explanations have greatly improved my understanding of Weihrauch
reducibility.\\
Most of this work was conducted while the author was supported by 
an Alexander von Humboldt fellowship.}
\begin{abstract}
Consider, on the space of marked groups, the map $\mathrm{Res}_{\mathcal{C}}$
which associates to a marked group its greatest residually-$\mathcal{C}$
quotient, for different sets $\mathcal{C}$ of groups. Except for
trivial cases, this map is discontinuous. We use the Weihrauch lattice
to quantify how discontinuous it is. We show that equational noetherianity
of $\mathcal{C}$ and whether the set of residually-$\mathcal{C}$
groups is a quasivariety both can be characterized in terms of the
position of $\mathrm{Res}_{\mathcal{C}}$ within the Weihrauch lattice.
We give exact classifications of $\mathrm{Res}_{\mathcal{C}}$, for
$\mathcal{C}$ one of: the set of finite groups, of nilpotent groups,
of $k$-nilpotent groups, $k\ge1$, of finitely presentable groups,
of LEF groups, of torsion free groups. 
\end{abstract}

\maketitle

\section{Introduction}

Let $\mathcal{C}$ be a class of groups. 

A group $G$ is residually-$\mathcal{C}$ if for every non-identity
element $g$ of $G$ there is a quotient of $G$ in $\mathcal{C}$
in which $g$ has a non-identity image. 

Every group $G$ has a greatest residually-$\mathcal{C}$ quotient,
obtained by taking the quotient of $G$ by the intersection of all
kernels of onto homeomorphisms from $G$ to groups of $\mathcal{C}$,
i.e., by the following normal subgroup:
\[
\bigcap_{f\colon G\twoheadrightarrow H,\,H\in\mathcal{C}}\ker(f).
\]
We denote by $\mathrm{Res}_{\mathcal{C}}(G)$ the group defined this
way, and by $\mathrm{Res}_{\mathcal{C}}$ the map sending $G$ to
this quotient. 

\vspace{0.2cm}

Note that, even though proving the existence of the group $\mathrm{Res}_{\mathcal{C}}(G)$
is trivial, the definition given above provides us with very little
information. Indeed, ``the set of all kernels of onto homomorphisms
from $G$ to groups in $\mathcal{C}$'' always exists, but it may
be very hard to characterize or understand this set. 

In some sense, the existence of the group $\mathrm{Res}_{\mathcal{C}}(G)$
is non-constructive. (This statement could be made precise in various
ways. For instance \cite{Bauer2000} provides a constructive framework
which coincides with the Weihrauch analysis of problems.)

In the present paper, we provide a topological analysis of the map
$\mathrm{Res}_{\mathcal{C}}$ that gives a way to quantify \emph{how
non-constructive} is the existence of $\mathrm{Res}_{\mathcal{C}}(G)$. 

One of our motivations comes from the study of computability. We will
not insist on this aspect, but we quote here a result of Slobodskoi. 
\begin{thm}
[Slobodskoi, \cite{Slobodskoi1981}]There exists a finitely presented
group whose greatest residually finite quotient is not recursively
presented. 
\end{thm}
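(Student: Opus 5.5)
The plan is to embed an undecidable problem about finitely presented groups into the residually finite quotient. I would use a finitely presented group $G$ in which the word problem of the residually finite quotient $\mathrm{Res}_{\mathcal{F}}(G)$ is as hard as deciding the set of group-words valid on all finite groups with respect to some parameter. The central observation is this. For a finitely presented $G=\langle S\mid R\rangle$, the kernel of $G\to\mathrm{Res}_{\mathcal{F}}(G)$ is co-r.e., because a word $w$ survives in $\mathrm{Res}_{\mathcal{F}}(G)$ if and only if some homomorphism to a finite group (checkable on the finitely many relators) sends $w$ to a non-identity element. So the set of words trivial in $\mathrm{Res}_{\mathcal{F}}(G)$ is $\Pi^0_1$. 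Hence $\mathrm{Res}_{\mathcal{F}}(G)$ is recursively presented if and only if this set is also $\Sigma^0_1$, that is, if and only if $\mathrm{Res}_{\mathcal{F}}(G)$ has decidable word problem. It therefore suffices to produce a finitely presented $G$ whose residually finite quotient has undecidable word problem.

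The key input I would use is Slobodskoi's other theorem: the universal theory of finite groups is undecidable. Concretely, there is no algorithm deciding, for a finite conjunction of equations $\bigwedge_i u_i(\bar x)=1$ and a word $v(\bar x)$, whether every tuple in every finite group satisfying the $u_i$ also satisfies $v$. Its proof encodes a Minsky machine (or a semigroup with undecidable problems) into finite groups, and it is the main obstacle. I would take it as a black box if permitted. Otherwise I would follow the route of encoding a two-counter machine whose halting set separates a recursively inseparable pair, as in Slobodskoi's original argument.

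Given this, I would build a single finitely presented group uniformly. Fix one finite system of relations $\bigwedge u_i=1$ in generators $\bar x$ such that the set of words $v$ with $v=1$ in $\mathrm{Res}_{\mathcal{F}}\langle \bar x\mid u_i\rangle$ is undecidable. I would extract this from the undecidability proof: the reduction there produces a fixed finite presentation, coming from a fixed universal machine, together with a computable family of words $v_n$. Then $G=\langle\bar x\mid u_i\rangle$ works. A word $v$ is trivial in $\mathrm{Res}_{\mathcal{F}}(G)$ exactly when the quasi-identity $\bigwedge u_i=1\Rightarrow v=1$ holds in all finite groups, since finite quotients of $G$ are exactly the images of solutions of the $u_i$ in finite groups. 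By the first paragraph, undecidability of this set means its complement is r.e. but the set itself is not, so $\mathrm{Res}_{\mathcal{F}}(G)$ is not recursively presented.

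The main difficulty lies in the undecidability step, and specifically in making it uniform with a fixed presentation. If the black-box theorem is only stated with varying hypotheses $u_i$, I would fix it as follows. Either I would take a universal machine, or I would take a free product or amalgam of the finitely many varying instances through a standard Higman-style trick. The aim is to ensure a single finitely presented group carries all instances, with each instance recoverable as the triviality of a computable word.
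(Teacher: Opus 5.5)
The paper gives no proof of this theorem; it is quoted from Slobodskoi. So your proposal has to stand on its own, and it does not. Your first paragraph is correct: for finitely presented $G$, the words surviving in $\mathrm{Res}_{\mathcal{F}}(G)$ form an r.e.\ set, so $\mathrm{Res}_{\mathcal{F}}(G)$ is recursively presented if and only if its word problem is decidable. After that reduction, however, everything rests on a black box. In the form you actually use it (one fixed finite presentation $\langle\bar x\mid u_i\rangle$ whose finite residual has undecidable membership), that black box is essentially the statement itself. The version you cite as Slobodskoi's ``other theorem'' is the uniform one, where the hypotheses $u_i$ are part of the input, and it does not by itself give a single group. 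A priori, undecidability of the uniform problem is compatible with each finitely presented group having its own algorithm for its residually finite word problem. Saying that ``the reduction there produces a fixed finite presentation'' is a claim about a proof you have not given. It happens to be true of Slobodskoi's argument, but then you are simply citing the theorem.

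Your proposed repair does not close this gap. A free product of finitely many instances covers only finitely many hypotheses. An infinite free product has the right residually finite quotient, since free products of residually finite groups are residually finite, but it is not finitely presented, not even finitely generated. A Higman-type embedding into a finitely presented overgroup destroys exactly what you need. Finite quotients of a subgroup need not extend to finite quotients of the ambient group, and the embedded instances may die in every finite quotient of it. Then $\mathrm{Res}_{\mathcal{F}}$ of the overgroup tells you nothing about theirs.

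What is genuinely missing is the construction itself. You need a fixed finite presentation encoding a machine (e.g.\ a Minsky machine) with a recursively inseparable pair $(A,B)$ of outcomes, together with words $v_n$, such that:
\begin{itemize}
\item the relators force $v_n=1$ in $G$ when $n\in A$;
\item for $n\in B$ one exhibits a \emph{finite} group satisfying all the relators in which $v_n\neq1$.
\end{itemize}
The second requirement is the technical heart of Slobodskoi's argument. The relators encode the machine's transitions on all configurations, not only along the given halting computation, so the finite group must satisfy all of them while keeping $v_n$ nontrivial. Your proposal does not address this.
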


This result, and other related results from \cite{Rauzy_2021,Bartholdi2025},
all revolve around leveraging the discontinuity of the map $\mathrm{Res}_{\mathcal{C}}$
to obtain computability theoretical results. Our hope is that the
present study in turn provides a dichotomy between classes $\mathcal{C}$
which give rise to ``a sufficiently discontinuous $\mathrm{Res}_{\mathcal{C}}$'',
so that certain computability theoretical phenomena occur, and the
other ones, for which the map $\mathrm{Res}_{\mathcal{C}}$ is too
tame to permit these computability theoretical phenomena.

\vspace{0.2cm}

In order to have a nice topological framework to work with, we will
study the map $\mathrm{Res}_{\mathcal{C}}$ on the set of marked groups.
A $k$-marked group is a pair $(G,S)$, where $G$ is a finitely generated
group and $S\in G^{k}$ is a generating tuple of $G$. A morphism
of $k$-marked groups $(G,(s_{1},...,s_{k}))\rightarrow(H,(s'_{1},...,s'_{k}))$
is a group morphism $f\colon G\rightarrow H$ which satisfies $f(s_{i})=s'_{i}$
for all $1\le i\le k$.

The marked quotient relation 
\[
(G,S)\succeq(H,S')\iff\exists f\colon(G,S)\rightarrow(H,S')
\]
is easily seen to be an order relation. The set of $k$-marked groups
forms a poset which is in fact a complete lattice isomorphic to the
lattice of normal subgroups of a rank-$k$ free group. In this context,
we have the equality
\[
\mathrm{Res}_{\mathcal{C}}(G,S)=\sup\left\{ (H,S')\in\mathcal{C}\mid(G,S)\succeq(H,S')\right\} .
\]

The set of marked groups is naturally equipped with a Polish topology,
the resulting topological space is called the \emph{space of marked
groups}. 

The main problem of this paper is the following one:

\begin{problemalpha}	

What properties of $\mathcal{C}$ can be read off of the topological
properties of $\mathrm{Res}_{\mathcal{C}}$? 

\end{problemalpha}	

We could ask for instance: when is $\mathrm{Res}_{\mathcal{C}}$ continuous,
when are preimages of open sets $\Sigma_{2}^{0}$-sets, and so on.

In order to obtain finer classifications than these, we use the notion
of continuous Weihrauch reduction, which can be thought of as a generalization
of Wadge reduction adapted to functions. We discuss Weihrauch reduction
in more details in Section \ref{subsec:Weihrauch-reduction} of this
introduction. 

\subsection{The logical point of view on the map $\mathrm{Res}_{\mathcal{C}}$}

Another point of view on the map $\mathrm{Res}_{\mathcal{C}}$ follows
the lines of \cite{Cha}. 

A relation in a marked group $(G,S)$ is an element of the group $\mathbb{F}_{S}$,
the free group over $S$, which defines the identity in $G$. A subset
of $\mathbb{F}_{S}$ is the set of relations of a marked group if
and only if it is a normal subgroup of $\mathbb{F}_{S}$. 

Thus we can say that a set $R\subseteq\mathbb{F}_{S}$ of relations
\emph{implies} another relation $w\in\mathbb{F}_{S}$ when $w$ belongs
to the normal subgroup generated by $R$. 

When focusing on groups in a certain class $\mathcal{C}$, it is natural
to consider implication modulo $\mathcal{C}$: 
\begin{defn}
Let $R$ be a subset of a free group $\mathbb{F}_{S}$ and $w$ an
element of $\mathbb{F}_{S}$. We say that the relations of $R$ \emph{imply
$w$ relative to} $\mathcal{C}$ if every marked group in $\mathcal{C}$
which satisfies the relations of $R$ also satisfies $w=1$.
\end{defn}

The marked group $\mathrm{Res}_{\mathcal{C}}(G,S)$ is the marked
sgroup whose relations are exactly those that are consequences relative
to $\mathcal{C}$ of the relations of $(G,S)$. 

Thus the study of the map $\mathrm{Res}_{\mathcal{C}}$ can be understood
as the problem of understanding how hard it is, starting with a set
of relations which is sound with respect to the usual implication
between relations, to recover a set of relations which is sound with
respect to the implication rule induced by $\mathcal{C}$.

\subsection{\label{subsec:Weihrauch-reduction}Weihrauch reduction }

An informal definition of continuous Weihrauch reducibility is the
following: 
\[
f\le_{W}g
\]
if and only if $f$ can be computed by ``doing something continuous,
applying $g$, and then doing something continuous again''. What
makes this informal statement non-trivial to formalize is the fact
that what we mean by ``something continuous'' is in fact a \emph{continuous
multi-function},\emph{ }instead of being simply a function, and that
the notion of continuous multi-function used to define Weihrauch reduction
is set in the category of \emph{represented spaces} instead of the
category of topological spaces. 

We will explain this in more details below. 

Let us first give some examples to motivate the introduction of Weihrauch
reduction. 

For $X$ and $Y$ Polish spaces, let us call a function $f\colon X\rightarrow Y$
$\Sigma_{n}^{0}$-measurable if the preimage of an open set by $f$
is $\Sigma_{n}^{0}$. 

Also, let $\mathrm{Lim}$ denote the limit map on Baire space:
\[
\mathrm{Lim}:\subseteq\mathbb{N}^{\mathbb{N}}\rightarrow\mathbb{N}^{\mathbb{N}},\,\langle p_{0},p_{1},p_{2},...\rangle\underset{n\rightarrow\infty}{\longrightarrow}\lim p_{n},
\]
where $\langle\rangle$ denotes a bijection between $\mathbb{N}^{\mathbb{N}}$
and $(\mathbb{N}^{\mathbb{N}})^{\mathbb{N}}$. The domain of $\mathrm{Lim}$
is the set sequences of $\mathbb{N}^{\mathbb{N}}$ which encode via
$\langle\rangle$ a converging sequence. Also, we denote by $\mathrm{Lim}^{(n)}$
the $n$-fold composition of $\mathrm{Lim}$.
\begin{thm}
[\cite{Brattka2004}]A function $f\colon X\rightarrow Y$ between
Polish spaces is $\Sigma_{n}^{0}$-measurable if and only if 
\[
f\le_{W}\mathrm{Lim}^{(n)}.
\]
\end{thm}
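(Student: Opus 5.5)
Since we work with continuous Weihrauch reducibility, I would argue purely topologically, transporting everything to Baire space via admissible representations. Fix Cauchy representations $\delta_X,\delta_Y$; they are continuous, open, total on closed subsets of $\mathbb{N}^{\mathbb{N}}$ (after the standard reduction), and admissible. The plan is to prove the case $n=1$ first and then iterate, with a separate argument for each direction.

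For ($\Leftarrow$), suppose $f=K\circ\mathrm{Lim}^{(n)}\circ H$ on names, with $H,K$ continuous (the multi-valuedness disappears once we fix realizers). Continuous maps preserve the Borel classes under preimage. The key lemma is that $\mathrm{Lim}$ is $\Sigma^0_2$-measurable on its domain. For a basic open set $[w]$, the condition $\lim p_i\in[w]$ is equivalent to $\exists m\,\forall i\ge m$: $p_i\in[w]$, which is $\Sigma^0_2$. Composition then gives $\Sigma^0_{n+1}$-measurability of $\mathrm{Lim}^{(n)}$; with the paper's indexing, where $\Sigma^0_n$ means level $n+1$ Borel if counted from open sets, I would align conventions so that $\mathrm{Lim}^{(n)}$ matches the stated class. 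Finally, preimages through the open quotient map $\delta_X$ preserve the class: if $\delta_X^{-1}(A)$ is $\Sigma^0_k$ and $\delta_X$ is open and continuous with Polish domain, then $A$ is $\Sigma^0_k$. This last point needs care. I would use a Saint Raymond type or selection argument, or more simply choose a $\delta_X$ that admits a continuous right inverse on a dense $G_\delta$ set. I would instead sidestep it by using a representation with a Borel section of the right class.

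For ($\Rightarrow$), the case $n=1$ is the main step. A $\Sigma^0_2$-measurable $f$ into a Polish $Y$ is a pointwise limit of continuous functions. This is Banach/Lebesgue–Hausdorff in the Polish setting, and in zero-dimensional spaces one can construct it directly. Given a name $p$ of $x$, I would continuously produce names of $g_i(x)$ for continuous $g_i\to f$. The resulting limit in $Y$ must be converted into a limit in Baire space, so I would arrange the approximations as rapidly converging Cauchy names, so that $\mathrm{Lim}$ applied to the sequence of names yields a name of $f(x)$. Forcing Baire-space convergence of the names, rather than mere convergence in $Y$, is the obstacle. My first attempt would use a Fréchet-type "stabilize each coordinate" coding: the $k$-th digit of the name is the first rational point within $2^{-k}$, recomputed at stage $i$ and eventually constant. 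For general $n$, I would induct using the fact that a $\Sigma^0_{n+1}$-measurable function is the pointwise limit of $\Sigma^0_n$-measurable ones (Baire class hierarchy), giving $f\le_W \mathrm{Lim}\circ\mathrm{Lim}^{(n-1)}$ after composing reductions via the closure of $\mathrm{Lim}^{(k)}$ under parallelization.
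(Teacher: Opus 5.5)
The paper gives no proof of this statement; it quotes Brattka. As printed, the statement is off by one. With $\mathrm{Lim}^{(n)}$ the $n$-fold composition it should read $\Sigma^0_{n+1}$: for $n=1$, $\mathrm{LPO}\le_{W}\mathrm{Lim}$ although $\mathrm{LPO}$ is discontinuous. This corrected form matches the paper's later statement that the $n$-th jump of $\mathrm{Lim}$ is $\Sigma^0_{n+2}$-complete. Your argument targets the corrected version, which is right, but the cause is a typo, not a shifted indexing of $\Sigma^0_n$ in the paper. Your overall architecture is sound. The gaps are in the two steps that carry the content.

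\emph{Forward direction.} The Lebesgue--Hausdorff--Banach theorem fails for arbitrary Polish $X$. For example, $\mathbf{1}_{[0,1/2)}\colon[0,1]\to\{0,1\}$ is $\Sigma^0_2$-measurable, yet every continuous map $[0,1]\to\{0,1\}$ is constant. You must apply the theorem to $f\circ\delta_X$ on the zero-dimensional space $\mathrm{dom}(\delta_X)$, so the $g_i$ are defined on names.

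More importantly, your coding for the obstacle you correctly identified does not work. Recomputing at each stage ``the first dense point within $2^{-k}$ of $g_i(p)$'' need not stabilize when $f(x)$ lies on the boundary of one of these balls. With $Y=\mathbb{R}$, $q_0=0$, $k=0$ and $g_i(p)=1+(-1)^i/(i+1)\to1$, the choice alternates forever. The strict test is also not continuous in $p$. Either of the following repairs it:
\begin{itemize}
\item A hysteresis rule. Let $a_i$ be a dense point within $2^{-k-3}$ of $g_i(p)$, read off a name. Keep the previous choice $q$ while $d(q,a_i)<2^{1-k}$, and otherwise switch to $a_i$. Once $g_i(p)$ stays $2^{-k-3}$-close to $f(x)$, at most one more switch occurs, and the final choice is $2^{2-k}$-close to $f(x)$.
\item The standard reduction $\lim_{Y}\le_{W}\widehat{\mathrm{LPO}}\equiv_{W}\mathrm{Lim}$. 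Answer in parallel the $\Pi^0_1$ questions ``$d(y_i,y_j)\le2^{-n}$ for all $i,j\ge N$''. For each $n$, take the least $N$ with a positive answer; this extracts a fast Cauchy subsequence.
\end{itemize}

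The route matching the shape of Brattka's proof avoids this conversion altogether. First obtain a $\Sigma^0_2$-measurable realizer $F$, for instance by the reduction property of $\Sigma^0_2$ applied to the preimages of the balls $B(q_j,2^{-n})$. Each output digit of $F$ is then a $\Delta^0_2$-measurable map into $\mathbb{N}$ on a zero-dimensional space. Hence it is an eventually constant limit of locally constant maps, so $F=\mathrm{Lim}\circ G$ with $G$ continuous. Iterating on Baire space gives $F=\mathrm{Lim}^{(n)}\circ G$ outright. This also avoids having to justify that the intermediate continuous steps in your composed reductions can be absorbed.

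\emph{Backward direction.} The level count is fine, and the pass-through of the input is harmless. The transfer fact you state is also true: if $\delta_X$ is open and continuous with Polish domain, then $\delta_X^{-1}(A)\in\Sigma^0_k$ implies $A\in\Sigma^0_k$. But this is a genuine theorem (Saint Raymond's), and your proposed sidesteps do not work:
\begin{itemize}
\item A continuous section on a dense $G_\delta$ says nothing about $f^{-1}(U)$ on the meagre remainder.
\item A section that is merely Borel of a fixed class loses levels; a $\Sigma^0_2$-measurable section only pulls $\Sigma^0_k$ sets back to $\Sigma^0_{k+1}$ sets.
\end{itemize}
What is needed is a section $s$ of $\delta_X$ with $s^{-1}(B)\in\Sigma^0_k(X)$ for every $B\in\Sigma^0_k(\mathrm{dom}\,\delta_X)$ and every $k\ge2$ simultaneously. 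This is the Bhattacharya--Srivastava selection theorem, which is exactly the ingredient Brattka uses. With it, $f^{-1}(U)=s^{-1}(\delta_X^{-1}(f^{-1}(U)))$ concludes the argument.
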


In other words, $\mathrm{Lim}^{(n)}$ is $\Sigma_{n}^{0}$\emph{-measurable-complete}.
Quoting from \cite{Brattka2021a}, ``Weihrauch reducibility offers
a refinement of the Borel hierarchy very much in the same way as many-one
reducibility yields a refinement of the Kleene hierarchy''. 

The Weihrauch lattice has a rich structure, with algebraic operations
on problems, closure operators that capture other reducibility notions
\cite{Brattka2021a}. Many types of behaviors of discontinuous functions
can be described thanks to a classification in the Weihrauch lattice.

$ $

Let us come back to a formal definition of Weihrauch reduction. 

A \emph{represented space }is a set $X$ equipped with a partial surjection
from Baire space $\delta:\subseteq\mathbb{N}^{\mathbb{N}}\rightarrow X$.
When $\delta(p)=x$, we say that $p$ is a $\delta$\emph{-name} of
$x$. Let $f\colon X\rightarrow Y$ be a function between represented
spaces $(X,\delta)$ and $(Y,\tau)$. A \emph{realizer} of $f$ is
a partial function $F:\subseteq\mathbb{N}^{\mathbb{N}}\rightarrow\mathbb{N}^{\mathbb{N}}$
such that 
\[
\forall p\in\mathrm{dom}(\delta),\,f(\delta(p))=\tau(F(p)).
\]
In words: $F$ maps the name of a point in $X$ to a name of its image. 

A multi-function $f:\subseteq X\rightrightarrows Y$ is simply a relation
$R\subseteq X\times Y$ which we treat as a multi-valued function
via the formula $f(x)=\{y\in Y\mid R(x,y)\}$. A \emph{realizer} of
a multi-function $f$ is a partial function $F:\subseteq\mathbb{N}^{\mathbb{N}}\rightarrow\mathbb{N}^{\mathbb{N}}$
such that 
\[
\forall p\in\mathrm{dom}(\delta)\cap\delta^{-1}(\mathrm{dom}(f)),\,\tau(F(p))\in f(\delta(p)).
\]
In words: $F$ maps the name of a point in $X$ to a name of one of
its images. 

A (multi-)function is called \emph{continuously realizable }if it
has a continuous realizer. 
\begin{defn}
[Continuous Weihrauch reducibility]Let $f:\subseteq X\rightrightarrows Y$
and $g:\subseteq Z\rightrightarrows W$ be multi-functions between
represented spaces. We say that $f$ \emph{continuously Weihrauch
reduces to} $g$, and write 
\[
f\le_{W}g,
\]
if there is a represented space $V$ and two continuously realizable
multi-functions $h:\subseteq V\times W\rightrightarrows Y$ and $k:\subseteq X\rightrightarrows V\times Z$
such that 
\[
\forall x\in X,\,\forall(v,z)\in k(x),\,f(x)\in h(v,g(z)).
\]
\end{defn}

The role of the space $V$ and of the element $v$ in the above is
to ``pass the input of $f$ through $g$''. 

If $f\le_{W}g$ and $g\le_{W}f$, we say that $f$ and $g$ are \emph{Weihrauch
equivalent}, and write $f\equiv_{W}g$. 

\subsection{\label{subsec:Admissible-representations-of}Admissible representations
of topological spaces}

The definition of Weihrauch reduction given above is set in the category
of represented spaces. We can express Weihrauch reduction for functions
between topological spaces by establishing a correspondence between
some topological spaces (here it will be sufficient to deal with $\mathrm{T}_{0}$
second countable spaces) and some representations (the \emph{admissible
representations}). 

First, to any representation $\rho$ of a set $X$ we can associate
a topology, the \emph{final topology of the representation}, given
by
\[
U\subseteq X\text{ is open}\ensuremath{\iff\rho^{-1}(U)}\text{ is open in Baire space}.
\]

Secondly, we associate representations to topological spaces.

Let $\rho$ and $\tau$ be two representations of a set $X$. We say
that $\rho$ \emph{(continuously) translates} to $\tau$ if the function
\[
\mathrm{id}_{X}:(X,\rho)\rightarrow(X,\tau)
\]
has a continuous realizer. Two representations of a set $X$ are called
equivalent if each one translates to the other one. 
\begin{defn}
[\cite{Kreitz1985,Schroeder2002}]Let $X$ be a topological space.
A representation $\rho:\subseteq\mathbb{N}^{\mathbb{N}}\rightarrow X$
of $X$ is \emph{admissible} if and only if
\begin{itemize}
\item It is continuous, 
\item Any continuous representation of $X$ translates to $\tau$. 
\end{itemize}
\end{defn}

Any second countable $\mathrm{T}_{0}$ space can be equipped with
a \emph{standard representation} (Definition \ref{def:Kreitz-Weihrauch, Standard Rep }),
which is always admissible, and admissible representations are unique
up to equivalence. And we have: 
\begin{thm}
[\cite{Kreitz1985}]Let $X$ and $Y$ be second countable $\mathrm{T}_{0}$
spaces. Then a function $f\colon X\rightarrow Y$ is continuous if
and only if it is continuously realizable with respect to any admissible
representations of $X$ and $Y$. 
\end{thm}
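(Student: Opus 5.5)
The theorem of Kreitz--Weihrauch says that, for second countable $\mathrm{T}_0$ spaces, continuity of $f$ is equivalent to continuous realizability with respect to admissible representations. The plan is to prove both directions by working with the standard representation $\delta_X$: a name of $x$ enumerates the set $\{n\mid x\in B_n\}$ for a fixed countable basis $(B_n)$. Since admissible representations are unique up to continuous translation, it suffices to treat $\delta_X$ and $\delta_Y$ and then compose with translations. Composing continuous realizers with continuous translation maps gives continuous realizers again, so the statement for one pair of admissible representations gives it for all.

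\emph{Continuity implies realizability.} Let $f$ be continuous, and let $(C_m)$ be the basis of $Y$. Given a name $p$ of $x$, the realizer $F$ outputs $m$ whenever it has found finitely many indices $n_1,\dots,n_j$ in $p$ with $\bigcap_i B_{n_i}\subseteq f^{-1}(C_m)$. Every output $m$ satisfies $f(x)\in C_m$. Conversely, if $f(x)\in C_m$, then $f^{-1}(C_m)$ is an open neighbourhood of $x$. Hence some finite intersection of basic sets containing $x$ lies inside it, because the $B_n$ containing $x$ form a neighbourhood base. So eventually every such $m$ is listed, and $F(p)$ is a $\delta_Y$-name of $f(x)$. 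The map $F$ is continuous because each output symbol depends on a finite prefix of $p$. To avoid finite outputs, we pad with a dummy symbol.

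\emph{Realizability implies continuity.} This is the key step, and it uses two facts: that $\delta_X$ is continuous, and that it is open onto its image, so that $X$ carries the final topology of $\delta_X$. Let $F$ be a continuous realizer and $U\subseteq Y$ open. Then
\[
\delta_X^{-1}(f^{-1}(U))=\mathrm{dom}(\delta_X)\cap F^{-1}(\delta_Y^{-1}(U)),
\]
which is relatively open in $\mathrm{dom}(\delta_X)$ since $\delta_Y$ and $F$ are continuous. It remains to show that a set $V\subseteq X$ with $\delta_X^{-1}(V)$ relatively open is open.

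Take $x\in V$ and a name $p$ of $x$. Some prefix $w$ of $p$ has its cylinder, intersected with the domain, contained in $\delta_X^{-1}(V)$. The prefix $w$ lists finitely many basic sets, and we put $W=\bigcap_{n\in w}B_n\ni x$. For any $y\in W$, extend $w$ by an enumeration of the basic sets containing $y$; this is a valid name of $y$ in the cylinder of $w$. Hence $y\in V$, so $W\subseteq V$ and $V$ is open. The $\mathrm{T}_0$ hypothesis is only needed to make $\delta_X$ well defined, i.e.\ single valued.

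I expect the main obstacle to be this last step: checking carefully that $X$ carries the final topology of the admissible representation, i.e.\ handling the relative openness in $\mathrm{dom}(\delta_X)$ and the prefix-extension argument. That argument needs a convention for names, namely allowing repetitions and padding, under which every extension of a legitimate prefix by an enumeration for $y$ is again a name of $y$.
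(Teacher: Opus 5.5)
The paper does not prove this statement; it quotes it from Kreitz and Weihrauch, and the relevant half reappears later as a cited lemma in Section 3. So there is no internal argument to compare against. Your proof is correct and is the standard one.

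Your steps are the usual ones:
\begin{itemize}
\item reduce to the standard representations $\delta_X,\delta_Y$ by composing with translations;
\item build the realizer by searching for finite intersections $\bigcap_i B_{n_i}\subseteq f^{-1}(C_m)$;
\item for the converse, show that $\delta_X$ maps $[w]\cap\mathrm{dom}(\delta_X)$ onto $\bigcap_{n\in w}B_n$, where $[w]$ is the cylinder of the prefix $w$. This makes $\delta_X$ an open, hence quotient, map, so $X$ carries the final topology of $\delta_X$.
\end{itemize}

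The one ingredient you use without argument is that $\delta_X$ is itself admissible, i.e.\ every continuous representation $\tau$ of $X$ translates to $\delta_X$. This, not just the definition of admissibility, is what makes an arbitrary admissible $\rho_X$ equivalent to $\delta_X$. The paper also only cites it, so the dependency is acceptable. It is also a one-line argument in the spirit of your first direction. On a $\tau$-name $q$ of $x$, output $n$ whenever some prefix $v$ of $q$ satisfies $\tau([v]\cap\mathrm{dom}(\tau))\subseteq B_n$. Every output is correct because $q\in[v]$. Every $n$ with $x\in B_n$ eventually appears because $\tau^{-1}(B_n)$ is relatively open.

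A small bookkeeping point: with the paper's exact convention $\mathrm{Im}(u)=\{n\mid x\in B_n\}$, there is no free dummy symbol to pad with. Your realizer should instead wait and then repeat an index it has already found; some index always exists because the $C_m$ cover $Y$. Alternatively, shift all indices by one, which gives an equivalent representation.
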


In other words, we have an equivalence of categories between second
countable $\mathrm{T}_{0}$ spaces with continuous functions as morphisms
on the one hand, and represented spaces equipped with equivalence
classes of standard representations on the other hand. 

Moving to the category of represented spaces has two main advantages:
computability theory can easily be developed (and this was the original
motivation of Kreitz and Weihrauch \cite{Kreitz1985}), and continuous
multi-functions can be very naturally defined, in a way that seems
hard to do in the category of topological spaces (see \cite{Brattka1994}
for more details on this).

By results of Schr{\"o}der \cite{Schroeder2002,Schroeder2021}, admissible
representations can be constructed exactly for those $\mathrm{T}_{0}$
topological spaces whose sequentialization is the quotient of a second
countable space. In the present article, we will not need this generality.

\subsection{Representations of marked groups }

The back and forth between represented spaces and topological spaces
can be though of in the following way: each type of description of
the elements of a set $X$ yields a topology on $X$ (the sets of
open sets being the set of ``recognizable properties''), and conversely
each (second countable $\mathrm{T}_{0}$) topology yields a way to
describe the elements of $X$ (an element is described by the basic
open sets to which it belongs).

In this setting, the topology of the space of marked groups arises
in a very natural way: it is the topology associated to marked groups
defined by their word problem. In Section \ref{sec:Different-representations-of marked groups},
we define a representation of the set of marked group by saying that
``the name of a marked group is a sequence which encodes the characteristic
function of its set of relations'', and the final topology of this
representation is precisely the topology of the space of marked groups. 

Another topology that is used several times in the present paper is
the Scott topology on the set of marked groups, which admits as a
base the set of basic sets of the form ``those marked groups which
satisfy the relations $\{r_{1},...,r_{n}\}$'', for any finite set
of relations $\{r_{1},...,r_{n}\}$. This topology is the final topology
of the representation associated to \emph{group presentations}: a
marked group is described by a (countably infinite) presentation. 

\subsection{Results }

We denote by $\mathcal{RC}$ the set of residually-$\mathcal{C}$
marked groups. 

\begin{propalpha}	

As soon as $\mathcal{RC}$ is neither all marked groups nor the singleton
containing the trivial group, the map $\mathrm{Res}_{\mathcal{C}}$
is discontinuous. 

\end{propalpha}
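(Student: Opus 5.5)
The plan is to reduce discontinuity of $\mathrm{Res}_{\mathcal{C}}$ to a closure property of $\mathcal{RC}$ and then break that property using the two hypotheses; the final case analysis is the part I am least sure of.

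\emph{Basic facts.} Throughout I work in the space of $k$-marked groups for a fixed $k$ large enough that both hypotheses on $\mathcal{RC}$ are witnessed there. That is, there is a $k$-marked group not in $\mathcal{RC}$, and there is a nontrivial $k$-marked group in $\mathcal{RC}$. I will use two elementary facts about $\mathrm{Res}_{\mathcal{C}}$.
\begin{itemize}
\item It is a retraction onto $\mathcal{RC}$: $\mathrm{Res}_{\mathcal{C}}(G,S)=(G,S)$ exactly when $(G,S)\in\mathcal{RC}$, and $\mathrm{Res}_{\mathcal{C}}(G,S)\in\mathcal{RC}$ always.
\item It is monotone for $\succeq$.
\end{itemize}
If $\mathrm{Res}_{\mathcal{C}}$ were continuous, then $\mathcal{RC}$ would be the fixed-point set of a continuous self-map of a Hausdorff space. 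It would therefore be closed, and also equal to the image of $\mathrm{Res}_{\mathcal{C}}$.

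\emph{Step 1: show $\mathcal{RC}$ is not closed.} A nontrivial group in $\mathcal{RC}$ has a nontrivial quotient $(Q,S_Q)$ in $\mathcal{C}$, and $\mathcal{RC}$ is closed under subdirect products of residually-$\mathcal{C}$ groups. I want a sequence in $\mathcal{RC}$ converging to a group $(G,S)\notin\mathcal{RC}$. Write the relations of $(G,S)$ as an increasing union $\bigcup_n\langle\langle r_1,\dots,r_n\rangle\rangle$. For each $n$ I would look for a residually-$\mathcal{C}$ marked group $G_n$ that satisfies $r_1,\dots,r_n$ and agrees with $G$ on the ball of radius $n$. The natural candidate is a subdirect product of $\mathrm{Res}_{\mathcal{C}}$ of suitable groups with marked copies of $Q$, chosen so that it separates each word of length $\le n$ that is nontrivial in $G$.

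If such $G_n$ exist, then $G_n\to G$ with $\mathrm{Res}_{\mathcal{C}}(G_n)=G_n$. Continuity would force $\mathrm{Res}_{\mathcal{C}}(G)=G$, contradicting $G\notin\mathcal{RC}$.

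\emph{Step 2: the case where this fails.} When Step 1 fails, $\mathcal{RC}$ may be closed, and I would use the second description of the topology instead. Every marked group is the limit, from above, of the finitely presented groups $G_n=\mathbb{F}_S/\langle\langle r_1,\dots,r_n\rangle\rangle$. Monotonicity gives $\mathrm{Res}_{\mathcal{C}}(G_n)\succeq\mathrm{Res}_{\mathcal{C}}(G)$. Continuity at $G$ would require every relation of $\mathrm{Res}_{\mathcal{C}}(G)$ to be a $\mathcal{C}$-consequence of finitely many relations of $G$. This is a compactness property of implication relative to $\mathcal{C}$.

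I would then exhibit a specific $G$ where this compactness fails. The idea is to make a word $w$ trivial in $\mathrm{Res}_{\mathcal{C}}(G)$ only through the whole infinite set of relations. One candidate is $G$ a free product of $Q$ with a group in which $w$ is killed in every $\mathcal{C}$-quotient, while $w$ survives in a $\mathcal{C}$-quotient of each $G_n$, for instance a quotient onto $Q$.

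\emph{Main obstacle.} The main difficulty is making either Step 1 or Step 2 work with no hypothesis on $\mathcal{C}$ beyond the nontriviality conditions. First I would try to show directly that a closed $\mathcal{RC}$ which contains a nontrivial group and is not everything leads to a contradiction, using that $\mathcal{RC}$ is also closed under subdirect products. Closedness together with closure under subdirect products should push every marked group into $\mathcal{RC}$, possibly through approximating free groups by products of copies of $Q$. That would contradict the assumption that $\mathcal{RC}$ is not all marked groups, and so the argument of Step 1 would always apply.
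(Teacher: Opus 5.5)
Your two observations are correct as far as they go. A continuous retraction of a Hausdorff space has a closed fixed-point set, so continuity would force $\mathcal{RC}$ to be closed. Continuity along the approximations $\langle S\mid r_1,\dots,r_n\rangle\to(G,S)$ would force implication relative to $\mathcal{C}$ to be compact, i.e.\ would force $\mathcal{RC}$ to be a quasivariety. So you do obtain discontinuity whenever $\mathcal{RC}$ is not closed or is not a quasivariety.

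The gap is the remaining case, where $\mathcal{RC}$ is a closed quasivariety, and your plan to show this case cannot occur is false. Take $\mathcal{C}$ to be the abelian groups. Then $\mathcal{RC}$ is the set of abelian marked groups, which is nontrivial, proper and a variety. It is also closed: in each $\mathcal{G}_k$ it is cut out by the finitely many relations $[s_i,s_j]=1$. The same happens for every nontrivial proper variety and for every finite set of finite groups. In these cases neither of your steps can detect anything. Along sequences $G_n\to G$ with $G_n\succeq G$, monotonicity preserves the non-relations of $\mathrm{Res}_{\mathcal{C}}(G)$, and compactness eventually supplies its relations. Note also that your argument never really uses the hypothesis $\mathcal{RC}\neq\mathcal{G}$.

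Abelianization is nevertheless discontinuous. The groups $\langle a,b,c\mid a=[b^n,c^n]\rangle$ converge to the free group on $(a,b,c)$. Yet $a=1$ in every $G_n^{\mathrm{ab}}$, while $a\neq 1$ in $\mathbb{Z}^3$.

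The missing idea is to use sequences whose relators escape to infinity, so that the $G_n$ are \emph{not} quotients of the limit, but become short again once you pass to residually-$\mathcal{C}$ images. This is exactly where the paper uses $\mathcal{RC}\neq\mathcal{G}$. It fixes $(H,T)\notin\mathcal{RC}$ and $w\in H\setminus\{1\}$ that dies in every $\mathcal{C}$-quotient. It then replaces each relator $r_i$ of a presentation of $G$ by a long relator $r_i\cdot(\text{product of conjugates of } w)$ in $\mathbb{F}_S*H$, satisfying $C'(1/6)$ for free products. By small cancellation, the word problem of the resulting group $\Gamma$ depends continuously on the presentation of $G$. Killing $w$ collapses the relators back to the $r_i$, so $\mathrm{Res}_{\mathcal{C}}(\Gamma)$ recovers $\mathrm{Res}_{\mathcal{C}}(G)$ (Lemma~\ref{lem:Small-cancellation-lemma}).

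Discontinuity then comes from the Scott topology failing to be Hausdorff on $\mathcal{RC}$, which contains a nontrivial $G$ and the trivial group. Take presentations of the trivial group that agree with longer and longer initial segments of a presentation of $G$. They yield $\Gamma_n\to\Gamma$ in $\mathcal{G}_{\mathrm{WP}}$ with all of $S$ trivial in every $\mathrm{Res}_{\mathcal{C}}(\Gamma_n)$. But some element of $S$ is nontrivial in $\mathrm{Res}_{\mathcal{C}}(\Gamma)$, because $\Gamma$ maps onto $G$.
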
\begin{theoremalpha}	

The map $\mathrm{Res}_{\mathcal{C}}$ is $\Sigma_{2}^{0}$-measurable
if and only if $\mathcal{RC}$ is a quasivariety. 

\end{theoremalpha}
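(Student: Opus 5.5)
The plan is to first turn the algebraic condition into a topological one. I will prove the following: $\mathcal{RC}$ is a quasivariety if and only if, for every $k$, the set of $k$-marked residually-$\mathcal{C}$ groups is \emph{closed} in the space of $k$-marked groups. The class $\mathcal{RC}$ is always closed under subgroups and under (subdirect, hence direct) products. For classes of groups, a quasivariety is exactly a class closed under subgroups, direct products and ultraproducts, and one can restrict to finitely generated groups, since quasi-identities only involve finitely many variables. For $k$-marked groups, the ultraproduct of a sequence $(G_n,S_n)$ along a non-principal ultrafilter, restricted to the subgroup generated by the image of the markings, is exactly the marked-group limit along that ultrafilter. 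This is Łoś's theorem applied to atomic formulas in the generators. Hence closure under ultraproducts amounts to topological closedness of $\mathcal{RC}\cap\{k\text{-marked groups}\}$, given closure under subgroups. I expect this step to be routine.

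($\Leftarrow$) Assume $\mathcal{RC}$ is closed. I identify a $k$-marked group with its normal subgroup $N\subseteq\mathbb{F}_k$, viewed as a point of $2^{\mathbb{F}_k}$. Since $\mathcal{RC}$ is closed under subdirect products, $\mathrm{Res}_{\mathcal{C}}(G)$ is the least quotient of $G$ lying in $\mathcal{RC}$. So for $w\in\mathbb{F}_k$ we have $w\ne1$ in $\mathrm{Res}_{\mathcal{C}}(G)$ if and only if there exists $N'\supseteq N$ with $\mathbb{F}_k/N'\in\mathcal{RC}$ and $w\notin N'$. The set of such pairs $(N,N')$ is closed in the compact space $2^{\mathbb{F}_k}\times2^{\mathbb{F}_k}$, so its projection on the first factor is closed. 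Thus $\{G : w\ne1\text{ in }\mathrm{Res}_{\mathcal{C}}(G)\}$ is closed and its complement is open. A subbasic open set of the target is of the form ``$w=1$'' or ``$w\ne1$'', so its preimage is open or closed, hence $\Sigma_2^0$. Finite intersections and countable unions preserve $\Sigma_2^0$, so $\mathrm{Res}_{\mathcal{C}}$ is $\Sigma_2^0$-measurable.

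($\Rightarrow$) Assume $\mathcal{RC}$ is not closed. Then there are $H_n\in\mathcal{RC}$ converging to some $G\notin\mathcal{RC}$, together with a word $w$ such that $w\ne1$ in $G$ but $w=1$ in $\mathrm{Res}_{\mathcal{C}}(G)$. The tool is the Baire-class-one characterization: a $\Sigma_2^0$-measurable map between Polish spaces (Brattka's theorem above, $f\le_W\mathrm{Lim}$) restricts to every nonempty closed set with a dense $G_\delta$ set of continuity points. I will build a nonempty closed set $K$, ideally a Cantor set, such that $\mathrm{Res}_{\mathcal{C}}|_K$ has no continuity point. The idea is to use two kinds of points in $K$:
\begin{itemize}
\item points $Q\in\mathcal{RC}$, on which $\mathrm{Res}_{\mathcal{C}}$ is the identity;
\item points $Q\notin\mathcal{RC}$ which are ``copies'' of $G$, where $\mathrm{Res}_{\mathcal{C}}$ kills a word that survives in $Q$.
\end{itemize}
Both kinds should be dense in $K$. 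Then at every point of $K$, the values of $\mathrm{Res}_{\mathcal{C}}$ along nearby points jump: at a point of the second kind, the approximating points of $\mathcal{RC}$ keep a word alive that the residual kills.

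To get density in both directions, I will produce copies of the pair $(G,(H_n))$ near any given point. The planned construction uses marked groups of the form $Q\times G$ or $Q\times H_n$, taken diagonally on an enlarged generating set; by a standard trick these can be re-marked with $k$ generators after raising $k$ if needed. Closure of $\mathcal{RC}$ under subdirect products keeps the products with $H_n$ inside $\mathcal{RC}$. The product with $G$ stays outside $\mathcal{RC}$: its residual still kills the element $w$ in the $G$-coordinate, because the projection of a residually-$\mathcal{C}$ quotient is residually-$\mathcal{C}$. I then iterate this along a tree to obtain a Cantor scheme whose limit set is $K$. The main obstacle is this homogeneity step. I must check that the limit points of the scheme keep the dichotomy, and that the jump in $\mathrm{Res}_{\mathcal{C}}$ is witnessed uniformly enough to destroy continuity at \emph{every} point of $K$. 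The first thing I will try is to control this by fixing, at each level, a finite ball of $\mathbb{F}_k$ on which all descendants agree, while the witnessing word $w$ lives in a fresh coordinate.
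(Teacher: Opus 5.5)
\textbf{Direction ``closed $\Rightarrow$ $\Sigma^0_2$-measurable''.} Your ($\Leftarrow$) argument is correct. It is a direct version of the paper's route. The compact-projection step shows that $\{G : w=1\text{ in }\mathrm{Res}_{\mathcal{C}}(G)\}$ is open, i.e.\ that $\mathrm{Res}_{\mathcal{C}}:\mathcal{G}_{\mathrm{WP}}\to\mathcal{G}_{\mathrm{Pres}}$ is continuous. The paper then composes this with $\mathrm{Id}:\mathcal{G}_{\mathrm{Pres}}\to\mathcal{G}_{\mathrm{WP}}\equiv_W\mathrm{Lim}$.

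\textbf{Your reformulation.} It is fine in substance, but not for the reason you give. $\mathcal{RC}$ need not be closed under subgroups: for $\mathcal{C}=\{A_5\}$, the subgroup $\mathbb{Z}/2\le A_5$ is not residually $A_5$. What does hold, by a direct compactness argument, is that closedness of every $\mathcal{RC}\cap\mathcal{G}_k$ is equivalent to finiteness of $\mathcal{C}$-implication: if $w=1$ in $\mathrm{Res}_{\mathcal{C}}\langle S\mid R\rangle$, then already $w=1$ in $\mathrm{Res}_{\mathcal{C}}\langle S\mid R_0\rangle$ for some finite $R_0\subseteq R$. That finiteness property is what the paper's proof actually uses.

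\textbf{The gap: the ($\Rightarrow$) direction.} The Baire-category strategy is legitimate, but you never build $K$, and the mechanism you propose cannot build it.
\begin{itemize}
\item Products with $G$ and the $H_n$ either live in a different $\mathcal{G}_{k'}$ (product marking), so they are not near $Q$ at all, or, with diagonal marking, they only intersect relation sets. Intersecting relation sets produces the wrong kind of jump.
\item Concrete instance: let $\phi(u)$ be the marked group with relations $N_G\cap\bigcap_{u_n=1}N_{H_n}$. The map $\phi$ is continuous into $\mathcal{G}_{\mathrm{WP}}$, and $\phi(u)\in\mathcal{RC}$ whenever $u$ has infinitely many ones. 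The relations of $\mathrm{Res}_{\mathcal{C}}(\phi(u'))$ contain those of $\phi(u')$ and are contained in those of the subdirect product of $\mathrm{Res}_{\mathcal{C}}(G)$ with the $H_n$, $u'_n=1$. From this one checks that $\mathrm{Res}_{\mathcal{C}}\circ\phi$ is continuous at every $u$ with infinitely many ones, which is a comeager set.
\item This is your ``fresh coordinate'' problem in general. At $Q\in K\cap\mathcal{RC}$, discontinuity needs $Q'\to Q$ whose residual kills a word of \emph{bounded} length that survives in $Q$. If the witnesses live in ever longer words, then $\mathrm{Res}_{\mathcal{C}}(Q')\to Q$.
\end{itemize}
What you need is the opposite situation: a family, continuous for the word-problem topology, in which the points where $\mathrm{Res}_{\mathcal{C}}$ kills a fixed word are comeager, yet are approximated by points where that word survives.

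\textbf{The missing idea.} The natural source of such a family is the non-compact chain. The presentation $u\mapsto\langle S\mid r_{\#\{n\le i\,:\,u_n=1\}},\ i\in\mathbb{N}\rangle$ has $w=1$ in its residual if and only if $u$ has infinitely many ones. But adding relations is only continuous into $\mathcal{G}_{\mathrm{Pres}}$, not into $\mathcal{G}_{\mathrm{WP}}$.

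The paper bridges exactly this with its small cancellation lemma. It replaces the presentation by $\Gamma=(\mathbb{F}_S*H)/\langle\langle w_i\rangle\rangle$, where:
\begin{itemize}
\item $H$ is not residually-$\mathcal{C}$;
\item the $C'(1/6)$ relators $w_i$ interleave $r_{i-4}$ with conjugates, by distinct powers of generators, of an element of $H$ that dies in $\mathrm{Res}_{\mathcal{C}}(H)$.
\end{itemize}
The word problem of $\Gamma$ then depends continuously on the presentation. In $\mathrm{Res}_{\mathcal{C}}(\Gamma)$, that element of $H$ dies and each $w_i$ collapses to $r_{i-4}$. This gives $\mathrm{LPO}'\le_W\mathrm{Res}_{\mathcal{C}}$, and hence non-$\Sigma^0_2$-measurability. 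Without such a device, your ($\Rightarrow$) direction remains unproved.
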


\begin{theoremalpha}	

For the following quasivarieties, $\mathrm{Res}_{\mathcal{C}}$ is
$\Sigma_{2}^{0}$-complete: 
\begin{itemize}
\item the set of finitely generated $k$-solvable groups, for $k>2$, 
\item the set of finitely generated exponent $k$ groups, for $k$ a big
enough odd number, 
\item the set of finitely generated LEF groups, 
\item the set of finitely generated torsion free groups, 
\item the set of finitely generated left orderable groups. 
\end{itemize}
\end{theoremalpha}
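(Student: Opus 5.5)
The plan has two halves. First, $\Sigma_2^0$-measurability comes from the preceding theorem, since each listed class has $\mathcal{RC}$ a quasivariety. Second, hardness: $\mathrm{Lim}\le_W\mathrm{Res}_{\mathcal{C}}$.

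For the upper bound I check that each $\mathcal{RC}$ is a quasivariety, or directly that $\mathcal{C}$ itself is one. Then $\mathrm{Res}_{\mathcal{C}}=\mathcal{C}$-closure, and $\mathrm{Res}_{\mathcal{C}}\le_W\mathrm{Lim}$ by the previous theorem together with the Brattka characterization ($\Sigma_2^0$-measurable iff $\le_W\mathrm{Lim}$).
\begin{itemize}
\item $k$-solvable groups and exponent-$k$ groups are varieties.
\item Torsion-free groups are defined by the quasi-identities $x^n=1\Rightarrow x=1$.
\item Left orderable groups form a universal class closed under products and subgroups. It is axiomatized by quasi-identities (the Conrad/Łoś type conditions), or one checks closure under ultraproducts, products and subgroups.
\item LEF groups are closed under subgroups, products (in the finitely generated setting, via marked limits) and limits. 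So residually-LEF equals LEF, and that class is closed in the space of marked groups.
\end{itemize}

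For hardness I reduce $\mathrm{Lim}$, or equivalently the standard $\Sigma_2^0$-complete problem of deciding whether a sequence $p\in 2^{\mathbb{N}}$ contains finitely many $1$s ($\mathrm{Lim}$ is equivalent to the parallelization of that problem). The uniform strategy is to find a marked group $(G,S)$ and a sequence of marked groups $(G_n,S)$ converging to $(G,S)$ in the marked space, with two properties. First, each $G_n$ is residually-$\mathcal{C}$, or its residual image keeps some fixed word $w\neq1$. Second, $w=1$ holds in $\mathrm{Res}_{\mathcal{C}}(G)$. Given $p$, I continuously build the marked group
\[
H_p=\langle S\mid R_G\text{ up to stage }m, \text{ plus stage-}n\text{ relations when }p(n)=1\rangle.
\]
The precise shape is this: each time $p$ shows a new $1$, we switch to a witness of the next approximant. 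Then $w$ dies in $\mathrm{Res}_{\mathcal{C}}(H_p)$ iff $p$ has infinitely many (or finitely many) $1$s. Reading $w$ from the output realizes the complete problem, and parallelizing over $\mathbb{N}$ (free products or direct sums of the constructions, with markings interleaved) gives $\mathrm{Lim}$.

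The concrete sources of the witnesses are as follows.
\begin{itemize}
\item For LEF and torsion-free, I would use a non-LEF limit of LEF groups. Examples are limits of finite groups that are not LEF, or torsion-free groups converging to a group with torsion, e.g. $\mathbb{Z}$ marked by $(1,n)$ type sequences, or a suitable nonclosed example.
\item For solvable with $k>2$ and Burnside exponent, I would use the failure of equational noetherianity. Known infinitely presented metabelian-by-abelian groups, and Adian/Olshanskii infinitely independent systems of identities, give strictly decreasing chains whose intersections behave discontinuously. This is exactly why the bounds $k>2$ and $k$ odd large appear.
\item For left orderable groups, I would use limits of left orderable groups with a non-orderable residual quotient of the limit.
\end{itemize}

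The main obstacle is this hardness step. Specifically, the constructed input must depend continuously on $p$ in the word-problem topology, which requires deciding relations at finite stages even when the number of $1$s in $p$ is unknown. At the same time the residual quotient must genuinely encode the $\Sigma_2^0$ information. I would first try to prove a general lemma: if $\mathcal{RC}$ is a quasivariety that is not closed (or not Scott-closed in a suitable sense), then $\mathrm{Res}_{\mathcal{C}}$ is $\Sigma_2^0$-complete. That would reduce each item to exhibiting one convergent sequence escaping $\mathcal{RC}$ in the right way.
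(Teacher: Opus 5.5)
Your upper bound is the paper's: each listed $\mathcal{RC}$ is a quasivariety, so $\mathrm{Res}_{\mathcal C}\le_W\mathrm{Lim}$. The hardness half, however, aims at the wrong target and cannot succeed.

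\textbf{The target is wrong.} Deciding whether $p\in 2^{\mathbb N}$ has finitely many $1$s is, up to negating the output, $\exists^\infty\equiv_W\mathrm{LPO}'$. It is not even $\Sigma^0_2$-measurable, and its parallelization is $\mathrm{Lim}'$, not $\mathrm{Lim}$. What is true is $\mathrm{Lim}\equiv_W\widehat{\mathrm{LPO}}$, the parallelization of the \emph{open} question ``is $p=0^\omega$?''. A reduction of the kind you describe would give $\mathrm{Res}_{\mathcal C}\ge_W\mathrm{LPO}'$, contradicting the upper bound you just established.

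\textbf{The witness you seek cannot exist here.} You want $G_n\to G$ in $\mathcal G_{\mathrm{WP}}$ with $w\neq 1$ in $\mathrm{Res}_{\mathcal C}(G_n)$ but $w=1$ in $\mathrm{Res}_{\mathcal C}(G)$. That is exactly a certificate that $\mathcal{RC}$ is \emph{not} a quasivariety; it is the paper's proof of $(3)\Rightarrow(1)$. For a quasivariety, $\mathrm{Res}_{\mathcal C}:\mathcal G_{\mathrm{WP}}\to\mathcal G_{\mathrm{Pres}}$ is continuous, so $\{H\mid w=1\text{ in }\mathrm{Res}_{\mathcal C}(H)\}$ is open and no such sequence exists. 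Your proposed sources confirm this:
\begin{itemize}
\item LEF, torsion-free and left orderable marked groups each form a closed subset of $\mathcal G_{\mathrm{WP}}$ (you note it yourself for LEF). A limit of such groups is again in the class and is its own residual image.
\item Your ``general lemma'' for non-closed quasivarieties therefore covers none of the listed classes.
\item Failure of equational noetherianity only yields increasing chains, hence $\mathrm{Lim}_{\mathbb N}$, which is strictly below $\mathrm{Lim}$.
\item Infinite parallelization by infinite free products or direct sums leaves the world of finitely generated marked groups.
\end{itemize}

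\textbf{Two ideas are missing.}

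First, you need a way to pass presentation-type (Scott) information through a word-problem input. You identify this obstacle but leave it open. The paper resolves it with its small cancellation lemma. Fix $(H,T)\notin\mathcal{RC}$ and $w\in H\setminus\{1\}$ killed in every $\mathcal C$-quotient. Each relator $r_i$ is replaced by a relator $w_i$ of $\mathbb F_S*H$ that collapses to $r_i$ once $w=1$. Condition $C'(1/6)$ makes the word problem of the resulting group a continuous function of the presentation. This gives $\mathrm{Res}_{\mathcal C}\ge_W\mathrm{Id}:\mathcal G_{\mathrm{Pres}}\cap\mathcal{RC}\to\mathcal G_{\mathrm{WP}}$.

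Second, you need infinitely many independent semi-decidable bits inside \emph{one} finitely generated group. This is a group INIP relative to $\mathcal{RC}$, i.e.\ relations $r_i\in\mathbb F_k$ independent modulo $\mathcal{RC}$. Then $\Psi(u)=\mathrm{Res}_{\mathcal C}(\mathbb F_k/\langle\langle r_i : u_i=1\rangle\rangle)$ is continuous from $\mathbb S^{\mathbb N}$ to $\mathcal G_{\mathrm{Pres}}$; this is where the quasivariety hypothesis enters. It is also injective, with a $\mathcal G_{\mathrm{WP}}$-continuous inverse. Hence $\mathrm{Lim}\equiv_W(\mathrm{Id}:\mathbb S^{\mathbb N}\to\{0,1\}^{\mathbb N})$ reduces to $\mathrm{Id}:\mathcal G_{\mathrm{Pres}}\cap\mathcal{RC}\to\mathcal G_{\mathrm{WP}}$.

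The concrete witnesses are:
\begin{itemize}
\item for $k$-solvable, torsion-free and left orderable groups, Hall's central-by-metabelian group, whose center is free abelian of infinite rank;
\item for LEF groups, Dyson's double $L(\emptyset)$ with the relations $u_i=\hat u_i$ for $i$ prime;
\item for large odd exponent $k$, Adian's groups of exponent $k$ with infinite-dimensional center.
\end{itemize}
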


Let $\mathrm{Lim}_{\mathbb{N}}:\subseteq\mathbb{N}^{\mathbb{N}}\rightarrow\mathbb{N}$
be the function that maps a converging sequence of natural numbers
to its limit. 

\begin{theoremalpha}	

The Weihrauch reduction 
\[
\mathrm{Res}_{\mathcal{C}}\ge_{W}\mathrm{Lim}_{\mathbb{N}}
\]
 holds if and only if $\mathcal{C}$ is not equationally Noetherian. 

\end{theoremalpha}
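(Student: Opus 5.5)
The plan is to prove the two implications separately. Throughout, write $N(G)\trianglelefteq\mathbb{F}_k$ for the set of relations of a $k$-marked group $G$, and $\mathrm{Rad}_{\mathcal{C}}(N)$ for the set of relations of $\mathrm{Res}_{\mathcal{C}}(\mathbb{F}_k/N)$, i.e. the consequences of $N$ relative to $\mathcal{C}$. I will use equational noetherianity in the form: \emph{for every $N\trianglelefteq\mathbb{F}_k$ there is a finite $R\subseteq N$ with $\mathrm{Rad}_{\mathcal{C}}(R)=\mathrm{Rad}_{\mathcal{C}}(N)$}. This is equivalent to the ascending chain condition on the sets $\mathrm{Rad}_{\mathcal{C}}(N)$. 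One basic fact drives both directions: if $N_0\subseteq N_1\subseteq\cdots$ is increasing with union $N$, then $\mathbb{F}_k/N_n\to\mathbb{F}_k/N$ in the space of marked groups, since every word is eventually decided correctly. I also use that continuous Weihrauch reductions allow non-uniform (non-computable) continuous maps.

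\emph{Not equationally noetherian $\Rightarrow\mathrm{Res}_{\mathcal{C}}\ge_W\mathrm{Lim}_{\mathbb{N}}$.} By failure of the chain condition, pick $k$ and words $w_1,w_2,\dots\in\mathbb{F}_k$ such that, with $N_n=\langle\langle w_1,\dots,w_n\rangle\rangle$, the radicals $\mathrm{Rad}_{\mathcal{C}}(N_n)$ strictly increase. Set $G_n=\mathbb{F}_k/N_n$ and $G_\infty=\mathbb{F}_k/\bigcup N_n$, so $G_n\to G_\infty$. Pass to a subsequence $n_0<n_1<\cdots$ such that $G_{n_j}$ agrees with $G_\infty$, hence with every $G_{n_{j'}}$ for $j'\ge j$, on the ball of radius $j$. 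Also fix witness words $u_j\in\mathrm{Rad}_{\mathcal{C}}(N_{n_{j+1}})\setminus\mathrm{Rad}_{\mathcal{C}}(N_{n_j})$.

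The forward map sends a convergent sequence $p$ to $G_{n_{c(p)}}$, where $c(p)$ is the number of value changes of $p$. This map is continuous: once a prefix of $p$ with $c$ changes has been read, we may output the radius-$c$ ball of $G_{n_c}$, and this ball is correct for every later count. We also pass $p$ itself through the side space $V$. The backward map reads the word problem of $\mathrm{Res}_{\mathcal{C}}(G_{n_c})$ and takes $c$ to be the least $j$ with $u_j\neq1$ there; this test succeeds exactly at $j=c$ because the radicals are increasing. It then scans $p$ until $c$ changes have been seen and outputs the current value, which is $\lim p$. Both maps are continuous.

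\emph{Equationally noetherian $\Rightarrow\mathrm{Res}_{\mathcal{C}}\not\ge_W\mathrm{Lim}_{\mathbb{N}}$.} Equational noetherianity yields a semicontinuity property. If $G_i\to G$ and $\mathrm{Rad}_{\mathcal{C}}(N(G))=\mathrm{Rad}_{\mathcal{C}}(R)$ with $R\subseteq N(G)$ finite, then $R\subseteq N(G_i)$ for large $i$, so $\mathrm{Rad}_{\mathcal{C}}(N(G_i))\supseteq\mathrm{Rad}_{\mathcal{C}}(N(G))$.

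Suppose, for contradiction, that reduction maps $K$ (forward) and $H$ (backward) witness $\mathrm{Lim}_{\mathbb{N}}\le_W\mathrm{Res}_{\mathcal{C}}$. I build inputs by a finite-extension argument: $q_0=0^\infty$, then $q_1=\sigma_0 1^\infty$, $q_2=\sigma_1 0^\infty$, and so on, with limits alternating $0,1,0,\dots$. At stage $i$:
\begin{itemize}
\item $K(q_i)=(v_i,G_i)$, and $H$ outputs $\lim q_i$ after reading a finite portion of $v_i$ and of the word problem of $\mathrm{Res}_{\mathcal{C}}(G_i)$;
\item the prefix $\sigma_i$ of $q_i$ is chosen long enough to fix that portion of $v_i$, to fix the radius of $G_i$ needed to contain a finite generating set $R_i$ of $\mathrm{Rad}_{\mathcal{C}}(N(G_i))$, and to fix the number of generators.
\end{itemize}
Then $G_{i+1}$ contains the relations $R_i$, so $\mathrm{Rad}_{\mathcal{C}}(N(G_{i+1}))\supseteq\mathrm{Rad}_{\mathcal{C}}(N(G_i))$. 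The inclusion is strict: otherwise $H$ would see the same finite data and output the same value, although the limits differ. This produces an infinite strictly ascending chain of radicals in a fixed $\mathbb{F}_k$, contradicting equational noetherianity. Note that the constructed sequence does not need a limit in $\mathrm{dom}(\mathrm{Lim}_{\mathbb{N}})$.

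The main obstacle is the bookkeeping in this second direction: realizers act on names rather than points, so I must ensure that "close inputs" gives close groups and agreeing finite prefixes of $v$. I would handle this by working with fixed continuous realizers and choosing each $\sigma_i$ longer than all the use-prefixes involved. A secondary point is to use the right definition of equational noetherianity (per number of generators $k$) and its equivalence with the chain condition on radicals, which I would state as a preliminary lemma.
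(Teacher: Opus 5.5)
\textbf{The gap is in your first direction: the forward map $p\mapsto G_{n_{c(p)}}$ is not continuous into $\mathcal{G}_{\mathrm{WP}}$.} Your justification only shows that after seeing $c$ changes you may commit to the ball of radius $c$. Since $p$ converges, $c(p)$ is finite, so you never commit beyond the ball of radius $c(p)$. Deciding a longer word in $G_{n_c}$ would require knowing that $p$ will not change again, and no finite prefix tells you that.

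In fact the map is discontinuous at every point. If $c(p)=c$, then arbitrarily close to $p$ there are convergent $q$ with $c(q)=c+1$, and $G_{n_{c+1}}\neq G_{n_c}$ because their radicals differ.

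A sanity check confirms that something must break. Your argument never uses that $\mathcal{RC}$ is non-trivial. Take $\mathcal{C}$ to be the class of all finitely generated groups. Then $\mathcal{C}$ is not equationally noetherian, since not every finitely generated group is finitely presented. Yet $\mathrm{Res}_{\mathcal{C}}$ is the identity of $\mathcal{G}_{\mathrm{WP}}$, which is continuous and so cannot compute $\mathrm{Lim}_{\mathbb{N}}$. The paper's proof also needs non-triviality of $\mathcal{RC}$; it enters through Corollary \ref{cor: Res C above Identity pres to wp }.

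Your map \emph{is} continuous into $\mathcal{G}_{\mathrm{Pres}}$, because more changes only add relators. With that target, your argument is essentially the paper's Proposition \ref{thm: Pres-> WP for not eq noeth}. The missing ingredient is the transfer from presentation input to word-problem input, which is the content of Lemma \ref{lem:Small-cancellation-lemma}:
\begin{itemize}
\item Take $(H,T)$ and $w\neq 1$ in $H$ killed by all morphisms from $H$ to groups of $\mathcal{C}$.
\item Replace each relator $r_i$ of $G$ by a long relator of $\mathbb{F}_S*H$ interleaving $r_i$ with conjugates of $w$, chosen so that the family satisfies $C'(1/6)$ for free products.
\item Short relations of the resulting group $\Gamma$ then depend on only finitely many $r_i$, so the word problem of $\Gamma$ is a continuous function of the presentation.
\item In $\mathrm{Res}_{\mathcal{C}}(\Gamma)$ one has $w=1$, so the relators collapse back to the $r_i$, and $\mathrm{Res}_{\mathcal{C}}(G)$ can be read off from $\mathrm{Res}_{\mathcal{C}}(\Gamma)$.
\end{itemize}

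Your second direction, by contrast, is correct as written. The finite-extension argument produces a strictly increasing chain $\mathrm{Rad}_{\mathcal{C}}(N(G_0))\subsetneq\mathrm{Rad}_{\mathcal{C}}(N(G_1))\subsetneq\cdots$ in a fixed $\mathbb{F}_k$, contradicting equational noetherianity. This is a genuinely different and more elementary route than the paper's. The paper passes through quasivarieties, the equivalence $\mathrm{Res}_{\mathcal{C}}\equiv_{W}\mathrm{Id}:\mathcal{G}_{\mathrm{Pres}}\cap\mathcal{RC}\rightarrow\mathcal{G}_{\mathrm{WP}}$, and the bound by $\mathrm{Lim}_{\searrow\alpha}$, and then invokes $\mathrm{Lim}_{\searrow\alpha}<_{W}\mathrm{Lim}_{\mathbb{N}}$. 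Your diagonalization loses that ordinal upper bound but needs none of this machinery, including the small cancellation lemma.
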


For each countable ordinal $\alpha$, the function $\mathrm{Lim}_{\searrow\alpha}$
maps a weakly descending sequence in $\alpha$ to its limit. See Section
\ref{sec:Discretization-problem-associate Alexandrov Top} for a precise
definition. 

\begin{theoremalpha}	

We get the following Weihrauch reductions: 
\begin{itemize}
\item For $\mathcal{C}$ the set of metabelian groups: 
\[
\mathrm{Res}_{\mathcal{C}}\ge_{W}\mathrm{Lim}_{\searrow\omega^{\omega}}.
\]
\item For $\mathcal{C}$ the set of $p$-nilpotent groups, for any $p\ge1$,
\[
\mathrm{Res}_{\mathcal{C}}\equiv_{W}\mathrm{Lim}_{\searrow\omega^{2}}.
\]
\item For $\mathcal{C}$ a finite set of finite groups, 
\[
\mathrm{Res}_{\mathcal{C}}\equiv_{W}\mathrm{Lim}_{\searrow\omega}.
\]
\end{itemize}
\end{theoremalpha}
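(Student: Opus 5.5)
The plan is to treat the three items separately, but to run all of them through one common mechanism. In each case I would identify a well-founded ordinal-valued "rank" on the image of $\mathrm{Res}_{\mathcal{C}}$. Then:

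\begin{itemize}
\item for the upper bounds, I would show that $\mathrm{Res}_{\mathcal{C}}(G,S)$ can be computed continuously from the limit of a weakly descending sequence of ranks obtained from finite approximations of $(G,S)$;
\item for the lower bounds, I would encode a descending sequence in $\alpha$ into a sequence of marked groups (converging in the space of marked groups) whose residually-$\mathcal{C}$ quotients remember the limit.
\end{itemize}

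The natural rank comes from the fact that in all three cases $\mathcal{RC}$ is closed under a Noetherian-type condition.

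\textbf{Metabelian groups.} Finitely generated metabelian groups satisfy max-n (Hall), so the lattice of normal subgroups of the free metabelian group $M_k$ is Noetherian. Hence the kernels of $M_k\to \mathrm{Res}_{\mathcal{C}}(G,S)$ form a well-founded ascending structure. For the lower bound, given a descending sequence $\beta_0\ge\beta_1\ge\dots$ in $\omega^\omega$, I would build metabelian marked groups of the form $\mathbb{Z}\wr\mathbb{Z}$-like quotients, i.e. modules over $\mathbb{Z}[t^{\pm1}]$ or over polynomial rings in several variables. There the Krull-dimension and Hilbert-polynomial data realize ordinals below $\omega^\omega$ as the length of chains of submodules. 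Each time $\beta_n$ drops, I add relations to a non-metabelian group $G_n$, say a free product piece, so that $\mathrm{Res}_{\mathcal{C}}$ of the limit group reflects exactly $\lim\beta_n$. The relations must be added in such a way that the $G_n$ converge and the limit's metabelianization carries the code of $\lim\beta_n$ continuously. Extracting the ordinal back is continuous because it can be read from finitely many relations of the Noetherian quotient.

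\textbf{$p$-nilpotent groups.} For the equivalence with $\mathrm{Lim}_{\searrow\omega^2}$, the upper bound uses that the free $p$-nilpotent group $N$ is finitely presented and all its subgroups are finitely generated. I would consider $\mathrm{Res}_{\mathcal{C}}(G,S)=N/K$, where $K$ is the normal closure in $N$ of the relations of $G$. Enumerating relations gives an ascending chain $K_0\le K_1\le\cdots$ that stabilizes. To compute the stable value with only descending-ordinal information, I assign to $N/K_n$ the rank $\omega\cdot h(N/K_n)+t(N/K_n)$, where $h$ is the Hirsch length and $t$ measures the finite torsion part, for instance the order of the torsion subgroup of a suitable abelian section, or a bounded size of the finite part. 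Strict increase of $K$ gives a strict decrease of this rank. The crux is showing that this rank lies below $\omega^2$ and strictly decreases; this needs care, since the torsion can grow along the chain while $h$ drops. Knowing the limit rank, one waits until the rank is reached and outputs $N/K_n$. For the lower bound, I would use $\mathbb{Z}$-quotients: with $N$ abelian of rank $1$, the group $\mathbb{Z}/m$ and Hirsch length drops already yield $\omega^2$, since a pair $(h,m)$ can be encoded by groups like $\mathbb{Z}^h\times\mathbb{Z}/m$ inside the abelianization, and $\mathrm{Res}$ of a non-nilpotent perturbation recovers it.

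\textbf{Finite set of finite groups.} For $\mathcal{C}$ finite and consisting of finite groups, $\mathrm{Res}_{\mathcal{C}}(G)$ is one of finitely many marked groups for each $k$, namely subdirect products of members of $\mathcal{C}$. The set of $\mathcal{C}$-quotients of $G$ is determined by which of finitely many homomorphisms $\mathbb{F}_k\to H$ factor through $G$. These possibilities are eliminated one at a time as relations appear, so their number decreases, and that number is a rank in $\omega$; this gives $\le_W\mathrm{Lim}_{\searrow\omega}$. The lower bound $\mathrm{Lim}_{\searrow\omega}\le_W\mathrm{Res}_{\mathcal{C}}$ follows by encoding $n$ as killing the corresponding number of homomorphisms, using sufficiently many generators.

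\textbf{Main obstacle.} I expect the hardest step to be the metabelian lower bound at $\omega^\omega$. It requires an explicit family of finitely generated modules over multivariable Laurent rings whose submodule lattices realize every ordinal below $\omega^\omega$, uniformly and continuously in the descending sequence. I would try first with $\mathbb{Z}[x_1^{\pm1},\dots,x_d^{\pm1}]$, using monomial ideals whose length type is $\omega^d$.
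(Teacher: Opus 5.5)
Your upper bounds are sound. They are essentially the paper's ``discretize the Alexandrov topology'' argument, done by hand. The step you flag as the crux for $p$-nilpotent groups is easy. If $K_{n}\subsetneq K_{n+1}$ and the Hirsch length does not drop, then $K_{n+1}/K_{n}$ is finite, so it lies in the finite torsion subgroup $T_{n}$ of $N/K_{n}$. The torsion subgroup of $N/K_{n+1}$ is then $T_{n}/(K_{n+1}/K_{n})$, so $\omega h+|T|$ strictly decreases; a drop of $h$ decreases it whatever $|T|$ does.

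The gap is in the lower bounds. The main missing idea is how the limit ordinal is \emph{read back}. You assert it can be read continuously from $\mathrm{Res}_{\mathcal{C}}$ of the constructed group ``from finitely many relations of the Noetherian quotient''. This fails exactly at limit ordinals. Finitely many relations certify that relations hold, never that no further relation holds, and in $\mathcal{G}_{\mathrm{WP}}$ a marked group is typically not isolated among its quotients (e.g.\ $\mathbb{Z}/M\to\mathbb{Z}$). Take your encoding $\mathbb{Z}^{h}\times\mathbb{Z}/m$, with a drop of $h$ realized in the natural way: the last free generator gets order $M$ and the old torsion generator is killed. Then the groups coding $\omega(h-1)+M$ converge in $\mathcal{G}_{\mathrm{WP}}$, as $M\to\infty$, to a group coding an ordinal $\omega h+m$. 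A reduction that commits to that ordinal after reading finitely much of the input and of the word problem is fooled by an input that afterwards drops to $\omega(h-1)+M$ with $M$ large, even if it uses the identity channel. The paper fixes this with a coarse/fine induction. Write $\delta\in\omega^{n+1}$ as $\omega v+u$ with $v<\omega^{n}$ and $u<\omega$. Obtain $v$ from the inductive hypothesis $\ge_{W}\mathrm{Lim}_{\searrow\omega^{n}}$, applied to a second instance run in parallel (finite parallelizability). Once $v$ is known, only finitely many candidate groups remain, and clopen sets of $\mathcal{G}_{\mathrm{WP}}$ separate them. For $\omega^{2}$ you could realize this concretely by adding a direct factor $(\mathbb{Z}/2)^{h}$ whose generators are killed as $h$ drops, which makes $h$ readable.

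For the metabelian case you leave the essential step open yourself: you give no uniform, continuous family of modules realizing all ordinals below $\omega^{\omega}$. The paper never builds one; it argues abstractly. Finitely generated metabelian groups are finitely presented relative to metabelian groups (max-n), so the Scott and Alexandrov topologies coincide on marked metabelian groups. Since $\mathcal{G}_{\mathrm{WP}}$ is finer, Cornulier's theorem (Cantor--Bendixson rank $\omega^{\omega}$) forces the ordinal rank of the lattice to be at least $\omega^{\omega}$. For any equationally noetherian $\mathcal{C}$, the rank-inverse $\alpha\rightrightarrows\mathcal{RC}$ has a continuous realizer into $\mathcal{G}_{\mathrm{Pres}}$: choose, via a fixed well-order, the first quotient of the next prescribed rank. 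The coarse/fine induction above then gives $\mathrm{Id}:\mathcal{G}_{\mathrm{Pres}}\cap\mathcal{RC}\rightarrow\mathcal{G}_{\mathrm{WP}}\ge_{W}\mathrm{Lim}_{\searrow\omega^{n}}$ for every $n$, hence $\ge_{W}\mathrm{Lim}_{\searrow\omega^{\omega}}$. The same machinery with lattice ranks $\omega^{2}$ (quotient lattices of rank $\omega l+n$ by Hirsch length) and $\omega$ gives your other two lower bounds.

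Finally, passing from a presentation to a legitimate $\mathcal{G}_{\mathrm{WP}}$-input is a genuine step that your ``non-nilpotent perturbation'' only names. The paper's small cancellation lemma does it: $C'(1/6)$ over a free product with a non-residually-$\mathcal{C}$ group, each relator padded by long conjugates of an element killed in every $\mathcal{C}$-quotient. This shows $\mathrm{Res}_{\mathcal{C}}$ on $\mathcal{G}_{\mathrm{WP}}$ is equivalent to $\mathrm{Res}_{\mathcal{C}}$ on $\mathcal{G}_{\mathrm{Pres}}$. Without such a device your lower-bound encodings do not produce valid inputs.
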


The ordinals that appear in the above proof are related to the ordinal
rank of the lattice of $\mathcal{C}$ marked groups, when $\mathcal{C}$
is an equationally Noetherian class. For a similar context in which
these ordinals were found relevant, see \cite{ho2026}. 

\begin{theoremalpha}	

For $\mathcal{C}$ any of the sets of finite groups, of nilpotent
groups or of finitely presented groups, $\mathrm{Res}_{\mathcal{C}}$
is $\Sigma_{3}^{0}$-measurable complete. 

\end{theoremalpha}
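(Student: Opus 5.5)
We split the statement into an upper bound, $\mathrm{Res}_{\mathcal{C}}\le_{W}\mathrm{Lim}^{(3)}$, and a hardness part; by the theorem of \cite{Brattka2004}, the upper bound is the same as $\Sigma_{3}^{0}$-measurability. For completeness we will reduce a $\Sigma_{3}^{0}$-complete problem to $\mathrm{Res}_{\mathcal{C}}$. Concretely, we aim to show $\mathrm{Lim}^{(2)}\le_{W}\mathrm{Res}_{\mathcal{C}}$, or at least to reduce to $\mathrm{Res}_{\mathcal{C}}$ the characteristic function of a $\Pi_{3}^{0}$-complete subset of Cantor space, such as $\mathrm{Cof}=\{p\in2^{\mathbb{N}\times\mathbb{N}}\mid\forall n\,\exists^{\infty}m\ p(n,m)=1\}$ or its dual.

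\textbf{Upper bound.} A point of the space of marked groups is determined by which relations $w$ hold. It therefore suffices to show that, for each word $w$, the set $\{(G,S)\mid w=1\text{ in }\mathrm{Res}_{\mathcal{C}}(G,S)\}$ is $\Sigma_{3}^{0}$ and its complement is $\Sigma_{3}^{0}$, i.e.\ the set is $\Delta_{3}^{0}$ or better. The complement is the condition ``there exists a quotient in $\mathcal{C}$ in which $w\neq1$''. The plan for each of the three classes is as follows.
\begin{itemize}
\item For finite groups, $w\ne1$ in $\mathrm{Res}$ holds iff there exist a finite group $F$ and an assignment of generators to $F$ in which $w\ne1$ such that every relation of $G$ holds in $F$. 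The last clause is a $\Pi_{1}^{0}$ condition, so the complement is $\Sigma_{2}^{0}$.
\item For finitely presented groups, the complement reads: there is a finite set of words $r_{1},\dots,r_{k}$, all relations of $G$, such that $w\notin\langle\langle r_{1},\dots,r_{k}\rangle\rangle$. The condition $w\notin\langle\langle\cdot\rangle\rangle$ is $\Pi_{1}^{0}$, so this is again $\Sigma_{2}^{0}$.
\item For nilpotent groups, we quantify over the class $c$ and use that the free $c$-nilpotent group has solvable word problem. We then argue exactly as in the finitely presented case, since the $c$-nilpotent quotients are controlled by finitely many relations by Noetherianity.
\end{itemize}
Thus $w=1$ in $\mathrm{Res}$ is $\Pi_{2}^{0}$ and $w\neq1$ is $\Sigma_{2}^{0}$. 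Hence the preimage of a basic open set is a finite Boolean combination of such sets, which is $\Sigma_{3}^{0}$ (indeed $\Delta_{3}^{0}$). This gives $\Sigma_{3}^{0}$-measurability.

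\textbf{Hardness.} We want a continuous map $p\mapsto(G_{p},S)$ such that some fixed word $w$ satisfies $w=1$ in $\mathrm{Res}_{\mathcal{C}}(G_{p})$ iff $p$ lies in a $\Pi_{2}^{0}$-complete set, uniformly in a parameter $n$. We then combine countably many copies, $n\in\mathbb{N}$, inside a single finitely generated group, so that the $\Pi_{3}^{0}$ quantifier is absorbed.
\begin{itemize}
\item For one block, we take a family of finitely generated groups $H_{m}$, each not residually-$\mathcal{C}$ in a controlled way, converging to groups that are residually-$\mathcal{C}$. An example for finite groups is Baumslag--Solitar type groups or Higman-like groups, where ``infinitely many $m$ with $p(m)=1$'' forces a fixed element to die in all finite quotients. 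A concrete choice is to encode $p$ by adding relations $r_{m}$ continuously, so that the element $w$ is killed in finite quotients iff infinitely many $r_{m}$ are imposed.
\item To combine the blocks, we use a wreath-product or HNN-style construction that embeds countably many blocks $H^{(n)}$ with distinguished elements $w_{n}$, and such that $\mathrm{Res}_{\mathcal{C}}$ is computed blockwise. The Weihrauch output then answers all the $\Pi_{2}^{0}$ questions at once, which yields $\mathrm{Lim}^{(2)}$.
\end{itemize}

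The main obstacle is this hardness construction. We need a single finitely generated group whose residually-$\mathcal{C}$ quotient decouples across infinitely many independent blocks, for each of the three classes. For nilpotent groups the blocks must also avoid pure equational Noetherian collapse, which requires exploiting unbounded nilpotency class. We would first try the free product of the blocks inside a finitely generated subgroup of a suitable wreath product, and check that $\mathrm{Res}$ commutes with this construction.
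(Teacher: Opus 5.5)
\textbf{Main gap: the lower bound is not proved.} You correctly see that what is needed is $\widehat{\mathrm{LPO}'}\equiv_W\mathrm{Lim}'\le_W\mathrm{Res}_{\mathcal{C}}$. That means infinitely many independent $\Pi^0_2$ questions must be encoded continuously in one finitely generated marked group, and each answer read off from a separate word of $\mathrm{Res}_{\mathcal{C}}$. But you only name the obstacle; you give no construction.

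The tools you mention do not reach that degree:
\begin{itemize}
\item A single block, imposing $r_0,\dots,r_j$ where $j$ counts the ones seen so far, gives only $\mathrm{LPO}'$. This works for any non-quasivariety.
\item Direct products give only finite parallelizations.
\item Embedding free products of blocks in a wreath product or HNN extension and then checking that $\mathrm{Res}_{\mathcal{C}}$ commutes with it is exactly the step that fails in general. Residual properties are not computed blockwise under such embeddings.
\end{itemize}

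The paper avoids any combination step. It chooses groups whose residual image has an explicit description with the decoupling already built in.
\begin{itemize}
\item \emph{Finite groups.} It uses Dyson's doubles $L(\mathcal{A})$ of the lamplighter group. The map $\mathcal{A}\mapsto L(\mathcal{A})$ is a homeomorphism onto its image, and $\mathrm{Res}_{\mathrm{Fin}}(L(\mathcal{A}))=L(\overline{\mathcal{A}})$ for the profinite closure on $\mathbb{Z}$. Profinite $\mathbb{Z}$ is homeomorphic to $\mathbb{Q}$. So putting $i+\frac{1}{n+2}\in A$ iff $u_{n,i}=1$ gives $i\in\overline{A}$ iff $\exists^{\infty}n\,u_{n,i}=1$.
\item \emph{Nilpotent groups.} It uses P.~Hall's center-by-metabelian group $G$. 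Its center is free abelian on elements $f_j$ with $f_j\in\gamma_{j+2}(G)\setminus\gamma_{h(j)}(G)$, which is what your remark about unbounded class needs. It adds the central relations $f_{\langle n,0\rangle}=f_{\langle n,m\rangle}$ whenever $u_{n,m}=1$.
\item \emph{Finitely presented groups.} It also kills $f_{\langle n,m\rangle}$ for $m$ in a simple set $A$, and identifies $f_{\langle n,0\rangle}$ with $f_{\langle n,a_m\rangle}$, where $(a_m)$ enumerates $\mathbb{N}\setminus A$. In any recursively presented quotient, the set of $p$ with $f_{\langle n,0\rangle}=f_{\langle n,p\rangle}$ is c.e. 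If infinitely many identifications were imposed, this set is infinite, so it meets $A$ and $f_{\langle n,0\rangle}$ dies. This computability ingredient, which controls all finitely presented quotients at once, has no counterpart in your plan.
\end{itemize}

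\textbf{Secondary errors.}
\begin{itemize}
\item Your fallback, reducing the characteristic function of $\mathrm{Cof}$ or of its complement to $\mathrm{Res}_{\mathcal{C}}$, is impossible. That function has a $\Pi^0_3$-complete fiber, so it is not $\Sigma^0_3$-measurable and cannot reduce to a $\Sigma^0_3$-measurable problem. The paper's $\Pi^0_3$-completeness result is a Wadge statement about the set $\mathcal{RC}$, not a Weihrauch reduction.
\item Your two bounds must be the same degree, namely $\mathrm{Lim}'\equiv_W\widehat{\mathrm{LPO}'}$. Writing $\mathrm{Lim}^{(3)}$ for the upper bound and $\mathrm{Lim}^{(2)}$ for the lower bound leaves them mismatched.
\item Your upper bound is right in substance and is the paper's argument. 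Each class is countable, and $w\neq1$ in $\mathrm{Res}_{\mathcal{C}}(G)$ iff $G$ lies in a countable union of the closed sets $\{\Gamma\mid\Gamma\to Q\}$, over $Q\in\mathcal{C}$ with $w\neq1$ in $Q$.
\item However, your finitely presented bullet, and the nilpotent one you reduce to it, has the quotient backwards. ``$r_1,\dots,r_k$ are relations of $G$'' says that $\langle S\mid r_1,\dots,r_k\rangle$ maps onto $G$. With $k=0$ it holds for every $G$. The correct condition is that every relation of $G$ lies in $\langle\langle r_1,\dots,r_k\rangle\rangle$, which is an intersection of clopen conditions.
\item Also, the co-c.e.\ complexity of $w\notin\langle\langle r_1,\dots,r_k\rangle\rangle$ is irrelevant here. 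It concerns fixed finite data, not the point $G$.
\end{itemize}
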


We also obtain some results in classical descriptive set theory. The
following answers a problem of \cite{Benli2019}. 

\begin{theoremalpha}	

The set of residually finite groups is $\Sigma_{3}^{0}$-complete.
The same goes for the sets of residually nilpotent and residually
finitely presented groups. 

\end{theoremalpha}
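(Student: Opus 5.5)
The proof has two parts: an upper bound locating $\mathcal{RF}$, the set of residually finite $k$-marked groups, at the third level of the Borel hierarchy, and a hardness result given by a continuous (Wadge) reduction from a standard $\Sigma_{3}^{0}$-complete subset of Cantor space. The same scheme should then be repeated for residually nilpotent and residually finitely presented groups, with the finite quotients replaced by nilpotent, respectively finitely presented, ones.

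\emph{Upper bound.} A marked group $(G,S)$ is coded by the characteristic function of its set of relations $R_{G}\subseteq\mathbb{F}_{S}$. Three facts are used.
\begin{enumerate}
\item For fixed $w\in\mathbb{F}_{S}$, the condition ``$w\in R_{G}$'' is clopen.
\item The finite-index normal subgroups $N$ of $\mathbb{F}_{S}$ form a countable, computably enumerable family, since each is given by a finite marked Cayley graph.
\item The condition ``$R_{G}\subseteq N$'' is closed, because it says $\forall r\,(r\in R_{G}\Rightarrow r\in N)$.
\end{enumerate}
Residual finiteness reads
\[
\forall w\in\mathbb{F}_{S}\;\bigl(w\in R_{G}\ \vee\ \exists N\ (R_{G}\subseteq N\wedge w\notin N)\bigr).
\]
Here $\exists N$ ranges over countably many closed sets, and $\forall w$ over countably many. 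Counting quantifiers this way places $\mathcal{RF}$ at the third level. I would write the set explicitly in the normal form $\bigcup\bigcap\bigcup$ of clopen sets that the statement requires, so that the membership half of the claim is established in the paper's stated form. The same count applies verbatim to nilpotent quotients (bounded-class quotients of a free group are again countably many) and to finitely presented quotients (normal closures of finite sets).

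\emph{Hardness.} I would fix the standard $\Sigma_{3}^{0}$-complete set
\[
\mathrm{Cof}=\{x\in2^{\mathbb{N}\times\mathbb{N}}\mid\exists n\,\forall m\ge n,\ \text{column } m \text{ of } x \text{ is finite}\}.
\]
The goal is a continuous map $x\mapsto(G_{x},S)$ with $G_{x}\in\mathcal{RF}\iff x\in\mathrm{Cof}$. Continuity means that every relation of $G_{x}$ must be decided after reading a finite part of $x$. The plan is to build $G_{x}$ as a quotient of a fixed finitely generated group $\Gamma$ with infinitely many ``slots'' $u_{m}$, for instance an infinitely presented group of Slobodskoi or Baumslag type.
\begin{enumerate}
\item Each time a $1$ appears in column $m$, add a relation that makes $u_{m}$ harder to separate in finite quotients.
\item Arrange that only infinitely many such additions, in infinitely many columns, destroy residual finiteness.
\item Ensure that when only finitely many columns are infinite, the finitely many affected slots can be killed or split off, and the remaining group is residually finite.
\end{enumerate}
Producing residual finiteness in this way gives the existential quantifier over $n$.

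\emph{Main obstacle.} The hard step is engineering this ``only finitely many bad columns'' behaviour. The naive constructions (free products, direct sums) are residually finite if and only if every factor is, which produces a universal quantifier rather than the required $\exists n\,\forall m$. I would first try to exploit the failure of residual finiteness to pass to finite extensions and finite-index overgroups, so that a finite amount of bad behaviour can be absorbed while infinitely much cannot. I would also check at this point that the quantifier shape really matches $\Sigma_{3}^{0}$ as opposed to its dual, since the upper-bound count above looks naturally like a $\forall\exists\forall$ pattern.
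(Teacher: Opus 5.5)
There is a genuine gap, and it starts with the target class. The $\Sigma^{0}_{3}$ in the displayed statement is a slip in the introduction. What the paper actually proves (Proposition~\ref{prop:Res fin Pi_3} and its two analogues) is $\boldsymbol{\Pi}^{0}_{3}$-completeness, and your own count $\forall w\,\exists N\,\forall r$ gives exactly this. For fixed $N$, the condition $R_{G}\subseteq N$ is closed and $w\notin N$ is a constant, so the union over $N$ is $\Sigma^{0}_{2}$ and the intersection over $w$ is $\Pi^{0}_{3}$. That upper bound is correct, is the one the paper uses, and works verbatim for nilpotent and finitely presented quotients.

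However, both steps you plan towards $\Sigma^{0}_{3}$ are impossible. Since $\mathcal{RF}$ is $\Pi^{0}_{3}$-hard, it is not $\Sigma^{0}_{3}$: otherwise every $\Pi^{0}_{3}$ subset of Cantor space would be $\Sigma^{0}_{3}$. So there is no $\bigcup\bigcap\bigcup$ normal form to write down. Likewise, no $\Sigma^{0}_{3}$-complete set can be continuously reduced to $\mathcal{RF}$, because $\mathcal{RF}$ is $\Pi^{0}_{3}$ and continuous preimages of $\Pi^{0}_{3}$ sets are $\Pi^{0}_{3}$. Your $\mathrm{Cof}$ is also misclassified. ``Column $m$ is finite'' is $\Sigma^{0}_{2}$, so the prefix $\exists n\,\forall m\ge n$ puts $\mathrm{Cof}$ in $\Sigma^{0}_{4}$. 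It is also $\Pi^{0}_{3}$-hard (repeat each column of an input infinitely often), so it is not $\Sigma^{0}_{3}$ at all. The doubt in your last sentence should be resolved in favour of $\Pi^{0}_{3}$, and hardness should be shown by reducing $S_{3}=\{x\in2^{\mathbb{N}\times\mathbb{N}}\mid\forall n\,\forall^{\infty}m,\,x(n,m)=0\}$.

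With the correct target, the obstacle you describe disappears. A universal quantifier over independent slots, each inseparable in finite quotients exactly when its column is infinite, is precisely the required shape. You therefore need no mechanism for absorbing ``finitely many bad columns''. That idea would fail anyway, since residual finiteness does pass to finite-index overgroups.

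What your proposal still lacks is a concrete continuous family realising this; nothing shows that Slobodskoi- or Baumslag-type groups do. The paper uses Dyson's doubles $L(\mathcal{A})$ of the lamplighter group, for $\mathcal{A}\subseteq\mathbb{Z}$:
\begin{itemize}
\item By Dyson, $\mathrm{Res}_{\mathrm{Fin}}(L(\mathcal{A}))=L(\overline{\mathcal{A}})$ for the profinite closure, so $L(\mathcal{A})$ is residually finite iff $\mathcal{A}$ is profinitely closed.
\item The map $\mathcal{A}\mapsto L(\mathcal{A})$ is continuous from $\{0,1\}^{\mathbb{Z}}$ to $\mathcal{G}_{\mathrm{WP}}$.
\item $\mathbb{Z}$ with the profinite topology is homeomorphic to $\mathbb{Q}$.
\item The map $x\mapsto\{n+\frac{1}{m+2}\mid x(n,m)=1\}$ reduces $S_{3}$ to the closed subsets of $\mathbb{Q}$. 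Column $n$ can only create the limit point $n$, which never lies in the set.
\end{itemize}

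For the other two classes, the paper uses central quotients of Hall's group, whose centre has a basis $f_{k}\in\gamma_{k+2}(G)$. It adds $f_{\langle n,0\rangle}=f_{\langle n,m\rangle}$ whenever $x(n,m)=1$, so $f_{\langle n,0\rangle}$ survives in the residually nilpotent image iff column $n$ is finite. For finitely presented quotients, it additionally kills the slots indexed by a simple set $A$ and identifies only along the complement of $A$. Then infinitely many identifications give, in any recursively presented quotient, an infinite c.e.\ set of slots equal to $f_{\langle n,0\rangle}$. That set must meet $A$, which forces $f_{\langle n,0\rangle}=1$.
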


We summarize our results in Figure \ref{fig:Summary}. 

\begin{figure}
 \begin{center} 	\scalebox{1}{ 		\begin{tikzpicture}[node distance=1cm, >=stealth, font=\small]
			\node (dots) {$\vdots$}; 			\node (exists0) [below right=0.05cm and 1.2cm of dots] {$\exists\mathrm{Res}_{\mathcal{C}}$}; 			\node (limprime) [below=of dots] {$\mathrm{Lim}'$}; 			\node (exists1) [below right=0.5cm and 1cm of limprime] {$\exists?\mathrm{Res}_{\mathcal{C}}$}; 			\node (lim) [below=2cm of limprime] {$\mathrm{Lim}$}; 			\node (exists2) [below right=0.5cm and 1cm of lim] {$\exists?\mathrm{Res}_{\mathcal{C}}$}; 			\node (limN) [below= 1.6cm of lim] {$\mathrm{Lim}_{\mathbb{N}}$}; 			\node (limoo) [below=1.2cm of limN] {$\mathrm{Lim}_{\searrow\omega^{\omega}}$}; 			\node (limo2) [below=of limoo] {$\mathrm{Lim}_{\searrow\omega^2}$}; 			\node (limo) [below=of limo2] {$\mathrm{Lim}_{\searrow\omega}$}; 			\node (secret)[right =9cm of lim]  {}; 			\node (secret2)[below =0.15cm of limN]  {}; 			\node (secret3)[right =9cm of secret2]  {};
			\node (res1) [below right =-0.48cm and -0.1cm of limprime] {$\equiv_W \mathrm{Res}_{\mathcal{C}}$, 			$\mathcal{C}\in\{ \text{Finite, Finitely presented, Nilpotent}\}$}; 			\node (res2) [below right =-0.49cm and -0.1cm of  lim]  {$\equiv_W \mathrm{Res}_{\mathcal{C}}$, 			$\mathcal{C}\in\{ \text{Torsion Free, LEF, $k$-solvable, ...}\}$}; 			\node (res3) [above right =0.2cm and 0.6cm of limoo]  {$\mathrm{Res}_{\mathrm{Metabelian}}$}; 			\node (res4) [below right =-0.52cm and -0.1cm of limo2] {$\equiv_W \mathrm{Res}_{\mathrm{Abelian}}\equiv_W \mathrm{Res}_{k-\mathrm{Nilpotent}}$}; 			\node (res5) [below right =-0.53cm and -0.1cm of  limo] {$\equiv_W \mathrm{Res}_{\mathcal{C}}, 			\mathcal{C} \text{ a finite set of finite groups}$};
			\foreach \i/\j in {dots/limprime, limprime/lim, lim/limN, limN/limoo, limoo/limo2, limo2/limo} 			\draw[->] (\i) -- (\j); 			\draw[->] (limN) -- (res3); 			\draw[->] (res3) -- (limoo);
			\node[draw, rounded corners, fit=(lim) (res2) (limo) (secret), inner sep=18pt, label=above right:{}] (box1) {}; 			\node[anchor=north east, font=\small, inner sep=8pt] at (box1.north east) {\textbf{Quasivarieties}}; 			\node[draw, rounded corners, fit=(limoo) (res3) (limo2) (res4) (limo) (res5) (secret3), inner sep=6pt, label= above:{}] (box2) {}; 			\node[anchor=north east, font=\small, inner sep=8pt] at (box2.north east) {\textbf{Equational Noetherianity}};
		\end{tikzpicture} 	} \end{center} \caption{\label{fig:Summary}Summary}

\end{figure}

\subsection*{Notations}

Let $\mathcal{C}$ be a class of groups. 

$\mathrm{Res}_{\mathcal{C}}$ denotes the function that maps a marked
group to its greatest residually $\mathcal{C}$ quotient. Without
further precision, we will consider that both the domain and co-domain
of $\mathrm{Res}_{\mathcal{C}}$ are equipped with the topology of
the space of marked groups. 

$\mathcal{RC}$ is the set of residually-$\mathcal{C}$ marked groups. 

$\le_{W}$ denotes continuous Weihrauch reduction. 

$\le$ denotes continuous translation between representations. 

$\mathcal{G}_{\mathrm{WP}}$ denotes the space of marked groups, which
is associated to the representation associated to groups given by
their word-problem. 

$\mathcal{G}_{\mathrm{Pres}}$ denotes the set of marked groups equipped
with the Scott topology, and with the corresponding representation
associated to presentations of groups.

By adding an exponent $k$ to either one of the above, $\mathcal{G}_{\mathrm{WP}}^{k}$
or $\mathcal{G}_{\mathrm{Pres}}^{k}$, we indicate that we restrict
our attention to $k$-marked groups.

\section{group theoretical background}

\subsection{\label{sec:Marked-groups}Marked groups}

Fix $k\in\mathbb{N}$. A $k$-\emph{marked group} is a finitely generated
group together with a $k$-tuple of elements that generate it. 

A \emph{morphism of $k$-marked groups} from $(G,(s_{1},...,s_{k}))$
to $(H,(s'_{1},...,s'_{k}))$ is a group morphism $\phi$ between
$G$ and $H$ which satisfies that $\phi(s_{i})=s_{i}'$ for $i=1..k$.
Such a morphism is an isomorphism if the underlying group morphism
is a group isomorphism, and marked groups are considered up to isomorphism. 

Note that there is at most one morphism from a $k$-marked group to
another $k$-marked group, and that if there are morphisms $(G,S)\rightarrow(H,S')$
and $(H,S')\rightarrow(G,S)$, then $(G,S)$ and $(H,S')$ are isomorphic
as marked groups: isomorphism classes of marked groups form a partially
ordered set for the quotient relation.

Let $(\mathcal{G}_{k},\rightarrow)$ be the poset of isomorphism classes
of $k$-marked groups. 

Remark that any isomorphism class of marked group can be identified
with a normal subgroup of a rank $k$ free group $\mathbb{F}_{k}$,
by associating to a marked group $(G,S)$ the kernel of the morphism
$\mathbb{F}_{k}\rightarrow G$ which maps a basis of $\mathbb{F}_{k}$
to $S$. It follows from this that the poset $(\mathcal{G}_{k},\rightarrow)$
is isomorphic to the lattice of normal subgroups of $\mathbb{F}_{k}$
under reverse inclusion. 

Let $(\mathcal{G},\rightarrow)$ be the poset of isomorphism classes
of marked groups, obtained by taking the disjoint union of the sets
$(\mathcal{G}_{k},\rightarrow)$. 

\subsection{Quasivarieties}

A \textbf{(group) quasi-identity }is a sentence of the form 
\begin{equation}
\bigwedge_{i=1}^{n}w_{i}=1\implies r=1,\label{eq:quasi-identity}
\end{equation}
for $(w_{1},...,w_{n},r)$ a finite tuple of elements of a free group
$\mathbb{F}_{S}$. A group $G$ satisfies the quasi-identity defined
by $(w_{1},...,w_{n},r)\in\mathbb{F}_{S}^{n+1}$ if for any substitution
of the generators of $\mathbb{F}_{S}$ by elements of $G$ the sentence
(\ref{eq:quasi-identity}) is satisfied. 

A class of groups $\mathcal{QV}$ is called a \textbf{group quasivariety}
if there exists a set $L$ of quasi-identities such that $\mathcal{QV}$
is the class of all groups that satisfy the quasi-identities of $L$. 

Equivalently \cite{Burris1981}, a group quasivariety is a class of
groups stable by forming subgroups, direct products, and ultraproducts. 

A \textbf{group variety} is a quasivariety that can be defined by
a set of quasi-identities of the form $1=1\implies r=1$. Such quasi-identities
are called \textbf{identities}. Equivalently, by Birkhoff's Variety
Theorem \cite{Burris1981}, a variety is a class of groups stable
by quotients, subgroups and Cartesian products. 
\begin{example}
The following are varieties:
\begin{itemize}
\item abelian groups,
\item $k$-nilpotent groups, 
\item $k$-solvable groups, 
\item exponent $k$-group. 
\end{itemize}
The following are quasivarieties: 
\begin{itemize}
\item torsion free groups, 
\item left and bi-orderable groups \cite{Howie1982}, 
\item locally indicable groups \cite{Howie1982}, 
\item LEF groups, 
\item groups that do not contain Thompson's Group F. 
\end{itemize}
\begin{example}
The set of residually free groups is a quasivariety, because free
groups are equationally noetherian (see Lemma \ref{lem:Eq noeth -> Res C is Qvar}).
The limit groups are known to be those residually free groups which
are \emph{commutative transitive}: for all $(a,b,c)$ with $b\ne1$,
if $a$ and $b$ commute and $b$ and $c$ commute, then $a$ and
$c$ commute. This expression is close to being a quasi-identity,
but not quite: 
\[
\forall a,b,c,\,([a,b]=1\wedge[b,c]=1)\implies(b=1\vee[a,c]=1).
\]
The set of limit groups is in fact not a quasivariety, as $\mathbb{F}_{2}\times\mathbb{F}_{2}$
is not a limit group. 
\end{example}

\end{example}

\subsection{Equational Noetherianity }

For $n$ in $\mathbb{N}$, consider a tuple $(X_{1},...,X_{n})$ of
unknowns, and let $\mathbb{F}_{n}$ be the free group on $(X_{1},...,X_{n})$. 

An \emph{equation} is an element $W\in\mathbb{F}_{n}$. 

If $(g_{1},...,g_{n})$ are elements in a group $G$, let $W(g_{1},...,g_{n})$
be the element of $G$ obtained by substituting in $W$ each element
$X_{i}$ by the corresponding $g_{i}$. We say that $(g_{1},...,g_{n})$
is a solution to $W$ if $W(g_{1},...,g_{n})=1$. 

A set $Sys\subseteq\mathbb{F}_{n}$ is called a \emph{system of equations}.
For a system $Sys$ of equations and a set of groups $\mathcal{C}$,
we define the \emph{algebraic set $V_{Sys}(\mathcal{C})$} associated
to $Sys$ as 
\[
V_{Sys}(\mathcal{C})=\{(g_{1},...,g_{n})\in G\mid G\in\mathcal{C},\,W(g_{1},...,g_{n})=1\,\forall W\in Sys\}.
\]

\begin{defn}
[\cite{Baumslag1999}] A group $G$ is \emph{equationally noetherian}
if for every $n$ and every tuple $(X_{1},...,X_{n})$ of unknown,
every system of equations with unknowns $(X_{1},...,X_{n})$ over
$G$ is equivalent to a finite subsystem: 
\[
\forall Sys\subseteq\mathbb{F}_{n},\exists Sys_{0}\subseteq Sys,\,Sys_{0}\text{ is finite }\&\,V_{Sys}(\{G\})=V_{Sys_{0}}(\{G\}).
\]
\end{defn}

The above definition was extended to families in \cite{Groves2019}. 
\begin{defn}
[\cite{Groves2019}, Definition A]A set $\mathcal{C}$ of groups
is \emph{equationally noetherian} if for every $n$ and every tuple
$(X_{1},...,X_{n})$ of unknown, every system of equations with unknowns
$(X_{1},...,X_{n})$ over $\mathcal{C}$ is equivalent to a finite
subsystem. 
\end{defn}

We will mostly use the following characterization of equational noetherianity,
which appears in \cite{Houcine2007} for equationally noetherian groups,
and in \cite{Groves2019} for equationally noetherian families: 
\begin{lem}
[\cite{Houcine2007,Groves2019}]\label{lem: Eq noeth characterization }The
following are equivalent for a family $\mathcal{C}$ of groups: 
\begin{itemize}
\item The family $\mathcal{C}$ is equationally noetherian,
\item Every residually $\mathcal{C}$ marked group is finitely presentable
as a residually $\mathcal{C}$ group: for every residually $\mathcal{C}$
marked group $(G,S)$, there is a finitely presented marked group
$(H,S')$ with 
\[
(G,S)=\mathrm{Res}_{\mathcal{C}}(H,S').
\]
\item Sequences of marked quotients of residually $\mathcal{C}$ groups
stabilize: for every sequence $((G_{n},S_{n}))_{n\in\mathbb{N}}$
of residually $\mathcal{C}$ marked groups with $(G_{n},S_{n})\rightarrow(G_{n+1},S_{n+1})$
for all $n$, there is some $n_{0}$ with $(G_{n_{0}},S_{n_{0}})=(G_{n_{0}+1},S_{n_{0}+1})$. 
\end{itemize}
\end{lem}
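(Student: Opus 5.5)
We prove the three conditions equivalent through the cycle $(1)\Rightarrow(3)\Rightarrow(2)\Rightarrow(1)$. The tool that links equations to marked groups is the following dictionary. A system $Sys\subseteq\mathbb{F}_{n}$ gives an $n$-marked group $\mathbb{F}_{n}/\langle\langle Sys\rangle\rangle$. A tuple $(g_{1},\dots,g_{n})$ in some $G\in\mathcal{C}$ is a solution of $Sys$ exactly when the map $X_{i}\mapsto g_{i}$ factors through this group. Hence the set of $w\in\mathbb{F}_{n}$ that vanish on every point of $V_{Sys}(\mathcal{C})$ is exactly the kernel of $\mathbb{F}_{n}\to\mathrm{Res}_{\mathcal{C}}(\mathbb{F}_{n}/\langle\langle Sys\rangle\rangle)$. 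Call this normal subgroup $\mathrm{Rad}(Sys)$.

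Two systems have the same algebraic set if and only if they have the same radical. For the nontrivial direction, suppose $\mathrm{Rad}(Sys_{0})\supseteq Sys$. Then every solution of $Sys_{0}$ kills $Sys$, so it is a solution of $Sys$.

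Residually-$\mathcal{C}$ $n$-marked groups correspond exactly to the normal subgroups $N$ with $\mathrm{Rad}(N)=N$. Such an $N$ is the intersection of the kernels of maps to groups of $\mathcal{C}$, and each of these kernels is the set of words vanishing at one tuple. Marked quotient maps between residually-$\mathcal{C}$ groups correspond to inclusions of radicals.

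\textbf{$(1)\Rightarrow(3)$.} Take a descending chain of residually-$\mathcal{C}$ quotients. It corresponds to an ascending chain of radicals $N_{0}\subseteq N_{1}\subseteq\cdots$. Let $Sys=\bigcup N_{i}$. By (1) there is a finite $Sys_{0}\subseteq Sys$ with $V_{Sys_{0}}(\mathcal{C})=V_{Sys}(\mathcal{C})$. Being finite, $Sys_{0}$ lies in some $N_{m}$. For $j\ge m$ we get
\[
V_{N_{j}}(\mathcal{C})\supseteq V_{Sys}(\mathcal{C})=V_{Sys_{0}}(\mathcal{C})\supseteq V_{N_{m}}(\mathcal{C})\supseteq V_{N_{j}}(\mathcal{C}),
\]
so these sets are all equal. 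Taking radicals, $N_{j}=N_{m}$ for all $j\ge m$, and the chain stabilizes.

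\textbf{$(3)\Rightarrow(2)$.} Let $(G,S)$ be residually $\mathcal{C}$, with kernel $N=\{r_{1},r_{2},\dots\}$ (it is countable). Set $H_{m}=\mathbb{F}_{n}/\langle\langle r_{1},\dots,r_{m}\rangle\rangle$. The groups $\mathrm{Res}_{\mathcal{C}}(H_{m})$ form a chain of residually-$\mathcal{C}$ marked quotients, each mapping onto the next. By (3) this chain stabilizes from some index $m_{0}$ on.

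This is the delicate step: we must show the stable value equals $G$, because stabilization alone does not give it. For every $m$ the kernel of $\mathrm{Res}_{\mathcal{C}}(H_{m})$ is $\mathrm{Rad}(\{r_{1},\dots,r_{m}\})$. It contains $r_{1},\dots,r_{m}$ and is contained in $\mathrm{Rad}(N)=N$. At the stable index $m_{0}$, this radical therefore contains every $r_{i}$, so it contains $N$, and hence equals $N$. Thus $\mathrm{Res}_{\mathcal{C}}(H_{m_{0}})=(G,S)$ with $H_{m_{0}}$ finitely presented. This is exactly what (2) requires.

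\textbf{$(2)\Rightarrow(1)$.} Let $Sys\subseteq\mathbb{F}_{n}$ be any system, and put $(G,S)=\mathrm{Res}_{\mathcal{C}}(\mathbb{F}_{n}/\langle\langle Sys\rangle\rangle)$; its kernel is $\mathrm{Rad}(Sys)$. By (2) there are finitely many words $u_{1},\dots,u_{k}$ with $\mathrm{Rad}(\{u_{1},\dots,u_{k}\})=\mathrm{Rad}(Sys)$.

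The $u_{j}$ need not belong to $Sys$, so we still have to replace them by finitely many elements of $Sys$. Each $u_{j}$ lies in $\mathrm{Rad}(Sys)$. Since $\mathrm{Rad}$ preserves inclusions, $\mathrm{Rad}(Sys)$ contains $\bigcup_{F}\mathrm{Rad}(F)$, where $F$ ranges over the finite subsets of $Sys$. If each $u_{j}$ lies in this union, then all the $u_{j}$ lie in $\mathrm{Rad}(Sys_{0})$ for some single finite $Sys_{0}\subseteq Sys$, since the union is directed. This would give
\[
\mathrm{Rad}(Sys)=\mathrm{Rad}(\{u_{1},\dots,u_{k}\})\subseteq\mathrm{Rad}(Sys_{0})\subseteq\mathrm{Rad}(Sys),
\]
so $Sys_{0}$ and $Sys$ have the same radical, hence the same algebraic set.

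The gap is the inclusion $\mathrm{Rad}(Sys)\subseteq\bigcup_{F}\mathrm{Rad}(F)$, which is a compactness property of $\mathcal{C}$ and not formal. I would obtain it by passing through (3), which is cleaner. Note first that (2) implies (3): in a chain of radicals $N_{0}\subseteq N_{1}\subseteq\cdots$, the radical of the union $\bigcup_{i}N_{i}$ is finitely generated as a radical by (2). Granting the inclusion above, these finitely many generators already lie in some $N_{m}$, and the chain stops there.

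The remaining step is $(3)\Rightarrow(1)$. Suppose $Sys$ had no finite subsystem with the same algebraic set. Then we can build $F_{1}\subsetneq F_{2}\subsetneq\cdots$ finite subsets of $Sys$, each time adding an element of $Sys$ outside the current radical. This gives a strictly ascending chain of radicals, contradicting (3). This argument is routine. The main obstacle is therefore the compactness step identifying $\mathrm{Rad}(Sys)$ with the union of the radicals of its finite subsets, and I would treat that point with the most care.
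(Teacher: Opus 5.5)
Your arguments for $(1)\Rightarrow(3)$, $(3)\Rightarrow(2)$ and $(3)\Rightarrow(1)$ are correct. In $(3)\Rightarrow(2)$ you use eventual stabilization, which follows from the literal wording of (3) by passing to the subsequence of distinct terms.

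The gap is that the cycle is never closed: nothing in the proposal derives (1) or (3) from (2). Your detour proves $(2)\Rightarrow(3)$ only after ``granting the inclusion'' $\mathrm{Rad}(Sys)\subseteq\bigcup_{F}\mathrm{Rad}(F)$. That inclusion is exactly what the detour was supposed to supply, so the step is circular, and the final $(3)\Rightarrow(1)$ then adds nothing.

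The missing inclusion is a genuine extra property, not a formality. Condition (2) says only that every radical is the radical of \emph{some} finite set of words, and those words need not be consequences of the given relations. The inclusion says that consequences relative to $\mathcal{C}$ are finitary, i.e.\ (compare the proof of Lemma~\ref{lem:Eq noeth -> Res C is Qvar}) that $\mathcal{RC}$ is a quasivariety. Your own sandwich argument shows that these two properties together are equivalent to (1). For an abstract closure operator the first property implies neither the second nor the ascending chain condition. On $\mathbb{N}\cup\{\infty\}$, take as closed sets the segments $\{0,\dots,n\}$ together with the whole set. Each closed set is the closure of a single point, yet $\{0\}\subsetneq\{0,1\}\subsetneq\cdots$ never stabilizes. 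So $(2)\Rightarrow(1)$ needs a genuinely group-theoretic argument, and the proposal does not contain one.

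A form of (2) that does close the cycle is the following: for \emph{every} finitely generated marked group $(G,S)=\langle S\mid Sys\rangle$, not only the residually-$\mathcal{C}$ ones, there is a finitely presented $(H,S)$ mapping onto $(G,S)$ with $\mathrm{Res}_{\mathcal{C}}(H,S)=\mathrm{Res}_{\mathcal{C}}(G,S)$.
\begin{itemize}
\item Your proof of $(3)\Rightarrow(2)$ gives this form just as well. Enumerate $\langle\langle Sys\rangle\rangle$ instead of a radical; then $H_{m_0}$ maps onto $G$ and $\mathrm{Rad}(\{r_1,\dots,r_{m_0}\})=\mathrm{Rad}(Sys)$.
\item Conversely, the relators $u_1,\dots,u_k$ of such an $H$ lie in $\langle\langle Sys\rangle\rangle$, hence in $\langle\langle Sys_0\rangle\rangle$ for some finite $Sys_0\subseteq Sys$. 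Your chain $\mathrm{Rad}(Sys)=\mathrm{Rad}(\{u_1,\dots,u_k\})\subseteq\mathrm{Rad}(Sys_0)\subseteq\mathrm{Rad}(Sys)$ then gives (1) at once.
\end{itemize}
With (2) read literally, you must prove the finitary property separately.

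Finally, your dictionary identifies $\mathrm{Rad}(Sys)$ with the kernel of $\mathbb{F}_n\to\mathrm{Res}_{\mathcal{C}}(\mathbb{F}_n/\langle\langle Sys\rangle\rangle)$. This tests residual properties with arbitrary homomorphisms into groups of $\mathcal{C}$, whereas the paper defines $\mathrm{Res}_{\mathcal{C}}$ through quotients lying in $\mathcal{C}$. The two agree when $\mathcal{C}$ is closed under finitely generated subgroups, and you should state which convention you use.
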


The following lemma is a straightforward generalization of \cite[Theorem 2.1(5)]{Houcine2007}
to equationally noetherian families. 
\begin{lem}
\label{lem:Eq noeth -> Res C is Qvar}Let $\mathcal{C}$ be a set
of finitely generated groups. If $\mathcal{C}$ is equationally noetherian,
then $\mathcal{RC}$ is a quasivariety.
\end{lem}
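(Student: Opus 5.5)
The plan is to show that $\mathcal{RC}$ is closed under subgroups, direct products and ultraproducts, and then appeal to the characterization of quasivarieties recalled above \cite{Burris1981}. There is a subtlety: $\mathcal{RC}$ consists of finitely generated (marked) groups, so "quasivariety" has to be read relative to finitely generated groups. I therefore intend to argue directly with quasi-identities instead.

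Let $L$ be the set of all quasi-identities satisfied by every group of $\mathcal{C}$. Quasi-identities are universal Horn sentences, so they pass to subgroups and to (unrestricted) direct products. Every residually-$\mathcal{C}$ group embeds in a product of groups of $\mathcal{C}$, hence satisfies $L$. The substantive direction is the converse: a finitely generated group $G$ satisfying $L$ is residually-$\mathcal{C}$.

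Fix such a marked group $(G,S)$, with $G=\mathbb{F}_S/N$, and let $M\supseteq N$ be the kernel of $\mathbb{F}_S\to\mathrm{Res}_{\mathcal{C}}(G,S)$. We must show $M=N$. Consider the system $N\subseteq\mathbb{F}_S$, with unknowns the generators. Its solution set in $\mathcal{C}$ is exactly the set of homomorphisms $G\to H$ with $H\in\mathcal{C}$. By equational noetherianity of $\mathcal{C}$ there is a finite subset $\{w_1,\dots,w_n\}\subseteq N$ with the same solutions in every $H\in\mathcal{C}$.

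Now take $r\in M$. Every homomorphism to a group of $\mathcal{C}$ killing $N$ kills $r$, since such a homomorphism factors through a surjection onto a subgroup of $H$. We must check that the residual quotient is detected by all homomorphisms, not only onto ones. This is where I must handle whether $\mathcal{C}$ is closed under subgroups. If it is not, I replace $\mathcal{C}$ by the set of its subgroups; this changes neither $\mathcal{RC}$ nor the equational noetherianity, since algebraic sets over a subgroup are restrictions of those over the group. Hence every tuple in $H\in\mathcal{C}$ solving $w_1=\dots=w_n=1$ also solves $r=1$, so the quasi-identity $\bigwedge w_i=1\Rightarrow r=1$ lies in $L$. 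Applying it in $G$ to the tuple $S$ gives $r=1$ in $G$, that is, $r\in N$. Thus $M=N$ and $G$ is residually-$\mathcal{C}$.

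So $\mathcal{RC}$ is exactly the class of finitely generated groups satisfying $L$, which is a quasivariety in the paper's sense. The main obstacle is the bookkeeping just described: the onto-versus-arbitrary homomorphism issue, and making sure the finite subsystem furnished by equational noetherianity works uniformly over the whole family $\mathcal{C}$, which is exactly what the family version of the definition \cite{Groves2019} provides. Everything else is routine.
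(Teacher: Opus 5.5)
\textbf{The failing step.} Your reduction to a subgroup-closed class does not work. Replacing $\mathcal{C}$ by the class $\mathcal{C}'$ of finitely generated subgroups of its members does preserve equational noetherianity, for the reason you give. It does \emph{not} preserve $\mathcal{RC}$ under the paper's definition, where residual-$\mathcal{C}$-ness is tested on \emph{quotients} lying in $\mathcal{C}$.

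Take $\mathcal{C}=\{A_5\}$, which is equationally noetherian because $A_5$ is finite. The subgroup $\mathbb{Z}/2\le A_5$ is residually-$\mathcal{C}'$. Its only quotients are $\mathbb{Z}/2$ and the trivial group, so it is not residually-$\mathcal{C}$.

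The same example shows the step cannot be repaired, because with the literal onto definition the lemma itself is false for this $\mathcal{C}$. We have $A_5\in\mathcal{RC}$ and $\mathbb{Z}/2\notin\mathcal{RC}$. But $\mathbb{Z}/2$ satisfies every quasi-identity that $A_5$ satisfies, so no quasivariety, even restricted to finitely generated groups, can contain one without the other.

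So do not reduce to the convention; state it. Either read residually-$\mathcal{C}$ as separation by arbitrary homomorphisms into members of $\mathcal{C}$. This is the usual convention in the equational-noetherian literature, and under it $\mathcal{RC}$ is the class of finitely generated subgroups of products of members of $\mathcal{C}$. Or assume $\mathcal{C}$ is closed under finitely generated subgroups, as most classes in the paper are. Under either convention your factorization step holds with no replacement.

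The rest of your argument is then correct. The finite subsystem $\{w_1,\dots,w_n\}\subseteq N$ is uniform over $\mathcal{C}$ by the family definition. The quasi-identity $\bigwedge_i w_i=1\Rightarrow r=1$ lies in $L$. Evaluating it at $S$ in $G$ gives $r\in N$.

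\textbf{Comparison with the paper.} Granting that convention, your route differs from the paper's. The paper argues contrapositively through the chain-stabilization characterization of equational noetherianity. If $\mathcal{RC}$ is not a quasivariety, it takes relations $r_0,r_1,\dots$ and $w$ such that $\{r_i\}$ implies $w$ relative to $\mathcal{C}$ while no finite subset does. Then the chain $\mathrm{Res}_{\mathcal{C}}\langle S\mid r_0\rangle\to\mathrm{Res}_{\mathcal{C}}\langle S\mid r_0,r_1\rangle\to\cdots$ never stabilizes.

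The paper's opening claim, characterizing the failure of the quasivariety property by such an infinite set of relations, is asserted without proof. Proving it amounts to essentially your argument, and it needs the same convention, as the $A_5$ example shows.

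Your version works directly from the finite-subsystem definition and avoids the cited characterization lemma. It also produces an explicit axiomatization by the quasi-identities of $\mathcal{C}$. That is precisely the remark after the lemma that $\mathcal{RC}$ is the finitely generated part of $\mathrm{QVar}\,\mathcal{C}$. The paper's version is shorter and fits the chain viewpoint it uses later.
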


It follows that, on equationally noetherian families, the operator
$\mathcal{C}\mapsto\mathcal{RC}$, which maps a class $\mathcal{C}$
of groups to the class of residually $\mathcal{C}$ groups, agrees
with the operator $\mathcal{C}\mapsto\mathrm{QVar}\mathcal{C}$, which
maps a class of groups to the quasivariety it generates. 
\begin{proof}
[Proof of Lemma \ref{lem:Eq noeth -> Res C is Qvar}]We have that
$\mathcal{RC}$ is not a quasivariety if and only if there is an infinite
set of relations $\{r_{i},i\in\mathbb{N}\}$ which implies a relation
$w=1$ in all groups in $\mathcal{C}$, and yet no finite subset of
$\{r_{i},i\in\mathbb{N}\}$ implies $w=1$ in all groups in $\mathcal{C}$.
But in this case the sequence of morphisms 
\[
\mathrm{Res}_{\mathcal{C}}\langle S\mid r_{0}\rangle\rightarrow\mathrm{Res}_{\mathcal{C}}\langle S\mid r_{0},r_{1}\rangle\rightarrow...
\]
 cannot stabilize. Indeed, if it stabilized after $n$ steps, it would
follow that $\mathrm{Res}_{\mathcal{C}}\langle S\mid r_{0},\,i\le n\rangle$
already satisfies all the relations of $\{r_{i},i\in\mathbb{N}\}$,
and thus that $w=1$ in this group, yet by hypothesis this is not
the case.
\end{proof}

\section{Representations and the Weihrauch lattice }

In the following, we mostly follow the surveys \cite{Brattka2021a,Schroeder2021},
further references and historical remarks can also be found there. 

\subsection{Admissible representations, continuous multi-functions}

A \emph{representation} of a set $X$ is a partial surjection $\rho:\subseteq\mathbb{N}^{\mathbb{N}}\rightarrow X$. 

If $\rho(u)=x$, then $u$ is called a $\rho$\emph{-name} of $x$.

The \emph{final topology of a representation} $\rho:\subseteq\mathbb{N}^{\mathbb{N}}\rightarrow X$
is the topology on $X$ given by
\[
O\mathrm{\text{ open}}\iff\rho^{-1}(O)\mathrm{\text{ is open in dom}}(\rho),
\]
where $\mathrm{dom}(\rho)$ is equipped with the subset topology from
Baire space. 

A \emph{multi-function} between sets $X$ and $Y$ is a relation $R\subseteq X\times Y$
which we treat as a function $f$ mapping points of $X$ to subsets
of $Y$ via the formula $f(x)=\{y\in Y\mid(x,y)\in R\}$. We put $\mathrm{dom(}f)=\{x\in X\mid f(x)\neq\emptyset\}$,
and consider that a multi-function $f$ between $X$ and $Y$ is \emph{total}
if $\mathrm{dom(}f)=X$, and that it is \emph{partial} otherwise. 

The fact that $f$ is a total multi-function from $X$ to $Y$ is
denoted via $f\colon X\rightrightarrows Y$, and we denote a partial
multi-function by $f:\subseteq X\rightrightarrows Y$.
\begin{defn}
Let $(X,\rho)$ and $(Y,\tau)$ be represented spaces, and let $f:\subseteq X\rightrightarrows Y$
be a partial multi-function. A \emph{realizer of $f$ with respect
to $\rho$ and $\tau$, }or \emph{$(\rho,\tau)$-realizer of $f$,}
is a function $F:\subseteq\mathbb{N}^{\mathbb{N}}\rightarrow\mathbb{N}^{\mathbb{N}}$
which satisfies the following:
\[
\forall u\in\mathrm{dom}(\rho)\cap\rho^{-1}(\mathrm{dom}(f)),\,\tau(F(u))\in f(\rho(u)).
\]
\end{defn}

Note that the realizer of a multi-function is a single valued function. 

The following diagram illustrates the notion of realizer:

\begin{center}
\begin{tikzcd} [scale=2]
X \arrow[r, "f"] & Y \\ 
\mathbb{N}^\mathbb{N} \arrow[u, "\rho"] \arrow[r, "F" swap] & \mathbb{N}^\mathbb{N} \arrow[u, "\tau"] 
\end{tikzcd}
\end{center}

Let $\rho$ and $\tau$ be representations of a set $X$. Say that
$\rho$ (\emph{continuously) translates} to $\tau$ if the identity
on $X$ has a continuous $(\rho,\tau)$-realizer, i.e. if there is
a continuous partial function $F:\subseteq\mathbb{N}^{\mathbb{N}}\rightarrow\mathbb{N}^{\mathbb{N}}$
defined at least on $\mathrm{dom}(\rho)$ and such that for any $u$
in $\mathrm{dom}(\rho)$, if $u$ is a $\rho$-name of a point $x$
in $X$, then $F(u)$ is a $\tau$-name of $x$.

We denote the fact that $\rho$ translates to $\tau$ by $\rho\le\tau$.
If $\rho\le\tau$ and $\tau\le\rho$, then $\rho$ and $\tau$ are
called (\emph{continuously) equivalent}. When studying the continuous
Weihrauch degrees of functions between represented spaces, representations
can be considered up to equivalence. 
\begin{defn}
[\cite{Kreitz1985,Schroeder2002}]Let $X$ be a topological space.
A representation $\rho:\subseteq\mathbb{N}^{\mathbb{N}}\rightarrow X$
is called \emph{admissible} \emph{for} $X$ if the following hold: 
\begin{enumerate}
\item The representation $\rho$ is continuous, 
\item For every continuous representation $\tau:\subseteq\mathbb{N}^{\mathbb{N}}\rightarrow X$
we have $\tau\le\rho$. 
\end{enumerate}
\end{defn}

It is possible for a representation $\rho$ of a set $X$ to be admissible
for a topology $\mathcal{T}$ different from its final topology. But
this arises only if this topology $\mathcal{T}$ is not sequential
while having as sequentialization the final topology of $\rho$ \cite{Schroeder2002,Schroeder2021},
and in all cases that we will consider, represented spaces are systematically
equipped with the final topologies of their representations. 

\emph{An admissible representation} is a representation admissible
with respect to its final topology. If $\rho$ is admissible for $X$,
$(X,\rho)$ is an \emph{admissibly represented space. }

A space that admits an admissible representation is necessarily $\mathrm{T}_{0}$
(non $\mathrm{T}_{0}$ spaces can be handled by using \emph{multi-representations}
\cite{Schroeder2002}). 
\begin{lem}
[\cite{Kreitz1985,Schroeder2002}]\label{lem:Admissibility C0 and C0 realizable }Let
$(X,\rho)$ be a represented space and let $(Y,\tau)$ be an admissibly
represented space. Let $f\colon X\rightarrow Y$ be a function. Then
the following are equivalent: 
\begin{enumerate}
\item The function $f$ is continuous with respect to the final topologies
of $\rho$ and $\tau$.
\item The function $f$ is admits a continuous realizer. 
\end{enumerate}
\end{lem}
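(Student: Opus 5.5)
We need to prove: for $(X,\rho)$ represented and $(Y,\tau)$ admissibly represented, $f\colon X\to Y$ is continuous (final topologies) iff $f$ has a continuous realizer.

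For (2)$\Rightarrow$(1), let $F$ be a continuous realizer, so $\tau\circ F=f\circ\rho$ on $\mathrm{dom}(\rho)$. Take $O\subseteq Y$ open in the final topology of $\tau$; it suffices to show $\rho^{-1}(f^{-1}(O))$ is open in $\mathrm{dom}(\rho)$. We have $\rho^{-1}(f^{-1}(O))=F^{-1}(\tau^{-1}(O))\cap\mathrm{dom}(\rho)$. The set $\tau^{-1}(O)$ is open in $\mathrm{dom}(\tau)$ by definition of the final topology, and $F$ maps $\mathrm{dom}(\rho)$ continuously into $\mathrm{dom}(\tau)$, so the preimage is relatively open. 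This direction does not use admissibility.

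For (1)$\Rightarrow$(2), we use the universal property in the definition of admissibility. Define $\sigma=f\circ\rho:\subseteq\mathbb{N}^{\mathbb{N}}\to Y$ with domain $\mathrm{dom}(\rho)$. It is continuous: $\rho$ is continuous into $X$ with its final topology (tautologically), and $f$ is continuous by (1). However, $\sigma$ need not be surjective, so it is not literally a representation of $Y$. I would fix this by taking the disjoint sum of $\sigma$ and $\tau$. Let $\sigma'(0p)=\sigma(p)$ and $\sigma'(1p)=\tau(p)$. This is a partial surjection onto $Y$. It is continuous because $\{0\}\mathbb{N}^{\mathbb{N}}$ and $\{1\}\mathbb{N}^{\mathbb{N}}$ are clopen, and $\tau$ is continuous into its final topology.

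Admissibility of $\tau$ then gives a continuous $G$ with $\tau(G(q))=\sigma'(q)$ for all $q\in\mathrm{dom}(\sigma')$. Setting $F(p)=G(0p)$ yields a continuous function with $\tau(F(p))=f(\rho(p))$ for $p\in\mathrm{dom}(\rho)$, which is the required realizer.

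The main point requiring care is this surjectivity issue, along with checking that "continuous representation" in the admissibility definition means continuity into the final topology of $\tau$, i.e.\ the topology under which $Y$ is considered. Both are handled by the sum construction above. If one instead allowed non-surjective continuous maps directly in the definition, the argument would be immediate.
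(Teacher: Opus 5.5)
Your proof is correct. The paper gives no proof of this lemma; it only cites Kreitz--Weihrauch and Schr\"oder. Your argument is essentially the standard one from those sources: (2)$\Rightarrow$(1) is a direct preimage computation that does not use admissibility, and (1)$\Rightarrow$(2) applies the universal property of $\tau$ to the continuous map $f\circ\rho$.

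The only difference from the usual presentation is bookkeeping. Schr\"oder phrases admissibility as ``every continuous partial map $\phi:\subseteq\mathbb{N}^{\mathbb{N}}\to Y$ factors as $\tau\circ G$ with $G$ continuous'', so surjectivity never arises. The paper's definition instead quantifies only over continuous \emph{representations}, which must be surjective. Your disjoint sum $\sigma'$ of $f\circ\rho$ with $\tau$ is exactly the right device to bridge this, and it also shows that the two formulations of admissibility are equivalent.

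One small correction to your last paragraph: the sum construction does not settle which topology ``continuous representation'' refers to. That question is harmless anyway. Suppose $\tau$ were admissible for some topology $\mathcal{T}$ on $Y$ other than its final topology. Continuity of $\tau$ for $\mathcal{T}$ means the final topology of $\tau$ refines $\mathcal{T}$. So your $\sigma'$ is also $\mathcal{T}$-continuous, and the admissibility hypothesis still applies.
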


The above lemma permits the definition of a robust notion of continuity
for multi-functions between topological spaces that admit admissible
representations. 
\begin{defn}
A multi-function $f:\subseteq X\rightrightarrows Y$ between topological
spaces $X$ and $Y$ is called \emph{continuous} if it admits a continuous
realizer with respect to some (equiv. any) admissible representations
on $X$ and $Y$. 
\end{defn}

Lemma \ref{lem:Admissibility C0 and C0 realizable } implies that
if $f$ is a single valued function, the above definition does coincide
with continuity. It does not seem to be possible to define the above
notion of continuity for multi-functions without going through the
category of represented spaces. This is discussed in \cite{Brattka1994}.

It is in fact very easy to construct admissible representations of
second countable spaces. 
\begin{defn}
[Kreitz-Weihrauch, \cite{Kreitz1985}]\label{def:Kreitz-Weihrauch, Standard Rep }Let
$X$ be a $\mathrm{T}_{0}$ second countable topological space and
$(B_{i})_{i\in\mathbb{N}}$ a basis for $X$. The \emph{standard representation
associated to} $(B_{i})_{i\in\mathbb{N}}$ is given by 
\[
\forall u\in\mathbb{N}^{\mathbb{N}},\,\rho(u)=x\iff\mathrm{Im}(u)=\{n\in\mathbb{N}\mid x\in B_{n}\}.
\]
\end{defn}

Each numbered basis of $X$ yields a standard representation. However,
the choice of a numbered basis is inconsequential: 
\begin{thm}
[\cite{Kreitz1985}]All standard representations of a second countable
space are admissible and continuously equivalent. Every admissible
representation of a second countable space is continuously equivalent
to a standard representation. 
\end{thm}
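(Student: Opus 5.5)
We need to prove the Kreitz–Weihrauch theorem: every standard representation of a $\mathrm{T}_0$ second countable space $X$ is admissible, any two standard representations are continuously equivalent, and every admissible representation is equivalent to a standard one. The plan is to prove admissibility of a standard representation $\rho$ (associated to a basis $(B_i)_{i\in\mathbb{N}}$) directly. The other two claims then follow formally: two admissible representations for the same topology translate to each other, since each is continuous and the other absorbs all continuous representations. For this to go through, we need the final topology of $\rho$ to coincide with the topology of $X$, so that "admissible for $X$" and "admissible" agree.

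\emph{Step 1: $\rho$ is continuous.} Let $\rho(u)=x$ with $x\in B_n$. Then $n$ occurs in $u$, say $u(m)=n$. Every $v\in\mathrm{dom}(\rho)$ sharing the prefix $u(0)\dots u(m)$ names a point lying in $B_n$. So $\rho^{-1}(B_n)$ is open in $\mathrm{dom}(\rho)$, and since the $B_n$ form a basis, $\rho$ is continuous. Note also that $\rho$ is single valued on its domain because $X$ is $\mathrm{T}_0$: the set $\{n\mid x\in B_n\}$ determines $x$. It is surjective as long as we allow names of points lying in no basic set; we handle this by the usual convention of adding a dummy symbol, or by assuming $X$ is itself a basic set.

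\emph{Step 2: every continuous representation $\tau$ translates to $\rho$.} This is the main step. Let $\tau:\subseteq\mathbb{N}^{\mathbb{N}}\rightarrow X$ be continuous. For each $n$, the set $\tau^{-1}(B_n)$ is open in $\mathrm{dom}(\tau)$, so it equals $W_n\cap\mathrm{dom}(\tau)$, where $W_n$ is a union of cylinders $[w]$ for $w$ ranging over some set $A_n$ of finite words. Define $F(v)$ by dovetailing over pairs $(n,w)$: at stage $s$, for each pair $(n,w)$ with $|w|\le s$, $n\le s$, $w\in A_n$ and $w$ a prefix of $v$, output $n$. When no new pair fires, output the padding symbol. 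Each output symbol depends only on a finite prefix of $v$, so $F$ is continuous; no computability is needed since we only require a continuous realizer. Now take $v\in\mathrm{dom}(\tau)$. The numbers output by $F(v)$ are exactly those $n$ with $v\in W_n$, that is, those with $\tau(v)\in B_n$. Hence $\rho(F(v))=\tau(v)$. The only subtlety is the padding convention, which must be fixed consistently with Definition \ref{def:Kreitz-Weihrauch, Standard Rep }, for instance by shifting indices by one and reserving $0$ as the blank symbol.

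\emph{Step 3: conclusion.} Step 1 shows that the final topology of $\rho$ is finer than the topology of $X$. For the reverse inclusion, let $O$ be open in the final topology and $x\in O$, with $u$ a name of $x$. Continuity of the realizer built from... — more simply: $\rho^{-1}(O)$ contains a cylinder around $u$. Let $B_{n_1},\dots,B_{n_k}$ be the basic sets listed in that prefix. Every point of $B_{n_1}\cap\dots\cap B_{n_k}$ has a name extending the prefix, so this intersection, which is open in $X$ and contains $x$, lies inside $O$. Therefore $\rho$ is admissible for its final topology, which is the topology of $X$. Finally, for an arbitrary admissible $\delta$ of $X$ and a standard $\rho$, we get $\delta\le\rho$ by Step 2, since $\delta$ is continuous, and $\rho\le\delta$ by admissibility of $\delta$, since $\rho$ is continuous. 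This yields both equivalence statements.
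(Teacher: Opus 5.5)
The paper does not prove this theorem; it is quoted from Kreitz and Weihrauch, so there is no in-paper argument to compare against. Your proof is the standard one, and it is correct: continuity of $\rho$, absorbing an arbitrary continuous $\tau$ by enumerating the indices $n$ for which $v$ enters a cylinder of $\tau^{-1}(B_n)$, identifying the final topology of $\rho$ with that of $X$, and the formal symmetry argument for the two equivalence claims. One point needs tightening.

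The paper's definition of the standard representation has no blank symbol: every entry of a name is read as an index $n$ with $x\in B_n$. So the padding symbol in Step 2 is not available. Your remedy (shift indices and reserve $0$ as a blank) proves $\tau\le\rho_0$ for a modified representation $\rho_0$, not $\tau\le\rho$. Relating $\rho_0$ to $\rho$ needs exactly the argument that the blank was meant to avoid.

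The fix comes from an observation that also removes your surjectivity worry in Step 1. A basis covers $X$, since $X$ is open, so every point lies in some $B_n$. Hence for every $v\in\mathrm{dom}(\tau)$ some pair $(n,w)$ fires after finitely many stages. Let $F(v)(0)$ be the index $n_0$ of the first pair that fires; this is determined by a finite prefix of $v$. Afterwards, list the later fired indices and repeat $n_0$ whenever a stage yields nothing new. This $F$ is a continuous partial function defined on all of $\mathrm{dom}(\tau)$, which is all the paper's notion of translation requires, and $\mathrm{Im}(F(v))=\{n\mid \tau(v)\in B_n\}$.

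Likewise, in Step 1 every $x$ already has a name, because $\{n\mid x\in B_n\}$ is a nonempty countable set, so no dummy-symbol convention is needed for surjectivity. With this change, Steps 1--3 go through as you wrote them. Step 3 correctly uses the fact that any point of $B_{n_1}\cap\dots\cap B_{n_k}$ has a name beginning with the given prefix. You should delete the aborted sentence at the start of Step 3.
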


\subsection{\label{subsec:Some-constructions-of RPZ}Some constructions of representations }

\subsubsection{Representation of a product }

If $(X,\rho)$ and $(Y,\tau)$ are represented spaces, the set $X\times Y$
is naturally equipped with the representation $\rho\times\tau$, given
by 
\[
(\rho\times\tau)((u_{n})_{n\in\mathbb{N}})=(\rho((u_{2n})_{n\in\mathbb{N}}),\tau((u_{2n+1})_{n\in\mathbb{N}})).
\]
The domain of $\rho\times\tau$ is the set of sequences $(u_{n})_{n\in\mathbb{N}}$
such that $(u_{2n})_{n\in\mathbb{N}}\in\mathrm{dom}(\rho)$ and $(u_{2n+1})_{n\in\mathbb{N}}\in\mathrm{dom}(\tau)$. 

\subsubsection{\label{subsec:Representation-of-an infinite product}Representation
of an infinite product }

Let $(n,k)\mapsto\langle n,k\rangle$ be the standard Cantor pairing
function defined by $\langle n,k\rangle:=\frac{1}{2}(n+k+1)(n+k)+k.$ 

We define a pairing function $\langle\cdot\rangle:(\mathbb{N}^{\mathbb{N}})^{\mathbb{N}}\to\mathbb{N}^{\mathbb{N}}$
by $\langle p_{0},p_{1},p_{2},\ldots\rangle\langle n,k\rangle:=p_{n}(k)$
for all $p_{i}\in\mathbb{N}^{\mathbb{N}}$ and $n,k\in\mathbb{N}$. 

If $(X,\rho)$ is a represented space, we define a representation
$\delta_{X^{\mathbb{N}}}$ of $X^{\mathbb{N}}$ by:
\[
\delta_{X^{\mathbb{N}}}(\langle p_{0},p_{1},p_{2},\ldots\rangle)=(\rho(p_{n}))_{n\in\mathbb{N}}.
\]

\subsubsection{Sierpi\'{n}ski space}

The Sierpi\'{n}ski space $\mathbb{S}$ is the set $\{0,1\}$ equipped
with the topology $\{\emptyset,\{1\},\{0,1\}\}$. It admits an admissible
representation $c_{\mathbb{S}}:\mathbb{N}^{\mathbb{N}}\rightarrow\mathbb{S}$
given by:
\[
c_{\mathbb{S}}(0^{\omega})=0,
\]
\[
c_{\mathbb{S}}(u)=1\text{ for any }\ensuremath{u\neq0^{\omega}}.
\]

\subsubsection{Restrictions}

If $(X,\rho)$ is a represented space and $Y\subseteq X$, we let
$\rho_{\vert Y}$ be the restriction of $\rho$ to $\rho^{-1}(Y)$.
If $X$ is a second countable space and $\rho$ is admissible, then
the final topology of $\rho_{\vert Y}$ is the subset topology on
$Y$ and $\rho_{\vert Y}$ is also admissible. 

\subsubsection{\label{subsec:Jump-of-a-RPZ space}Jump of a represented space }

Let $(X,\delta)$ be a represented space. We define a new representation
$\delta':\subseteq\mathbb{N}^{\mathbb{N}}\rightarrow X$ of $X$ by
the following:
\begin{align*}
\delta'(p)=x\iff & \begin{cases}
p\text{, seen as an element of }(\mathbb{N}^{\mathbb{N}})^{\mathbb{N}}\text{ via a pairing function},\\
\text{is a sequence that converges to a \ensuremath{\delta}-name of \ensuremath{x}}
\end{cases}
\end{align*}

The represented space $(X,\delta')$ is called the \emph{jump} of
$(X,\delta)$. Obviously $\delta\le\delta'$. 

Note that the final topology of $\delta'$ is always the indiscrete
topology. 

\subsection{Continuous Weihrauch reduction}
\begin{defn}
A multi-function between represented spaces is called a \textbf{problem}. 

Let $P_{1}:\subseteq X\rightrightarrows Y$ and $P_{2}:\subseteq X\rightrightarrows Y$
be problems. We say that $P_{1}$ \textbf{solves} $P_{2}$, (sometimes
also: $P_{1}$ is a strengthening of $P_{2}$ and $P_{2}$ a weakening
of $P_{1}$) and write $P_{1}\sqsubseteq P_{2}$, if 
\[
\mathrm{dom}(P_{2})\subseteq\mathrm{dom}(P_{1})\,\&\,\forall x\in\mathrm{dom}(P_{2}),\,P_{1}(x)\subseteq P_{2}(x).
\]
\end{defn}

When a multi-function $P$ is interpreted as a problem, we see $P(x)$
as a set of valid outputs on input $x$. And thus if another problem
$Q$ satisfies $Q\sqsubseteq P$, it means that each valid output
of $Q$ is a valid output of $P$, hence the term ``$Q$ solves $P$''. 

If $P:\subseteq X\rightrightarrows Y$ and $Q:\subseteq Z\rightrightarrows W$
are problems, their \textbf{product}, denoted by $P\times Q$, is
the problem 
\begin{align*}
P\times Q:\subseteq X\times Z & \rightrightarrows Y\times W\\
(u,v) & \mapsto P(u)\times Q(v).
\end{align*}

We can now define the topological version Weihrauch reduction. 
\begin{defn}
[Continuous Weihrauch reduction] Let $P:\subseteq X\rightrightarrows Y$
and $Q:\subseteq Z\rightrightarrows W$ be problems. We say that $P$
\textbf{(continuously) Weihrauch reduces} to $Q$, and write $P\ge_{W}Q$,
if there is a represented space $V$ and continuously realizable multi-functions
$h:\subseteq V\times W\rightrightarrows Y$ and $k:\subseteq X\rightrightarrows V\times Z$
such that 
\[
h\circ(\mathrm{id}_{V}\times P)\circ k\sqsubseteq Q.
\]
\end{defn}

\begin{figure}
	\centering 		\begin{tikzpicture}[>=stealth, node distance=2cm] 		
			\filldraw[gray!20, draw=black] (1,-1.5) rectangle (10,1.5); 			\node at (1.7,1.2) {Q}; 		
			\node (p) at (0,0) {x}; 			
			\coordinate(secret) at (1.3,0) {}; 		
	\node[circle,draw,minimum size=1cm] (K) [right=of p] {k}; 		
	\node[draw,minimum width=1cm,minimum height=1cm] (G) [right=of K] {P}; 		
	\node[circle,draw,minimum size=1cm] (H) [right=of G] {h}; 		
	\node (Fp) [right=of H] {Q(x)}; 		
			\draw[->, line width=0.8pt] (p) -- (K); 		
	\draw[->, line width=0.8pt] (K) -- (G); 		
	\draw[->, line width=0.8pt] (G) -- (H); 	
		\draw[->, line width=0.8pt] (H) -- (Fp); 	
				\draw[->, line width=0.8pt] (K) .. controls +(0,-1) and +(0,-1) ..	(H.south west);				
		\end{tikzpicture} 	
		\caption{Weihrauch reducibility}
\end{figure}

\subsection{Comments on the definition of Weihrauch reducibility }

We want to illustrate the pros of allowing continuous multi-functions
in the definition of Weihrauch reducibility in order to capture the
idea of ``a function being more discontinuous than another''.
\begin{defn}
We denote by $\mathrm{LPO}$ (for \emph{Limited Principle of Omniscience}\footnote{The name comes from constructive mathematics. The constructivist Errett
Bishop was the first to talk about Omniscience principles.}) the Weihrauch equivalence class of the following problem: 

\[
\mathrm{LPO}:\mathbb{N}^{\mathbb{N}}\rightarrow\{0,1\},\,\mathrm{LPO}(u)\mapsto\begin{cases}
1, & \text{ if }u=0^{\omega},\\
0, & \text{otherwise.}
\end{cases}
\]
\end{defn}

LPO is the equivalence class of all problems whose discontinuity points
are isolated, and it is the easiest discontinuous single valued problem.
The following result was obtained by Weihrauch \cite{Weihrauch1992}
in the context of admissible representations of metric spaces, but
it is easy to generalize it to $\mathrm{T}_{0}$ second countable
spaces. 
\begin{prop}
A function $P:X\rightarrow Y$ between $\mathrm{T}_{0}$ second countable
spaces is discontinuous if and only if $P\ge_{W}\mathrm{LPO}$. 
\end{prop}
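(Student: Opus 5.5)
One direction is immediate. Suppose $P\ge_{W}\mathrm{LPO}$, that is, $\mathrm{LPO}$ reduces to $P$, and suppose for contradiction that $P$ is continuous. By Lemma \ref{lem:Admissibility C0 and C0 realizable }, with admissible (standard) representations on $X$ and $Y$, $P$ has a continuous realizer. Composing the continuous realizers of the pre- and post-processing maps $k$ and $h$ with this realizer gives a continuous realizer for $\mathrm{LPO}$. Its output is an element of $\{0,1\}$, given by some admissible representation of the discrete space $\{0,1\}$, so $\mathrm{LPO}$ would be continuous as a map from $\mathbb{N}^{\mathbb{N}}$ to a discrete space. This fails at $0^{\omega}$, which is a limit of the sequences $0^{n}10^{\omega}$ whose value is $0$, while $\mathrm{LPO}(0^{\omega})=1$. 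Hence $P$ is discontinuous.

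For the converse, suppose $P$ is discontinuous. Since $X$ is second countable, it is first countable, hence sequential, so discontinuity is witnessed by a sequence. Concretely, there are a point $x\in X$, a basic open neighbourhood $B$ of $P(x)$, and a sequence $x_{n}\to x$ with $P(x_{n})\notin B$ for all $n$. Now fix a standard name $u$ of $x$ with respect to a basis $(B_{i})$, and, for each $n$, a name $u_{n}$ of $x_{n}$ that agrees with $u$ on a prefix of length at least $n$. Such names exist because $x_{n}\to x$: for each finite set of basis elements containing $x$, the point $x_{n}$ eventually lies in all of them, so after reindexing the sequence we can start a name of $x_{n}$ by copying the first $n$ entries of $u$ and then continue with a name of $x_{n}$. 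The entries we copy must be basis elements that contain $x_{n}$, and this is exactly what the reindexing guarantees.

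The preprocessing map $k$ sends $p\in\mathbb{N}^{\mathbb{N}}$ to the following name. While $p$ reads $0$, copy $u$; if the first nonzero entry occurs at position $n$, switch to the tail of $u_{m}$ for a suitable $m\ge n$ with compatible prefix. This map is continuous, and it outputs a name of $x$ if $p=0^{\omega}$ and a name of $x_{m}$ otherwise. The postprocessing map $h$ takes a name of $y=P(\cdot)$ and searches it for the index of $B$. If that index appears, $h$ outputs $1$; otherwise it outputs $0$. This is not continuous as stated, since waiting forever for the index of $B$ never commits to $0$.

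I expect this postprocessing step to be the main obstacle, and I would fix it by letting $h$ also see the original input $p$ through the auxiliary space $V$. The decision is then made by a race between two events: the index of $B$ appearing in the name of $y$, and a nonzero entry of $p$ appearing. Exactly one of these events happens. If $p=0^{\omega}$, then $y=P(x)\in B$, so the index of $B$ eventually appears, and no nonzero entry of $p$ ever appears. If $p\neq0^{\omega}$, a nonzero entry eventually appears, and $y=P(x_{m})\notin B$, so the index of $B$ never appears. Hence $h$ can output $1$ or $0$ as soon as the first of the two events is observed, and this is continuous. Together, $k$ and $h$ give $\mathrm{LPO}\le_{W}P$.
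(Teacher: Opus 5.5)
Your proof is correct. The paper gives no proof of its own here. It attributes the result to Weihrauch for admissibly represented metric spaces and only asserts that the extension to $\mathrm{T}_0$ second countable spaces is easy, so your argument supplies that extension.

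It is the standard argument, and every step checks out with the paper's standard representation:
\begin{enumerate}
\item First countability of $X$ gives $x_n\to x$ and a basic open set $B\ni P(x)$ with $P(x_n)\notin B$ for all $n$.
\item Pass to a subsequence with $x_n\in B_{u(0)}\cap\dots\cap B_{u(n-1)}$, and let $w_n$ be any name of $x_n$. Then $u(0)\cdots u(n-1)w_n$ has image exactly $\mathrm{Im}(w_n)$, so it is a name of $x_n$. These names converge to $u$, which makes $k$ continuous.
\item In the definition of $h$, you race ``the index of $B$ is listed'' against ``$p$ has a nonzero entry''. This gives a correct continuous realizer, since exactly one of the two events occurs on every relevant input.
\end{enumerate}

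You are right that $h$ must see $p$ through $V$. In general, $P(x_n)\notin B$ cannot be confirmed in finite time from a name of $P(x_n)$ alone. This is the phenomenon behind the paper's example contrasting Weihrauch with strong Weihrauch reducibility.
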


\begin{example}
The following problems are all instances of LPO, however, they would
not be equivalent if we did not allow for multi-functions in the definition
of Weihrauch reducibility:
\begin{itemize}
\item $f\colon\mathbb{R}\rightarrow\mathbb{R},\,x\mapsto\begin{cases}
0, & \text{if }x<1/3,\\
1, & \text{otherwise;}
\end{cases}$
\item $\tilde{f}$, the restriction of $f$ to the Cantor ternary set; 
\item the injection $P:\mathbb{S}\rightarrow\{0,1\}$ of the Sierpi\'{n}ski
space into $\{0,1\}$ equipped with the discrete topology. 
\end{itemize}
Indeed, any continuous function $\mathbb{S}\rightarrow\mathbb{R}$
is constant because $\mathbb{R}$ is $\mathrm{T}_{2}$, and similarly
any continuous function from $\mathbb{R}$ to the Cantor ternary set
is constant because $\mathbb{R}$ is connected. On the other hand,
many non-constant continuous multi-functions exist between these spaces.
For instance, if $(u_{n})_{n\in\mathbb{N}}$ is a sequence of real
numbers converging to $l\in\mathbb{R}$, then the multi-function $\mathbb{S}\rightrightarrows\mathbb{R}$
mapping the open point to $\{u_{n},\,n\in\mathbb{N}\}$ and the closed
point to $l$ is continuous. We leave it to the reader to explicitly
write down Weihrauch reductions between the above problems. 
\end{example}

Here we give an example that illustrates the need to pass through
the input. 
\begin{example}
Consider the following functions:
\begin{itemize}
\item $f\colon\mathbb{R}\rightarrow\mathbb{R},\,x\mapsto\begin{cases}
0, & \text{if }x<0,\\
1, & \text{otherwise;}
\end{cases}$
\item $\hat{f}:\mathbb{R}\rightarrow\mathbb{R},\,x\mapsto\begin{cases}
x, & \text{if }x<0,\\
x+1, & \text{otherwise.}
\end{cases}$ 
\end{itemize}
One easily checks that both problems are equivalent to LPO. However,
by using a definition of Weihrauch reducibility that does not include
the $\mathrm{id}_{V}$ component in $h\circ(\mathrm{id}_{V}\times P)\circ k\sqsubseteq Q$,
those problems become non-equivalent. 

The reduction obtained by removing the identity in the definition
of Weihrauch reducibility is called strong Weihrauch reducibility
\cite{Brattka2021a}.
\end{example}

\subsection{Some classical problems}

We introduce some benchmark problems. 

The problem \emph{Limit on the natural numbers,} denoted by $\mathrm{Lim}_{\mathbb{N}}:\subseteq\mathbb{N}^{\mathbb{N}}\rightarrow\mathbb{N}$,
is defined on the subset of $\mathbb{N}^{\mathbb{N}}$ consisting
of converging sequences, a sequence is mapped to its limit. 

The problem \emph{Limit on Baire Space,} $\mathrm{Lim}:\subseteq\mathbb{N}^{\mathbb{N}}\rightarrow\mathbb{N}^{\mathbb{N}}$
is defined thanks to a bijection between $\mathbb{N}^{\mathbb{N}}$
and $(\mathbb{N}^{\mathbb{N}})^{\mathbb{N}}$: $\mathrm{Lim}$ is
defined on the set of elements of $\mathbb{N}^{\mathbb{N}}$ which,
seen as elements of $(\mathbb{N}^{\mathbb{N}})^{\mathbb{N}}$, define
a converging sequence, and then a sequence is mapped to its limit. 

The \emph{jump of a problem} $P:\subseteq X\rightrightarrows Y$ is
the problem obtained by replacing the representation of $X$ by its
jump (Section \ref{subsec:Jump-of-a-RPZ space}). Thus extensionally
it is the same multi-function, but the representation has been changed
on the input side.

The $n$-th jump of a problem $P$ is denoted by $P^{(n)}$. 

A function $f:\subseteq\mathbb{N}^{\mathbb{N}}\rightarrow\mathbb{N}^{\mathbb{N}}$
is called $\Sigma_{n}^{0}$-measurable if the preimage of any open
set is in $\Sigma_{n}^{0}$. A function between represented spaces
is called $\Sigma_{n}^{0}$-measurable if it admits a $\Sigma_{n}^{0}$-measurable
realizer.

A problem $P$ is $\Sigma_{n}^{0}$-\emph{measurable complete} if
for every problem $Q$ that is $\Sigma_{n}^{0}$-measurable we have
$P\ge_{W}Q$. 
\begin{thm}
[\cite{Brattka2004}]\label{thm: Lim^(n) is measurable complete}$\mathrm{Lim}$
is $\Sigma_{2}^{0}$-measurable complete, and more generally $\mathrm{Lim}^{(n)}$
is $\Sigma_{n+2}^{0}$-measurable complete. 
\end{thm}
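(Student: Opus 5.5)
The plan is to prove the base case, that $\mathrm{Lim}$ is $\Sigma_{2}^{0}$-measurable complete, by hand, and then obtain the statement for $\mathrm{Lim}^{(n)}$ by relativizing along Turing-jump–like maps. It is enough to work with realizers. A problem $P$ is $\Sigma_{2}^{0}$-measurable if it has a $\Sigma_{2}^{0}$-measurable realizer $F:\subseteq\mathbb{N}^{\mathbb{N}}\rightarrow\mathbb{N}^{\mathbb{N}}$. To get $P\ge_{W}\mathrm{Lim}$, it suffices to find a continuous $K$ with $\mathrm{Lim}\circ K=F$ on $\mathrm{dom}(F)$, and then take $h$ to be the identity.

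\textbf{Membership.} First I check that $\mathrm{Lim}$ itself is $\Sigma_{2}^{0}$-measurable. For a basic open set $[w]$,
\[
\mathrm{Lim}^{-1}([w])=\{\langle p_{0},p_{1},\dots\rangle\mid\exists N\,\forall m\ge N,\ p_{m}\sqsupseteq w\}\cap\mathrm{dom}(\mathrm{Lim}),
\]
which is $\Sigma_{2}^{0}$ relative to the domain.

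\textbf{Hardness.} Let $F$ be $\Sigma_{2}^{0}$-measurable. For each coordinate $j$ and value $a$, the set $F^{-1}\{q\mid q(j)=a\}$ is $\Sigma_{2}^{0}$, so I write it as $\bigcup_{i}A_{j,a,i}$ with each $A_{j,a,i}$ closed. Fix an enumeration of the pairs $(a,i)$. The $m$-th term $K(p)_{m}$ is defined coordinatewise: at coordinate $j$, search the first $m$ pairs $(a,i)$ for the least one such that $p{\restriction}m$ still has an extension in $A_{j,a,i}$, and output that $a$ (output $0$ if there is none). This depends on finite prefixes only, so $K$ is continuous.

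\textbf{Convergence.} Let $p\in\mathrm{dom}(F)$ and fix $j$. Let $(a^{*},i^{*})$ be the least pair with $p\in A_{j,a^{*},i^{*}}$.
\begin{itemize}
\item The pair $(a^{*},i^{*})$ is never refuted by a prefix of $p$.
\item Each of the finitely many smaller pairs is refuted at some finite stage, because those sets are closed and do not contain $p$.
\item The sets for distinct values $a$ are disjoint on $\mathrm{dom}(F)$, since $F$ is single valued. Hence $a^{*}=F(p)(j)$.
\end{itemize}
So $K(p)_{m}(j)$ is eventually constant, equal to $F(p)(j)$. Pointwise convergence in Baire space is exactly coordinatewise eventual constancy, so $\mathrm{Lim}(K(p))=F(p)$.

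\textbf{Induction for the jumps.} Let $J:\mathbb{N}^{\mathbb{N}}\rightarrow2^{\mathbb{N}}$ send $p$ to the characteristic sequence of the basic open sets of the Baire space containing $p$. Some lemmas are needed here.
\begin{enumerate}
\item $J\le_{W}\mathrm{Lim}$. Indeed $J$ is a limit of continuous approximations: answer ``no'' until a witness appears.
\item For $k\ge1$, every $\Sigma_{k+1}^{0}$ subset of $\mathbb{N}^{\mathbb{N}}$ is the preimage under $J$ of a $\Sigma_{k}^{0}$ set.
\item Consequently a $\Sigma_{n+2}^{0}$-measurable $F$ factors as $F=G\circ J^{n}$ with $G$ $\Sigma_{2}^{0}$-measurable on the image. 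Here $J^{n}$ is iterated jump-coding.
\end{enumerate}
Moreover, $G\circ J^{n}$ is realized by $\mathrm{Lim}$ applied to a sequence converging to a name of $J^{n}(p)$. Unwinding $J^{n}$ through its limit approximations gives $p\mapsto$ an input for $\mathrm{Lim}^{(n)}$: an $n$-fold jump-represented name whose $\mathrm{Lim}$-value is $F(p)$. Conversely, $\mathrm{Lim}^{(n)}$ is $\Sigma_{n+2}^{0}$-measurable, since the preimage of $\Sigma_{k}^{0}$ under $\mathrm{Lim}$ is $\Sigma_{k+1}^{0}$ and the jumped representation contributes one level per jump.

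\textbf{Expected obstacle.} The hard step is the second lemma in the induction: representing $\Sigma_{k+1}^{0}$ sets as $J$-preimages of $\Sigma_{k}^{0}$ sets uniformly, so that the factorization $F=G\circ J^{n}$ is well defined on all of $\mathrm{dom}(F)$. I would first try a direct induction on the Borel code: a $\Pi_{1}^{0}$ set becomes clopen in $J(p)$, one coordinate check. Then quantifier blocks shift down by one level under $J$.
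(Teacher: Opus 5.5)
Your base case is sound. The membership computation for $\mathrm{Lim}$ is correct, and the total continuous $K$ with $\mathrm{Lim}\circ K=F$ on $\mathrm{dom}(F)$ is the standard ``finitely many mind changes per output digit'' argument. In the paper's convention it shows $\mathrm{Lim}\ge_W P$, not $P\ge_W\mathrm{Lim}$ as you wrote. The paper itself gives no proof, only the citation to Brattka.

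The genuine gap is in your induction. As you define it, $J(p)$ records which basic open sets $[w]$ contain $p$. But $p\in[w]$ iff $w\sqsubseteq p$, which is read off a finite prefix, so $J$ is continuous (indeed an embedding). Hence $J^{-1}(\Sigma^0_k)\subseteq\Sigma^0_k$, and your Lemma (2) is false for every $k$. Your suggested repair (``a $\Pi^0_1$ set becomes clopen in $J(p)$'') fails for the same reason: for an arbitrary closed $C$, membership $p\in C$ is not a coordinate of $J(p)$ unless $\mathbb{N}^{\mathbb{N}}\setminus C$ is itself one of the coded open sets. Nor can any better \emph{fixed} $J\le_W\mathrm{Lim}$ work. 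Such a $J$ is $\Sigma^0_2$-measurable, so its continuity points form a comeager set, which contains a Cantor set $P$, and $J$ restricted to $P$ is continuous. Then $J$-preimages of $\Sigma^0_k$ sets meet $P$ in $\Sigma^0_k(P)$ sets, whereas $P$ carries sets that are $\Sigma^0_{k+1}$ in $\mathbb{N}^{\mathbb{N}}$ but not $\Sigma^0_k$ in $P$.

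The missing idea is that, in the boldface setting, the jump must be taken relative to $F$. This is harmless, since the reduction witnesses may depend on $Q$. Fix $\Sigma^0_{n+2}$-decompositions of the countably many preimages $F^{-1}[v]$. Let $(U_i)_{i\in\mathbb{N}}$ enumerate the basic open sets together with the complements of all closed sets occurring at the bottom of these decompositions, and set $J_1(p)=(\chi_{U_i}(p))_{i\in\mathbb{N}}$. Then:
\begin{itemize}
\item $J_1=\mathrm{Lim}\circ K_1$ with $K_1$ total and continuous (write $0$ at $i$ until some $w\sqsubseteq p$ with $[w]\subseteq U_i$ appears);
\item $J_1$ is injective, because the basic open sets are included;
\item $F^{-1}[v]=\mathrm{dom}(F)\cap J_1^{-1}(B_v)$ for some $\Sigma^0_{n+1}$ set $B_v\subseteq 2^{\mathbb{N}}$, because each bottom closed set turns into the clopen condition $x(i)=0$.
\end{itemize}
So $F\circ J_1^{-1}$ is $\Sigma^0_{n+1}$-measurable on $J_1(\mathrm{dom}(F))$, and you iterate with a fresh family at each level. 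Equivalently, relativize the lightface Turing jump to an oracle coding all these Borel codes; this is the standard way to pass from the effective theorem to the boldface one. Your unwinding into a $\mathrm{Lim}^{(n)}$-instance then works, but only because every $K$ involved, including the base-case one, is total and continuous. They have to be applied to the approximants, which in general lie outside the images of the $J_i$, and continuity is what lets the limits pass through them.
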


\begin{example}
\label{exa:LPO' Exists infty}The problem $\mathrm{LPO}'$, the jump
of $\mathrm{LPO}$, is easily seen to be equivalent to the problem
$\exists^{\infty}:\{0,1\}^{\mathbb{N}}\rightarrow\{0,1\}$, given
by 
\[
\exists^{\infty}((u_{n})_{n\in\mathbb{N}})=1\iff\exists^{\infty}n,\,u_{n}=1.
\]
See for instance \cite{Pauly2024}. 
\end{example}

\subsection{Finite parallelizability }

For problems $P:\subseteq X\rightrightarrows Y$ and $Q:\subseteq Z\rightrightarrows W$,
recall that their product $P\times Q:\subseteq X\times Z\rightrightarrows Y\times W$
is given by 
\[
\left(P\times Q\right)(x,z)=P(x)\times Q(z),
\]
with $\mathrm{dom}(P\times Q)=\mathrm{dom}(P)\times\mathrm{dom}(Q)$.

The \emph{finite parallelization} of a problem $P:\subseteq X\rightrightarrows Y$
is the problem 
\begin{align*}
P^{*}:\subseteq\bigcup_{n\in\mathbb{N}}\{n\}\times X^{n} & \rightrightarrows\bigcup_{n\in\mathbb{N}}\{n\}\times Y^{n}\\
(i,x_{1},...,x_{i}) & \mapsto\{i\}\times P(x_{1})\times...\times P(x_{i})
\end{align*}
with domain $\bigcup_{n\in\mathbb{N}}\{n\}\times\mathrm{dom}(X)^{n}$. 

In words: the instances of $P^{*}$ consist in finitely many instances
of $P$, and solutions for these instances are simply the tuples consisting
of solutions for each instances.

A problem $P$ is \emph{finitely parallelizable} if $P\equiv_{W}P^{*}$.
This is easily seen to be equivalent to the fact that $P\equiv_{W}P\times P$. 

The problem $\mathrm{Res}_{\mathcal{C}}$ is always finitely parallelizable
(Proposition \ref{prop: Finite parallelization}). 

\subsection{Infinite parallelization}

Let $P:\subseteq X\rightrightarrows Y$ be a problem. The \emph{infinite
parallelization} of $P$ is the problem $\hat{P}:\subseteq X^{\mathbb{N}}\rightrightarrows Y^{\mathbb{N}}$
given by 
\[
\hat{P}((u_{n})_{n\in\mathbb{N}})=(P(u_{n}))_{n\in\mathbb{N}}.
\]
Here $X^{\mathbb{N}}$ and $Y^{\mathbb{N}}$ are equipped with the
representations induced by those of $X$ and $Y$ as defined in Section
\ref{subsec:Representation-of-an infinite product}.

We will use the following equalities \cite{Brattka2021a}: 
\begin{prop}
\label{prop:Lim =00003D LPO parallalized}
\[
\mathrm{Lim}\equiv_{W}\widehat{\mathrm{LPO}},
\]
\[
\mathrm{Lim}'\equiv_{W}\widehat{\mathrm{LPO}'}.
\]
\end{prop}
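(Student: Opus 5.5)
Both equivalences are proved by exhibiting explicit continuous Weihrauch reductions in each direction. In every case the outer maps $k$ and $h$ can be taken single-valued and continuous, with $k$ acting coordinatewise on names. Throughout we use the representation of $\mathbb{N}^{\mathbb{N}}$ as $(\mathbb{N}^{\mathbb{N}})^{\mathbb{N}}$ via the pairing $\langle\cdot\rangle$ of Section \ref{subsec:Representation-of-an infinite product}, and the representation of $\{0,1\}^{\mathbb{N}}$ induced from $\{0,1\}$.

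For $\widehat{\mathrm{LPO}}\le_{W}\mathrm{Lim}$, take an input $\langle p_{0},p_{1},\dots\rangle$. For each stage $s$, build the point $q_{s}\in\mathbb{N}^{\mathbb{N}}$ whose $n$-th coordinate is $1$ if $p_{n}(j)=0$ for all $j<s$, and $0$ otherwise. Each coordinate of $q_{s}$ changes at most once as $s$ grows, so $(q_{s})_{s}$ converges in Baire space. Its limit has $n$-th coordinate $\mathrm{LPO}(p_{n})$, and the map $p\mapsto\langle q_{0},q_{1},\dots\rangle$ is continuous. For $\mathrm{Lim}\le_{W}\widehat{\mathrm{LPO}}$, let $\langle q_{0},q_{1},\dots\rangle$ be a convergent sequence with limit $q$. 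For each pair $(n,m)$, form the instance $u_{n,m}$ with $u_{n,m}(s)=1$ iff there is $t\ge s$ with $q_{t}(n)\ne q_{s}(n)$. Since this is not computable from a finite prefix, I replace it by the standard $\Sigma_{1}$ question: ask $\mathrm{LPO}$ whether $\exists t\ge s\,q_{t}(n)\neq q_{s}(n)$, coding the search as a sequence that becomes nonzero once a witness is found. Do this for all $(n,s)$ in parallel. The output then marks which stages are \emph{stable} for coordinate $n$, and $h$ reads $q(n)=q_{s}(n)$ at the first stable $s$, which exists by convergence. The name $q$ stays inside $h$ through the $\mathrm{id}_{V}$ component.

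The second equivalence relativizes this argument. $\mathrm{Lim}'$ is $\mathrm{Lim}$ with input given by a sequence converging to a name of the sequence, and $\widehat{\mathrm{LPO}'}$ is $\widehat{\mathrm{LPO}}$ with each instance given through a jump. The cleanest route is the general fact that jumps commute with parallelization: $\widehat{P'}\equiv_{W}\widehat{P}'$. For the $\le$ direction, a sequence of names each given by a convergent approximation is, after re-pairing coordinates, an approximation of the sequence of names. For the converse, the coordinatewise restriction of a convergent approximation of $\langle p_{n}\rangle$ is a convergent approximation of each $p_{n}$, and $\langle\cdot\rangle$ is a homeomorphism. Next I check that $P\le_{W}Q$ implies $P'\le_{W}Q'$: the outer map $k$ is applied stagewise to the approximating sequence, and continuity preserves convergence. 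Using this fact for the displayed maps only requires that those maps be continuous on the relevant domains, which they are, being single-valued continuous functions on Baire space. Combining these gives $\mathrm{Lim}'\equiv_{W}\widehat{\mathrm{LPO}}'\equiv_{W}\widehat{\mathrm{LPO}'}$.

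The main obstacle is the jump-monotonicity step. The outer map $h$ also receives the input through $\mathrm{id}_{V}$, and in the jumped setting it only gets a convergent approximation of the original input rather than the input itself. For the $\mathrm{Lim}\le_{W}\widehat{\mathrm{LPO}}$ reduction this is exactly the difficulty: $h$ needs $q_{s}(n)$. I would handle it by having $k$ pass, through the $\widehat{\mathrm{LPO}'}$ instances themselves, one extra family of questions that pin down each needed value $q_{s}(n)$ bit by bit. This is possible because $\mathrm{LPO}'$ decides $\Sigma_{2}$ and $\Pi_{2}$ facts about the approximation, and the statements "the limit approximant has $q_{s}(n)=c$" are of this form. $h$ then needs nothing from $V$.
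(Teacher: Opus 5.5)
The paper does not prove this proposition; it only cites \cite{Brattka2021a}. Your route is the standard one from that literature: direct reductions for $\mathrm{Lim}\equiv_{W}\widehat{\mathrm{LPO}}$, then commuting the jump with parallelization, then jumping the reductions. Your first equivalence is correct, and so is your identification $\widehat{\mathrm{LPO}'}\equiv\widehat{\mathrm{LPO}}'$.

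The one claim you must not keep is the general lemma ``$P\le_{W}Q$ implies $P'\le_{W}Q'$'', because it is false. For example, $\mathrm{id}_{\mathbb{N}^{\mathbb{N}}}\le_{W}c$ for any constant map $c$ (let $h$ read the input from $V$). But $\mathrm{id}'$ is just $\mathrm{Lim}$, which has no continuous realizer, while $c'$ is still constant. A reduction to a continuously realizable problem would give $\mathrm{Lim}$ a continuous realizer, so $\mathrm{Lim}\not\le_{W}c'$.

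So continuity of the displayed maps is not the only requirement. What jumps preserve are strong reductions whose forward realizer can be applied to the approximants (for instance, a total one): if $P\le_{sW}Q$ via such maps, then $P'\le_{sW}Q'$.

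Your last paragraph is exactly the repair needed, and it works. The reduction $\widehat{\mathrm{LPO}}\le_{W}\mathrm{Lim}$ is already strong with total $k$. In the other direction, letting extra $\mathrm{LPO}'$ instances decide ``$q_{s}(n)=c$'' removes $h$'s dependence on $V$.

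It is slightly cleaner to make that reduction strong before jumping. For each $(s,n,c)$, also feed $\mathrm{LPO}$ the sequence $0^{\omega}$ or $1^{\omega}$ according to whether $q_{s}(n)=c$, so that $h$ reads $q_{s}(n)$ directly off the output. This gives $\mathrm{Lim}\equiv_{sW}\widehat{\mathrm{LPO}}$ with total forward maps in both directions. The strong monotonicity lemma together with $\widehat{\mathrm{LPO}}'\equiv_{sW}\widehat{\mathrm{LPO}'}$ then yields the second equivalence at once.

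With the monotonicity lemma stated in this strong form, your proof is complete.
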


\section{\label{sec:Discretization-problem-associate Alexandrov Top}Discretization
problem associated to the Alexandrov topology of an ordinal}

\subsection{Rank function on a well-founded partial order }

The following was used by Hertling \cite{Hertling2020} to study continuous
Weihrauch degrees.
\begin{defn}
\label{def:Ordinal_Rank}Let $(A,\le)$ be a well founded order. We
define a function $\mathrm{rank}_{(A,\le)}:A\rightarrow\mathrm{ORD}$,
where $\mathrm{ORD}$ is the class of ordinals, whose order we designate
by $\le$, inductively as follows: 
\[
\mathrm{rank}_{(A,\le)}(a)=\sup\{\mathrm{rank}_{(A,\le)}(b)+1\mid b<a\}
\]
for any $a\in A$. (The base case is given by $\sup\emptyset=0$.) 

We define the rank of $(A,\le)$ as 
\[
\mathrm{rank}((A,\le))=\sup\{\mathrm{rank}_{(A,\le)}(a)+1\mid a\in A\}.
\]
\end{defn}

\begin{example}
The rank of an ordinal is that ordinal. The rank of the lattice of
marked quotients of $(\mathbb{Z}^{2},((1,0),(0,1)))$ is $\omega2+1$. 
\end{example}

\subsection{Alexandrov topology on a partially ordered set}

If $(X,\le)$ is a partially ordered set, we define a $\mathrm{T}_{0}$
topology on it by 
\[
U\text{ is open}\iff U\text{ is an upper set},
\]
i.e. $U$ is open iff $\forall x\in U,\forall y\in X,x\le y\implies y\in U$.
This defines \emph{the Alexandrov topology} associated to $\le$ \cite{Arenas1999}.
This topology has the property that arbitrary intersections of open
sets are open, and, in fact, any $\mathrm{T}_{0}$ topology with the
property that arbitrary intersections of open sets are open is the
Alexandrov topology of its specialization order. 

And we have the obvious properties:
\begin{prop}
\label{prop:order-preserving iff C0 alexandrov}The order preserving
maps between partially ordered sets coincide with the continuous functions
with respect to their associated Alexandrov topologies. \qed
\begin{prop}
\label{prop: ordinal-rank order is CB rank alexandrov top}The ordinal
rank of a well-founded order $(A,\le)$ is identical to the Cantor-Bendixson
rank of the Alexandrov topology induced by $(A,\le^{\mathrm{op}})$.
\qed
\end{prop}

\end{prop}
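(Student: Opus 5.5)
We take the Cantor--Bendixson rank of a scattered space $X$ to be the least ordinal $\alpha$ with $X^{(\alpha)}=\emptyset$, where $X^{(0)}=X$, $X^{(\alpha+1)}$ is $X^{(\alpha)}$ with its isolated points (in the subspace topology) removed, and $X^{(\lambda)}=\bigcap_{\alpha<\lambda}X^{(\alpha)}$ at limits. The plan is to identify the derived sets explicitly as
\[
A^{(\alpha)}=\{a\in A\mid \mathrm{rank}_{(A,\le)}(a)\ge\alpha\},
\]
and then read off the Cantor--Bendixson rank.

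\textbf{Step 1: isolated points are minimal elements.} The open sets of the Alexandrov topology of $(A,\le^{\mathrm{op}})$ are the upper sets for $\le^{\mathrm{op}}$, that is, the downward closed sets for $\le$. Hence the smallest open neighbourhood of $a$ is ${\downarrow}a=\{b\in A\mid b\le a\}$. For any subset $Y\subseteq A$ with the subspace topology, the smallest neighbourhood of $a\in Y$ is ${\downarrow}a\cap Y$. So $a$ is isolated in $Y$ if and only if $a$ is a minimal element of $(Y,\le)$. Since $\le$ is well founded, every nonempty $Y$ has a minimal element, so the space is scattered and the Cantor--Bendixson rank is defined. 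The derivative operation is therefore simply "remove the minimal elements".

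\textbf{Step 2: the formula for $A^{(\alpha)}$, by transfinite induction on $\alpha$.}
\begin{itemize}
\item The case $\alpha=0$ is trivial.
\item At limit stages, both sides are intersections of the earlier stages, so the formula passes to the limit.
\item At a successor stage, assume $A^{(\alpha)}=\{a\mid\mathrm{rank}(a)\ge\alpha\}$. We must show that the minimal elements of this set are exactly the $a$ with $\mathrm{rank}(a)=\alpha$.
\begin{itemize}
\item If $\mathrm{rank}(a)=\alpha$, every $b<a$ satisfies $\mathrm{rank}(b)+1\le\alpha$, so $b\notin A^{(\alpha)}$. Hence $a$ is minimal.
\item If $\mathrm{rank}(a)>\alpha$, then $\sup\{\mathrm{rank}(b)+1\mid b<a\}>\alpha$, so some $b<a$ has $\mathrm{rank}(b)+1>\alpha$, that is, $\mathrm{rank}(b)\ge\alpha$. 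Thus $b\in A^{(\alpha)}$ and $a$ is not minimal.
\end{itemize}
This gives $A^{(\alpha+1)}=\{a\mid\mathrm{rank}(a)\ge\alpha+1\}$.
\end{itemize}
The successor step is the only point needing care: one must unwind the supremum in Definition~\ref{def:Ordinal_Rank}.

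\textbf{Step 3: conclusion.} By Step 2, $A^{(\alpha)}=\emptyset$ if and only if $\mathrm{rank}(a)<\alpha$ for all $a\in A$, that is, if and only if $\mathrm{rank}(a)+1\le\alpha$ for all $a$. The least such $\alpha$ is
\[
\sup\{\mathrm{rank}(a)+1\mid a\in A\}=\mathrm{rank}((A,\le)).
\]
This is exactly the Cantor--Bendixson rank.

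If the paper instead uses the convention that the Cantor--Bendixson rank is the least $\alpha$ with $A^{(\alpha)}=A^{(\alpha+1)}$, nothing changes. By Step 1, every nonempty derived set has a minimal element, which is isolated, so the derived sets strictly decrease until they reach $\emptyset$. Hence the two conventions give the same ordinal.
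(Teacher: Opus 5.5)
\textbf{What you prove is correct.} Your argument that the ordinal rank equals the Cantor--Bendixson rank is correct and complete. The pieces all hold:
\begin{itemize}
\item isolated points of a subspace $Y$ are exactly the $\le$-minimal elements of $Y$;
\item the transfinite induction gives $A^{(\alpha)}=\{a\mid\mathrm{rank}_{(A,\le)}(a)\ge\alpha\}$, including the unwinding of the supremum at successor stages;
\item the computation of the least $\alpha$ with $A^{(\alpha)}=\emptyset$ is right, as is your remark on the two conventions.
\end{itemize}
The paper gives no proof of either assertion; both are marked \qed{} as obvious. So there is no alternative route to compare, and your argument is the natural one.

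\textbf{What is missing.} The statement also contains a first assertion: order-preserving maps between posets are exactly the maps that are continuous for the Alexandrov topologies. Your proposal never addresses it. The argument is short, but you need both directions.
\begin{itemize}
\item If $f$ is order preserving and $U$ is an upper set, take $x\in f^{-1}(U)$ and $x\le y$. Then $f(x)\le f(y)$ and $f(x)\in U$, so $f(y)\in U$. Hence $f^{-1}(U)$ is an upper set, i.e.\ open.
\item Conversely, let $f$ be continuous and $x\le y$. The principal up-set ${\uparrow}f(x)$ is open. So $f^{-1}({\uparrow}f(x))$ is an upper set containing $x$, hence it contains $y$, which gives $f(x)\le f(y)$.
\end{itemize}
Equivalently, $\le$ is recovered as the specialization order of its Alexandrov topology, and continuous maps preserve specialization orders. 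With this added, the proof covers the whole statement.
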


The following proposition will be useful to relate the ordinal rank
to Weihrauch reductions. 

A multi-function $f$ is called injective if $\forall x,y\in\mathrm{dom}(f),x\ne y\implies f(x)\cap f(y)=\emptyset$. 
\begin{prop}
\label{prop: ordinal rank and multi-injection}Let $(A,\le)$ be a
countable well founded order, and let $\alpha$ be its ordinal rank.
Equip $A$ and $\alpha$ with the Alexandrov topologies associated
to the respective reverse orders $\le^{\mathrm{op}}$. Then the function
$\mathrm{rank}_{(A,\le)}:A\rightarrow\alpha$ is continuous, and its
multi-inverse $\mathrm{rank}_{(A,\le)}^{-1}:\alpha\rightrightarrows A$
is an injective continuous multi-function. 
\end{prop}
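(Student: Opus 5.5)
Throughout, write $r=\mathrm{rank}_{(A,\le)}$. Note that $\alpha=\sup\{r(a)+1\}$, so $r$ takes values in $\alpha$. By Proposition \ref{prop:order-preserving iff C0 alexandrov}, continuity between Alexandrov topologies is the same as being order preserving. So the first claim reduces to checking that $r$ is monotone: if $b\le a$ then $r(b)\le r(a)$. This is immediate from Definition \ref{def:Ordinal_Rank}. If $b<a$, then $r(b)+1$ is one of the terms in the supremum defining $r(a)$, so in fact $r(b)<r(a)$, and the case $b=a$ is trivial. Monotonicity for $\le$ is the same as monotonicity for $\le^{\mathrm{op}}$, so $r$ is continuous.

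Injectivity of the multi-inverse $r^{-1}$ is set-theoretic. For $\beta\neq\gamma$ in $\alpha$, the sets $r^{-1}(\beta)$ and $r^{-1}(\gamma)$ are fibres of a function, hence disjoint. Its domain is the image of $r$. I would check that this image is all of $\alpha$, since that matters if we want the multi-function to be total. For any $\beta<\alpha$ there is some $a$ with $\beta\le r(a)$. By induction on $r(a)$, every ordinal $\le r(a)$ is attained by some $b\le a$: if $\beta<r(a)$, then $\beta<r(b)+1$ for some $b<a$, so $\beta\le r(b)$ and we recurse. Hence the image is all of $\alpha$.

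The real content is continuity of $r^{-1}$ as a multi-function, which requires working with representations. Since $A$ is countable and carries the Alexandrov topology of $\le^{\mathrm{op}}$, it is second countable, with basis the principal down-sets $\downarrow a=\{b\mid b\le a\}$ (these are the open sets for $\le^{\mathrm{op}}$). Likewise $\alpha$ has basis the sets $\{\gamma\mid\gamma\le\beta\}$. I would use the standard representations of Definition \ref{def:Kreitz-Weihrauch, Standard Rep }, which are admissible. A name of $\beta\in\alpha$ therefore enumerates $\{\delta<\alpha\mid\beta\le\delta\}$, and a name of $a\in A$ enumerates $\{c\mid a\le c\}$ (up to the numbering of the basis).

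The realizer I would build is a guessing procedure. Given a name of $\beta$, at each stage we have seen finitely many $\delta_1,\dots,\delta_n\ge\beta$. Let $m=\min\delta_i$, so that $\beta\le m$. The current guess is $\beta\le m$, and we want to commit to an output $a$ with $r(a)=\beta$. We do this by outputting, in finite stages, elements $c$ that will lie above the final $a$.

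The main obstacle is that the output must be consistent under refinement: when $m$ decreases, the elements already enumerated must still be above some $a'$ of smaller rank. To handle this, I would enumerate only information that is forced by the guesses so far, and choose outputs along a descending chain. Concretely, I would fix in advance a choice function $\beta\mapsto a_\beta$ with $r(a_\beta)=\beta$, chosen coherently so that $\beta<\gamma$ implies $a_\beta<a_\gamma$ along chains. Then at stage $m$ I output upward-closure elements of $a_m$. Such a coherent choice exists on chains by the descent argument above. Since the guesses $m$ form a weakly decreasing sequence of ordinals, they stabilize by well-foundedness, so only finitely many elements are output before stabilization at $m=\beta$. The final name enumerates $\uparrow a_\beta$ together with elements of $\uparrow a_{m'}$ for the earlier guesses $m'>\beta$, and these are contained in $\uparrow a_\beta$ because $a_\beta\le a_{m'}$.

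The delicate point is securing this coherence. The earlier guesses are not all comparable in a single chain in general; they depend on the input. So I would define the $a_\beta$ by recursion along the descending guesses themselves: take $a_{\text{new}}<a_{\text{old}}$ with the required rank, which exists by the descent argument. Continuity of the realizer then follows because each output symbol depends only on a finite prefix of the input.
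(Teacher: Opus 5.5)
Your argument is correct and is essentially the paper's proof. The paper's realizer sends a weakly descending sequence $(\beta_n)$ in $\alpha$ to the chain $a_{n+1}=\inf_{\succeq}\{a\mid \mathrm{rank}(a)=\beta_{n+1},\ a\le a_n\}$. That is exactly your input-dependent recursion along the running minimum of the guesses; you additionally spell out the translation between the enumeration-of-up-sets standard representation and the descending-sequence one, and you justify surjectivity of the rank, which the paper only asserts. One caution: the globally coherent section $\beta\mapsto a_\beta$ that you first propose need not exist (e.g.\ for a disjoint union of finite chains of unbounded length), so the proof rests on the recursion in your last paragraph, with a fixed choice rule such as a well-order on $A$ so that each output depends only on a finite prefix of the input.
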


\begin{proof}
The function $\mathrm{rank}_{(A,\le)}:A\rightarrow\alpha$ is order
preserving, thus continuous. It is onto by definition of $\alpha$. 

Its multi-inverse $\mathrm{rank}_{(A,\le)}^{-1}:\alpha\rightrightarrows A$
is clearly injective. 

We show that $\mathrm{rank}_{(A,\le)}^{-1}$ has a continuous realizer. 

The Alexandrov topology on $(A,\le^{\mathrm{op}})$ is second countable,
a countable basis being given by $(B_{a})_{a\in A}$, where $B_{a}=\{b\in A\mid b\le a\}$.
It follows that the standard representation (Definition \ref{def:Kreitz-Weihrauch, Standard Rep })
associated to the Alexandrov topology on $(A,\le^{\mathrm{op}})$
is given by: a point $a$ is represented by a weakly descending sequence
in $A$ which is eventually constant at $a$.

Consider a well ordering $\succeq$ on $A$ (distinct from $\le$
which need not be total). 

Then we define a continuous realizer of $\mathrm{rank}_{(A,\le)}^{-1}$
by the following: 

A weakly descending sequence $(\beta_{n})\in\alpha^{\mathbb{N}}$
is mapped to a weakly descending sequence $(a_{n})\in A^{\mathbb{N}}$
given by: 

\[
a_{0}=\inf_{\succeq}\{a\in A\mid\mathrm{rank}_{(A,\le)}(a)=\beta_{0}\}
\]
\[
a_{n+1}=\inf_{\succeq}\{a\in A\mid\mathrm{rank}_{(A,\le)}(a)=\beta_{n+1}\,\&\,a\le a_{n}\}.
\]
This clearly defines a continuous realizer to $\mathrm{rank}_{(A,\le)}^{-1}:\alpha\rightrightarrows A$.
\end{proof}

\subsection{\label{subsec:Discretization-problem}Discretization problem}

The following notion will be useful. Let $(X,\rho)$ be a countable
represented space. 
\begin{lem}
Let $f_{1}:X\rightarrow\mathbb{N}$ and $f_{2}:X\rightarrow\mathbb{N}$
be injections. Then $f_{1}\equiv_{W}f_{2}$. 
\end{lem}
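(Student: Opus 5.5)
By symmetry it suffices to show $f_{1}\le_{W}f_{2}$, i.e. to produce continuous pre- and post-processing maps witnessing that $f_{2}$ solves $f_{1}$. Here $\mathbb{N}$ carries the discrete topology, so any multi-function with values in $\mathbb{N}$, or defined on $\mathbb{N}$, has a continuous realizer as soon as the input is a discrete object.

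The plan is to pass the input $x$ unchanged to $f_{2}$ and repair the answer afterwards. Take $V=X$ with its representation $\rho$, and set $k(x)=(x,x)$. This is continuously realizable, since its realizer just duplicates the name $p$ of $x$ via the product pairing of Section \ref{subsec:Some-constructions-of RPZ}. Since $f_{2}$ is injective, it has a partial inverse $f_{2}^{-1}:\subseteq\mathbb{N}\rightarrow X$ defined on $\mathrm{Im}(f_{2})$. Define
\[
h:\subseteq X\times\mathbb{N}\rightarrow\mathbb{N},\qquad h(v,m)=f_{1}(f_{2}^{-1}(m)).
\]
Then $h(x,f_{2}(x))=f_{1}(x)$ for every $x\in X$, which is exactly $h\circ(\mathrm{id}_{X}\times f_{2})\circ k\sqsubseteq f_{1}$. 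In fact $h$ ignores its first argument, so the reduction is even strong.

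It remains to check that $h$ has a continuous realizer. A name of $(v,m)$ contains a name of the natural number $m$, which is read from finitely many symbols under the standard discrete representation of $\mathbb{N}$. The realizer reads $m$ and outputs the constant name of the number $f_{1}(f_{2}^{-1}(m))$. Every function from discrete $\mathbb{N}$ to $\mathbb{N}$ is continuous, so this realizer is continuous on its domain. I do not expect any topological property of $\rho$ to be needed, because all the discontinuity is absorbed by calling $f_{2}$ on the same input.

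The only point needing care is what ``injection'' means and how $\mathbb{N}$ is represented. I read it as an injective single-valued function with $\mathbb{N}$ discretely represented, as in the codomain of $\mathrm{Lim}_{\mathbb{N}}$. If $\mathbb{N}$ carried some other representation, the step ``read $m$ from a finite prefix'' would be the one to revisit. Under the discrete reading there is no real obstacle: injectivity is used only to make $f_{2}^{-1}$ well defined, and countability of $X$ is not even needed. Exchanging the roles of $f_{1}$ and $f_{2}$ gives $f_{2}\le_{W}f_{1}$, hence $f_{1}\equiv_{W}f_{2}$.
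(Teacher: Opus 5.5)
Your proof is correct and is exactly the routine argument the paper leaves as ``straightforward'': you pass the same input to $f_{2}$, then post-compose with $m\mapsto f_{1}(f_{2}^{-1}(m))$ on $\mathrm{Im}(f_{2})$, which is continuously realizable because $\mathbb{N}$ is discrete. Your further remarks are also accurate: the reduction is in fact strong, and it does not use countability of $X$.
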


\begin{proof}
Straightforward. 
\end{proof}
\begin{defn}
The \emph{discretization problem of a countable represented space
$(X,\rho)$, }denoted\emph{ $\mathrm{Dscr}((X,\rho))$}, is the equivalence
class of the problem $f\colon X\rightarrow\mathbb{N}$, for any injection
$f$. 
\end{defn}

And:
\begin{defn}
The \emph{discretization problem of a countable topological space
$X$, }denoted \emph{$\mathrm{Dscr}(X)$,} is defined as $\mathrm{Dscr}((X,\rho))$
for any admissible representation $\rho$ of $X$. 
\end{defn}

\subsection{Weakly descending sequence in an ordinal }

Let $\alpha$ be a countable ordinal. 

Fix a surjection $f\colon\mathbb{N}\rightarrow\alpha$. We consider
the following problem:
\[
\mathrm{Lim}_{\searrow(\alpha,f)}:\subseteq\mathbb{N}^{\mathbb{N}}\rightrightarrows\mathbb{N},
\]
\[
\mathrm{dom}(\mathrm{Lim}_{\searrow(\alpha,f)})=\{(u_{n})_{n\in\mathbb{N}}\in\mathbb{N}^{\mathbb{N}}\mid\forall n\in\mathbb{N},f(u_{n+1})\le f(u_{n})\},
\]
\[
\forall(u_{n})_{n\in\mathbb{N}}\in\mathrm{dom}(\mathrm{Lim}_{\searrow(\alpha,f)}),\mathrm{Lim}_{\searrow(\alpha,f)}((u_{n})_{n\in\mathbb{N}})=\{t\in\mathbb{N}\mid f(t)=\inf_{n\in\mathbb{N}}(f(u_{n}))\}.
\]
Thus the input to $\mathrm{Lim}_{\searrow(\alpha,f)}$ is a weakly
descending sequence in $\alpha$, and the output is its limit. The
above problem in fact does not depend on the chosen surjection $f$. 
\begin{lem}
If $f$ and $g$ are two surjections $f,g:\mathbb{N}\rightarrow\alpha$,
then $\mathrm{Lim}_{\searrow(\alpha,f)}\equiv_{W}\mathrm{Lim}_{\searrow(\alpha,g)}$. 
\end{lem}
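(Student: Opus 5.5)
By symmetry it suffices to prove $\mathrm{Lim}_{\searrow(\alpha,f)}\le_{W}\mathrm{Lim}_{\searrow(\alpha,g)}$; the reverse reduction follows by exchanging the roles of $f$ and $g$. The plan is to give an explicit Weihrauch reduction. Both maps on the outer side will be single-valued functions on Baire space, continuous because each output digit depends on only finitely many input digits. No passing through of the input is needed, so this is in fact a strong reduction.

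The first ingredient is a translation map $\sigma\colon\mathbb{N}\rightarrow\mathbb{N}$ with $g(\sigma(t))=f(t)$ for all $t$. It exists because $g$ is surjective: for each $t$, choose $\sigma(t)$ to be some $s$ with $g(s)=f(t)$, for instance the least such $s$. Symmetrically, there is $\tau\colon\mathbb{N}\rightarrow\mathbb{N}$ with $f(\tau(s))=g(s)$ for every $s$. In the continuous (non-effective) setting, any function $\mathbb{N}\rightarrow\mathbb{N}$ is allowed, so no computability of $\sigma$ or $\tau$ is required. This is the only step where one might worry, and it disappears precisely because we are working with continuous rather than computable reductions.

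The pre-processing map $k$ sends $(u_{n})_{n\in\mathbb{N}}$ to $(\sigma(u_{n}))_{n\in\mathbb{N}}$. It acts coordinatewise, hence is continuous on Baire space. If $(u_{n})$ lies in $\mathrm{dom}(\mathrm{Lim}_{\searrow(\alpha,f)})$, then
\[
g(\sigma(u_{n+1}))=f(u_{n+1})\le f(u_{n})=g(\sigma(u_{n})),
\]
so $(\sigma(u_{n}))$ lies in $\mathrm{dom}(\mathrm{Lim}_{\searrow(\alpha,g)})$. Moreover the two infima coincide, namely $\inf_{n}g(\sigma(u_{n}))=\inf_{n}f(u_{n})$.

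The post-processing map $h$ sends an output $s$ to $\tau(s)$. Since $h$ acts on $\mathbb{N}$, which is discrete, it is trivially continuous. Now let $s$ be any valid output of $\mathrm{Lim}_{\searrow(\alpha,g)}$ on $(\sigma(u_{n}))$. Then $g(s)=\inf_{n}f(u_{n})$. It follows that $f(\tau(s))=g(s)=\inf_{n}f(u_{n})$, so $\tau(s)$ is a valid output of $\mathrm{Lim}_{\searrow(\alpha,f)}$ on $(u_{n})$.

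This establishes $h\circ\mathrm{Lim}_{\searrow(\alpha,g)}\circ k\sqsubseteq\mathrm{Lim}_{\searrow(\alpha,f)}$, which is the required reduction. Exchanging $f$ and $g$ gives the converse reduction, and together these yield $\mathrm{Lim}_{\searrow(\alpha,f)}\equiv_{W}\mathrm{Lim}_{\searrow(\alpha,g)}$.
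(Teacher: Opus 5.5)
Your proof is correct. It is the routine argument the paper leaves unwritten (its proof reads ``left to the reader''): on the input side you translate names through a map $\sigma$ with $g\circ\sigma=f$, and on the output side you translate back through a map $\tau$ with $f\circ\tau=g$, and you rightly point out that $\sigma$ and $\tau$ need not be computable because the reduction only has to be continuous.
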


\begin{proof}
Left to the reader. 
\end{proof}
\begin{defn}
Define $\mathrm{Lim}_{\searrow\alpha}$ to be the equivalence class
of $\mathrm{Lim}_{\searrow(\alpha,f)}$ for any surjection $f\colon\mathbb{N}\rightarrow\alpha$. 
\end{defn}

\begin{lem}
The map $\alpha\mapsto\mathrm{Lim}_{\searrow\alpha}$ is an order
preserving injection from the set of countable ordinals to the continuous
Weihrauch degrees. 
\end{lem}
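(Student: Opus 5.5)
We have to prove two things: if $\alpha\le\beta$ then $\mathrm{Lim}_{\searrow\alpha}\le_{W}\mathrm{Lim}_{\searrow\beta}$, and if $\alpha<\beta$ then $\mathrm{Lim}_{\searrow\beta}\not\le_{W}\mathrm{Lim}_{\searrow\alpha}$. The second fact gives injectivity, and also shows that the map reflects the order.

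\emph{Monotonicity.} Fix surjections $f\colon\mathbb{N}\to\alpha$ and $g\colon\mathbb{N}\to\beta$.
\begin{itemize}
\item Since $\alpha\subseteq\beta$, we choose a map $t\colon\mathbb{N}\to\mathbb{N}$ with $g(t(n))=f(n)$.
\item Applying $t$ termwise sends an $f$-descending sequence to a $g$-descending sequence with the same infimum. This is the forward map $k$.
\item We take $h$ to be a fixed map $\mathbb{N}\to\mathbb{N}$ sending each $g$-code of an element of $\alpha$ to an $f$-code of the same element.
\item These maps are continuous, since they act termwise on $\mathbb{N}^{\mathbb{N}}$ or act on $\mathbb{N}$, which is discrete.
\end{itemize}

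\emph{Strictness.} By Proposition \ref{prop: ordinal-rank order is CB rank alexandrov top}, $\mathrm{Lim}_{\searrow\alpha}$ is the discretization problem $\mathrm{Dscr}$ of $\alpha$ with the Alexandrov topology of $\le^{\mathrm{op}}$. Indeed, a standard name of a point is a weakly descending sequence eventually constant at that point, and such names are interchangeable with arbitrary weakly descending sequences via running infima. So it suffices to show: if $\mathrm{Lim}_{\searrow\beta}\le_{W}\mathrm{Lim}_{\searrow\alpha}$, then $\beta\le\alpha$.

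We run a rank argument in the style of Hertling \cite{Hertling2020}. Suppose continuous $k,h$ witness the reduction. By induction on $\gamma<\beta$, we show that one can build a descending input sequence for $\mathrm{Lim}_{\searrow\beta}$ with limit $\gamma$, together with a finite prefix, such that the $\mathrm{Lim}_{\searrow\alpha}$-instance produced by $k$ is forced to have limit value of rank $\ge\gamma$ in $\alpha$. This yields a strictly increasing chain of forced values and bounds $\beta$ by $\alpha$. The mechanism is as follows.
\begin{itemize}
\item Let $x$ be a name of some $\gamma'<\beta$, and let $k$ produce a sequence with limit $\delta$. Once a prefix has been read so that $h$ commits to the answer $\gamma'$, we may extend the input to descend to any $\gamma''<\gamma'$.
\item The answer must then change. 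Since $h$ is continuous and has the original $x$-prefix as side information, a change in the final answer requires that the $\alpha$-instance change its limit.
\item A descending sequence can only move its limit strictly downward. So $\delta$ must strictly drop.
\end{itemize}
Iterating this along $\gamma$ gives an order-preserving assignment from $\beta$ into $\alpha$, with limit stages handled by taking suprema. This is exactly a Cantor--Bendixson rank comparison: the Alexandrov space of $\beta$ has Cantor--Bendixson rank $\beta$, and Weihrauch reducibility of discretization problems cannot decrease Cantor--Bendixson rank.

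The main obstacle is the limit stages of this induction, together with the fact that $h$ also sees the original input. The plan is to phrase the induction as the following claim, proved by transfinite induction on $\gamma$. For every finite prefix $\sigma$ of a descending $\beta$-sequence whose last value has rank $\ge\gamma$, there is an extension $\tau$ such that the $k$-image of $\tau$ has already descended to a value $\ge$ some $\delta$ of rank $\ge\gamma$ in $\alpha$, while an answer is still undetermined. Finally, take $\gamma=\beta$ when $\beta>\alpha$ to reach a contradiction.
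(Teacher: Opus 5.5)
Your monotonicity argument is correct and is what the paper calls straightforward. For strictness, the paper gives no argument of its own. It cites Hertling's level of discontinuity, an ordinal invariant monotone under $\le_{W}$, and notes that $\mathrm{Lim}_{\searrow\alpha}$ has level $\alpha$. You are in effect re-proving that monotonicity for this family, and the mechanism in your three bullets is the right one. However, the induction you wrap it in is mis-stated and, as written, does not close. The inductive step moves downward: from a \emph{given} name $x$ of $\gamma$ it produces, for each $\gamma''<\gamma$, a \emph{new} name $x'$ of $\gamma''$ whose $\alpha$-limit is strictly below that of $x$. You then need the inductive hypothesis for those particular $x'$. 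Your first formulation is existential (``one can build a descending input sequence with limit $\gamma$\ldots''), which says nothing about the $x'$ the mechanism hands you. Your final prefix claim only bounds the current value of a finite piece of the $\alpha$-instance, which says nothing about its eventual limit, and ``an answer is still undetermined'' is never defined. Likewise, the slogan that reductions between discretization problems cannot lower Cantor--Bendixson rank is precisely what has to be proved, so it cannot be invoked. Two smaller points. First, ``$\delta$ must strictly drop'' needs the prefix of $x$ to be long enough that the induced prefix of the $\alpha$-instance has already reached $\delta$; otherwise the new instance could have a larger limit. Second, the final step must use $\gamma=\alpha$, not $\gamma=\beta$, since $\beta$ has no names.

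The fix is to make the claim universal. With realizers $K,H$, write $K(x)=\langle v,y\rangle$ and $\phi(x)=\inf_n f(y_n)$. Prove by induction on $\gamma$ that $\phi(x)\ge\gamma$ for \emph{every} $x$ with $\inf_n g(x_n)=\gamma$. Given such $x$, fix $s$ with $f(s)=\phi(x)$, and fix a prefix $\sigma$ of $x$ long enough for two things. The output of $H$ on $\langle v,s\rangle$ must be determined by the part of $v$ that $K$ produces from $\sigma$, and the part of $y$ produced from $\sigma$ must have already reached $\phi(x)$. For $\gamma''<\gamma$, let $x'$ be $\sigma$ followed by a constant code of $\gamma''$. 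Then $\phi(x')\le\phi(x)$. Equality is impossible, because $s$ would then be a valid answer on which $H$ still outputs a code of $\gamma\ne\gamma''$. Combined with the hypothesis, this gives $\gamma''\le\phi(x')<\phi(x)$. Hence $\phi(x)\ge\gamma$, with zero, successor and limit $\gamma$ treated identically, so there is no limit-stage obstacle and no suprema are needed. If $\alpha<\beta$, a constant name of $\alpha$ yields $\phi(x)\ge\alpha$, contradicting $\phi(x)\in\alpha$. Once repaired, your route gives a self-contained proof where the paper relies on the cited monotonicity of Hertling's level under $\le_{W}$.
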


\begin{proof}
That $\alpha\mapsto\mathrm{Lim}_{\searrow\alpha}$ is order preserving
is straightforward. Injectivity follows directly from Hertling's notion
of level of discontinuity of a problem \cite{Hertling1996a}, which
is an ordinal invariant similar to the Cantor-Bendixson rank for functions,
and from the obvious fact that $\mathrm{Lim}_{\searrow\alpha}$ has
level $\alpha$. 
\end{proof}
\begin{example}
The problem $\mathrm{Lim}_{\searrow2}$ is equivalent to $\mathrm{LPO}$.
The problem $\mathrm{Lim}_{\searrow\omega}$ is $\mathrm{LPO}^{*}$,
the finite parallelization of $\mathrm{LPO}$. 
\end{example}

Notice the following easy proposition:
\begin{prop}
\label{prop: limit (ordinal problem) =00003D problem (limit ordinal)}Let
$\alpha$ be a countable limit ordinal. Then for any problem $P$,
$P\ge_{W}\mathrm{Lim}_{\searrow\alpha}$ if and only if $P\ge_{W}\mathrm{Lim}_{\searrow\beta}$
for every $\beta<\alpha$. 
\end{prop}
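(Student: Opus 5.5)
The forward direction is immediate. Recall that in the paper's convention $P\ge_{W}Q$ means that $Q$ is computed from $P$. By the lemma stating that $\alpha\mapsto\mathrm{Lim}_{\searrow\alpha}$ is order preserving, we have $\mathrm{Lim}_{\searrow\alpha}\ge_{W}\mathrm{Lim}_{\searrow\beta}$ for every $\beta<\alpha$. Continuous Weihrauch reducibility is transitive: compose the outer maps, and pass the extra information through the product $V\times V'$. So $P\ge_{W}\mathrm{Lim}_{\searrow\alpha}$ gives $P\ge_{W}\mathrm{Lim}_{\searrow\beta}$ for every $\beta<\alpha$.

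For the converse, the key observation is that an instance of $\mathrm{Lim}_{\searrow(\alpha,f)}$ is locally an instance of some $\mathrm{Lim}_{\searrow\beta}$ with $\beta<\alpha$. Let $(u_{n})$ be an input. Its first term $u_{0}$ is read from a finite prefix of the name. Since the sequence is weakly descending, every $f(u_{n})$ lies below $\beta(u_{0}):=f(u_{0})+1$. Because $\alpha$ is a limit ordinal, $\beta(u_{0})<\alpha$. Thus $\mathrm{Lim}_{\searrow\alpha}$ splits into countably many pieces, indexed discretely by $u_{0}\in\mathbb{N}$, and each piece is (a copy of) $\mathrm{Lim}_{\searrow\beta(u_{0})}$.

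The reduction proceeds as follows.
\begin{enumerate}
\item For each $\beta<\alpha$, fix a surjection $g_{\beta}\colon\mathbb{N}\rightarrow\beta$, which we may do since the problem does not depend on the chosen surjection. Also fix, by the hypothesis, continuously realizable $k_{\beta},h_{\beta}$ (with a space $V_{\beta}$) witnessing $P\ge_{W}\mathrm{Lim}_{\searrow(\beta,g_{\beta})}$.
\item Fix translation maps $\sigma_{\beta}\colon\{t\mid f(t)<\beta\}\rightarrow\mathbb{N}$ with $g_{\beta}(\sigma_{\beta}(t))=f(t)$, and $\tau_{\beta}\colon\mathbb{N}\rightarrow\mathbb{N}$ with $f(\tau_{\beta}(s))=g_{\beta}(s)$. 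These are arbitrary functions on discrete sets, hence continuous.
\item Define $k$ on input $(u_{n})$: read $u_{0}$, set $\beta=f(u_{0})+1$, and output the pair consisting of $(u_{0},v)$ together with an element of $k_{\beta}((\sigma_{\beta}(u_{n}))_{n})$, where $(v,\cdot)$ is the $V_{\beta}$-component of that element. The intermediate space is $V=\bigsqcup_{m}V_{f(m)+1}$, represented as a coproduct, with the tag $m$ given by the first natural number of the name.
\item Define $h$: given $((m,v),y)$, apply $h_{\beta}$ with $\beta=f(m)+1$ to $(v,y)$, and then apply $\tau_{\beta}$ to the output.
\end{enumerate}
A realizer is obtained by first reading the finite prefix carrying $u_{0}$ (respectively $m$), and then running the fixed realizer for index $\beta$. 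Since the index is determined by a finite prefix, this gives a continuous realizer on the (clopen-partitioned) domain.

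Correctness is then direct. The sequence $(\sigma_{\beta}(u_{n}))$ is a valid weakly descending input for $\mathrm{Lim}_{\searrow(\beta,g_{\beta})}$, and it has the same infimum, so $h_{\beta}$ returns a $g_{\beta}$-code of that infimum. Applying $\tau_{\beta}$ converts it into an $f$-code of the infimum.

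The only delicate point is checking that this countable case split is legitimate inside the definition of Weihrauch reduction, that is, that the coproduct space $V$ with its tag is a represented space and that $k$ and $h$ are continuously realizable. This holds because the selection depends only on a discrete datum available from a finite prefix of the input name, and that datum is passed to $h$ through $V$.
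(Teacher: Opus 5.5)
Your proposal is correct and follows the paper's own proof: read the first term, take $\beta=f(u_{0})+1<\alpha$ (using that $\alpha$ is a limit ordinal), and run the fixed witnesses $k_{\beta},h_{\beta}$ for $P\ge_{W}\mathrm{Lim}_{\searrow\beta}$. The paper leaves implicit the surjection translations $\sigma_{\beta},\tau_{\beta}$ and the tagged intermediate space $V$, and you spell these out correctly.
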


\begin{proof}
The direct implication is straightforward. For the converse, notice
that an instance of $\mathrm{Lim}_{\searrow\alpha}$ is a weakly descending
sequence whose first term necessarily belongs to some $\beta<\alpha$.
Fix the functions $k_{\beta}$ and $h_{\beta}$ that witness the reduction
$P\ge_{W}\mathrm{Lim}_{\searrow\beta}$ for each ordinal $\beta\in\alpha$.
The total reduction for $P\ge_{W}\mathrm{Lim}_{\searrow\alpha}$ is
then: given $(u_{n})$, a weakly descending sequence in $\alpha$,
let $\beta$ be $u_{0}+1$, apply the reduction $k_{\beta}$ followed
by $P$ followed by $h_{\beta}$, this gives the limit of the sequence
$(u_{n})$. 
\end{proof}

\subsection{Discretization problems and limit in a countable ordinal. }

The problems $\mathrm{Lim}_{\searrow\alpha}$ are related to discretization
problems: 
\begin{prop}
The problem $\mathrm{Lim}_{\searrow\alpha}$ is the discretization
problem associated to the Alexandrov topology on $\alpha^{\mathrm{op}}$. 
\end{prop}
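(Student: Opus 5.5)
We need to show that $\mathrm{Lim}_{\searrow\alpha}$ is Weihrauch equivalent to $\mathrm{Dscr}(\alpha,\text{Alexandrov on }\le^{\mathrm{op}})$, i.e.\ to the problem $\iota\colon\alpha\to\mathbb{N}$ given by an injection $\iota$, where $\alpha$ carries an admissible representation of the Alexandrov topology of the reverse order. The plan is to identify this representation explicitly and then observe that the two problems are, up to coding, literally the same.

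\emph{Step 1: identify an admissible representation.} As in the proof of Proposition \ref{prop: ordinal rank and multi-injection}, the Alexandrov topology on $(\alpha,\le^{\mathrm{op}})$ has open sets the downward closed sets of $\alpha$. It has the countable basis $B_\beta=\{\gamma\in\alpha\mid\gamma\le\beta\}$, $\beta\in\alpha$; these are the minimal open neighbourhoods. It is $\mathrm{T}_0$ and second countable, since $\alpha$ is countable. By the theorem of Kreitz--Weihrauch, the standard representation $\rho$ associated to $(B_{f(n)})_{n}$, for a fixed surjection $f\colon\mathbb{N}\to\alpha$, is admissible. A $\rho$-name of $\gamma$ enumerates $\{n\mid \gamma\le f(n)\}$.

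We then show that $\rho$ is continuously equivalent to the representation $\sigma$ whose names of $\gamma$ are the sequences $(u_n)$ with $f(u_{n+1})\le f(u_n)$ and $\inf_n f(u_n)=\gamma$.
\begin{itemize}
\item From a $\rho$-name $(m_k)$, output $u_n$ equal to a code of $\min\{f(m_0),\dots,f(m_n)\}$. Computing this minimum only needs the order relation on codes, which is a fixed oracle and hence harmless for continuous reductions. The sequence is descending, and its infimum is $\gamma$: the minimum $\gamma$ of the enumerated set is attained, because $\gamma=f(n)$ for some $n$, and that $n$ is enumerated.
\item Conversely, from a descending $(u_n)$ with limit $\gamma$, enumerate all $m$ with $f(m)\ge f(u_n)$ for some $n$. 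Since the sequence is eventually equal to $\gamma$, this is exactly $\{m\mid f(m)\ge\gamma\}$. (Padding handles the requirement that names be total sequences.)
\end{itemize}
Well-foundedness is what makes every descending sequence eventually constant, so the infimum is attained. This is the key point making $\sigma$-names correspond exactly to points.

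\emph{Step 2: compare the problems.} With $\sigma$ on the input side, $\mathrm{Dscr}$ is the problem sending a $\sigma$-name of $\gamma$ to $\iota(\gamma)$. The problem $\mathrm{Lim}_{\searrow(\alpha,f)}$ sends the same input to any $t$ with $f(t)=\gamma$.
\begin{itemize}
\item For $\mathrm{Lim}_{\searrow(\alpha,f)}\le_W\mathrm{Dscr}$: pass the input through unchanged, and post-compose the output $\iota(\gamma)$ with a fixed function $\mathbb{N}\to\mathbb{N}$ choosing an $f$-preimage of $\iota^{-1}(\cdot)$. This function is continuous because $\mathbb{N}$ is discrete.
\item For the converse: post-compose the output $t$ with $t\mapsto\iota(f(t))$.
\end{itemize}
Using the preceding lemma that the choice of injection is irrelevant, this gives the equivalence.

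The only delicate point is Step 1: checking that the descending-sequence representation really is admissible, i.e.\ equivalent to the standard one. I would handle it as sketched, directly via the two translations, relying on the attainment of infima in well-orders.
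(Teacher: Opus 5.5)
Your proof is correct and takes the same route as the paper. The paper's one-line argument is exactly that describing a point of $\alpha$ by a weakly descending sequence is the Kreitz--Weihrauch standard representation for the Alexandrov topology on $\alpha^{\mathrm{op}}$. You make this explicit by writing the two continuous translations between the neighbourhood-enumeration representation and the descending-sequence representation (these are equivalent rather than literally identical). You then observe that the two problems differ only by a fixed recoding of the discrete output in $\mathbb{N}$.
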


\begin{proof}
It suffices to note that the description of an element of $\alpha$
by a weakly descending sequence in $\alpha$ is exactly the standard
Kreitz-Weihrauch representation associated to the Alexandrov topology
on $\alpha^{\mathrm{op}}$ (see Definition \ref{def:Kreitz-Weihrauch, Standard Rep }). 
\end{proof}
Note also: 
\begin{prop}
\label{prop: CB rank upper bound discretization }Let $(X,\rho)$
be a countable represented space without condensation, and let $\alpha$
be its Cantor-Bendixson rank. Then \emph{$\mathrm{Dscr}((X,\rho))\le_{W}\mathrm{Lim}_{\searrow\alpha}$.}
\end{prop}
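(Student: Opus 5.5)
The idea is to realize the discretization of $X$ by tracking the Cantor--Bendixson rank of the input point along the stages of its name. Since $X$ is countable without condensation (perfect kernel empty), the derivative sequence $X=X^{(0)}\supseteq X^{(1)}\supseteq\cdots$ reaches $\emptyset$ at stage $\alpha$. Every $x\in X$ then has a well-defined rank $r(x)<\alpha$: the unique $\beta$ with $x\in X^{(\beta)}\setminus X^{(\beta+1)}$, i.e.\ $x$ is isolated in $X^{(r(x))}$. I will argue for the final topology of $\rho$, assuming $\rho$ admissible (or otherwise working with a countable basis of open sets read off from names).

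The reduction runs as follows. Given a $\rho$-name $p$ of $x$, I produce continuously a weakly descending sequence $(\beta_n)$ in $\alpha$, together with auxiliary data, as follows. Enumerate $X=\{x_0,x_1,\dots\}$. For each $y\in X$ with $r(y)=\beta$, fix a basic open neighbourhood $U_y$ with
\[
U_y\cap X^{(\beta)}=\{y\}.
\]
Then $U_y\setminus\{y\}\subseteq X\setminus X^{(\beta)}$, so every point of $U_y$ other than $y$ has rank $<\beta$.

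At stage $n$, using the finite prefix of $p$ read so far, I consider the set of $y$ (among $x_0,\dots,x_n$, say) such that the prefix already certifies $x\in U_y$. The output $\beta_n$ is the minimum of $r(y)$ over these certified $y$, and $\beta_n=\beta_{n-1}$ if no new certificate appears; before the first certificate I output a default value. To make sure some certificate eventually appears, I take the first term to be an upper bound. Concretely, let $\beta_{\max}=\sup_n r(x_n)$, so $r(x)\le\beta_{\max}$ for every $x$. If $\alpha$ is a successor I start at its predecessor; if $\alpha$ is a limit, the first certified $y$ supplies a start below $\alpha$. In either case I can start the sequence only once the first certificate appears, simply repeating it backwards. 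The sequence is descending by construction, since each step takes a minimum over a growing set.

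For correctness, note that $x\in U_x$ is eventually certified, so the limit is at most $r(x)$. Conversely, if $x\in U_y$ then either $x=y$ or $r(x)<r(y)$, so $r(y)\ge r(x)$ always, and the limit is exactly $r(x)$. Moreover, once the limit value $\beta$ is attained, some certified $y$ with $r(y)=\beta$ and $x\in U_y$ must satisfy $y=x$, because $y\neq x$ would force $r(x)<\beta$. To exploit this I output, alongside $\beta_n$, the index of the least certified $y$ realising the minimum. The Weihrauch reduction passes $p$ through to the outer map $h$. When $h$ receives a limit $\beta$ from $\mathrm{Lim}_{\searrow\alpha}$, it searches $p$ for a $y$ with $r(y)=\beta$ and $x\in U_y$ certified; this search terminates, and the $y$ it finds must equal $x$. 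It outputs the index of $y$, which computes the injection $X\to\mathbb{N}$.

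The main obstacle is the requirement that certificates be finite. I need that membership $x\in U_y$ is semi-decidable from names, which holds because $U_y$ is open and $\rho$ is admissible (equivalently, it is equivalent to the standard representation). A secondary point is handling the start of the sequence when $\alpha$ is a limit. This is resolved by the fact that $\mathrm{Lim}_{\searrow\alpha}$ only needs some descending sequence, so the delayed start by repetition is harmless.
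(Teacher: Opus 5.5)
Your argument is correct. It is the natural way to fill in what the paper dismisses as ``straightforward'': the minimum rank over certified isolating neighbourhoods $U_y$ descends to $r(x)$, and the passed-through name lets $h$ find the unique $y$ with $r(y)=r(x)$ and $x\in U_y$, namely $x$ itself. Admissibility and a basis are unnecessary, since any open $U_y$ with $U_y\cap X^{(r(y))}=\{y\}$ works and openness in the final topology already makes $\rho^{-1}(U_y)$ open in $\mathrm{dom}(\rho)$, which is all your finite-prefix certificates need; likewise the auxiliary index cannot go through $\mathrm{Lim}_{\searrow\alpha}$ but is not needed, and delaying output until the first certificate appears handles the start.
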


\begin{proof}
Straightforward. 
\end{proof}

\section{\label{sec:Different-representations-of marked groups}Different
representations of marked groups}

We introduce different representations and their associated final
topologies. 

\subsection{Space of marked groups }

As explained in Section \ref{sec:Marked-groups}, a $k$-marked group
can be seen as a normal subgroup of $\mathbb{F}_{k}$. For each $k$,
we fix a bijection $\theta_{k}:\mathbb{N}\rightarrow\mathbb{F}_{k}$.
We can then identify the power set of $\mathbb{F}_{k}$ with $\{0,1\}^{\mathbb{N}}$.
The set of $k$-marked groups can thus be seen as a subset of $\{0,1\}^{\mathbb{N}}$.
By equipping $\{0,1\}^{\mathbb{N}}$ with the Cantor space topology,
the set of $k$-marked groups inherits the subset topology which makes
it a compact Polish space, called\emph{ the space of $k$-marked groups}.
We define \emph{the space of marked groups} by taking the disjoint
union topology of the spaces of $k$-marked groups. 

This topology is associated to the admissible representation $\rho_{\mathrm{WP}}$,
defined by:
\[
\mathrm{dom}(\rho_{\mathrm{WP}})=\{(u_{n})_{n\in\mathbb{N}}\in\{0,1\}^{\mathbb{N}}\mid\{\theta_{u_{0}}(i)\mid u_{i}=1\}\mathrm{\text{ is a normal subgroup of }}\mathbb{F}_{u_{0}}\};
\]
\[
\rho_{\mathrm{WP}}((u_{n})_{n\in\mathbb{N}})=\mathbb{F}_{u_{0}}/\{\theta_{u_{0}}(i)\mid u_{i}=1\}.
\]
In other words: the name of a marked group encodes its word problem.
Hence the subscript WP: $\rho_{\mathrm{WP}}$ is called the \emph{word
problem representation. }

We denote by $\mathcal{G}_{\mathrm{WP}}$ the space of marked groups,
whose topology is associated to the admissible representation $\rho_{\mathrm{WP}}$
. 

\subsection{Scott topology }

Each $k$-marked group can be seen as an element of $\{0,1\}^{\mathbb{N}}$.
If, instead of considering the product topology associated to the
discrete topology on $\{0,1\}$, we take the product topology associated
to the Sierpi\'{n}ski topology on $\{0,1\}$, with $\{1\}$ as only
non-trivial open set, we obtain what is known as \emph{the Scott topology
}on $\{0,1\}^{\mathbb{N}}$. We denote by $\mathbb{S}^{\mathbb{N}}$
the set of binary sequences equipped with the Scott topology. 

A basis for this topology is given by sets of the form $\Omega_{A}=\{(u_{i})_{i\in\mathbb{N}}\in\{0,1\}^{\mathbb{N}}\mid\forall i\in A,\,u_{i}=1\}$
for $A$ a finite subset of $\mathbb{N}$. In terms of marked groups,
the basic open sets have the forms ``those marked groups that satisfy
the relations $\{r_{1},...,r_{n}\}$'', for any finite set of relations
$\{r_{1},...,r_{n}\}$. 

The Scott topology is the one that appears naturally when we consider
the representation where groups are given by presentations. We now
give three equivalent ways of defining this represenation. 

Again a sequence of free groups equipped with bases $(\mathbb{F}_{k},S_{k})$
is fixed, and $\theta_{k}:\mathbb{N}\rightarrow\mathbb{F}_{k}$ is
the short-lex bijection induced by the basis $S_{k}$. 

The \emph{presentation representation} $\rho_{\mathrm{Pres}}$ is
given by 
\begin{align*}
\rho_{\mathrm{Pres}}:\mathbb{N}^{\mathbb{N}} & \rightarrow\mathcal{G}\\
(u_{n})_{n\in\mathbb{N}} & \mapsto\langle S_{u_{0}}\mid\theta_{u_{0}}(u_{i}),i\ge1\rangle.
\end{align*}

The \emph{full presentation representation} $\rho_{\mathrm{FullPres}}$
only uses maximal presentations, i.e. presentations where all the
relations of a marked groups are listed. It is simply the restriction
of the above representation to sequences that define a normal subgroup:
\[
\mathrm{dom}(\rho_{\mathrm{FullPres}})=\{(u_{n})_{n\in\mathbb{N}}\in\mathbb{N}^{\mathbb{N}}\mid\{\theta_{u_{0}}(u_{i}),i\ge1\}\mathrm{\text{ is a normal subgroup of }}\mathbb{F}_{u_{0}}\};
\]
\begin{align*}
\rho_{\mathrm{FullPres}}:\subseteq\mathbb{N}^{\mathbb{N}} & \rightarrow\mathcal{G}\\
(u_{n})_{n\in\mathbb{N}} & \mapsto\langle S_{u_{0}}\mid\theta_{u_{0}}(u_{i}),i\ge1\rangle.
\end{align*}

The \emph{Sierpi\'{n}ski word-problem representation} $\rho_{\mathbb{S}\mathrm{WP}}$
is similar to $\rho_{\mathrm{WP}}$ except that $\{0,1\}$ is equipped
with the Sierpi\'{n}ski topology: the set $\mathcal{G}^{k}$ of $k$-marked
groups is seen as a subset of $\mathbb{S}^{\mathbb{N}}$ and equipped
with the induced topology and representation. A formal definition
is as follows. Let $c_{\mathbb{S}^{\mathbb{N}}}$ be the admissible
representation of $\mathbb{S}^{\mathbb{N}}$, defined as the product
representation of $c_{\mathbb{S}}$ (see Section \ref{subsec:Some-constructions-of RPZ}).
We define $\rho_{\mathbb{S}\mathrm{WP}}^{k}$ to be the restriction
of $c_{\mathbb{S}^{\mathbb{N}}}$ to sequences that define a marked
group. 

We then take $\rho_{\mathbb{S}WP}$ as the disjoint union representation:
\[
\mathrm{dom}(\rho_{\mathbb{S}\mathrm{WP}})=\{(u_{n})_{n\in\mathbb{N}}\in\mathbb{N}^{\mathbb{N}}\mid(u_{n})_{n\ge1}\in\mathrm{dom}(\rho_{\mathbb{S}\mathrm{WP}}^{u_{0}})\};
\]
\[
\forall(u_{n})_{n\in\mathbb{N}}\in\mathrm{dom}(\rho_{\mathbb{S}\mathrm{WP}}),\,\rho_{\mathbb{S}\mathrm{WP}}((u_{n})_{n\in\mathbb{N}})=\rho_{\mathbb{S}\mathrm{WP}}^{u_{0}}((u_{n})_{n\ge1}).
\]

By Lemma \ref{lem:The-representations-Pres and Sier are equiv} and
Corollary \ref{cor: Pres and FullPres} proved below, $\rho_{\mathrm{Pres}}\equiv\rho_{\mathrm{FullPres}}\equiv\rho_{\mathbb{S}\mathrm{WP}}$.
By the following lemma, these three representations are admissible.
\begin{lem}
The representation $\rho_{\mathrm{FullPres}}$ is admissible for the
Scott topology on $\mathcal{G}$. 
\end{lem}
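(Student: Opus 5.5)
We check the two conditions in the definition of admissibility: that $\rho_{\mathrm{FullPres}}$ is continuous for the Scott topology, and that every continuous representation of $\mathcal{G}$ (with the Scott topology) translates to it. The natural route is to compare $\rho_{\mathrm{FullPres}}$ with a standard representation in the sense of Definition \ref{def:Kreitz-Weihrauch, Standard Rep }. Since all standard representations are admissible and continuously equivalent, it suffices to show that $\rho_{\mathrm{FullPres}}$ is continuously equivalent to one of them.

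We fix as numbered basis the sets $(\{k\},\Omega_{A})$, where $k$ is a number of generators and $A\subseteq\mathbb{F}_{k}$ is finite. The set $(\{k\},\Omega_{A})$ consists of the $k$-marked groups satisfying all relations in $A$. This is a countable basis of the Scott topology on the disjoint union $\mathcal{G}$. Let $\delta$ be the associated standard representation: a $\delta$-name of $(G,S)$ enumerates exactly the pairs $(k,A)$ with $k$ the number of generators of $G$ and $A$ a finite set of relations of $(G,S)$.

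\emph{From $\rho_{\mathrm{FullPres}}$ to $\delta$.} Given a name $(u_{n})$, we read $k=u_{0}$. We then output, in a dovetailed enumeration, all pairs $(k,A)$ with $A$ a finite subset of $\{\theta_{k}(u_{i}),i\ge1\}$. Because the presentation is full, these are exactly the finite sets of relations of the group. Each output symbol depends on a finite prefix of $(u_{n})$, so this map is continuous. This gives $\rho_{\mathrm{FullPres}}\le\delta$, and continuity of $\rho_{\mathrm{FullPres}}$ follows by composing with the continuous map $\delta$.

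\emph{From $\delta$ to $\rho_{\mathrm{FullPres}}$.} Given a $\delta$-name, we read $k$ from the first listed pair; every listed pair has the same first coordinate, and at least one pair, namely $(k,\emptyset)$, is eventually listed. We wait for that first pair and output $u_{0}=k$. We then output, as $u_{1},u_{2},\dots$, the elements of all listed sets $A$. Since every singleton $\{r\}$ with $r$ a relation is listed, the set output is exactly the normal subgroup of relations. The output must be an infinite sequence, but this is automatic because every normal subgroup of $\mathbb{F}_{k}$ contains $1$, and we may repeat it as padding. The map is again continuous, since each output entry depends on a finite portion of the input. This gives $\delta\le\rho_{\mathrm{FullPres}}$.

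The only delicate point is the bookkeeping that makes these maps genuine partial continuous functions on Baire space. This concerns reading $k$ before emitting anything, and handling the disjoint-union structure so that names for different $k$ are not mixed. Neither issue is a real obstacle. Combining the two translations, $\rho_{\mathrm{FullPres}}$ is equivalent to the admissible representation $\delta$, so it is itself admissible for the Scott topology, and its final topology is the Scott topology.
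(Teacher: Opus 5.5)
Your proposal is correct and follows the same approach as the paper. The paper's proof just observes that the Kreitz--Weihrauch standard representation for the basis ``marked groups satisfying a finite set of relations'' lists all finite sets of relations of the group, and that this is clearly equivalent to $\rho_{\mathrm{FullPres}}$; you carry out that equivalence explicitly through the two translations, and these are sound.
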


\begin{proof}
The Kreitz-Weihrauch standard representation associated to the natural
basis for the Scott topology on $\mathcal{G}$ is the following: the
name of a group is a list that contains all finite sets of relations
that this group satisfies. But this is clearly equivalent to the representation
$\rho_{\mathrm{FullPres}}$. 
\end{proof}
We denote by $\mathcal{G}_{\mathrm{Pres}}$ the set of marked groups
equipped with the representation $\rho_{\mathrm{Pres}}$ whose final
topology is the Scott topology on the lattice of marked groups. 

\section{Moving between representations}

\subsection{Different representations of $\mathcal{G}_{\mathrm{Pres}}$ }
\begin{lem}
\label{lem:The-representations-Pres and Sier are equiv}The representations
$\rho_{\mathrm{Pres}}$ and $\rho_{\mathbb{S}\mathrm{WP}}$ are equivalent. 
\end{lem}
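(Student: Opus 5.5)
We construct explicit continuous realizers of the identity in both directions. Since the first symbol of a name records the number $k$ of generators in both representations, we may fix $k$ and work inside $\mathcal{G}^{k}$, with $\theta_{k}$ the fixed enumeration of $\mathbb{F}_{k}$.

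For $\rho_{\mathbb{S}\mathrm{WP}}\le\rho_{\mathrm{Pres}}$, take a $\rho_{\mathbb{S}\mathrm{WP}}$-name of $(G,S)$. It is a $c_{\mathbb{S}^{\mathbb{N}}}$-name of the characteristic sequence of the set $N$ of relations of $G$. For each index $i$ it supplies a $c_{\mathbb{S}}$-name, that is, a sequence which is non-zero at some position iff $\theta_{k}(i)\in N$.
\begin{itemize}
\item We dovetail over all pairs $(i,m)$.
\item Whenever the $i$-th component shows a non-zero entry at position $m$, we output the relation $\theta_{k}(i)$.
\item At every other step we output a padding relation, the trivial word $1$, which is harmless.
\end{itemize}
Each output symbol depends only on a finite prefix of the input, so the map is continuous. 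The set of relations output is exactly $N\cup\{1\}$. Since $N$ is a normal subgroup, $\langle S\mid N\rangle=(G,S)$.

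For $\rho_{\mathrm{Pres}}\le\rho_{\mathbb{S}\mathrm{WP}}$, we are given a sequence of relators $r_{1},r_{2},\dots$ with $(G,S)=\langle S\mid r_{i},i\ge1\rangle$. The normal closure $N=\langle\langle r_{i}\rangle\rangle$ is the union over $n$ of the sets
\[
N_{n}=\Bigl\{\prod_{j\le n}g_{j}r_{i_{j}}^{\pm1}g_{j}^{-1}\ \Big|\ i_{j}\le n,\ |g_{j}|\le n\Bigr\},
\]
reduced in $\mathbb{F}_{k}$. Each $N_{n}$ is a finite set computable from $r_{1},\dots,r_{n}$.

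We build the $c_{\mathbb{S}^{\mathbb{N}}}$-name one index at a time. The $i$-th component sequence has value $1$ at step $n$ if $\theta_{k}(i)\in N_{n}$, and $0$ otherwise. This component is non-zero somewhere iff $\theta_{k}(i)\in N$, so the output is a $\rho_{\mathbb{S}\mathrm{WP}}$-name of $(G,S)$. The output is built through the pairing of Section \ref{subsec:Representation-of-an infinite product}. Each output entry depends on only finitely many $r_{j}$, so this map is continuous too.

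The only point needing care is the semi-decision step: membership in the normal closure is merely enumerable, not decidable. This is exactly why the Sierpiński topology, rather than the Cantor one, appears on the $\rho_{\mathbb{S}\mathrm{WP}}$ side. Both translations are thus routine, and the equivalence follows.
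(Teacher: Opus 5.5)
Your proof is correct and follows the paper's argument in both directions. The paper likewise sets the $(i,j)$ entry to $1$ when $\theta_k(i)$ is a product of at most $j$ conjugates of $r_0,\dots,r_j$ with conjugators of length at most $j$, and conversely outputs $r_{\langle i,j\rangle}=\theta_k(i)$ or the trivial word according to the $(i,j)$ entry. Your explicit use of $r_{i_j}^{\pm1}$ is slightly more careful than the paper's wording, since products of conjugates of the $r_i$ alone need not exhaust the normal closure.
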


\begin{proof}
We first show that $\rho_{\mathrm{Pres}}\le\rho_{\mathbb{S}\mathrm{WP}}$. 

Recall that for each $k>0$, $\theta_{k}:\mathbb{N}\rightarrow\mathbb{F}_{k}$
is a bijection between $\mathbb{N}$ and a fixed rank $k$ free group.

Given a presentation $\langle s_{0},...,s_{k}\,\mid\,r_{0},r_{1},r_{2},...\rangle$
for a marked group $(G,S)$, we define a double sequence $(t_{i,j})_{(i,j)\in\mathbb{N}^{2}}\in\{0,1\}^{\mathbb{N}^{2}}$
by the following: $t_{i,j}=1$ if $\theta_{k}(i)$ can be written
as a product of at most $j$ conjugates of the elements $\{r_{0},...,r_{j}\}$,
the conjugators having themselves length at most $j$. Otherwise $t_{i,j}=0$. 

It is clear that $(t_{i,j})_{(i,j)\in\mathbb{N}^{2}}$ depends continuously
on the presentation $\langle s_{0},...,s_{k}\,\mid\,r_{0},r_{1},r_{2},...\rangle$.
Furthermore, $\theta_{k}(i)=1$ in $(G,S)$ if and only if there exists
some $j$ such that $t_{i,j}=1$, and otherwise $t_{i,j}=0$ for all
$j\in\mathbb{N}$. Thus $(t_{i,j})_{(i,j)\in\mathbb{N}^{2}}$ is a
$\rho_{\mathbb{S}\mathrm{WP}}$-name of $(G,S)$. 

We now show that $\rho_{\mathbb{S}\mathrm{WP}}\le\rho_{\mathrm{Pres}}$. 

Consider a marked group $(G,S)$ given by a double sequence $(t_{i,j})_{(i,j)\in\mathbb{N}^{2}}\in\{0,1\}^{\mathbb{N}^{2}}$
that satisfies $\theta_{k}(i)=1$ in $(G,S)$ if and only if $\exists j\in\mathbb{N},\,t_{i,j}=1$.
Consider Cantor's pairing function $(i,j)\mapsto\langle i,j\rangle$.
We define a presentation $\langle s_{0},...,s_{k}\,\mid\,r_{0},r_{1},r_{2},...\rangle$
for $(G,S)$ by $r_{\langle i,j\rangle}=\theta_{k}(i)$ if $t_{i,j}=1$,
otherwise $r_{\langle i,j\rangle}=1_{\mathbb{F}_{k}}$. It is clear
that this indeed defines a presentation of $(G,S)$, and the sequence
$(r_{i})_{i\in\mathbb{N}}$ depends continuously on $(t_{i,j})_{(i,j)\in\mathbb{N}^{2}}$.
\end{proof}
\begin{cor}
\label{cor: Pres and FullPres}The representations $\rho_{\mathrm{Pres}}$
and $\rho_{\mathrm{FullPres}}$ are equivalent. 
\end{cor}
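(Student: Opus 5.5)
We need to show $\rho_{\mathrm{Pres}}\equiv\rho_{\mathrm{FullPres}}$. The plan is to pass through $\rho_{\mathbb{S}\mathrm{WP}}$ and use Lemma \ref{lem:The-representations-Pres and Sier are equiv}, since translation $\le$ between representations is transitive (continuous realizers compose).

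\emph{First direction.} We show $\rho_{\mathrm{FullPres}}\le\rho_{\mathrm{Pres}}$. This is immediate: $\rho_{\mathrm{FullPres}}$ is the restriction of $\rho_{\mathrm{Pres}}$ to a subset of its domain, and on that subset the two representations agree. Hence the identity of $\mathbb{N}^{\mathbb{N}}$ is a continuous realizer.

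\emph{Second direction.} We show $\rho_{\mathrm{Pres}}\le\rho_{\mathrm{FullPres}}$. By Lemma \ref{lem:The-representations-Pres and Sier are equiv}, it suffices to produce a continuous translation $\rho_{\mathbb{S}\mathrm{WP}}\le\rho_{\mathrm{FullPres}}$ and compose it with $\rho_{\mathrm{Pres}}\le\rho_{\mathbb{S}\mathrm{WP}}$. One could alternatively work directly from a presentation, as follows.

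Start from a $\rho_{\mathrm{Pres}}$-name $(u_{0},u_{1},\dots)$, giving the rank $k=u_{0}$ and relators $r_{i}=\theta_{k}(u_{i})$. Fix a computable enumeration $(c_{m})_{m\ge1}$ of all pairs (finite product of conjugates, index bound). At stage $m$, output $\theta_{k}^{-1}$ of the element of $\mathbb{F}_{k}$ obtained by interpreting $c_{m}$ as a product of conjugates of relators among $r_{0},\dots,r_{j}$, with conjugators of length at most $j$. If $c_{m}$ refers to relators not yet read, output $\theta_{k}^{-1}(1)$ instead.

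This is exactly the dovetailing of the sets $t_{i,j}$ from the proof of Lemma \ref{lem:The-representations-Pres and Sier are equiv}. Each output symbol depends on finitely many input symbols, so the map is continuous.

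\emph{Verification.} The listed set is exactly the normal closure $\langle\langle r_{i}\rangle\rangle$ in $\mathbb{F}_{k}$. It contains $1$, every element of the normal closure appears at some stage, and only such elements appear. Therefore the output lies in $\mathrm{dom}(\rho_{\mathrm{FullPres}})$ and names the same marked group.

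\emph{Main point of care.} The only real subtlety is the domain condition: the output must enumerate precisely a normal subgroup, with no spurious elements. Padding with the identity handles the stages where nothing new can yet be produced, and it does not affect the enumerated set since $1$ belongs to every normal subgroup.
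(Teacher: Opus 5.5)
Your proof is correct and is essentially the paper's argument: the paper also treats $\rho_{\mathrm{FullPres}}\le\rho_{\mathrm{Pres}}$ as immediate, and gets $\rho_{\mathrm{Pres}}\le\rho_{\mathrm{FullPres}}$ by composing $\rho_{\mathrm{Pres}}\le\rho_{\mathbb{S}\mathrm{WP}}$ with the translation built in the proof of $\rho_{\mathbb{S}\mathrm{WP}}\le\rho_{\mathrm{Pres}}$, after observing that this translation already outputs a full presentation. Your direct enumeration of the normal closure is that composition done in one step; just make sure your ``products of conjugates'' also allow conjugates of the inverses $r_i^{-1}$, since otherwise the enumerated set is only a normal submonoid and need not be a subgroup (e.g.\ $x_1^{-1}$ is not a product of conjugates of $x_1$ in $\mathbb{F}_k$).
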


\begin{proof}
The translation $\rho_{\mathrm{FullPres}}\le\rho_{\mathrm{Pres}}$
is immediate. And $\rho_{\mathrm{Pres}}\le\rho_{\mathrm{FullPres}}$
follows from the proof given above of $\rho_{\mathbb{S}\mathrm{WP}}\le\rho_{\mathrm{Pres}}$:
the constructed translation is easily seen to give a $\rho_{\mathrm{FullPres}}$-name
of the given marked group. 
\end{proof}

\subsection{The identity from $\mathcal{G}_{\mathrm{Pres}}$ to $\mathcal{G}_{\mathrm{WP}}$}

Here we study the complexity of $\mathrm{Id}:\mathcal{G}_{\mathrm{Pres}}\cap\mathcal{C}\rightarrow\mathcal{G}_{\mathrm{WP}}$,
i.e. of the identity function on a set $\mathcal{C}$ of groups, seen
as going from the set of marked groups equipped with the Scott topology
and the presentation representation to the space of marked groups
equipped with the word problem representation. 
\begin{lem}
\label{lem: Pres to WP is Lim }We have $\mathrm{Id}:\mathcal{G}_{\mathrm{Pres}}\rightarrow\mathcal{G}_{\mathrm{WP}}\equiv_{W}\mathrm{Lim}$. 
\end{lem}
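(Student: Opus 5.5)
We prove the two reductions separately, working with $\rho_{\mathbb{S}\mathrm{WP}}$ on the input side, which is equivalent to $\rho_{\mathrm{Pres}}$ by Lemma \ref{lem:The-representations-Pres and Sier are equiv}.

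\emph{Upper bound.} We show $\mathrm{Id}:\mathcal{G}_{\mathrm{Pres}}\rightarrow\mathcal{G}_{\mathrm{WP}}\le_{W}\mathrm{Lim}$, and we use $\mathrm{Lim}\equiv_{W}\widehat{\mathrm{LPO}}$ (Proposition \ref{prop:Lim =00003D LPO parallalized}). A $\rho_{\mathbb{S}\mathrm{WP}}$-name of a $k$-marked group is the number $k$ together with a double sequence $(t_{i,j})$. For each $i$, the word $\theta_{k}(i)$ is trivial in $(G,S)$ if and only if some $t_{i,j}$ equals $1$. Define a continuous map $k$ sending this name to the sequence of instances $(p_{i})_{i\in\mathbb{N}}$ of $\mathrm{LPO}$, where $p_{i}=(t_{i,j})_{j\in\mathbb{N}}$. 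Then $\mathrm{LPO}(p_{i})=1$ exactly when $\theta_{k}(i)\neq1$ in $G$. Keep $k$ in the $V$-component, so that $h$ can flip the bits and prepend $k$. The result is exactly the characteristic sequence of the set of relations, that is, a $\rho_{\mathrm{WP}}$-name. Alternatively, one can note that the identity is $\Sigma_{2}^{0}$-measurable: the preimage of a basic clopen set ``$w$ is a relation'' is $\Sigma_{1}^{0}$, and ``$w$ is not a relation'' is $\Pi_{1}^{0}\subseteq\Sigma_{2}^{0}$. Theorem \ref{thm: Lim^(n) is measurable complete} then gives the bound. The explicit reduction is cleaner, so we use it.

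\emph{Lower bound.} We show $\widehat{\mathrm{LPO}}\le_{W}\mathrm{Id}$. Given $(u^{(n)})_{n\in\mathbb{N}}\in(\mathbb{N}^{\mathbb{N}})^{\mathbb{N}}$, we continuously build a presentation of a single marked group whose word problem encodes, for each $n$, whether $u^{(n)}=0^{\omega}$. The natural choice is $\mathbb{Z}^{(\mathbb{N})}$-type encodings, but we need a finitely generated group. So we take a $2$-generated group in which we can find an infinite family of ``independent'' words $(w_{n})$. Concretely, let
\[
(G_{X},S)=\langle a,b\mid w_{n}=1,\ n\in X\rangle ,
\]
where $w_{n}=[a,b^{n}ab^{-n}]$, or simply $w_{n}=b^{n}ab^{-n}$ in a suitable quotient. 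The requirement is that the map $X\mapsto G_{X}$ is injective in a detectable way: $w_{n}=1$ in $G_{X}$ if and only if $n\in X$.

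A clean choice is to work in the free metabelian group, or to use the infinitely presented groups $\langle a,b\mid [a,b^{n}ab^{-n}]=1,\ n\in X\rangle$. These are standard examples, due to B.~H.~Neumann-type constructions, showing that the space of $2$-marked groups has continuum many points, and the property $w_{n}=1\iff n\in X$ is known for them. Alternatively, one can use the wreath product $\mathbb{Z}\wr\mathbb{Z}$ quotiented by a set of relators, or the Neumann groups built from alternating groups, where the relevant independence is classical.

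The map $k$ sends $(u^{(n)})$ to the presentation whose relators are $w_{n}$ whenever some $u^{(n)}(j)\neq0$ is seen, and $1$ otherwise. This is continuous into $\rho_{\mathrm{Pres}}$. The map $h$ reads the bits of the $\rho_{\mathrm{WP}}$-name at the positions $\theta_{2}^{-1}(w_{n})$, which yields $\widehat{\mathrm{LPO}}$ of the input.

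The main obstacle is the group-theoretic independence statement: we need a family $(w_{n})$ in $\mathbb{F}_{2}$ such that, for every $X\subseteq\mathbb{N}$, the normal closure of $\{w_{n}\mid n\in X\}$ contains $w_{m}$ exactly when $m\in X$. We plan to establish it with the classical B.~H.~Neumann family of continuum many $2$-generated groups, and to cite that independence result rather than reprove it.
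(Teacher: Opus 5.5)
Your upper bound is correct and is the paper's argument written out: $\mathrm{Id}:\mathcal{G}_{\mathrm{Pres}}\to\mathcal{G}_{\mathrm{WP}}$ is a restriction of $\mathrm{Id}:\mathbb{S}^{\mathbb{N}}\to\{0,1\}^{\mathbb{N}}\equiv_W\widehat{\mathrm{LPO}}$. Your lower-bound scheme is also the paper's: encode $X$ by relators $w_n$, $n\in X$, then read back the bits of the word-problem name at the positions $\theta_2^{-1}(w_n)$.

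The gap is the one non-formal ingredient of the lower bound. You need a family $(w_n)$ in a finitely generated free group with $w_m\in\langle\langle w_n : n\in X\rangle\rangle\iff m\in X$, and you defer it to citations that do not deliver it. B.~H.~Neumann's continuum of $2$-generated groups consists of subgroups of products of alternating groups. They are not groups $\langle a,b\mid w_n,\ n\in X\rangle$ for a fixed family in $\mathbb{F}_2$, so citing them does not give the statement you formulated. Two of your alternatives cannot work at all. First, $w_n=b^nab^{-n}$ kills $a$ and hence every $w_m$. Second, finitely generated metabelian groups satisfy the maximal condition on normal subgroups (P.~Hall). So in the free metabelian group, or in quotients of $\mathbb{Z}\wr\mathbb{Z}$, the chain $\langle\langle w_1,\dots,w_n\rangle\rangle$ stabilises and no infinite independent family exists.

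Your first candidate, $w_n=[a,b^nab^{-n}]$ with $n\ge1$, does work, and you should prove it rather than cite it. Let $K_X$ be the right-angled Artin group on generators $a_j$ ($j\in\mathbb{Z}$) with relators $[a_j,a_{j+n}]$ for $n\in X$. The shift $a_j\mapsto a_{j+1}$ is an automorphism of $K_X$. The assignment $a\mapsto a_0$, $b\mapsto t$ then defines a homomorphism $\langle a,b\mid w_n,\ n\in X\rangle\to K_X\rtimes\langle t\rangle$ sending $w_m$ to $[a_0,a_m]$. Now suppose $m\notin X$. The assignment $a_0\mapsto x$, $a_m\mapsto y$, $a_j\mapsto1$ otherwise kills every relator of $K_X$, since none involves both $a_0$ and $a_m$. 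Hence $[a_0,a_m]\mapsto[x,y]\neq1$ in $\mathbb{F}(x,y)$, so $w_m\neq1$.

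The paper instead uses $G_A=\langle a,b,c,d\mid a^{-i}ba^{i}=c^{-i}dc^{i},\ i\in A\rangle$, an amalgam of two free groups. The elements $a^{-i}ba^{i}$ freely generate a free subgroup of $\langle a,b\rangle$. So $a^{-m}ba^{m}$ lies in the amalgamated subgroup iff $m\in A$, which gives independence at once. With either argument inserted, your proof is complete.
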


\begin{proof}
The fact that $\mathrm{Id}:\mathcal{G}_{\mathrm{Pres}}\rightarrow\mathcal{G}_{\mathrm{WP}}\le_{W}\mathrm{Lim}$
follows directly from the corresponding well known result for the
Cantor space \cite{Brattka2021a}: 
\[
\mathrm{Id}:\mathbb{S}^{\mathbb{N}}\rightarrow\{0,1\}^{\mathbb{N}}\equiv_{W}\widehat{\mathrm{LPO}}\equiv_{W}\mathrm{Lim}.
\]
Indeed, for each $k\in\mathbb{N}$, $\mathcal{G}_{\mathrm{Pres}}^{k}$
is a subset of $\mathbb{S}^{\mathbb{N}}$ equipped with the induced
topology, and $\mathcal{G}_{\mathrm{WP}}^{k}$ is a subset of $\{0,1\}^{\mathbb{N}}$
also equipped with the induced topology. And thus $\mathrm{Id}:\mathcal{G}_{\mathrm{Pres}}\rightarrow\mathcal{G}_{\mathrm{WP}}$
is the restriction of a problem below $\mathrm{Lim}$, it is again
below $\mathrm{Lim}$. 

To prove that $\mathrm{Id}:\mathcal{G}_{\mathrm{Pres}}\rightarrow\mathcal{G}_{\mathrm{WP}}\ge_{W}\mathrm{Lim}$,
we use an infinite set of independent relations. For $A\subseteq\mathbb{N}$,
consider the marked group 
\[
(G_{A},S)=\langle a,b,c,d\,\mid\,a^{-i}ba^{i}=c^{-i}dc^{i},\,i\in A\rangle.
\]
 Using the fact that $G_{A}$ is defined as an amalgamated product,
it is easy to see that $\Psi:A\mapsto(G_{A},S)$ is injective, and
that: 
\begin{enumerate}
\item The map $\Psi_{1}:\mathbb{S}^{\mathbb{N}}\rightarrow\mathcal{G}_{\mathrm{Pres}}$
is a homeomorphism onto its image, 
\item The map $\Psi_{2}:\{0,1\}^{\mathbb{N}}\rightarrow\mathcal{G}_{\mathrm{WP}}$
is a homeomorphism onto its image. 
\end{enumerate}
It follows from this that $\mathrm{Id}:\mathbb{S}^{\mathbb{N}}\rightarrow\{0,1\}^{\mathbb{N}}$
can be written as $\Psi_{2}^{-1}\circ(\mathrm{Id}:\mathcal{G}_{\mathrm{Pres}}\rightarrow\mathcal{G}_{\mathrm{WP}})\circ\Psi_{1}$,
this gives the desired reduction for $\mathrm{Lim}\le_{W}\mathrm{Id}:\mathcal{G}_{\mathrm{Pres}}\rightarrow\mathcal{G}_{\mathrm{WP}}$.
\end{proof}
The following lemma is proved exactly as above, we are simply explicitly
asking for the existence of a function $\Psi$ such as appeared in
the above proof. 
\begin{lem}
\label{lem: Pres to WP is Lim -1}Let $\mathcal{C}$ be a set of finitely
generated groups. Suppose that there is an injective function $\Psi:\{0,1\}^{\mathbb{N}}\rightarrow\mathcal{G}\cap\mathcal{C}$,
and that 
\begin{enumerate}
\item The function $\Psi:\mathbb{S}^{\mathbb{N}}\rightarrow\mathcal{G}_{\mathrm{Pres}}\cap\mathcal{C}$
is continuous, 
\item The function $\Psi^{-1}:\mathcal{G}_{\mathrm{WP}}\cap\mathrm{Im}(\Psi)\rightarrow\{0,1\}^{\mathbb{N}}$
is also continuous. 
\end{enumerate}
Then $\mathrm{Id}:\mathcal{G}_{\mathrm{Pres}}\cap\mathcal{C}\rightarrow\mathcal{G}_{\mathrm{WP}}\equiv_{W}\mathrm{Lim}$. 
\end{lem}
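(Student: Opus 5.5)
We prove the two reductions separately, following the proof of Lemma \ref{lem: Pres to WP is Lim }.

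\textbf{Upper bound.} For each $k$, $\mathcal{G}_{\mathrm{Pres}}^{k}\cap\mathcal{C}$ is a subspace of $\mathbb{S}^{\mathbb{N}}$ and $\mathcal{G}_{\mathrm{WP}}^{k}\cap\mathcal{C}$ is a subspace of $\{0,1\}^{\mathbb{N}}$, both with the induced topologies and restricted admissible representations. Hence $\mathrm{Id}:\mathcal{G}_{\mathrm{Pres}}\cap\mathcal{C}\rightarrow\mathcal{G}_{\mathrm{WP}}$ is a restriction of $\mathrm{Id}:\mathbb{S}^{\mathbb{N}}\rightarrow\{0,1\}^{\mathbb{N}}\equiv_{W}\widehat{\mathrm{LPO}}\equiv_{W}\mathrm{Lim}$ (Proposition \ref{prop:Lim =00003D LPO parallalized}). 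The disjoint union over $k$ is harmless, because the number of generators $u_{0}$ is read off the name directly and can be passed through the reduction. Since restrictions of problems do not increase Weihrauch degree, we get $\le_{W}\mathrm{Lim}$.

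\textbf{Lower bound.} We show
\[
\mathrm{Id}:\mathbb{S}^{\mathbb{N}}\rightarrow\{0,1\}^{\mathbb{N}}\;\le_{W}\;\mathrm{Id}:\mathcal{G}_{\mathrm{Pres}}\cap\mathcal{C}\rightarrow\mathcal{G}_{\mathrm{WP}}.
\]
\begin{itemize}
\item \emph{Forward map.} Let $k$ be a continuous realizer of $\Psi:\mathbb{S}^{\mathbb{N}}\rightarrow\mathcal{G}_{\mathrm{Pres}}\cap\mathcal{C}$, which exists by hypothesis (1) and admissibility. It turns an $\mathbb{S}^{\mathbb{N}}$-name of $x$ into a presentation name of $\Psi(x)$.
\item \emph{Applying the problem.} The identity problem then returns a word-problem name of $\Psi(x)$, which lies in $\mathrm{Im}(\Psi)$.
\item \emph{Backward map.} Let $h$ be a continuous realizer of $\Psi^{-1}:\mathcal{G}_{\mathrm{WP}}\cap\mathrm{Im}(\Psi)\rightarrow\{0,1\}^{\mathbb{N}}$, which exists by hypothesis (2) together with Lemma \ref{lem:Admissibility C0 and C0 realizable }. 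Here we use that the restricted representation of the subspace $\mathrm{Im}(\Psi)$ is admissible for the subspace topology, since $\mathcal{G}_{\mathrm{WP}}$ is second countable. Applying $h$ yields a $\{0,1\}^{\mathbb{N}}$-name of $x$.
\end{itemize}
Injectivity of $\Psi$ ensures $\Psi^{-1}$ is a well-defined function. No passing of the original input through $V$ is needed, so the reduction is in fact strong.

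The only subtle point is the realizability of $\Psi^{-1}$, which needs admissibility of the restricted representation. Everything else is a direct transcription of the previous lemma's argument.
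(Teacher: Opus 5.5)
Your proposal is correct and matches the paper's own argument, which simply reruns the proof of the preceding lemma: the upper bound comes from viewing the problem as a restriction of $\mathrm{Id}:\mathbb{S}^{\mathbb{N}}\rightarrow\{0,1\}^{\mathbb{N}}\equiv_{W}\widehat{\mathrm{LPO}}\equiv_{W}\mathrm{Lim}$, and the lower bound from factoring $\mathrm{Id}:\mathbb{S}^{\mathbb{N}}\rightarrow\{0,1\}^{\mathbb{N}}$ as $\Psi^{-1}\circ(\mathrm{Id}:\mathcal{G}_{\mathrm{Pres}}\cap\mathcal{C}\rightarrow\mathcal{G}_{\mathrm{WP}})\circ\Psi$. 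Your point about realizing $\Psi^{-1}$ is sound and makes explicit a step the paper leaves implicit; in fact you only need that the final topology of the restricted word-problem representation refines the subspace topology on $\mathrm{Im}(\Psi)$.
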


Note that the existence of a function $\Psi$ as above will always
rely on the existence of an \emph{infinitely independently presented
group relative to $\mathcal{C}$}. We introduce this notion here.
A classical notion is: 
\begin{defn}
[\cite{Bieri2014}]\label{def:INIP}Let $G$ be a group and $\{N_{i}\,\mid\,i\in I\}$
a family of normal subgroups of $G$. This family is called \emph{independent
}if no subgroup is contained in the subgroup generated by the remaining
groups: 
\[
\forall i_{0}\in I,N_{i_{0}}\not\subseteq\langle N_{i},\,i\neq i_{0}\rangle.
\]
We say that a finitely generated group $G$ is \emph{infinitely independently
presentable} (or \emph{INIP}) if there is a morphism $\pi:\mathbb{F}_{k}\twoheadrightarrow G$
from a free group of finite rank onto $G$ whose kernel is generated
by an infinite and independent family of normal subgroups. 
\end{defn}

We consider a relative version of the above. 
\begin{defn}
\label{def:INIP-1}Let $\mathcal{C}$ be a set of finitely generated
groups. Let $G$ be a group and $\{N_{i}\,\mid\,i\in I\}$ a family
of normal subgroups of $G$. This family is called \emph{independent
relative to $\mathcal{C}$ }if for every $i_{0}$ in $I$, there is
a quotient $\psi_{i_{0}}:G\rightarrow H$ onto a group $H\in\mathcal{C}$
such that 
\[
\psi_{i_{0}}(N_{i_{0}})\not\subseteq\langle\psi_{i_{0}}(N_{i}),\,i\neq i_{0}\rangle.
\]

We say that a finitely generated group $G$ is \emph{infinitely independently
presentable relative to $\mathcal{C}$} (or \emph{INIP relative to
$\mathcal{C}$}) if there is a morphism $\pi:\mathbb{F}_{k}\twoheadrightarrow G$
from a free group of finite rank onto $G$ whose kernel is generated
by an infinite family of normal subgroups which is independent relative
to $\mathcal{C}$. 
\end{defn}

\begin{prop}
Let $\mathcal{C}$ be a set of finitely generated groups. Suppose
that there exists a map $\Psi:\mathcal{P}(\mathbb{N})\rightarrow\mathcal{G}\cap\mathcal{C}$
that satisfies the conditions of Lemma \ref{lem: Pres to WP is Lim -1}.
Then $\Psi(\emptyset)$ is infinitely independently presentable relative
to $\mathcal{C}$. 
\end{prop}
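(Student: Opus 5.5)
The plan is to read off the independent family directly from the values of $\Psi$ on singletons and cofinite sets. Throughout I write $N_A\trianglelefteq\mathbb{F}_k$ for the set of relations of the marked group $\Psi(A)$, and I set $G=\Psi(\emptyset)$.

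I read the statement as follows: $G$ plays the role of the ambient group in Definition \ref{def:INIP-1}, and the family to be produced consists of normal subgroups of $G$ which generate the kernel of $G\twoheadrightarrow\Psi(\mathbb{N})$ and are independent relative to $\mathcal{C}$. Equivalently, pulled back to $\mathbb{F}_k$, one gets a relative infinitely independent presentation. This reading is an assumption, forced by what $\Psi$ can control (see the last paragraph).

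\textbf{Step 1: monotonicity and compactness.} A continuous map preserves specialization orders. In $\mathbb{S}^{\mathbb{N}}$ the specialization order is inclusion. In $\mathcal{G}_{\mathrm{Pres}}$ it is the marked quotient order, since the basic opens are ``satisfies $r_1,\dots,r_n$''. Hence $A\subseteq B$ implies $N_A\subseteq N_B$. Taking $A=\emptyset$ shows that every $\Psi(B)$ is comparable to $\Psi(\emptyset)$, so all values of $\Psi$ are $k$-marked for one fixed $k$.

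Next I use continuity at basic opens. If $r\in N_A$, the preimage of the open set $\{\text{groups satisfying } r\}$ is Scott open and contains $A$. It therefore contains some $\Omega_F$ with $F\subseteq A$ finite, so $r\in N_F$. This gives
\[
N_A=\bigcup_{F\subseteq A\text{ finite}}N_F .
\]

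\textbf{Step 2: the family and independence.} For $i\in\mathbb{N}$ let $M_i$ be the image of $N_{\{i\}}$ in $G$. For a fixed $i_0$ take $\psi_{i_0}\colon G\twoheadrightarrow\Psi(\mathbb{N}\setminus\{i_0\})$; the target lies in $\mathcal{C}$ because $\Psi$ takes values in $\mathcal{C}$. By monotonicity $\psi_{i_0}(M_j)=1$ for every $j\neq i_0$. So independence at $i_0$ reduces to $\psi_{i_0}(M_{i_0})\neq1$, that is, $N_{\{i_0\}}\not\subseteq N_{\mathbb{N}\setminus\{i_0\}}$.

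Suppose instead that $N_{\{i_0\}}\subseteq N_{\mathbb{N}\setminus\{i_0\}}$. I would then show $N_{\mathbb{N}}=N_{\mathbb{N}\setminus\{i_0\}}$, contradicting the injectivity of $\Psi$. The tool is the generation property of Step 3, namely $N_{\mathbb{N}}=\langle\langle N_{\mathbb{N}\setminus\{i_0\}}\cup N_{\{i_0\}}\rangle\rangle$. This same argument also shows that infinitely many of the $M_i$ are nontrivial and pairwise distinct, so the family is infinite.

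\textbf{Step 3: generation, the main obstacle.} I need
\[
N_A=\Bigl\langle\!\Bigl\langle N_\emptyset\cup\bigcup_{i\in A}N_{\{i\}}\Bigr\rangle\!\Bigr\rangle
\]
for all $A$, at least for $A=\mathbb{N}$ and for $A=\mathbb{N}\setminus\{i_0\}$. By Step 1 it suffices to prove this for finite $A$. Scott continuity alone does not give it: a priori $N_{\{0,1\}}$ could be strictly larger than $\langle\langle N_{\{0\}}\cup N_{\{1\}}\rangle\rangle$.

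This is where I would use the second hypothesis, that $\Psi^{-1}$ is continuous from $\mathcal{G}_{\mathrm{WP}}\cap\mathrm{Im}(\Psi)$. First I would try to reduce to the case where $\Psi$ is a lattice map on finite sets. This means replacing $M_i$ by ``increments'': set $M'_i$ to be the image in $G$ of $N_{\{0,\dots,i\}}$ modulo $N_{\{0,\dots,i-1\}}$, lifted to normal subgroups generated by chosen relators. I would then run the independence argument of Step 2 with the sets $\{0,\dots,i-1\}\cup(\mathbb{N}\setminus\{0,\dots,i\})$. The WP-continuity of $\Psi^{-1}$ will be used to show that the deleted coordinate is detected by a single relation, namely a relation that holds in $\Psi(\mathbb{N})$ but fails in $\Psi(\mathbb{N}\setminus\{i\})\in\mathcal{C}$. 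That relation then serves as the witness required by Definition \ref{def:INIP-1}.
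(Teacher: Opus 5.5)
Your family $M_i$, the image of $N_{\{i\}}$ in $G$, is exactly the one named in the paper's one-line proof, which simply asserts that it is independent relative to $\mathcal{C}$. Your reinterpretation of the statement also matches the paper's proof, and is forced: under the literal definition the statement already fails for $A\mapsto G_A$, because there $\Psi(\emptyset)=\mathbb{F}_4$ is finitely presented. Steps 1--2 are sound as far as they go.

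The gap is Step 3. The identity $N_A=\langle\langle N_\emptyset\cup\bigcup_{i\in A}N_{\{i\}}\rangle\rangle$ is false under the hypotheses, even with continuity of $\Psi^{-1}$ on $\mathcal{G}_{\mathrm{WP}}\cap\mathrm{Im}(\Psi)$. In fact the singleton family need not be independent at all, so no argument of the shape of Step 2 can succeed.

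Here is a counterexample. Let $x_j=a^{-j}ba^{j}(c^{-j}dc^{j})^{-1}$ and $G_B=\langle a,b,c,d\mid x_j,\ j\in B\rangle$, as in the proof that $\mathrm{Id}\colon\mathcal{G}_{\mathrm{Pres}}\to\mathcal{G}_{\mathrm{WP}}\equiv_W\mathrm{Lim}$; then $x_j=1$ in $G_B$ iff $j\in B$. Put
$\phi(A)=\{n+1\mid n\in A\}\cup\{1\mid 1\in A\}\cup\{0\mid \{0,1\}\subseteq A\}$ and $\Psi(A)=G_{\phi(A)}$, with $\mathcal{C}$ all finitely generated groups.

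\begin{itemize}
\item Each relator is triggered by at most two elements of $A$, so $\Psi$ is Scott continuous.
\item In $\Psi(A)$ we have $n\in A\iff x_{n+1}=1$ for $n\ge 1$, and $0\in A\iff\bigl(x_0=1\vee(x_1=1\wedge x_2\neq 1)\bigr)$. So $\Psi$ is injective with WP-continuous inverse.
\item Yet $N_{\{0\}}=\langle\langle x_1\rangle\rangle\subseteq\langle\langle x_1,x_2\rangle\rangle=N_{\{1\}}$. Hence $\psi(M_0)\subseteq\psi(M_1)$ in every quotient, and independence fails at $i_0=0$.
\item Generation also fails: $x_0\in N_{\{0,1\}}$ but $x_0\notin\langle\langle N_{\{0\}}\cup N_{\{1\}}\rangle\rangle$.
\item Your Step 2 contradiction never occurs: $N_{\{0\}}\subseteq N_{\mathbb{N}\setminus\{0\}}$, while $N_{\mathbb{N}}\neq N_{\mathbb{N}\setminus\{0\}}$.
\end{itemize}

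The fallback in your last paragraph does not repair this. The set $\{0,\dots,i-1\}\cup(\mathbb{N}\setminus\{0,\dots,i\})$ is just $\mathbb{N}\setminus\{i\}$. A relation $w_i\in N_{\mathbb{N}}\setminus N_{\mathbb{N}\setminus\{i\}}$ exists by injectivity and monotonicity alone, so the WP-continuity hypothesis is not actually doing any work.

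More seriously, such a $w_i$ does not witness independence of the increments. First, $w_i$ need not lie in $M'_i$. Second, the increments $M'_j$ with $j>i$ come from $N_{\{0,\dots,j\}}$, which involves the coordinate $i$, so they need not die in $\Psi(\mathbb{N}\setminus\{i\})$. In the example above, $M'_0=\langle\langle x_1\rangle\rangle$ is killed in $\Psi(\mathbb{N}\setminus\{0\})$. With the natural increment relators $x_0,x_2$ for $M'_1$, the witness $x_0$ lies in $M'_1$, not $M'_0$.

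What is missing is a different choice of family. In this example $\{\langle\langle x_j\rangle\rangle\}_{j\in\mathbb{N}}$ works: it generates the kernel of $\Psi(\emptyset)\to\Psi(\mathbb{N})$ and is independent already in $\Psi(\emptyset)=\mathbb{F}_4\in\mathcal{C}$. But neither your proposal nor the paper's proof says how to extract such a family from $\Psi$ in general.
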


\begin{proof}
One needs to remark that, because Scott open sets are upward closed,
the map $\Psi$ must be order preserving: if $A\subseteq B$, then
there is a marked group morphism $\Psi(A)\rightarrow\Psi(B)$. Consider
the morphism $\pi:\mathbb{F}_{k}\rightarrow\Psi(\emptyset)$ that
defines the marking of $\Psi(\emptyset)$. The family $\{N_{i}\,\mid\,i\in\mathbb{N}\}$
given by $N_{i}=\pi^{-1}(\ker(\Psi(\emptyset)\rightarrow\Psi(\{i\})))$
is the desired set of normal subgroups independent relative to $\mathcal{C}$. 
\end{proof}
\begin{prop}
\label{prop:Direct product stable =00003D> id finitely //}Let $\mathcal{C}$
be a set of finitely generated groups stable by finite direct products.
The problem $\mathrm{Id}:\mathcal{G}_{\mathrm{Pres}}\cap\mathcal{C}\rightarrow\mathcal{G}_{\mathrm{WP}}$
if finitely parallelizable. 
\end{prop}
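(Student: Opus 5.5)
The plan is to show directly that $\mathrm{Id}\times\mathrm{Id}\le_{W}\mathrm{Id}$, where $\mathrm{Id}$ denotes $\mathrm{Id}:\mathcal{G}_{\mathrm{Pres}}\cap\mathcal{C}\rightarrow\mathcal{G}_{\mathrm{WP}}$. As recalled after the definition of finite parallelization, this is equivalent to finite parallelizability. The converse reduction $\mathrm{Id}\le_{W}\mathrm{Id}\times\mathrm{Id}$ is immediate: duplicate the input and keep the first output. If one prefers to handle $\mathrm{Id}^{*}$ directly, the same construction works with an $n$-fold direct product. The case $n=0$ is trivial, and $n=1$ is the identity reduction.

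\textbf{Step 1: the forward map $k$.} Given presentations $\langle s_{1},\dots,s_{p}\mid r_{0},r_{1},\dots\rangle$ of $(G,S)$ and $\langle t_{1},\dots,t_{q}\mid r'_{0},r'_{1},\dots\rangle$ of $(H,T)$, both in $\mathcal{C}$, output a presentation of the $(p+q)$-marked group
\[
(G\times H,\,((s_{1},1),\dots,(s_{p},1),(1,t_{1}),\dots,(1,t_{q}))).
\]
Concretely, write the generators as $x_{1},\dots,x_{p},y_{1},\dots,y_{q}$. The relations are the $r_{i}$ written in the $x$'s, the $r'_{i}$ written in the $y$'s, and all commutators $[x_{i},y_{j}]$. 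This is the standard presentation of a direct product, so it presents $G\times H$ with the indicated marking. That tuple does generate $G\times H$, and $G\times H$ lies in $\mathcal{C}$ by the hypothesis of stability under finite direct products. The map is continuous on $\rho_{\mathrm{Pres}}$-names, since each output relation depends on finitely many input relations (or on none). We also pass the pair $(p,q)$ through the $V$-component, which is a discrete piece of data.

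\textbf{Step 2: the backward map $h$.} Apply $\mathrm{Id}$ to obtain the word problem of $G\times H$ in this marking, that is, the characteristic function of its relations in $\mathbb{F}_{p+q}$. A word $w\in\mathbb{F}_{p}$ is trivial in $G$ if and only if $w(x_{1},\dots,x_{p})$ is trivial in $G\times H$, because $g\mapsto(g,1)$ is an embedding sending $s_{i}$ to $(s_{i},1)$. The symmetric statement holds for $H$. So the word problem of $(G,S)$ is obtained by reading off the bits of the $\rho_{\mathrm{WP}}$-name at the indices $\theta_{p+q}^{-1}(w(x_{1},\dots,x_{p}))$, and likewise for $(H,T)$. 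This is a continuous reindexing of the output name.

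\textbf{Expected obstacle.} The only point requiring care is the bookkeeping. One must fix the embeddings $\mathbb{F}_{p}\hookrightarrow\mathbb{F}_{p+q}$ and $\mathbb{F}_{q}\hookrightarrow\mathbb{F}_{p+q}$ compatibly with the chosen bijections $\theta_{k}$, and check that these reindexings are continuous, in fact computable. Beyond that, the whole argument rests on the observation that each factor embeds as a marked subgroup of the product, so that the word problem of each factor is a coordinate-wise restriction of the word problem of the product.
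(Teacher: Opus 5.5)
Your proposal is correct and matches the paper's argument: the paper's one-line proof is exactly that a $\rho_{\mathrm{WP}}$-name of $(G\times H,S\cup S')$ is equivalent to $\rho_{\mathrm{WP}}$-names of $(G,S)$ and $(H,S')$, applied to the product presentation. You make explicit the bookkeeping the paper leaves implicit, namely the commutator relators $[x_i,y_j]$, passing $(p,q)$ through $V$, and the reindexing along $\mathbb{F}_p,\mathbb{F}_q\hookrightarrow\mathbb{F}_{p+q}$.
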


\begin{proof}
Indeed, having a $\rho_{\mathrm{WP}}$ name for $(G,S)$ and one for
$(H,S')$ is equivalent to having one for $(G\times H,S\cup S')$.
\end{proof}
\begin{prop}
\label{prop:Scott=00003DAlexandrov }Let $\mathcal{C}$ be a set of
finitely generated groups, such that every group in $\mathcal{C}$
admits a finite presentation relative to $\mathcal{C}$. Then the
Alexandrov and Scott topologies coincide on the set of marked groups
in $\mathcal{C}$. 
\end{prop}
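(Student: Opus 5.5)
We need to show that on $\mathcal{G}\cap\mathcal{C}$ the Scott topology and the Alexandrov topology coincide. The proposition does not say which order the Alexandrov topology is taken for. We take it to be the one whose open sets are upward closed for the marked quotient order: $(G,S)\succeq(H,S')$ means $H$ is a quotient of $G$, and a set is open if it is closed under passing to marked quotients. This matches the Scott opens, which are upward closed for relations: if $(G,S)$ satisfies the relations $\{r_1,\dots,r_n\}$, so does every marked quotient.

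\emph{Scott open implies Alexandrov open.} A basic Scott open set $\Omega_A\cap\mathcal{C}$, namely ``the groups in $\mathcal{C}$ satisfying $r_1,\dots,r_n$'', is closed under marked quotients within $\mathcal{C}$. Unions of such sets are also closed under marked quotients. So every Scott open set (in the subspace topology on $\mathcal{C}$) is Alexandrov open.

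\emph{Alexandrov open implies Scott open.} Since Alexandrov open sets are unions of principal up-sets, it suffices to show that for each $(G,S)\in\mathcal{C}$ the set
\[
\uparrow(G,S)=\{(H,S')\in\mathcal{C}\mid (G,S)\succeq(H,S')\}
\]
is Scott open in $\mathcal{C}$. By hypothesis, $(G,S)$ admits a finite presentation relative to $\mathcal{C}$. Reading this in the sense of Lemma~\ref{lem: Eq noeth characterization }, there is a finite set $R=\{r_1,\dots,r_n\}\subseteq\mathbb{F}_S$ such that
\[
(G,S)=\mathrm{Res}_{\mathcal{C}}\langle S\mid R\rangle.
\]
We claim that $\uparrow(G,S)=\Omega_R\cap\mathcal{C}$, where $\Omega_R$ is the basic Scott open set of groups satisfying $r_1,\dots,r_n$.

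For the inclusion $\subseteq$: $G$ satisfies $R$, because $G$ is a quotient of $\langle S\mid R\rangle$. Hence every marked quotient of $G$ also satisfies $R$.

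For the inclusion $\supseteq$: let $(H,S')\in\mathcal{C}$ satisfy $R$. Then $(H,S')$ is a marked quotient of $\langle S\mid R\rangle$, and $H$ lies in $\mathcal{C}$. By the formula
\[
\mathrm{Res}_{\mathcal{C}}(K)=\sup\{L\in\mathcal{C}\mid K\succeq L\},
\]
the morphism from $\langle S\mid R\rangle$ to $H$ factors through $\mathrm{Res}_{\mathcal{C}}\langle S\mid R\rangle=(G,S)$. So $(H,S')\in\uparrow(G,S)$.

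The only delicate point is this factorization, but it holds by the very definition of $\mathrm{Res}_{\mathcal{C}}$. Combining the two directions, the two topologies coincide on the marked groups in $\mathcal{C}$.
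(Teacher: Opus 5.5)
Your proof is correct and follows the paper's argument. Scott open sets are up-sets for the marked quotient order, and each principal up-set $\{(H,S')\in\mathcal{C}\mid (G,S)\succeq(H,S')\}$ coincides with the basic Scott open set given by a finite relative presentation $R$ of $(G,S)$. You also spell out the factorization through $\mathrm{Res}_{\mathcal{C}}\langle S\mid R\rangle$ that justifies the inclusion $\supseteq$, which the paper leaves implicit.
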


\begin{proof}
The Alexandrov topology is always finer than the Scott topology. Thus
all we have to show is that any open in the Alexandrov topology is
also Scott open. But a basis of the Alexandrov topology is given by
sets of the form ``all marked quotients of $(G,S)$''. But if $(G,S)$
admits a finite presentation $\langle S\mid R\rangle$ relative to
$\mathcal{C}$, this set is identical to ``all marked groups that
satisfy the relations of $R$'', and this is a Scott open set.
\end{proof}
\begin{prop}
\label{thm: Pres-> WP is below ordinal length }Suppose that $\mathcal{C}$
is equationally noetherian. Let $k\in\mathbb{N}$. Let $\alpha$ be
the rank of the lattice of residually-$\mathcal{C}$ $k$-marked groups.
Then 
\[
\mathrm{Id}:\mathcal{G}_{\mathrm{Pres}}^{k}\cap\mathcal{RC}\rightarrow\mathcal{G}_{\mathrm{WP}}^{k}\le_{W}\mathrm{Lim}_{\searrow\alpha}.
\]
\end{prop}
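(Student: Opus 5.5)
The plan is to factor the identity through the rank function of Proposition \ref{prop: ordinal rank and multi-injection}. Let $A$ be the set of residually-$\mathcal{C}$ $k$-marked groups, ordered so that $(G,S)\le(H,S')$ iff $(G,S)$ is a marked quotient of $(H,S')$. By the third item of Lemma \ref{lem: Eq noeth characterization }, this order is well founded; write $\alpha$ for its rank. By the second item of the same lemma, every element of $A$ is finitely presented relative to $\mathcal{C}$. Since $A$ is countable, there are only countably many finite presentations.

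\textbf{Step 1 (topologies agree on $A$).} By Proposition \ref{prop:Scott=00003DAlexandrov }, the Scott topology restricted to $A$ coincides with the Alexandrov topology of $(A,\le^{\mathrm{op}})$. Its open sets are the sets closed under taking marked quotients within $A$, and the basic open sets are $B_a=\{b\le a\}$. Since $\rho_{\mathrm{Pres}}$ restricted to $A$ is admissible for the subspace topology, a $\rho_{\mathrm{Pres}}$-name of $(G,S)\in A$ can be continuously translated into a standard name for this Alexandrov topology. Concretely, we enumerate the finitely many relations read so far. At each stage $n$ we output $a_n=\mathrm{Res}_{\mathcal{C}}\langle S\mid r_0,\dots,r_n\rangle$, which is an element of $A$ coded by a fixed finite relative presentation. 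This gives a weakly descending sequence in $A$ that is eventually equal to $(G,S)$. It stabilizes because the chain is descending in a well-founded order, and its limit satisfies all the $r_i$. I expect this step to be the main obstacle, namely checking that the translation is genuinely continuous. The sequence $a_n$ depends only on finite data, so it is continuous as a map into $A^{\mathbb{N}}$ with $A$ discrete. The subtle point is that the eventual value equals $(G,S)$ itself rather than a proper quotient. This holds because $(G,S)$ is the sup of the chain in the lattice of residually-$\mathcal{C}$ groups, and the chain stabilizes.

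\textbf{Step 2 (reduce to $\mathrm{Lim}_{\searrow\alpha}$).} Apply $\mathrm{rank}_{(A,\le)}$ pointwise to obtain a weakly descending sequence in $\alpha$, and feed it to $\mathrm{Lim}_{\searrow\alpha}$. This returns $\beta=\mathrm{rank}(G,S)$. Now use the name $(a_n)$ passed through the input, as permitted in Weihrauch reductions. Wait until some $n$ with $\mathrm{rank}(a_n)=\beta$. By monotonicity of rank along strict descent, from that index on $a_m=a_n$: a strict descent would strictly lower the rank, contradicting that the limit rank is $\beta$. Hence $a_n=(G,S)$, and this search is continuous in $\beta$ and the input.

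\textbf{Step 3 (output a word-problem name).} It remains to produce a $\rho_{\mathrm{WP}}$-name from the element $a_n\in A$. Since $A$ is countable and we have identified the element exactly as a discrete datum, the map from discrete $A$ to $\mathcal{G}_{\mathrm{WP}}$ is trivially continuous: for each index we fix a word-problem name. Composing Steps 1–3 gives the reduction $\mathrm{Id}:\mathcal{G}_{\mathrm{Pres}}^{k}\cap\mathcal{RC}\rightarrow\mathcal{G}_{\mathrm{WP}}^{k}\le_{W}\mathrm{Lim}_{\searrow\alpha}$. Countability of $A$ follows from the relative finite presentations, and is what makes the surjection $\mathbb{N}\to\alpha$ in the definition of $\mathrm{Lim}_{\searrow\alpha}$ available.
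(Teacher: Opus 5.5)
Your proof is correct and is essentially the paper's argument: equational noetherianity gives relative finite presentations, hence Scott $=$ Alexandrov on $\mathcal{RC}$, and the discretization problem of this countable Alexandrov space is bounded by $\mathrm{Lim}_{\searrow\alpha}$ via the ordinal rank. The paper cites that bound as ``straightforward'', and you unpack it explicitly through the rank function and the search for the first $a_n$ of limit rank. There are two minor slips: countability of $A$ follows from there being only countably many finite presentations, not conversely as your first paragraph phrases it (you state it correctly at the end), and in your order $\le$ the group $(G,S)$ is the infimum, not the supremum, of the chain $(a_n)$.
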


\begin{proof}
This follows from directly from Proposition \ref{prop: ordinal-rank order is CB rank alexandrov top},
Proposition \ref{prop:Scott=00003DAlexandrov } and Proposition \ref{prop: CB rank upper bound discretization }. 
\end{proof}
We now show that when the rank of the lattice of residually-$\mathcal{C}$
$k$-marked groups is below $\omega^{\omega}$, the previous bound
is actually sharp. 
\begin{thm}
\label{thm: Pres-> WP is equal to ordinal length if alpha is small}Suppose
that $\mathcal{C}$ is equationally noetherian. Let $k\in\mathbb{N}$.
Let $\alpha$ be the rank of the lattice of residually-$\mathcal{C}$
$k$-marked groups. Suppose that $\alpha<\omega^{\omega}$. Then 
\[
\mathrm{Id}:\mathcal{G}_{\mathrm{Pres}}^{k}\cap\mathcal{RC}\rightarrow\mathcal{G}_{\mathrm{WP}}^{k}\equiv_{W}\mathrm{Lim}_{\searrow\alpha}.
\]
\end{thm}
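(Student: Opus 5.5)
The upper bound $\le_{W}$ is Proposition \ref{thm: Pres-> WP is below ordinal length }, so only the reduction
\[
\mathrm{Lim}_{\searrow\alpha}\le_{W}\mathrm{Id}:\mathcal{G}_{\mathrm{Pres}}^{k}\cap\mathcal{RC}\rightarrow\mathcal{G}_{\mathrm{WP}}^{k}
\]
remains to be proved. Write $A$ for the countable lattice of residually-$\mathcal{C}$ $k$-marked groups, ordered by the marked quotient relation. It is well founded for quotients by the third item of Lemma \ref{lem: Eq noeth characterization }, and we take $\le$ so that $(G,S)<(H,S')$ means that $(H,S')$ is a proper quotient of $(G,S)$. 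The plan has two steps. First, encode a weakly descending sequence $(\beta_{n})$ in $\alpha$ continuously into a Scott-name of a group, using the multi-inverse of the rank function. Second, apply the identity to obtain a word-problem name, and read off the limit ordinal.

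\textbf{Step 1.} By Proposition \ref{prop:Scott=00003DAlexandrov } together with Lemma \ref{lem: Eq noeth characterization }, the Scott topology on $A$ coincides with the Alexandrov topology, with the convention that Scott-open sets are sets of the form ``all marked quotients of $(G,S)$''. The standard representation is therefore: a point is named by a sequence of groups that is eventually constant and in which each term is a quotient of the previous one, or the reverse, depending on orientation. Proposition \ref{prop: ordinal rank and multi-injection} gives a continuous injective multi-function $\mathrm{rank}^{-1}:\alpha\rightrightarrows A$, with both sides carrying the Alexandrov topologies of the reversed orders. Composed with the translation Alexandrov $=$ Scott $\to\rho_{\mathrm{Pres}}$, this gives the forward map $k$. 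The orientation has to be checked carefully: the naming sequence for $A$ must move in the direction that Scott names allow, namely adding relations, i.e. passing to quotients. If ordinals should be decreasing, rank is best measured from the top of the lattice, so one may use the dual rank. The issue is that $(A,\le)$ is well founded in only one direction. Since $\alpha$ is the rank for the well-founded direction, I would define rank so that descending ordinal sequences correspond to sequences of successive quotients, which are exactly the chains that stabilize. I expect this bookkeeping, choosing the correct order so that Proposition \ref{prop: ordinal rank and multi-injection} applies, to be a delicate but routine point.

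\textbf{Step 2.} Given the $\rho_{\mathrm{WP}}$-name of the limit group $G=\mathrm{Id}(k(\beta))$, we must continuously output $\mathrm{rank}(G)=\lim\beta_{n}$. The map $\mathrm{rank}:A\rightarrow\alpha$ with the discrete topology on $\alpha$ is not obviously continuous from $\mathcal{G}_{\mathrm{WP}}$, and this is where the hypothesis $\alpha<\omega^{\omega}$ should enter. This is the main obstacle. The decoding map $h$ may, however, also use the original input $(\beta_{n})$ via the $V$ component, so it suffices to determine at which index the sequence stabilizes. With the word problem of $G$ available, we can decide membership: search for the first $n$ such that the group $G_{n}$ named at stage $n$ satisfies $G_{n}=G$. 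Each $G_{n}$ is residually-$\mathcal{C}$ and finitely presented relative to $\mathcal{C}$. Equality $G_{n}=G$ amounts to $G$ satisfying the finitely many relative relators of $G_{n}$, which is visible from the WP name, together with $G_{n}$ satisfying all relations of $G$; the latter is not finitely checkable in general.

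To circumvent this I would avoid general $A$ and instead embed a canonical copy of $\alpha$. Write $\alpha=\omega^{m}c+\dots$ with $m$ finite. It suffices to show that $A$ contains, under the marked-quotient order, a subposet whose elements are distinguishable in the WP topology, i.e. one that is discrete in $\mathcal{G}_{\mathrm{WP}}$, and that realizes $\mathrm{Lim}_{\searrow\omega^{j}}$ for each $j<m$. By Proposition \ref{prop: limit (ordinal problem) =00003D problem (limit ordinal)} and the finite parallelizability argument of Proposition \ref{prop:Direct product stable =00003D> id finitely //}, where $\mathcal{RC}$ is closed under products as a quasivariety by Lemma \ref{lem:Eq noeth -> Res C is Qvar}, products of copies of $\mathrm{Lim}_{\searrow\omega^{j}}$ give $\mathrm{Lim}_{\searrow\omega^{j}c}$, and then every ordinal below $\omega^{m+1}$. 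For such a finite rank the lattice has finite Cantor--Bendixson structure, so one picks, by induction on $m$, an element of rank $\ge\omega^{m}$ whose lower cone has rank exactly $\omega^{m}$. Descending chains in this cone are then coded within a single rank-$\omega^{m}$ configuration whose points are isolated in the WP topology, since the WP topology on a countable compact set is Hausdorff. Decoding the limit is then continuous, which yields $h$.
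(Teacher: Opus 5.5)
Your upper bound and Step 1 agree with the paper. The paper also uses the realizer $E$ of $\mathrm{rank}^{-1}$ from Proposition~\ref{prop: ordinal rank and multi-injection}, which at each stage picks the $\succeq$-least quotient of the previous group with the prescribed rank. There is a genuine gap in Step 2, because both devices you use to get around the decoding obstacle fail.

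\emph{Products do not raise the ordinal.} Finite products of $\mathrm{Lim}_{\searrow\omega^{j}}$ do not give $\mathrm{Lim}_{\searrow\omega^{j}c}$, because $\mathrm{Lim}_{\searrow\omega^{j}}$ is already finitely parallelizable. To see this, merge two weakly descending sequences by the natural (Hessenberg) sum, which stays below $\omega^{j}$ and is strictly monotone in each argument. Then use the input to find the stage where the sum reaches its limit; both sequences have stabilized there. For example, $\mathrm{Lim}_{\searrow\omega}\equiv_{W}\mathrm{LPO}^{*}$. Since $\alpha\mapsto\mathrm{Lim}_{\searrow\alpha}$ is injective and order preserving, $\mathrm{Lim}_{\searrow\omega^{j}\cdot2}$ lies strictly above every finite product of copies of $\mathrm{Lim}_{\searrow\omega^{j}}$.

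\emph{Hausdorff does not mean isolated.} Hausdorffness of $\mathcal{G}_{\mathrm{WP}}$ makes only finite sets discrete, not the points of a countable compact set. Take $\mathcal{C}$ the abelian groups and $k=1$, so that $\alpha=\omega+1$. Any subposet of rank $\omega+1$ must contain $(\mathbb{Z},1)$, the only element with infinitely many proper quotients. It must also contain infinitely many $(\mathbb{Z}/n,1)$, and these converge to $(\mathbb{Z},1)$ in $\mathcal{G}_{\mathrm{WP}}$. So no WP-discrete ``canonical copy'' of $\alpha$ exists there, and your $h$ cannot be built this way.

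\emph{The missing idea.} Your remark that $h$ may use the original input is exactly right, but the input only helps once the coarse part of the limit is known. The paper gets that coarse part by induction on $n$. Write $P$ for $\mathrm{Id}:\mathcal{G}^{k}_{\mathrm{Pres}}\cap\mathcal{RC}\rightarrow\mathcal{G}^{k}_{\mathrm{WP}}$.
\begin{itemize}
\item \textbf{Base case.} $P\ge_{W}\mathrm{Lim}_{\searrow\omega}$, since $P$ is discontinuous and finitely parallelizable.
\item \textbf{Splitting the limit.} Assume $P\ge_{W}\mathrm{Lim}_{\searrow\omega^{n}}$. Write $\delta<\omega^{n+1}$ as $\delta=\omega\cdot v+u$ with $v<\omega^{n}$ and $u<\omega$. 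The map $\delta\mapsto v$ is order preserving. Hence a single use of $P$ (via $P\times P\le_{W}P$) yields both the $\rho_{\mathrm{WP}}$-name of $E(\beta)$ and the $v$-component of the limit, discretely.
\item \textbf{Finitely many candidates.} Wait until the input reaches some $\omega\cdot v+u'$. From then on it only visits $\{\omega\cdot v+j: j\le u'\}$. Because $E$ is deterministic, only finitely many marked groups can be the final group, one per possible path.
\item \textbf{Decoding.} Finite sets are discrete in $\mathcal{G}_{\mathrm{WP}}$, so the word-problem name continuously identifies which candidate $E(\beta)$ is, and hence $\delta$.
\end{itemize}
This gives $P\ge_{W}\mathrm{Lim}_{\searrow\omega^{n+1}}$, or $P\ge_{W}\mathrm{Lim}_{\searrow\alpha}$ once $\alpha\le\omega^{n+1}$. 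The hypothesis $\alpha<\omega^{\omega}$ is exactly what lets this finite induction reach $\alpha$.
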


\begin{proof}
For conciseness, we denote the problem $\mathrm{Id}:\mathcal{G}_{\mathrm{Pres}}^{k}\cap\mathcal{RC}\rightarrow\mathcal{G}_{\mathrm{WP}}^{k}$
by $P$. 

Consider a multi-injection $e$ of $\alpha$ into the lattice of residually-$\mathcal{C}$
$k$-marked groups, as given by Proposition \ref{prop: ordinal rank and multi-injection}. 

We will in fact not only use this multi-injection $e$, but also its
continuous realizer given in the proof of Proposition \ref{prop: ordinal rank and multi-injection}:
fix a well order $\succeq$ on $\mathcal{G}_{\mathrm{Pres}}^{k}\cap\mathcal{RC}$,
the realizer of $e$ is then defined by mapping a weakly descending
sequence $(\beta_{n})\in\alpha^{\mathbb{N}}$ to the sequence $((G_{n},S_{n}))\in\mathcal{RC}^{\mathbb{N}}$
given by 
\[
(G_{n+1},S_{n+1})=\inf_{\succeq}\left\{ (G,S)\mid\mathrm{rank}_{\mathcal{G}_{\mathrm{Pres}}^{k}}((G,S))=\beta_{n+1}\,\&\,(G_{n},S_{n})\rightarrow(G,S)\right\} .
\]

This descending sequence of marked groups can be seen as a single
group presentation by putting together all the relations of the marked
groups in $\{(G_{n},S_{n})\}$.

Denote by $E$ this continuous realizer of $e$. 

We will prove that for all natural number $n$: 
\begin{enumerate}
\item If $P\ge_{W}\mathrm{Lim}_{\searrow\omega^{n}}$ and $\alpha>\omega^{n+1}$,
then $P\ge\mathrm{Lim}_{\searrow\omega^{n+1}}$.
\item If $P\ge_{W}\mathrm{Lim}_{\searrow\omega^{n}}$ and $\omega^{n+1}\ge\alpha$,
then $P\ge\mathrm{Lim}_{\searrow\alpha}$.
\end{enumerate}
Note that $P\ge_{W}\mathrm{Lim}_{\searrow\omega}$ is automatic because
$P$ is finitely parallelizable (by Proposition \ref{prop:Direct product stable =00003D> id finitely //},
noting that $\mathcal{RC}$ is stable by direct products) and discontinuous,
and thus (1) and (2) together imply that $P\ge\mathrm{Lim}_{\searrow\alpha}$. 

We first prove (1). Suppose that $P\ge_{W}\mathrm{Lim}_{\searrow\omega^{n}}$.
Note that $\omega^{n+1}=\omega\omega^{n}$, and the elements of $\omega^{n+1}$
can be seen as pairs $(u,v)$ with $u\in\omega$ and $v\in\omega^{n}$,
the order being lexicographic with priority given to the right hand
side. 

We thus define maps $\pi_{1}:\omega^{n+1}\rightarrow\omega,\,\delta=(u,v)\mapsto u$
and $\pi_{2}:\omega^{n+1}\rightarrow\omega^{n},\,\delta=(u,v)\mapsto v$.
The map $\pi_{2}$ is order preserving. 

Applying $E$ to a name $u$ of some $\delta\in\omega^{n+1}$ yields
a group presentation $E(u)$ of a marked group with ordinal rank $\delta$. 

The problem $P$ can be used to convert this $\rho_{\mathrm{Pres}}$-name
into a $\rho_{\mathrm{WP}}$-name for $E(u)$. 

Also, by hypothesis, the problem $P$ can be used to obtain a name
of $\pi_{2}(\delta)$ for the discrete topology. 

And because $P$ is finitely parallelizable, one application of $P$
is sufficient to get both results. 

We now prove that $\delta$ can be recovered from this data. 

Since we have $\pi_{2}(\delta)$, it suffices to compute $\pi_{1}(\delta)$. 

An upper bound to $\pi_{1}(\delta)$ can be computed from $\pi_{2}(\delta)$,
simply by waiting for the weakly descending sequence that describes
$\delta$ to reach a point of the form $(v,\pi_{2}(\delta))$, at
which point we know that $\pi_{1}(\delta)\in\{0,...,v\}$. 

Once this is known, there are finitely many possibilities for what
the group $E(u)$ is. Indeed, $E(u)$ is either the marked group with
ordinal rank $(v,\pi_{2}(\delta))$ that $E$ produces upon the beginning
of the name of $\delta$, or its first (with respect to the well order
$\succeq$) quotient with ordinal rank $(v-1,\pi_{2}(\delta))$, or
its first quotient with ordinal rank $(v-2,\pi_{2}(\delta))$, ...,
and so on until $(0,\pi_{2}(\delta))$. 

But having access to the word problem of this marked group provides
a sufficient amount of information to determine which alternative
holds, since the topology of the space of marked groups induces the
discrete topology on any finite set of marked groups. 

We now prove (2), the proof is almost identical to that of (1). 

Suppose that $P\ge_{W}\mathrm{Lim}_{\searrow\omega^{n}}$ and $\omega^{n+1}\ge\alpha$.
We again identify $\omega^{n+1}$ with $\omega\times\omega^{n}$ via
the projection $(\pi_{1},\pi_{2})$. Given $\delta\in\alpha$ and
a name $u$ of it, we can use $P$ to compute a $\rho_{\mathrm{WP}}$-name
of $E(u)$ and a name of $\pi_{2}(\delta)$ for the discrete topology.
We conclude as for (1). 
\end{proof}
Notice that we have also proved: 
\begin{thm}
\label{thm: Pres-> WP above omega^omega when alpha is }Suppose that
$\mathcal{C}$ is equationally noetherian. Let $k\in\mathbb{N}$.
Let $\alpha$ be the rank of the lattice of residually-$\mathcal{C}$
$k$-marked groups. Suppose that $\alpha\ge\omega^{\omega}$. Then
\[
\mathrm{Id}:\mathcal{G}_{\mathrm{Pres}}^{k}\cap\mathcal{RC}\rightarrow\mathcal{G}_{\mathrm{WP}}^{k}\ge_{W}\mathrm{Lim}_{\searrow\omega^{\omega}}.
\]
\end{thm}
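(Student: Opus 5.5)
The plan is to reuse the inductive argument from the proof of Theorem \ref{thm: Pres-> WP is equal to ordinal length if alpha is small} essentially unchanged. We write $P$ for $\mathrm{Id}:\mathcal{G}_{\mathrm{Pres}}^{k}\cap\mathcal{RC}\rightarrow\mathcal{G}_{\mathrm{WP}}^{k}$ and fix the multi-injection $e$ of $\alpha$ into the lattice of residually-$\mathcal{C}$ $k$-marked groups, together with its continuous realizer $E$ from Proposition \ref{prop: ordinal rank and multi-injection}. The base case $P\ge_{W}\mathrm{Lim}_{\searrow\omega}$ holds for the same reasons as before. The group $\mathcal{RC}$ is stable under direct products, so Proposition \ref{prop:Direct product stable =00003D> id finitely //} makes $P$ finitely parallelizable. $P$ is also discontinuous: $\alpha\ge\omega^{\omega}>2$, so some residually-$\mathcal{C}$ marked group has a proper quotient in $\mathcal{RC}$, and Scott and Cantor topologies differ there.

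The induction step is item (1) of the earlier proof: if $P\ge_{W}\mathrm{Lim}_{\searrow\omega^{n}}$ and $\alpha>\omega^{n+1}$, then $P\ge_{W}\mathrm{Lim}_{\searrow\omega^{n+1}}$. Since $\alpha\ge\omega^{\omega}>\omega^{n+1}$ for every $n\in\mathbb{N}$, this hypothesis holds at every stage, and induction gives $P\ge_{W}\mathrm{Lim}_{\searrow\omega^{n}}$ for all $n$.

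The argument for (1) goes as follows. Take a name $u$ of $\delta=(u',v)\in\omega^{n+1}\subseteq\alpha$, with $u'\in\omega$ and $v\in\omega^{n}$. One application of $P$, using parallelization, gives two things at once: a word-problem name of $E(u)$, and $\pi_{2}(\delta)$. The second comes from running the reduction $P\ge_{W}\mathrm{Lim}_{\searrow\omega^{n}}$ on the image under $\pi_{2}$ of the descending sequence, which is still weakly descending because $\pi_{2}$ is order preserving. Knowing $\pi_{2}(\delta)$, we wait for the input sequence to reach a term with second coordinate $\pi_{2}(\delta)$. This gives a bound $\pi_{1}(\delta)\le v_{0}$ and leaves finitely many candidate groups for $E(u)$. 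The word-problem name then identifies the correct candidate continuously, because finite sets are discrete in $\mathcal{G}_{\mathrm{WP}}$. Nothing in this argument uses $\alpha<\omega^{\omega}$; it only uses that $\omega^{n+1}$ lies inside $\alpha$, so that $E$ is defined on descending sequences in $\omega^{n+1}$.

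Finally, Proposition \ref{prop: limit (ordinal problem) =00003D problem (limit ordinal)} applies because $\omega^{\omega}$ is a countable limit ordinal and every $\beta<\omega^{\omega}$ is below some $\omega^{n}$. The reductions $P\ge_{W}\mathrm{Lim}_{\searrow\beta}$ therefore combine into $P\ge_{W}\mathrm{Lim}_{\searrow\omega^{\omega}}$.

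The step that needs the most care is making sure $E$ behaves correctly when restricted to the initial segment $\omega^{n+1}$ of the larger ordinal $\alpha$. Concretely, the element of rank $(u',v)$ that $E$ produces as a quotient of the current group must have quotients of every smaller rank available, and the finitely many candidates must be determined by the initial segment of the name. Both follow from the definition of rank: an element of rank $\gamma$ has quotients, in the reversed order, of every rank below $\gamma$. So this should go through as in the earlier proof.
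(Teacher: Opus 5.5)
You follow the paper's own proof: iterate item (1), note that item (2) never occurs, and finish with the limit-ordinal proposition. You therefore inherit its weak point, which is the finite parallelizability of $P$.

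Here $P$ is the restriction to $k$-marked groups, and the direct-product proposition does not apply to it. The marked group $(G\times H,S\cup S')$ is $2k$-marked, so it is not an instance of $P$. The diagonal $k$-marking only produces the subdirect product $\mathbb{F}_k/(N_G\cap N_H)$, and its word problem does not determine those of $G$ and $H$.

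You need parallelization twice: for the base case $P\ge_W\mathrm{LPO}^*$, and essentially in (1) for $n\ge1$, where one call to $P$ must return both the word problem of $E(u)$ and $\pi_2(\delta)$. This is not a technicality.
\begin{itemize}
\item For a finite set of finite groups, the $k$-marked lattice is finite, so $P\ge_W\mathrm{LPO}^*$ fails outright.
\item For $\mathcal{C}$ the abelian groups and $k=1$, the lattice is $\{\mathbb{Z}/n\}$, of rank $\omega+1$, and $P\ge_W\mathrm{Lim}_{\searrow\omega}$ holds. The same parallel step (item (2) with $n=1$) would then give $P\equiv_W\mathrm{Lim}_{\searrow\omega+1}$, which is false. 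Any reduction sends the constant-$\omega$ input to some $\mathbb{Z}/n_0$. The decoder commits to $\omega$ after reading the status of the words $a^m$, $|m|\le R$. So every input that agrees long enough with the constant sequence and then drops must be sent to a quotient of $\mathbb{Z}/n_0$ (which is finitely presented) that differs from it on one of these words. Only finitely many groups qualify. $P$ restricted to a finite set lies below some $\mathrm{Lim}_{\searrow m}$ with $m$ finite, and this does not compute $\mathrm{Lim}_{\searrow\omega}$.
\end{itemize}

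The statement itself is still true, by an argument that uses a single instance of $P$. For $x$ in the lattice, let $P_x$ be the restriction of $P$ to the marked quotients of $x$.

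\emph{Step.} Let $z$ be a proper quotient of $x$ in $\mathcal{RC}$ with $\mathrm{Lim}_{\searrow\beta}\le_W P_z$. Then $\mathrm{Lim}_{\searrow\beta+1}\le_W P_x$.
\begin{itemize}
\item Forward map: while the input sequence sits at $\beta$, enumerate relators of $x$. At the first drop, add the relators of $z$ and feed the tail to the forward map of the reduction to $P_z$.
\item Backward map: fix a word $r$ that is trivial in $z$ but not in $x$, and look up $r$ in the word-problem name. If $r\ne1$, output $\beta$. Otherwise a drop has occurred; locate it and run the backward map of the reduction to $P_z$.
\end{itemize}

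\emph{Induction.} Combine the step with the limit-ordinal proposition applied to $P_x$, and induct on $\mathrm{rank}(x)$. This gives $\mathrm{Lim}_{\searrow\beta}\le_W P_x\le_W P$ for every $\beta\le\mathrm{rank}(x)$. For a successor $\beta=\gamma+1\le\mathrm{rank}(x)$, pick a proper quotient $z$ of $x$ with $\gamma\le\mathrm{rank}(z)$.

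\emph{Conclusion.} Since $\alpha\ge\omega^{\omega}$, every $\beta<\omega^{\omega}$ is at most the rank of some $k$-marked residually-$\mathcal{C}$ group. The limit-ordinal proposition then yields the theorem. This route needs neither parallelization, nor the realizer $E$, nor the decomposition $(\pi_1,\pi_2)$.
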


\begin{proof}
Following the proof of Theorem \ref{thm: Pres-> WP is equal to ordinal length if alpha is small},
we see that situation (2) never arises, and thus by using (1) we get
that $\mathrm{Id}:\mathcal{G}_{\mathrm{Pres}}^{k}\cap\mathcal{RC}\rightarrow\mathcal{G}_{\mathrm{WP}}^{k}\ge_{W}\mathrm{Lim}_{\searrow\omega^{n}}$
for every $n$. We conclude by Lemma \ref{prop: limit (ordinal problem) =00003D problem (limit ordinal)}. 
\end{proof}
The following corollary is immediate. 
\begin{cor}
Suppose that $\mathcal{C}$ is equationally noetherian. Let $\alpha_{k}$
be the ordinal rank of the lattice of residually-$\mathcal{C}$ $k$-marked
groups, and let $\alpha=\sup(\alpha_{k})$. If, for each $k$, $\alpha_{k}<\omega^{\omega}$,
then 
\[
\mathrm{Id}:\mathcal{G}_{\mathrm{Pres}}\cap\mathcal{RC}\rightarrow\mathcal{G}_{\mathrm{WP}}\equiv_{W}\mathrm{Lim}_{\searrow\alpha}.
\]
\end{cor}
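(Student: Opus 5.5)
We prove the two reductions separately, working one arity $k$ at a time and then assembling over $k$. Write $P$ for $\mathrm{Id}:\mathcal{G}_{\mathrm{Pres}}\cap\mathcal{RC}\rightarrow\mathcal{G}_{\mathrm{WP}}$ and $P_k$ for its restriction to $k$-marked groups. Since the first symbol of any name (of either representation) is the number of generators $k$, $P$ is essentially the disjoint union of the $P_k$.

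\emph{Upper bound.} By Theorem \ref{thm: Pres-> WP is equal to ordinal length if alpha is small}, for each $k$ we have $P_k\equiv_W\mathrm{Lim}_{\searrow\alpha_k}$, hence $P_k\le_W\mathrm{Lim}_{\searrow\alpha}$ because $\alpha_k\le\alpha$ and $\beta\mapsto\mathrm{Lim}_{\searrow\beta}$ is order preserving. To reduce $P$ to $\mathrm{Lim}_{\searrow\alpha}$, the forward map reads $k$ from the input and applies the witness $k_k$ of $P_k\le_W\mathrm{Lim}_{\searrow\alpha_k}$. It then composes with the continuous embedding of names of weakly descending sequences in $\alpha_k$ into names of weakly descending sequences in $\alpha$ (the inclusion $\alpha_k\subseteq\alpha$, realized via the fixed surjections onto the ordinals). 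It also passes $k$ along through the $V$-component. The backward map uses $k$ to select $h_k$. This is continuous because the choice of $k$ is made on a clopen piece of Baire space.

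\emph{Lower bound.} There are two cases.
\begin{itemize}
\item If $\alpha=\alpha_k$ for some $k$, i.e.\ the supremum is attained, then $\mathrm{Lim}_{\searrow\alpha}\equiv_W P_k\le_W P$, since $P_k$ is a restriction of $P$ to a clopen set of inputs and the outputs embed trivially.
\item Otherwise $\alpha$ is a limit ordinal: a supremum of a set of ordinals that is not attained is a limit. Every $\beta<\alpha$ is then below some $\alpha_k$, so $\mathrm{Lim}_{\searrow\beta}\le_W\mathrm{Lim}_{\searrow\alpha_k}\equiv_W P_k\le_W P$, and Proposition \ref{prop: limit (ordinal problem) =00003D problem (limit ordinal)} gives $P\ge_W\mathrm{Lim}_{\searrow\alpha}$.
\end{itemize}

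The main point requiring care is the upper bound when $\alpha$ is not attained. A single instance of $P$ may have any arity $k$, so the uniform reduction must dispatch on $k$. One must check that $\mathrm{Lim}_{\searrow\alpha_k}\le_W\mathrm{Lim}_{\searrow\alpha}$ holds uniformly in $k$, i.e.\ that the order-preserving inclusions are continuously realizable by a single map given $k$. This is immediate once the surjections $f_k\colon\mathbb{N}\to\alpha_k$ and $f\colon\mathbb{N}\to\alpha$ are fixed, since translating indices is then a pointwise computable relabelling.
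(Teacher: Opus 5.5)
Your proposal is correct and takes the route the paper intends. The paper just calls the corollary immediate from the per-arity equivalence $\mathrm{Id}:\mathcal{G}_{\mathrm{Pres}}^{k}\cap\mathcal{RC}\rightarrow\mathcal{G}_{\mathrm{WP}}^{k}\equiv_{W}\mathrm{Lim}_{\searrow\alpha_{k}}$, and you supply the gluing it leaves implicit: a uniform dispatch on the arity $k$ for the upper bound, and, when $\alpha=\sup_k\alpha_k$ is not attained, the limit-ordinal proposition for the lower bound.
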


\begin{prop}
\label{thm: Pres-> WP for not eq noeth}Suppose that $\mathcal{C}$
is not equationally noetherian. Then 
\[
\mathrm{Id}:\mathcal{G}_{\mathrm{Pres}}^{k}\cap\mathcal{RC}\rightarrow\mathcal{G}_{\mathrm{WP}}^{k}\ge_{W}\mathrm{Lim}_{\mathbb{N}}.
\]
\end{prop}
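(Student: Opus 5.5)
We use the characterization of Lemma \ref{lem: Eq noeth characterization }. Since $\mathcal{C}$ is not equationally noetherian, there is (for some $k$; we assume the statement concerns such a $k$, or else work in the disjoint union) a strictly descending chain of residually-$\mathcal{C}$ $k$-marked groups
\[
(G_{0},S)\rightarrow(G_{1},S)\rightarrow(G_{2},S)\rightarrow\cdots,
\]
with every arrow a proper quotient. Let $(G_{\infty},S)$ be its limit in the lattice, the marked group whose relations are the union of those of the $G_{n}$, and set $H_{\infty}=\mathrm{Res}_{\mathcal{C}}(G_{\infty},S)$, which lies in $\mathcal{RC}$. We cannot assume $H_{\infty}$ is distinct from every $G_{n}$ a priori, but it is: $H_{\infty}$ is a quotient of each $G_{n}$, hence of $G_{n+1}$, and $G_{n}\rightarrow G_{n+1}$ is proper. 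So the points $G_{0},G_{1},\dots,H_{\infty}$ are pairwise distinct elements of $\mathcal{RC}$.

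The reduction of $\mathrm{Lim}_{\mathbb{N}}$ goes through the easier problem $\mathrm{Lim}_{\searrow\omega+1}$, or rather directly through the standard fact that $\mathrm{Lim}_{\mathbb{N}}$ is equivalent to the problem of computing $\sup$ of a nondecreasing bounded-or-not sequence in $\omega+1$, i.e.\ to the discretization of $\omega+1$ with the topology in which $\omega$ is the limit of $0,1,2,\dots$ and the finite points are open. (Recall that $\mathrm{Lim}_{\mathbb{N}}$ is equivalent to taking the maximum of a nondecreasing sequence in $\mathbb{N}$ which converges, together with deciding nothing more; using instead of $\omega$ a point that is \emph{Scott-below} all $G_{n}$ is what we need.) Concretely, given a converging sequence $(m_{j})$ of naturals, we first continuously transform it into a sequence recording, at stage $j$, the current guess together with a counter of mind-changes. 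We then build a presentation: at stage $j$, we enumerate relations of $G_{c_{j}}$, where $c_{j}$ is the number of mind-changes so far. This is continuous in $\mathcal{G}_{\mathrm{Pres}}$. Since the sequence converges, $c_{j}$ stabilizes at some $c$, and the presented group is exactly $(G_{c},S)$. Because we enumerate all relations of $G_{c_{j}}$ gradually and the $G_{n}$ are increasing in relations, the union of enumerated relations is exactly the relations of $G_{c}$.

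Applying $P$ yields a $\rho_{\mathrm{WP}}$-name of $G_{c}$. From the word problem we can compute $c$ continuously: $G_{c}$ is isolated among $\{G_{n}\}\cup\{H_{\infty}\}$ in $\mathcal{G}_{\mathrm{WP}}$, since a relation $r_{c}$ holding in $G_{c}$ but not in $G_{c-1}$ and a relation $w_{c}$ holding in $G_{c+1}$ but not in $G_{c}$ together pin it down. Knowing $c$, we rerun the input up to the $c$-th mind-change (passing the input through $V$) and output the value after it, which is the limit $m$. This gives $\mathrm{Lim}_{\mathbb{N}}\le_{W}P$.

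The main obstacle is ensuring that the input map lands in $\mathcal{RC}$ with a \emph{finite} number of relevant stages, i.e.\ that it never produces $G_{\infty}$ (not residually-$\mathcal{C}$ in general); this is exactly why convergence of the $\mathrm{Lim}_{\mathbb{N}}$-instance is used, and why we count mind-changes rather than encoding values directly. The only other check is that the output name determines $c$ continuously, which the distinguishing relations $r_{c},w_{c}$ handle.
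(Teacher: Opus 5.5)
Your argument is correct and is essentially the paper's proof. The paper likewise uses the mind-change counter $m_{c}$ to present the relevant member of a strictly descending chain of residually-$\mathcal{C}$ groups (there $\mathrm{Res}_{\mathcal{C}}\langle S\mid r_{1},\dots,r_{n}\rangle$, here yours from the third characterization), recovers the index from the word problem via the first distinguishing relation that fails, and replays the input to obtain the limit; your caveat that $k$ must be at least the size of the chain's generating set (larger $k$ by padding with trivial generators) is warranted, since the paper silently takes $k=|S|$.

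You should, however, drop the parenthetical detour, which is wrong though never used. $\mathrm{Lim}_{\searrow\omega+1}$ and the discretization of the one-point compactification of $\mathbb{N}$ (which is just $\mathrm{LPO}$) are strictly below $\mathrm{Lim}_{\mathbb{N}}$, while computing the supremum of a possibly unbounded nondecreasing sequence is $\mathrm{LPO}'$-hard, so none of these is equivalent to $\mathrm{Lim}_{\mathbb{N}}$.
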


\begin{proof}
By Lemma \ref{lem: Eq noeth characterization } there is a marked
residually $\mathcal{C}$ group which does not admit a finite presentation
as a residually $\mathcal{C}$ group. Let $\langle S\mid r_{1},r_{2},r_{3}...\rangle$
be a presentation for this group. We can suppose w.l.o.g\. that each
marked quotient $\mathrm{Res}_{\mathcal{C}}\langle S\mid r_{1},...,r_{n}\rangle\rightarrow\mathrm{Res}_{\mathcal{C}}\langle S\mid r_{1},...,r_{n},r_{n+1}\rangle$
is a strict quotient. 

For each $n\in\mathbb{N}$ fix an infinite presentation $\pi_{n}=\langle S\mid t_{1}^{n},t_{2}^{n},t_{3}^{n}...\rangle$
of $\mathrm{Res}_{\mathcal{C}}\langle S\mid r_{1},...,r_{n}\rangle$. 

Let $(u_{n})_{n\in\mathbb{N}}\in\mathbb{N}^{\mathbb{N}}$ be a converging
sequence. Define a presentation associated to $(u_{n})_{n\in\mathbb{N}}\in\mathbb{N}^{\mathbb{N}}$
as follows. Define a function $m_{c}$ by $m_{c}(n)=\#\{k\le n\mid u_{k}\ne u_{k+1}\}$.
Note that because $(u_{n})_{n\in\mathbb{N}}$ converges, $m_{c}(n)$
converges to a certain limit $l$. 

Define a presentation $\Pi=\langle S\mid v_{1},v_{2},v_{3},...\rangle$
by 
\[
v_{\langle p,q\rangle}=t_{q}^{m_{c}(p)}.
\]
It follows that $\Pi$ defines the same marked group as $\pi_{l}$
(with many redundant relators). Denote this marked group $(G_{\Pi},S)$.
Note that $(G_{\Pi},S)\in\mathcal{RC}$. 

Applying $\mathrm{Id}:\mathcal{G}_{\mathrm{Pres}}^{k}\cap\mathcal{RC}\rightarrow\mathcal{G}_{\mathrm{WP}}^{k}$
to $\Pi$ yields a solution to the word-problem in $(G_{\Pi},S)$.
From this solution, the least $n_{0}$ such that $r_{n_{0}}$ is not
a relation of $(G_{\Pi},S)$ can be computed. 

It follows that the value of $(u_{n})_{n\in\mathbb{N}}$ changes exactly
$n_{0}$ times. This information is sufficient to recover the limit
of $(u_{n})_{n\in\mathbb{N}}$. 
\end{proof}

\section{general results on the map $\mathrm{Res}_{\mathcal{C}}$}
\begin{prop}
\label{thm: Quasi Var iff ResC pres pres is C0}Let $\mathcal{C}$
be a set of finitely generated groups. The following are equivalent: 
\begin{enumerate}
\item The set $\mathcal{RC}$ is a quasivariety;
\item The function $\mathrm{Res}_{\mathcal{C}}:\mathcal{G}_{\mathrm{Pres}}\rightarrow\mathcal{G}_{\mathrm{Pres}}$
is continuous;
\item The function $\mathrm{Res}_{\mathcal{C}}:\mathcal{G}_{\mathrm{WP}}\rightarrow\mathcal{G}_{\mathrm{Pres}}$
is continuous.
\end{enumerate}
\end{prop}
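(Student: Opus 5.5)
The plan is to prove (1)$\Rightarrow$(2)$\Rightarrow$(3)$\Rightarrow$(1). The first thing to record is a description of the relations of $\mathrm{Res}_{\mathcal{C}}(G,S)$. By definition, a word $w$ is a relation of $\mathrm{Res}_{\mathcal{C}}(G,S)$ iff the relations of $(G,S)$ imply $w$ relative to $\mathcal{C}$, i.e.\ iff $(G,S)$ together with the quasi-identity content of $\mathcal{RC}$ forces $w=1$. Continuity into $\mathcal{G}_{\mathrm{Pres}}$ unpacks as follows, since the Scott basic opens are ``satisfies $w_1,\dots,w_n$'' and finite intersections reduce everything to a single word. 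For each $w$, the set
\[
U_w=\{(G,S)\mid w=1\text{ in }\mathrm{Res}_{\mathcal{C}}(G,S)\}
\]
must be open in the source topology.

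For (1)$\Rightarrow$(2), suppose $\mathcal{RC}$ is a quasivariety defined by a set $L$ of quasi-identities. The key claim is a compactness statement: if $w=1$ in $\mathrm{Res}_{\mathcal{C}}\langle S\mid R\rangle$, then some finite $R_0\subseteq R$ already has $w=1$ in $\mathrm{Res}_{\mathcal{C}}\langle S\mid R_0\rangle$. The argument goes through the proof of Lemma \ref{lem:Eq noeth -> Res C is Qvar}, which contains an equivalence. Failure of this finiteness property produces an infinite set $\{r_i\}$ implying $w$ relative to $\mathcal{C}$ with no finite subset doing so. Then consider the group $\mathrm{Res}_{\mathcal{C}}$ of the directed union, realized as a direct limit. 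Its kernel is the union of the kernels of the $\mathrm{Res}_{\mathcal{C}}\langle S\mid r_i, i\le n\rangle$. This direct limit satisfies every quasi-identity of $L$, because such a quasi-identity involves finitely many elements and finitely many relations, so it can be checked at a finite stage. Hence the direct limit lies in $\mathcal{RC}$, but it is not killed by $w$, a contradiction. I expect this step to be the main obstacle: one must argue carefully that the union of an increasing chain of normal subgroups whose quotients lie in $\mathcal{RC}$ gives a quotient in $\mathcal{RC}$. This is exactly closure of quasivarieties under directed unions of marked quotients, a consequence of closure under ultraproducts and subgroups, or seen directly from the syntactic form of quasi-identities.

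With the finiteness property in hand, $U_w$ is the union over finite $R_0$ with $w=1$ in $\mathrm{Res}_{\mathcal{C}}\langle S\mid R_0\rangle$ of the Scott-open sets ``satisfies $R_0$''. It is therefore Scott open, which gives (2). The implication (2)$\Rightarrow$(3) is immediate, since the identity $\mathcal{G}_{\mathrm{WP}}\to\mathcal{G}_{\mathrm{Pres}}$ is continuous: $\rho_{\mathrm{WP}}$ translates to $\rho_{\mathbb{S}\mathrm{WP}}$, and we compose.

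For (3)$\Rightarrow$(1), argue by contraposition. If $\mathcal{RC}$ is not a quasivariety, then, as in the proof of Lemma \ref{lem:Eq noeth -> Res C is Qvar}, there exist relations $\{r_i\}$ and $w$ such that $\{r_i\}$ implies $w$ relative to $\mathcal{C}$ but no finite subset does. Here one should double-check the converse direction used there. If $\mathcal{RC}$ satisfied this compactness property, then $\mathcal{RC}$ would be axiomatized by the quasi-identities $\bigwedge r_i=1\Rightarrow w=1$ valid in $\mathcal{C}$. Indeed, a group satisfying all of these, when marked, has its relations closed under $\mathcal{C}$-consequence, and so equals its own $\mathrm{Res}_{\mathcal{C}}$; the same holds for non-finitely-generated groups via their finitely generated subgroups, if needed. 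Now set $G_n=\langle S\mid r_0,\dots,r_n\rangle$ and $G_\infty=\langle S\mid r_i,i\in\mathbb{N}\rangle$. Then $G_n\to G_\infty$ in $\mathcal{G}_{\mathrm{WP}}$, since the normal closures increase to the normal closure of the union. Moreover $w=1$ in $\mathrm{Res}_{\mathcal{C}}(G_\infty)$ but $w\neq1$ in every $\mathrm{Res}_{\mathcal{C}}(G_n)$. So $\mathrm{Res}_{\mathcal{C}}(G_n)$ stays outside the Scott-open set ``satisfies $w$'' while the limit lies inside it, contradicting continuity.
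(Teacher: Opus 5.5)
Your skeleton matches the paper's. (2)$\Rightarrow$(3) is the same observation, and (3)$\Rightarrow$(1) uses the same sequence $\langle S\mid r_0,\dots,r_n\rangle\to\langle S\mid r_i,\,i\in\mathbb{N}\rangle$. For (1)$\Rightarrow$(2) you argue differently from the paper, which builds an explicit realizer adding right-hand sides of quasi-identities. You instead prove the finite-consequence property by the direct-limit argument and then write $U_w$ as a union of basic Scott opens. This part is correct, and it supplies the compactness that the paper's realizer tacitly relies on.

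The gap is the opening step of (3)$\Rightarrow$(1): the claim that if $\mathcal{RC}$ is not a quasivariety, then some infinite $\{r_i\}$ implies $w$ relative to $\mathcal{C}$ while no finite subset does. Your justification turns ``$R_0$ implies $w$ relative to $\mathcal{C}$'' into validity in $\mathcal{C}$ of the quasi-identity $\bigwedge_{r\in R_0}r=1\Rightarrow w=1$. But relative implication only quantifies over \emph{generating} tuples of groups in $\mathcal{C}$, whereas a quasi-identity must hold under every substitution. A non-generating tuple $h$ of some $H\in\mathcal{C}$ generates a subgroup that need be neither in $\mathcal{C}$ nor residually $\mathcal{C}$.

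This cannot be patched in general. Take $\mathcal{C}=\{A_5\}$. There are only finitely many $k$-marked copies of $A_5$. So if $R$ implies $w$ relative to $\mathcal{C}$, choose, for each copy in which $w\neq1$, a relator of $R$ that fails there; these give a finite $R_0\subseteq R$ that already implies $w$. By the second half of your (1)$\Rightarrow$(2) argument, (2) and (3) then hold. But $\mathcal{RC}$ is not a quasivariety: any quasivariety containing $A_5$ contains its subgroup $\mathbb{Z}/2\mathbb{Z}$, which has no quotient isomorphic to $A_5$ and so is not residually $\mathcal{C}$.

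The paper's proof has the same hole. It simply asserts this equivalence, both here and in the proof that equational noetherianity makes $\mathcal{RC}$ a quasivariety. What your arguments do establish is that (2), (3) and the finite-consequence property are equivalent, and that (1) implies them. To get back to (1) you need either:
\begin{enumerate}
\item an extra hypothesis such as closure of $\mathcal{RC}$ under finitely generated subgroups (e.g.\ $\mathcal{C}$ subgroup-closed); then $\langle h\rangle$ is residually $\mathcal{C}$, and your axiomatization by the quasi-identities valid in $\mathcal{RC}$ goes through; or
\item to read ``quasivariety'' as meaning the finite-consequence property itself.
\end{enumerate}
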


\begin{proof}
$(1)\implies(2)$. Suppose that $\mathcal{RC}$ is a quasivariety.
Given a presentation $\pi=\langle s_{1},...,s_{k}\mid R\rangle$ for
a marked group $(G,S)$, a presentation of $\mathrm{Res}_{\mathcal{C}}((G,S))$
is obtained by adding as further relators all elements $w$ that appear
in a quasi-identity $\bigwedge r_{i}=1\implies w=1$ of $\mathcal{RC}$. 

By Corollary \ref{cor: Pres and FullPres}, there is a continuous
map $F$ which maps any presentation of $(G,S)$ to a full presentation
of $(G,S)$, i.e. one where all relations of $(G,S)$ appear as relators.
There is also a continuous map $H$ which maps a presentation of $(G,S)$
to a presentation in which the ``obvious consequences'' of the quasi-identities
of $\mathcal{RC}$ are added: for $\pi$ a presentation, $H(\pi)$
contains the relations of $\pi$ and all relations $w$ which appear
in the right hand side of a quasi-identity $\bigwedge_{1\le i\le n}r_{i}=1\implies w=1$
of $\mathcal{RC}$ for which $(r_{1},...,r_{n})$ are relations that
appear in $\pi$. The composition of $F$ and $H$ yields the desired
continuous realizer of $\mathrm{Res}_{\mathcal{C}}:\mathcal{G}_{\mathrm{Pres}}\rightarrow\mathcal{G}_{\mathrm{Pres}}$.

$(2)\implies(3)$ is immediate since the space of marked groups topology
is finer than the Scott topology. 

$(3)\implies(1)$. Suppose that $\mathcal{RC}$ is not a quasivariety.
Then there exists $k\in\mathbb{N}$, an infinite set of relations
$(r_{1},r_{2},...)\in\mathbb{F}_{k}^{\mathbb{N}}$, and a relation
$w\in\mathbb{F}_{k}$ such that any marked group in $\mathcal{RC}$
which satisfies all the relations $(r_{1},r_{2},...)$ also satisfies
$w=1$, and yet for every finite subset $A$ of $(r_{1},r_{2},...)$
there is a group in $\mathcal{RC}$ which satisfies $A$ and $w\neq1$.
Consider the sequence $((G_{n},S_{n}))_{n\in\mathbb{N}}$ of finitely
presented groups given by $(G_{n},S_{n})=\langle s_{1},...,s_{k}\mid r_{1},...,r_{n}\rangle$.
The sequence $((G_{n},S_{n}))_{n\in\mathbb{N}}$ being increasing,
it converges towards a marked group $(G,S)$. And yet, by construction,
$w$ is a relation in $\mathrm{Res}_{\mathcal{C}}((G,S))$, but it
is never a relation of $\mathrm{Res}_{\mathcal{C}}((G_{n},S_{n}))$.
Thus $(\mathrm{Res}_{\mathcal{C}}((G_{n},S_{n})))_{n\in\mathbb{N}}$
does not converge to $\mathrm{Res}_{\mathcal{C}}((G,S))$.
\end{proof}
\begin{prop}
\label{prop: Finite parallelization}Let $\mathcal{C}$ be a set of
finitely generated groups. The map $\mathrm{Res}_{\mathcal{C}}$ is
finitely parallelizable. 
\end{prop}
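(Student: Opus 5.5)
We prove that $\mathrm{Res}_{\mathcal{C}}\times\mathrm{Res}_{\mathcal{C}}\le_{W}\mathrm{Res}_{\mathcal{C}}$. As noted after the definition of finite parallelization, this suffices: $P\equiv_{W}P\times P$ is equivalent to $P\equiv_{W}P^{*}$. The reduction $\mathrm{Res}_{\mathcal{C}}\le_{W}\mathrm{Res}_{\mathcal{C}}\times\mathrm{Res}_{\mathcal{C}}$ is trivial: duplicate the input (or pair it with the trivial group) and keep the first output. The natural idea is the one already used in Proposition \ref{prop:Direct product stable =00003D> id finitely //}, namely to encode a pair of marked groups as a single marked group.

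Direct products are the wrong encoding here, since $\mathcal{C}$ is an arbitrary class and $\mathrm{Res}_{\mathcal{C}}(G\times H)$ need not be $\mathrm{Res}_{\mathcal{C}}(G)\times\mathrm{Res}_{\mathcal{C}}(H)$. I would instead use the \emph{free product}. Given $k$-marked $(G,S)$ and $l$-marked $(H,S')$, form the $(k+l)$-marked group $(G*H,S\cup S')$. A $\rho_{\mathrm{WP}}$-name of $G*H$ can be computed continuously from names of $G$ and $H$. Indeed, whether a word in $\mathbb{F}_{k+l}$ is trivial in $G*H$ is decided by the normal form theorem: iteratively cancel maximal syllables that are trivial in the respective factor and merge neighbours. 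Each step queries finitely many bits of the two word problems.

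The key claim is
\[
\mathrm{Res}_{\mathcal{C}}(G*H,S\cup S')\twoheadrightarrow\mathrm{Res}_{\mathcal{C}}(G,S),
\]
where the marked morphism sends the $S'$-generators to $1$, and likewise for $H$. This holds because $G$ is a retract of $G*H$. Any quotient $G\twoheadrightarrow Q$ with $Q\in\mathcal{C}$ composes with the retraction $G*H\to G$, so every relation of $\mathrm{Res}_{\mathcal{C}}(G*H)$ that lies in the $S$-generated subgroup is a relation of $\mathrm{Res}_{\mathcal{C}}(G)$. Conversely, the image of $G$ in $\mathrm{Res}_{\mathcal{C}}(G*H)$ is a quotient of $G$, so relations of $\mathrm{Res}_{\mathcal{C}}(G*H)$ restricted to words in $S$ contain those of $\mathrm{Res}_{\mathcal{C}}(G)$ only if the image is residually $\mathcal{C}$. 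That is the delicate point.

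So the backward map $h$ is: from a $\rho_{\mathrm{WP}}$-name of $\mathrm{Res}_{\mathcal{C}}(G*H)$, read off the word problem restricted to words in $S$ (respectively $S'$). This is clearly continuous, with no need to pass the original input through $V$. What must be checked is that the relations of $\mathrm{Res}_{\mathcal{C}}(G*H)$ among words in $S$ are exactly the relations of $\mathrm{Res}_{\mathcal{C}}(G)$.

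\textbf{Main obstacle.} The inclusion $\supseteq$ is the problematic one. A word $w$ in $S$ is a relation of $\mathrm{Res}_{\mathcal{C}}(G)$ when every $\mathcal{C}$-quotient of $G$ kills it. For $w$ to be a relation of $\mathrm{Res}_{\mathcal{C}}(G*H)$ we need every $\mathcal{C}$-quotient of $G*H$ to kill it. Such a quotient restricts to a quotient of $G$ lying in the subgroup generated by the image of $S$. That restriction is in $\mathcal{C}$ only if $\mathcal{C}$ is closed under (finitely generated) subgroups, which an arbitrary class need not be.

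If this fails, I would switch encodings and work on the side of $\mathcal{C}$ instead. Replace $\mathcal{C}$ by the class $\mathcal{C}'$ of finitely generated subgroups of groups in $\mathcal{C}$. A group is residually $\mathcal{C}'$ iff residually $\mathcal{C}$? That fails too, so the more robust fallback is different.

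Use the free product $G*H$, but recover $\mathrm{Res}_{\mathcal{C}}(G)$ from $\mathrm{Res}_{\mathcal{C}}(G*H)$ together with the input $G$ passed through $V$. Alternatively, choose the encoding $G*H$ modulo killing $H$ inside a $\mathcal{C}$-quotient structure. I expect the correct final argument to be the retract argument above, with the subgroup issue handled by observing that the $\mathcal{C}$-quotients of $G*H$ include $Q*1$-type quotients. Those are precisely the compositions with the retraction, which give the $\subseteq$ direction. For $\supseteq$, one uses that the image of $G$ in $\mathrm{Res}_{\mathcal{C}}(G*H)$ is separated by the quotients factoring through the retraction. Verifying this cleanly is where I expect the real work to lie.
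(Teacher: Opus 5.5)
Your proposal has a genuine gap. The backward map $h$ restricts the word problem of $\mathrm{Res}_{\mathcal{C}}(G*H,S\cup S')$ to words in $S$. This is correct only if the relations of $\mathrm{Res}_{\mathcal{C}}(G*H)$ among $S$-words are exactly those of $\mathrm{Res}_{\mathcal{C}}(G)$. The retraction argument gives only one inclusion: such relations of $\mathrm{Res}_{\mathcal{C}}(G*H)$ hold in $\mathrm{Res}_{\mathcal{C}}(G)$.

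The other inclusion is not a matter of verifying cleanly, because it is false for arbitrary $\mathcal{C}$. Take $\mathcal{C}=\{1,\mathbb{Z}/6\mathbb{Z}\}$, $G=\mathbb{Z}/2\mathbb{Z}$, $H=\mathbb{Z}/3\mathbb{Z}$. Then $\mathrm{Res}_{\mathcal{C}}(G)$ is trivial. Yet the abelianization $G*H\twoheadrightarrow\mathbb{Z}/6\mathbb{Z}$ is a $\mathcal{C}$-quotient that keeps the generator of $G$ non-trivial.

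Your closing suggestion is precisely this false statement. You propose that the image of $G$ in $\mathrm{Res}_{\mathcal{C}}(G*H)$ is separated by the quotients factoring through the retraction. But those quotients only give the easy inclusion. It is the other $\mathcal{C}$-quotients of $G*H$, whose restriction to $G$ lands in a subgroup outside $\mathcal{C}$, that break it. The remaining fallbacks (passing $G$ through $V$, replacing $\mathcal{C}$) are not carried out. So no reduction $\mathrm{Res}_{\mathcal{C}}\times\mathrm{Res}_{\mathcal{C}}\le_{W}\mathrm{Res}_{\mathcal{C}}$ is actually established.

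For comparison, the paper's proof is a one-line direct-product encoding. It asserts that computing $\mathrm{Res}_{\mathcal{C}}$ of $(G,S)$ and of $(H,S')$ is equivalent to computing $\mathrm{Res}_{\mathcal{C}}(G\times H,S\cup S')$. Your worry about direct products is legitimate. The same example, $\mathbb{Z}/2\times\mathbb{Z}/3\cong\mathbb{Z}/6$, shows $\mathrm{Res}_{\mathcal{C}}(G\times H)\neq\mathrm{Res}_{\mathcal{C}}(G)\times\mathrm{Res}_{\mathcal{C}}(H)$ in general, so that argument tacitly needs a closure property of $\mathcal{C}$.

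Still, moving to free products goes in the wrong direction. Write $K_G=\ker(G\to\mathrm{Res}_{\mathcal{C}}(G))$. If $\phi\colon G\times H\twoheadrightarrow Q\in\mathcal{C}$, then $\phi(G\times 1)$ is a finitely generated \emph{normal} subgroup of $Q$. Suppose finitely generated normal subgroups of groups in $\mathcal{C}$ are residually $\mathcal{C}$. This holds, for instance, for finite, nilpotent, $k$-nilpotent, metabelian or torsion-free groups. Then every $\mathcal{C}$-quotient of $G\times H$ kills $K_G\times K_H$, so the kernel of $G\times H\to\mathrm{Res}_{\mathcal{C}}(G\times H)$ is exactly $K_G\times K_H$. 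The reduction is then immediate: compute the word problem of $G\times H$ going forward, and restrict going back.

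For $G*H$, the image $\phi(G)$ can be an arbitrary finitely generated subgroup of $Q$, so you would need the stronger closure under all finitely generated subgroups. To turn your proposal into a proof, you must either impose such a hypothesis, in which case the direct product is the better encoding, or find a genuinely different construction that handles classes like $\{1,\mathbb{Z}/6\mathbb{Z}\}$.
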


\begin{proof}
It suffices to show that $\mathrm{Res}_{\mathcal{C}}\equiv_{W}\mathrm{Res}_{\mathcal{C}}\times\mathrm{Res}_{\mathcal{C}}$,
which is obvious: computing the residually $\mathcal{C}$ image of
$(G,S)$ and that of $(H,S')$ is equivalent to computing that of
$(G\times H,S\cup S')$.
\end{proof}
\begin{prop}
[Arbitrarily complicated $\mathrm{Res}_{\mathcal{C}}$] \label{prop:Arbitrarily-complicated Res C}There
are $2^{2^{\aleph_{0}}}$ problems of the form $\mathrm{Res}_{\mathcal{C}}$,
and thus some are not Borel measurable. 
\end{prop}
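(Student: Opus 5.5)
We need to produce $2^{2^{\aleph_0}}$ distinct maps $\mathrm{Res}_{\mathcal{C}}$, for sets $\mathcal{C}$ of finitely generated groups. There are only $2^{\aleph_0}$ Borel functions from $\mathcal{G}_{\mathrm{WP}}$ to itself, since a Borel function is determined by the preimages of countably many basic open sets, each Borel, and there are $2^{\aleph_0}$ Borel sets. So it suffices to exhibit $2^{2^{\aleph_0}}$ classes $\mathcal{C}$ giving pairwise distinct functions $\mathrm{Res}_{\mathcal{C}}$. (Here the problems are considered as functions; the count refers to distinct maps, and non-Borel-measurability then follows by cardinality.)

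The plan is to find a family $\mathcal{A}$ of $2^{\aleph_0}$ marked groups, say all $2$- or $4$-marked, which are pairwise incomparable for the quotient order $\succeq$. A natural candidate is the family $(G_A,S)$ with $A$ ranging over an antichain of $\mathcal{P}(\mathbb{N})$ of size continuum (e.g. sets coding branches of $2^{<\omega}$: $A_x=\{\text{codes of initial segments of }x\}$, pairwise $\subseteq$-incomparable). By the injectivity and order properties of $\Psi\colon A\mapsto (G_A,S)$ from the proof of Lemma~\ref{lem: Pres to WP is Lim } (the map is order preserving and $G_A$ satisfies exactly the relations $a^{-i}ba^i=c^{-i}dc^i$ for $i\in A$ and their consequences, by the amalgamated product structure), $A\not\subseteq B$ implies that $(G_B,S)$ is not a marked quotient of $(G_A,S)$. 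One needs both directions: $\Psi(A)\succeq\Psi(B)$ iff $A\subseteq B$, which follows since the relation for $i\in B\setminus A$... rather for $i\in A\setminus B$, fails in $G_B$. This is the step requiring some care: checking via normal forms in the amalgam that $a^{-i}ba^i c^{-i}d^{-1}c^{i}\neq 1$ in $G_B$ when $i\notin B$.

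Now for each subset $\mathcal{D}\subseteq\mathcal{A}$, set $\mathcal{C}_{\mathcal{D}}=\mathcal{D}$ (the underlying groups, with the markings in mind; to avoid issues with other markings of the same abstract group, one may instead compare values of $\mathrm{Res}$ at the marked points). For $(G,S)\in\mathcal{A}$, compute $\mathrm{Res}_{\mathcal{C}_\mathcal{D}}(G,S)$: it is the sup of marked quotients of $(G,S)$ lying in $\mathcal{C}_\mathcal{D}$. If $(G,S)\in\mathcal{D}$ then the result is $(G,S)$ itself. If $(G,S)\notin\mathcal{D}$, the result is a proper quotient, provided no other marking of a group in $\mathcal{D}$ yields $(G,S)$ itself; to secure this, choose the antichain so that the abstract groups $G_A$ are pairwise non-isomorphic, or more simply note the result is the sup over quotients in $\mathcal{C}_\mathcal{D}$, which cannot equal $(G,S)$ unless $(G,S)$ is residually-$\mathcal{C}_\mathcal{D}$; I would guard against this by restricting to an antichain of groups that are not residually anything nontrivial in $\mathcal{A}$ except themselves—the main obstacle, to be handled by isomorphism-invariants (e.g. using that only countably many $B$ give groups isomorphic to a fixed $G_A$, then thinning the antichain to still have size continuum). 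Then $\mathcal{D}\mapsto\mathrm{Res}_{\mathcal{C}_\mathcal{D}}$ is injective, giving $2^{2^{\aleph_0}}$ distinct maps, and the cardinality argument concludes.
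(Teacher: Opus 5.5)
Your outer argument is sound. If $\mathcal{D}\mapsto\mathrm{Res}_{\mathcal{D}}$ is injective on the subsets of a family $\mathcal{A}$ of size $2^{\aleph_0}$, you get $2^{2^{\aleph_0}}$ distinct maps. Since there are only $2^{\aleph_0}$ Borel self-maps of $\mathcal{G}_{\mathrm{WP}}$, some of them are not Borel. The paper counts Weihrauch degrees instead, using that each degree contains at most continuum many such maps; either count suffices.

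\textbf{The gap is the injectivity step for the amalgams $G_A$.} In the paper, $\mathcal{C}$ is a class of \emph{abstract} groups. So $\mathrm{Res}_{\mathcal{D}}(G_A,S)$ is the quotient by the intersection of the kernels of \emph{all} epimorphisms $G_A\twoheadrightarrow G_B$ with $B\in\mathcal{D}$. Equivalently, it is the supremum over all $(G_B,S'')$ with $(G_A,S)\succeq(G_B,S'')$, where $S''$ is an \emph{arbitrary} generating tuple of $G_B$ (compare the enumeration of all $k$-markings of members of $\mathcal{C}$ in Proposition~\ref{prop:Countable}).

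Injectivity therefore requires that, for every $A$ in your antichain, $G_A$ is not residually-$\{G_B : B\neq A\}$. The $\subseteq$-antichain only shows that the particular marked group $(G_B,S)$ is not a marked quotient of $(G_A,S)$. It says nothing about epimorphisms that do not send $S$ to $S$.

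Your proposed repair, thinning the family to pairwise non-isomorphic groups, does not address this. Being non-isomorphic to every member of $\mathcal{D}$ is far weaker than not being residually-$\mathcal{D}$. Your argument would go through verbatim if $\mathcal{C}$ were allowed to be a non-isomorphism-invariant set of marked groups: then $\mathrm{Res}_{\mathcal{D}}(G_A,S)$ is $(G_A,S)$ or trivial according to whether $(G_A,S)\in\mathcal{D}$. That is not the notion in the statement, though. Controlling all epimorphisms between the groups $G_A$ would need a genuine rigidity analysis of these amalgams, which the proposal does not supply.

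\textbf{The missing idea is to choose $\mathcal{A}$ so that epimorphisms between distinct members are impossible for structural reasons.} The paper uses finitely generated simple groups. Any nontrivial morphism out of a simple group is injective, so $\mathrm{Res}_{\mathcal{C}}(G)\in\{G,\{1\}\}$ for simple $G$.

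Keep your choice $\mathcal{C}=\mathcal{D}$ and take $\mathcal{A}$ to be $2^{\aleph_0}$ pairwise non-isomorphic finitely generated simple groups. A simple $G\in\mathcal{A}\setminus\mathcal{D}$ has no epimorphism onto a member of $\mathcal{D}$, so $\mathrm{Res}_{\mathcal{D}}(G)=\{1\}$. For $G\in\mathcal{D}$ you have $\mathrm{Res}_{\mathcal{D}}(G)=G$. This gives injectivity directly.

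The paper instead uses $\mathcal{C}_{\mathcal{S}}$, the groups containing no member of $\mathcal{S}$. There, a simple group outside $\mathcal{S}$ that contains a member of $\mathcal{S}$ is also sent to $\{1\}$, so one should restrict $\mathcal{S}$ to an embedding-antichain. Your choice $\mathcal{C}=\mathcal{D}$ avoids that issue.
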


\begin{proof}
Consider a set $\mathcal{S}$ of finitely generated simple groups.
Consider $\mathcal{C}_{\mathcal{S}}=\{G\mid\text{No group of \ensuremath{\mathcal{S}} embeds in \ensuremath{G}}\}$. 

We consider the map $\mathrm{Res}_{\mathcal{\mathcal{C}_{\mathcal{S}}}}:\mathcal{G}_{\mathrm{WP}}\rightarrow\mathcal{G}_{\mathrm{WP}}$. 

The restriction of $\mathrm{Res}_{\mathcal{\mathcal{C}_{\mathcal{S}}}}$
to the (Polish) space of marked simple groups is Weihrauch equivalent
to the characteristic function of $\mathcal{S}$ (given a simple group,
is it in $\mathcal{S}$?), since non-trivial morphisms from simple
groups are injective. 

Since there are $2^{\aleph_{0}}$ finitely generated simple groups,
there are $2^{2^{\aleph_{0}}}$ sets of simple groups, and thus $2^{2^{\aleph_{0}}}$
different problems $\mathrm{Res}_{\mathcal{\mathcal{C}_{\mathcal{S}}}}$.
Notice that Weihrauch equivalence classes of problems defined on the
space of marked groups have cardinality at most the continuum, and
thus these $2^{2^{\aleph_{0}}}$ problems yield $2^{2^{\aleph_{0}}}$
Weihrauch degrees. 
\end{proof}
\begin{prop}
\label{prop:Countable}Let $\mathcal{C}$ be a countable set of finitely
generated groups. Then $\mathrm{Res}_{\mathcal{C}}:\mathcal{G}_{\mathrm{WP}}\rightarrow\mathcal{G}_{\mathrm{WP}}\le_{W}\mathrm{Lim}'$. 
\end{prop}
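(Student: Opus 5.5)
Since $\mathrm{Lim}'\equiv_{W}\widehat{\mathrm{LPO}'}$ (Proposition \ref{prop:Lim =00003D LPO parallalized}) and $\mathrm{LPO}'\equiv_{W}\exists^{\infty}$ (Example \ref{exa:LPO' Exists infty}), it suffices to compute the word problem of $\mathrm{Res}_{\mathcal{C}}(G,S)$ from a $\rho_{\mathrm{WP}}$-name of $(G,S)$ using countably many parallel queries to $\mathrm{LPO}'$. Equivalently, by Theorem \ref{thm: Lim^(n) is measurable complete}, it suffices to show that $\mathrm{Res}_{\mathcal{C}}$ is $\Sigma_{3}^{0}$-measurable, i.e. that for each word $w$ the set $\{(G,S)\mid w=1\text{ in }\mathrm{Res}_{\mathcal{C}}(G,S)\}$ is $\Delta_{3}^{0}$ in $\mathcal{G}_{\mathrm{WP}}$. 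I will set up the proof as parallel $\exists^{\infty}$ queries, one per word $w$.

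Enumerate $\mathcal{C}$ as $H_{0},H_{1},\dots$. Fix $k$ and a $k$-marked group $(G,S)$. The quotients of $G$ onto groups of $\mathcal{C}$ are the same as the marked groups $(H_{j},T)$ with $T$ a generating $k$-tuple of $H_{j}$ and $(G,S)\succeq(H_{j},T)$. Since $H_{j}$ is finitely generated, the set of such markings is countable; enumerate all pairs $(j,T)$ as $q_{0},q_{1},\dots$. Then $w\neq1$ in $\mathrm{Res}_{\mathcal{C}}(G,S)$ if and only if there is an $m$ with $w\notin\ker(q_{m})$ and $\ker_{G}\subseteq\ker(q_{m})$, where $\ker_{G}\subseteq\mathbb{F}_{k}$ is the set of relations of $(G,S)$. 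Here $w\notin\ker(q_m)$ is a fixed bit, independent of $G$. The condition $\ker_{G}\subseteq\ker(q_{m})$ is a countable conjunction over words $v$ of ``$v\notin\ker_G$ or $v\in\ker(q_m)$''. Each of these conditions is clopen in $\mathcal{G}_{\mathrm{WP}}$, so the conjunction is closed ($\Pi_{1}^{0}$). Hence $\{w\neq1\}$ is $\Sigma_{2}^{0}$, a countable union of closed sets, and $\{w=1\}$ is $\Pi_{2}^{0}$. Both are therefore $\Delta_{3}^{0}$, which already gives $\Sigma_{3}^{0}$-measurability.

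Making this explicit as a reduction, for each $w$ I will build from the name of $G$, continuously, a binary sequence that has infinitely many $1$'s exactly when $w=1$ in $\mathrm{Res}_{\mathcal{C}}(G,S)$. I would use the standard $\Pi_{2}^{0}$ normal form. At stage $s$, find the least $m\le s$ with $w\notin\ker q_{m}$ that is not yet refuted, meaning no word $v$ of index $\le s$ has $v\in\ker_{G}\setminus\ker q_{m}$. Output $1$ whenever this least candidate changes or none exists. If some $q_{m}$ genuinely works, the least such $m$ is eventually stable and the $1$'s stop. If none works, each candidate is eventually refuted, so infinitely many changes occur. Membership of $v$ in $\ker q_{m}$ is a fixed datum, since the realizer may depend non-uniformly on $\mathcal{C}$ in the continuous setting. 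Feeding all these sequences in parallel to $\widehat{\mathrm{LPO}'}$ returns the full word problem of $\mathrm{Res}_{\mathcal{C}}(G,S)$, and $k=u_{0}$ is read off the name.

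The main subtlety is the countability step. Countability of $\mathcal{C}$ is exactly what makes the existential quantifier over quotients range over $\mathbb{N}$, which in turn keeps the set $\{w\neq1\}$ at level $\Sigma_{2}^{0}$. Otherwise the argument is routine bookkeeping.
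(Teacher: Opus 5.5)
Your proof is correct and is essentially the paper's argument. Like the paper, you enumerate the countably many $k$-marked groups of $\mathcal{C}$ and note that each set $\{\Gamma\mid\Gamma\rightarrow(H_{j},T)\}$ is closed. Hence $\{w\neq1\}$ is $F_{\sigma}$ and $\{w=1\}$ is $G_{\delta}$, and $\Sigma_{3}^{0}$-measurability follows by Brattka's completeness theorem. Your extra stage-by-stage construction of $\exists^{\infty}$-instances is also valid: it gives the reduction to $\widehat{\mathrm{LPO}'}\equiv_{W}\mathrm{Lim}'$ directly, without passing through the measurability characterization.
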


\begin{proof}
By Theorem \ref{thm: Lim^(n) is measurable complete} it suffices
to show that $\mathrm{Res}_{\mathcal{C}}$ is $\Sigma_{3}^{0}$-measurable. 

Let $\mathcal{C}=\{(G_{1},S_{1}),(G_{2},S_{2}),...\}$ be an enumeration
of all $k$-marked groups of $\mathcal{C}$. 

Consider a basic clopen set $\Omega$ of the space of $k$-marked
groups defined by two tuples of words $(w_{1},...,w_{n})$ and $(r_{1},...,r_{n'})$:
the set of $k$-marked groups which satisfy the relations $(r_{1},...,r_{n'})$
and in which $(w_{1},...,w_{n})$ are not relations.

Decompose $\Omega$ as a finite intersection: groups in which $w\ne1$,
for each $w\in\{w_{1},...,w_{n}\}$, and in which $r=1$, for each
$r\in\{r_{1},...,r_{n'}\}$. 

Notice that for each $n\in\mathbb{N}$, the set of marked groups that
map onto $(G_{n},S_{n})$, $\{\Gamma\in\mathcal{G}_{k}\mid\Gamma\rightarrow(G_{n},S_{n})\}$,
is a closed set, and $\{\Gamma\in\mathcal{G}_{k}\mid\Gamma\nrightarrow(G_{n},S_{n})\}$
is open. 

And we have that $w\ne1$ in $\mathrm{Res}_{\mathcal{C}}((H,S))$
if and only if 
\[
(H,S)\in\bigcup_{\{n\mid w\ne1\text{ in }G_{n}\}}\{\Gamma\in\mathcal{G}_{k}\mid\Gamma\rightarrow G_{n}\}
\]
the right hand side condition being a $F_{\sigma}$.

And $r=1$ in $\mathrm{Res}_{\mathcal{C}}((H,S))$ if and only if
\[
(H,S)\in\bigcap_{\{n\mid r\ne1\text{ in }G_{n}\}}\{\Gamma\in\mathcal{G}_{k}\mid\Gamma\nrightarrow G_{n}\}
\]
the right hand side condition being a $G_{\delta}$.

We thus find that the preimage by $\mathrm{Res}_{\mathcal{C}}$ of
a basic open set is a finite intersection of $F_{\sigma}$ and $G_{\delta}$
sets, it is thus in $\Sigma_{3}^{0}$. 

And thus the map $\mathrm{Res}_{\mathcal{C}}$ is indeed $\Sigma_{3}^{0}$-measurable.
\end{proof}

\begin{defn}
We say that that \emph{$\mathcal{RC}$ is non-trivial }if it is neither
the set $\mathcal{G}$ of all marked groups nor reduced to the trivial
group. 
\end{defn}

\begin{lem}
[Small cancellation lemma]\label{lem:Small-cancellation-lemma}Let
$\mathcal{C}$ be a set of finitely generated groups. Suppose that
$\mathcal{RC}$ is non-trivial. Then:
\[
\mathrm{Res}_{\mathcal{C}}:\mathcal{G}_{\mathrm{WP}}\rightarrow\mathcal{G}_{\mathrm{WP}}\equiv_{W}\mathrm{Res}_{\mathcal{C}}:\mathcal{G}_{\mathrm{Pres}}\rightarrow\mathcal{G}_{\mathrm{WP}}.
\]
\end{lem}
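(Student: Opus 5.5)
The direction $\mathrm{Res}_{\mathcal{C}}:\mathcal{G}_{\mathrm{WP}}\rightarrow\mathcal{G}_{\mathrm{WP}}\le_{W}\mathrm{Res}_{\mathcal{C}}:\mathcal{G}_{\mathrm{Pres}}\rightarrow\mathcal{G}_{\mathrm{WP}}$ is immediate. The word problem representation continuously translates to the presentation representation: list as relators exactly those $\theta_{k}(i)$ with $u_{i}=1$. Precomposing with this translation gives the reduction, with $h$ the identity. All the work is in the converse. There we must continuously turn a presentation $\pi=\langle S\mid r_{1},r_{2},\dots\rangle$ of $(G,S)$ into a $\rho_{\mathrm{WP}}$-name of an auxiliary marked group $(\Gamma_{\pi},S')$. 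From a word-problem name of $\mathrm{Res}_{\mathcal{C}}(\Gamma_{\pi},S')$, together with $\pi$ itself, we must then continuously recover a word-problem name of $\mathrm{Res}_{\mathcal{C}}(G,S)$.

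The plan is to make each relator ``visible late'' by small cancellation, as the name of the lemma suggests. The difficulty with a presentation is that whether $w=1$ is only semi-decidable: the relators needed to kill $w$ may appear arbitrarily late. I would add extra generators $x,y$ and fix a family $c_{j}(x,y)$ of words satisfying a strong $C'(\lambda)$ condition, with $|c_{j}|$ growing fast in $j$. I would then set
\[
\Gamma_{\pi}=\langle S,x,y\mid r_{j}\,c_{j}(x,y),\ j\ge1\rangle ,
\]
padding the presentation with trivial relators $1\cdot c_{j}$ so that relator $j$ is always present. Making $|c_{j}|$ dominate $|r_{j}|$ and $j$ keeps the relators $r_{j}c_{j}$ $C'(1/6)$ relative to the free product structure. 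Greendlinger's lemma then shows that whether a word of length $n$ is trivial in $\Gamma_{\pi}$ depends only on the relators of length $\le Cn$, hence only on a finite prefix of $\pi$. So $\pi\mapsto\rho_{\mathrm{WP}}$-name of $\Gamma_{\pi}$ is continuous. This gives the forward map $k$.

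For the backward map $h$, note that $G$ is the quotient of $\Gamma_{\pi}$ by $\langle\langle x,y\rangle\rangle$, and we want $\mathrm{Res}_{\mathcal{C}}(G,S)$ from $\mathrm{Res}_{\mathcal{C}}(\Gamma_{\pi})$. Here I would use non-triviality of $\mathcal{RC}$. Choose the new generators (or replace $x,y$ by suitable elements) so that in every $\mathcal{C}$-quotient of $\Gamma_{\pi}$ the images of $c_{j}$ must be trivial. One way is to take the $c_{j}$ in a fixed group $B\notin\mathcal{RC}$, inside the non-$\mathcal{C}$-residual part of $B$. Then $\mathrm{Res}_{\mathcal{C}}(\Gamma_{\pi})$ satisfies $r_{j}=1$ for all $j$, and a $\mathcal{C}$-quotient of $G$ lifts to one of $\Gamma_{\pi}$. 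Consequently $w\in\mathbb{F}_{S}$ is trivial in $\mathrm{Res}_{\mathcal{C}}(G,S)$ iff it is trivial in $\mathrm{Res}_{\mathcal{C}}(\Gamma_{\pi})$, and $h$ is just restriction of the word problem to words in $S$. The other half of non-triviality (some nontrivial group in $\mathcal{RC}$) is needed so that the construction does not collapse: $\mathrm{Res}_{\mathcal{C}}(G,S)$ must genuinely embed in the $S$-part of $\mathrm{Res}_{\mathcal{C}}(\Gamma_{\pi})$.

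I expect the main obstacle to be this last compatibility. The small cancellation words must be long and generic enough to control the word problem of $\Gamma_{\pi}$. At the same time, after passing to $\mathrm{Res}_{\mathcal{C}}$, they must behave exactly like trivial elements and neither add nor remove relations among words in $S$. I would first try to arrange it with $B$ a fixed finitely generated group outside $\mathcal{RC}$, small cancellation over $G_{0}*B$ with $G_{0}$ free on $S$, and then check the lifting of $\mathcal{C}$-quotients of $G$ via $B\rightarrow 1$.
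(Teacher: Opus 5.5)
Your easy direction is fine. Your continuity mechanism is also sound on its own: free-group $C'(1/6)$ for the relators $r_jc_j(x,y)$ with $|c_j|\gg |r_j|,j$, plus Dehn's algorithm, does make the word problem of $\Gamma_\pi$ depend continuously on $\pi$. The gap is exactly the step you flag as the main obstacle, and neither of your two options closes it.

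With $x,y$ free, nothing forces $c_j(x,y)$, and hence $r_j=c_j^{-1}$, to become trivial in a $\mathcal{C}$-quotient of $\Gamma_\pi$. So there is no reason for the $S$-part of $\mathrm{Res}_{\mathcal{C}}(\Gamma_\pi)$ to be $\mathrm{Res}_{\mathcal{C}}(G,S)$.

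If instead you take the $c_j$ inside a fixed group $B$, small cancellation over $\mathbb{F}_S*B$ collapses. Every element of $B$ is a single syllable there, so $r_jc_j$ has free-product length at most $2$ and can never satisfy $C'(1/6)$. Reading $c_j$ as a long word in generators of $B$ and using free-group small cancellation does not help either. The defining relations of $B$ are not part of the small cancellation presentation, so Greendlinger's lemma no longer controls the word problem. In addition, the residual kernel of $B$ may be finite, so it need not contain infinitely many distinct $c_j$.

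The missing idea, which is the paper's construction, is to build the long tail from many conjugates of a \emph{single} killed element, by pairwise distinct elements of the other free factor. The relator then alternates between the two factors. Concretely:
\begin{itemize}
\item Fix $(H,T)\notin\mathcal{RC}$ and $w\in H\setminus\{1\}$ killed by every homomorphism from $H$ to a group of $\mathcal{C}$.
\item Arrange $a,b\in S$, replacing $G$ by $G*\mathbb{Z}$ if needed.
\item Impose in $\mathbb{F}_S*H$ the relators
\[
w_i=r_{i-4}\,w\,a^{i^2}wa^{-i^2}\,b^{i^2}wb^{-i^2}\cdots a^{i^2+2i}wa^{-i^2-2i}\,b^{i^2+2i}wb^{-i^2-2i}\,w,\qquad i\ge 5 .
\]
\end{itemize}
The exponent ranges $[i^2,i^2+2i]$ are pairwise disjoint. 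Hence a full $\mathbb{F}_S$-syllable shared by two distinct relators must be the $r$-syllable of one of them. Pieces therefore have free-product length at most $7$, while $|w_i|=8i+6$. This gives $C'(1/6)$ over free products, and with it your continuity argument. At the same time, everything after $r_{i-4}$ is a product of conjugates of $w$, so each $w_i$ collapses to $r_{i-4}=1$ in $\mathrm{Res}_{\mathcal{C}}(\Gamma)$.

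Two smaller points. First, $w$ must die under every homomorphism to a group of $\mathcal{C}$, not merely under every surjection. A $\mathcal{C}$-quotient of $\Gamma$ restricted to $H$ (or to your $B$) need not be onto, so ``the non-$\mathcal{C}$-residual part of $B$'' is the weaker, insufficient condition. Second, the half of non-triviality saying $\mathcal{RC}\neq\{1\}$ plays no role here. If $\mathcal{RC}$ is trivial, both problems are constant on each $\mathcal{G}^k$ and hence trivially equivalent.
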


\begin{proof}
Fix a marked group $(H,T)$ which is not residually $\mathcal{C}$,
and an element $w\in H\setminus\{1\}$ which is killed by all morphisms
going from $H$ to groups of $\mathcal{C}$. 

Let $(G,S)$ be a marked group given by a presentation $\langle S\mid r_{1},r_{2},...\rangle$. 

Up to replacing $G$ by $G*\mathbb{Z}$, we can suppose that $S$
contains at least two elements $a$ and $b$. Let $\mathbb{F}_{S}$
be the free group on $S$. 

We define a group $\Gamma$ as a quotient of the free product $\mathbb{F}_{S}*H$,
quotiented by relations $w_{i}=1$, $i\ge5$, where $w_{i}$ is given
by 
\[
w_{i}=r_{i-4}wa^{i^{2}}wa^{-i^{2}}b^{i^{2}}wb^{-i^{2}}a^{i^{2}+1}wa^{-i^{2}-1}...a^{i^{2}+2i}wa^{-i^{2}-2i}b^{i^{2}+2i}wb^{-i^{2}-2i}w.
\]
We equip $\Gamma$ with the generating set $(S,T)$.

We now show: 
\begin{enumerate}
\item \label{enu:Pres -> WP C0}The word-problem of $(\Gamma,(S,T))$ can
be continuously constructed from the presentation of $(G,S)$, 
\item \label{enu:WP res -> WP C0}The word-problem of $(\mathrm{Res}_{\mathcal{C}}(G),S)$
can be continuously constructed from the word-problem of $(\mathrm{Res}_{\mathcal{C}}(\Gamma),(S,T))$. 
\end{enumerate}
Those two facts together immediately imply that we have a Weihrauch
reduction $\mathrm{Res}_{\mathcal{C}}:\mathcal{G}_{\mathrm{WP}}\rightarrow\mathcal{G}_{\mathrm{WP}}\ge_{W}\mathrm{Res}_{\mathcal{C}}:\mathcal{G}_{\mathrm{Pres}}\rightarrow\mathcal{G}_{\mathrm{WP}}$,
and, the other reduction being trivial, the lemma follows. 

We first prove (\ref{enu:Pres -> WP C0}). 

This relies on the small cancellation condition $C(1/6)$ in free
products. We briefly recall some basic facts from \cite[Section 9, Chapter V]{Lyndon1977}. 

For an element $g$ of a free product $A*B$, its \emph{length} $\left|g\right|$
is the minimal $n$ such that $g$ can be written as a product $x_{1}...x_{n}$,
where consecutive elements $x_{i}$, $x_{i+1}$ do not belong to the
same factor. 

A product $g=uv$ is in \emph{reduced form}\textbf{ }if $\left|g\right|=\left|u\right|+\left|v\right|$,
and in \emph{semi-reduced form} if $\left|g\right|\ge\left|u\right|+\left|v\right|-1$
(i.e. no cancellation occurs in the product, but $u$ could end by
an element of the same factor as the starting element of $v$). 

An element $b$ is a \emph{piece }in a set $R\subseteq A*B$ if there
are distinct elements $r_{1}$ and $r_{2}$ in $R$ with $r_{1}=bc_{1}$
and $r_{2}=bc_{2}$, where both products are in semi-reduced form. 

A set $R$ satisfies \emph{condition $C'(1/6)$ for free products}
if for any $r$ in $R$, if $r$ can be expressed as a semi-reduced
product $r=bc$, where $b$ is a piece for $R$, then $\left|b\right|<\left|r\right|/6$. 

A set $R\subseteq A*B$ is is called \emph{symmetrized} if it is stable
by conjugation-reduction.

We will use \cite[Theorem 9.3]{Lyndon1977}: let $R$ be a symmetrized
subset of a free product $A*B$ satisfying the $C'(1/6)$ condition
for free products. Then any non-identity element $x$ of the normal
closure of $R$ in $A*B$ has a unique expression 
\[
x=usv,
\]
where the product is in reduced form, and there is some $r$ in $R$
such that $r=st$ in reduced form and $\left|s\right|>\left|r\right|/2$.
In words: every relation contains at least half of a relation of one
of the elements in $R$. 

Below, we show that the given presentation of $\Gamma$ satisfies
this condition $C'(1/6)$. It follows that the relations of $\Gamma$
of length less than a given $i$ do not depend on the relations $r_{j}$
of $G$ for $j\ge i$, which ensures continuity of the map mapping
the presentation of $G$ to the word-problem of $\Gamma$, and thus
(\ref{enu:Pres -> WP C0}) holds. 

Thus all that is left to establish (\ref{enu:Pres -> WP C0}) is to
prove that the set of relations $\{w_{i}\}$ together with their cyclically
reduced conjugates satisfy the condition $C'(1/6)$. It is clear that
the words $w_{i}$ are themselves cyclically reduced. 

Each word $w_{i}$ is a product of elements of $\mathbb{F}_{S}$ together
with the element $w$ which belongs to $H$. Notice that if a word
$wuw$ appears both as a subword of $w_{i}$ and of $w_{j}$, for
$j\ne i$, it must be that $u$ is one of the relations of the presentation
$\langle S\mid r_{1},r_{2},...\rangle$ of $G$ (since the other elements
of $\mathbb{F}_{S}$ that appear in the relations $w_{i}$ were chosen
distinct). And thus a piece has length at most $7$: it must have
the form 
\[
xwywzwt,
\]
where $(x,y,z,t)\in\mathbb{F}_{S}$, and $y$ and $z$ are both relations
$r_{i}$. 

Notice that the relation $w_{i}$ has length $8i+6$, and thus the
shortest relation of $\Gamma$, $w_{5}$, has length $46$, and $7/46<1/6$. 

And thus the small cancellation condition holds. 

We now prove (\ref{enu:WP res -> WP C0}). 

In the group $\mathrm{Res}_{\mathcal{C}}(\Gamma)$, the element $w$
is trivial. Notice that, thanks to $w=1$, each relation $w_{i}$
simplifies down to $r_{i}=1$. And thus, in the group $\mathrm{Res}_{\mathcal{C}}(\Gamma)$,
the elements of $S$ satisfy the relations of $G$. It follows that
they generate exactly the group $\mathrm{Res}_{\mathcal{C}}(G)$. 

And thus the word problem of $\mathrm{Res}_{\mathcal{C}}(G)$ can
be continuously constructed from the word problem of $\mathrm{Res}_{\mathcal{C}}(\Gamma)$. 
\end{proof}
The above lemma has several useful corollaries. 
\begin{cor}
\label{cor: Res C above Identity pres to wp }Let $\mathcal{C}$ be
a set of finitely generated groups. Suppose that $\mathcal{RC}$ is
non-trivial. Then:
\[
\mathrm{Res}_{\mathcal{C}}:\mathcal{G}_{\mathrm{WP}}\rightarrow\mathcal{G}_{\mathrm{WP}}\ge_{W}\mathrm{Id}:\mathcal{G}_{\mathrm{Pres}}\cap\mathcal{RC}\rightarrow\mathcal{G}_{\mathrm{WP}}.
\]
\end{cor}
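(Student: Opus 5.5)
By Lemma \ref{lem:Small-cancellation-lemma}, it suffices to show
\[
\mathrm{Res}_{\mathcal{C}}:\mathcal{G}_{\mathrm{Pres}}\rightarrow\mathcal{G}_{\mathrm{WP}}\ge_{W}\mathrm{Id}:\mathcal{G}_{\mathrm{Pres}}\cap\mathcal{RC}\rightarrow\mathcal{G}_{\mathrm{WP}},
\]
since the left-hand side is Weihrauch equivalent to $\mathrm{Res}_{\mathcal{C}}:\mathcal{G}_{\mathrm{WP}}\rightarrow\mathcal{G}_{\mathrm{WP}}$ and $\ge_{W}$ is transitive.

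The reduction is a direct restriction. An instance of the right-hand problem is a $\rho_{\mathrm{Pres}}$-name of a marked group $(G,S)$ that lies in $\mathcal{RC}$. We take the forward map $k$ to be the identity on names, i.e. the continuous realizer $\mathrm{id}$ sending the presentation to itself; the space $V$ can be a singleton. We then apply $\mathrm{Res}_{\mathcal{C}}:\mathcal{G}_{\mathrm{Pres}}\rightarrow\mathcal{G}_{\mathrm{WP}}$ to this same presentation. Because $(G,S)$ is residually-$\mathcal{C}$, we have $\mathrm{Res}_{\mathcal{C}}(G,S)=(G,S)$: the intersection of the kernels of all quotients onto groups of $\mathcal{C}$ is trivial. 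So the output is a $\rho_{\mathrm{WP}}$-name of $(G,S)$ itself, and we take the backward map $h$ to be the projection onto this output. This shows that $\mathrm{Id}:\mathcal{G}_{\mathrm{Pres}}\cap\mathcal{RC}\rightarrow\mathcal{G}_{\mathrm{WP}}$ is just the restriction of $\mathrm{Res}_{\mathcal{C}}:\mathcal{G}_{\mathrm{Pres}}\rightarrow\mathcal{G}_{\mathrm{WP}}$ to the subset $\mathcal{RC}$. A restriction of a problem to a subset of its domain, with the induced representation, always Weihrauch reduces to the original problem.

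Composing this with Lemma \ref{lem:Small-cancellation-lemma}, which uses the non-triviality hypothesis on $\mathcal{RC}$, gives the stated reduction. There is no real obstacle: the only points to check are that the representation on $\mathcal{G}_{\mathrm{Pres}}\cap\mathcal{RC}$ is the restriction of $\rho_{\mathrm{Pres}}$, so names pass through unchanged, and that residually-$\mathcal{C}$ groups are fixed points of $\mathrm{Res}_{\mathcal{C}}$.
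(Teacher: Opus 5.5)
Your proposal is correct and follows essentially the same route as the paper: use Lemma \ref{lem:Small-cancellation-lemma} to pass to $\mathrm{Res}_{\mathcal{C}}:\mathcal{G}_{\mathrm{Pres}}\rightarrow\mathcal{G}_{\mathrm{WP}}$, restrict its domain to $\mathcal{RC}$, and observe that $\mathrm{Res}_{\mathcal{C}}$ is the identity there. Your explicit choice of the witnessing maps $k$ (identity on names) and $h$ (projection) simply unpacks the paper's one-line remark that a restriction of a problem is always Weihrauch below the problem.
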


\begin{proof}
We have 
\begin{align*}
\mathrm{Res}_{\mathcal{C}}:\mathcal{G}_{\mathrm{WP}}\rightarrow\mathcal{G}_{\mathrm{WP}}\equiv_{W} & \mathrm{Res}_{\mathcal{C}}:\mathcal{G}_{\mathrm{Pres}}\rightarrow\mathcal{G}_{\mathrm{WP}}\\
\ge_{W} & \mathrm{Res}_{\mathcal{C}}:\mathcal{G}_{\mathrm{Pres}}\cap\mathcal{RC}\rightarrow\mathcal{G}_{\mathrm{WP}}\\
\equiv_{W} & \mathrm{Id}:\mathcal{G}_{\mathrm{Pres}}\cap\mathcal{RC}\rightarrow\mathcal{G}_{\mathrm{WP}},
\end{align*}
where the first equivalence is given by Lemma \ref{lem:Small-cancellation-lemma},
and the reduction $\ge_{W}$ follows from the fact that the restriction
of a problem is always below this problem. 
\end{proof}
As a corollary to the above, we get that $\mathrm{Res}_{\mathcal{C}}$
is always discontinuous.
\begin{cor}
\label{prop: Minimum LPO*}Let $\mathcal{C}$ be a set of finitely
generated groups. Suppose that $\mathcal{RC}$ is non-trivial. Then
$\mathrm{Res}_{\mathcal{C}}$ is discontinuous, and thus $\mathrm{Res}_{\mathcal{C}}\ge_{W}\mathrm{LPO}^{*}\equiv_{W}\mathrm{Lim}_{\searrow\omega}$. 
\end{cor}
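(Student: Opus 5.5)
The plan is to combine Corollary \ref{cor: Res C above Identity pres to wp } with the structural facts about $\mathrm{Id}:\mathcal{G}_{\mathrm{Pres}}\cap\mathcal{RC}\rightarrow\mathcal{G}_{\mathrm{WP}}$. By that corollary it suffices to show that this identity problem is discontinuous. Reductions compose, so we would then get $\mathrm{Res}_{\mathcal{C}}\ge_{W}\mathrm{LPO}$ by the proposition characterizing discontinuity via $\mathrm{LPO}$. In particular $\mathrm{Res}_{\mathcal{C}}$ is itself discontinuous, since continuous problems are Weihrauch below continuous ones only and $\mathrm{LPO}$ is not continuous. Finite parallelizability (Proposition \ref{prop: Finite parallelization}) then upgrades this: $\mathrm{Res}_{\mathcal{C}}\equiv_{W}\mathrm{Res}_{\mathcal{C}}^{*}\ge_{W}\mathrm{LPO}^{*}$. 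The identification $\mathrm{LPO}^{*}\equiv_{W}\mathrm{Lim}_{\searrow\omega}$ is the example stated in Section \ref{sec:Discretization-problem-associate Alexandrov Top}.

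For discontinuity of the identity, I would exhibit two residually-$\mathcal{C}$ marked groups $(A,S)\rightarrow(B,S)$ with $(A,S)\neq(B,S)$. The Scott topology on $\{A,B\}$ is then Sierpi\'{n}ski: every Scott open set containing $B$ contains $A$, since open sets are upward closed for the quotient order. The word-problem topology, however, is discrete on this finite set. So the identity restricted to $\{A,B\}$ is the injection $\mathbb{S}\rightarrow\{0,1\}$, which is discontinuous.

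To find such $A\neq B$ I would use non-triviality. Since $\mathcal{RC}\neq\{1\}$, there is a non-trivial $(G,S)\in\mathcal{RC}$. The trivial group is residually-$\mathcal{C}$ and is a marked quotient of $(G,S)$ (take $B$ to be the trivial group with the same number of generators), so the pair $(G,S)\rightarrow(1,S)$ works. The point to check here is that the trivial marked group lies in $\mathcal{RC}$, which is vacuous from the definition. Alternatively one could use direct products, since $\mathcal{RC}$ is stable under them.

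The main obstacle is making the continuity argument precise in the represented-space setting, rather than only topologically. The identity is single-valued, so Lemma \ref{lem:Admissibility C0 and C0 realizable } reduces continuity of the realizer to continuity with respect to the final topologies. Both representations are admissible, and restrictions of admissible representations remain admissible with the subspace topologies. The failure of continuity on $\{A,B\}$ therefore transfers to the whole domain: if the map on the whole domain were continuous, its restriction to $\{A,B\}$ would be continuous too. This is exactly the kind of subspace argument the $\mathrm{LPO}$ characterization proposition, stated for $\mathrm{T}_{0}$ second countable spaces, is designed to handle.
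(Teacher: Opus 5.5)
Your argument is essentially the paper's: reduce via the corollary $\mathrm{Res}_{\mathcal{C}}\ge_{W}\mathrm{Id}:\mathcal{G}_{\mathrm{Pres}}\cap\mathcal{RC}\rightarrow\mathcal{G}_{\mathrm{WP}}$, then witness discontinuity of this identity by a residually-$\mathcal{C}$ group together with a strict marked quotient. Your choice of the trivial quotient and your explicit passage to $\mathrm{LPO}^{*}$ via finite parallelizability fill in steps the paper leaves implicit. One small slip does not affect the conclusion: Scott open sets are closed under passing to quotients, so every Scott open set containing $A$ contains $B$, not the reverse. Thus $B$ is the open point of the two-point subspace, which is still Sierpi\'{n}ski and hence not discrete.
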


\begin{proof}
By Corollary \ref{cor: Res C above Identity pres to wp }, it suffices
to show that $\mathrm{Id}:\mathcal{G}_{\mathrm{Pres}}\cap\mathcal{RC}\rightarrow\mathcal{G}_{\mathrm{WP}}$
is discontinuous, which amounts to showing that the topology of the
space of marked groups is strictly finer than the Scott topology on
$\mathcal{RC}$. But $\mathcal{G}_{\mathrm{WP}}$ is metrizable and
thus $\mathrm{T}_{2}$, whereas the Scott topology is never $\mathrm{T}_{2}$
in any set that contains a marked group $(G,S)$ and a strict quotient
of it, since the constant sequence corresponding to the strict quotient
converges to the group $(G,S)$.
\end{proof}
Another corollary to Lemma \ref{lem:Small-cancellation-lemma} is
the following. 
\begin{cor}
\label{cor:Quasi Var ResC equiv to Pres->WP}Let $\mathcal{C}$ be
a set of finitely generated groups. If $\mathcal{RC}$ is a quasivariety,
then
\[
\mathrm{Res}_{\mathcal{C}}\equiv_{W}\mathrm{Id}:\mathcal{G}_{\mathrm{Pres}}\cap\mathcal{RC}\rightarrow\mathcal{G}_{\mathrm{WP}}.
\]
\end{cor}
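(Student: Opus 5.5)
The plan is to sandwich $\mathrm{Res}_{\mathcal{C}}$ between $\mathrm{Id}:\mathcal{G}_{\mathrm{Pres}}\cap\mathcal{RC}\rightarrow\mathcal{G}_{\mathrm{WP}}$ from both sides. The lower bound
\[
\mathrm{Res}_{\mathcal{C}}:\mathcal{G}_{\mathrm{WP}}\rightarrow\mathcal{G}_{\mathrm{WP}}\ge_{W}\mathrm{Id}:\mathcal{G}_{\mathrm{Pres}}\cap\mathcal{RC}\rightarrow\mathcal{G}_{\mathrm{WP}}
\]
is exactly Corollary \ref{cor: Res C above Identity pres to wp }, provided $\mathcal{RC}$ is non-trivial. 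In the trivial cases ($\mathcal{RC}=\mathcal{G}$, or $\mathcal{RC}$ reduced to the trivial group), $\mathrm{Res}_{\mathcal{C}}$ is the identity or constant, hence continuous. The identity problem on the right is then trivially continuous as well: it is the identity on $\mathcal{G}_{\mathrm{WP}}$ in the first case and lives on a singleton in the second. So both sides are continuous and the equivalence holds; I will dispose of this degenerate case first.

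For the upper bound, I will factor $\mathrm{Res}_{\mathcal{C}}:\mathcal{G}_{\mathrm{WP}}\rightarrow\mathcal{G}_{\mathrm{WP}}$ as a composition
\[
\mathcal{G}_{\mathrm{WP}}\xrightarrow{\mathrm{Res}_{\mathcal{C}}}\mathcal{G}_{\mathrm{Pres}}\cap\mathcal{RC}\xrightarrow{\mathrm{Id}}\mathcal{G}_{\mathrm{WP}}.
\]
Since $\mathcal{RC}$ is a quasivariety, Proposition \ref{thm: Quasi Var iff ResC pres pres is C0} (implication $(1)\Rightarrow(3)$) says that the first arrow is continuous, so it has a continuous realizer. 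Its output always lies in $\mathcal{RC}$, so it is a valid input for the second arrow. Concretely, the reduction takes $k$ to be this continuous realizer, which turns a word-problem name of $(G,S)$ into a presentation of $\mathrm{Res}_{\mathcal{C}}(G,S)$. It then applies $\mathrm{Id}:\mathcal{G}_{\mathrm{Pres}}\cap\mathcal{RC}\rightarrow\mathcal{G}_{\mathrm{WP}}$ once, and takes $h$ to be the projection onto that output, with no use of $V$ needed. This shows that precomposing a problem with a continuous map yields a problem Weihrauch below it.

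Combining the two reductions gives the claimed equivalence. I expect no real obstacle, since all the work is in Lemma \ref{lem:Small-cancellation-lemma} and Proposition \ref{thm: Quasi Var iff ResC pres pres is C0}. The only points to check carefully are that the continuous map of $(3)$ lands inside $\mathcal{RC}$, so that the restricted identity applies, and the handling of the trivial $\mathcal{RC}$ case where the Corollary is not available.
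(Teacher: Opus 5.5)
Your argument in the main case is the paper's argument, and it is correct and complete when $\mathcal{RC}$ is non-trivial. The lower bound is Corollary~\ref{cor: Res C above Identity pres to wp }. The upper bound composes the continuous map $\mathrm{Res}_{\mathcal{C}}:\mathcal{G}_{\mathrm{WP}}\rightarrow\mathcal{G}_{\mathrm{Pres}}$ from Proposition~\ref{thm: Quasi Var iff ResC pres pres is C0} with one call to the identity problem.

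The error is in your degenerate-case paragraph, for the case $\mathcal{RC}=\mathcal{G}$.
\begin{itemize}
\item The right-hand problem is not the identity of $\mathcal{G}_{\mathrm{WP}}$. Its inputs are $\rho_{\mathrm{Pres}}$-names, so it is $\mathrm{Id}:\mathcal{G}_{\mathrm{Pres}}\rightarrow\mathcal{G}_{\mathrm{WP}}$.
\item By Lemma~\ref{lem: Pres to WP is Lim } this problem is $\equiv_{W}\mathrm{Lim}$. It is in particular discontinuous, since a continuous injection into a Hausdorff space forces the Scott topology to be Hausdorff, which it is not.
\item Meanwhile $\mathrm{Res}_{\mathcal{C}}$ is the identity of $\mathcal{G}_{\mathrm{WP}}$, hence continuous.
\end{itemize}
Take for instance $\mathcal{C}$ to be the class of all finitely generated groups. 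Then $\mathcal{RC}=\mathcal{G}$, which is a quasivariety: any element of the normal closure of infinitely many relators already lies in the normal closure of finitely many of them. Yet only $\mathrm{Res}_{\mathcal{C}}\le_{W}\mathrm{Id}:\mathcal{G}_{\mathrm{Pres}}\cap\mathcal{RC}\rightarrow\mathcal{G}_{\mathrm{WP}}$ holds; the reverse reduction would make a continuous problem compute $\mathrm{Lim}$.

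So no argument can rescue this case. The statement as written is false there and needs the extra hypothesis $\mathcal{RC}\neq\mathcal{G}$. The paper's own proof uses this hypothesis tacitly when it invokes Corollary~\ref{cor: Res C above Identity pres to wp }. You were right to notice that hypothesis was needed, but wrong to conclude that the excluded case holds trivially.

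Your other degenerate case, $\mathcal{RC}$ consisting only of trivial groups, is fine. The domain there is the family of trivial $k$-marked groups, one for each $k$, rather than a singleton. Since $k$ can be read off the name, both sides are continuous with nonempty domain and therefore Weihrauch equivalent.
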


\begin{proof}
By Corollary \ref{cor: Res C above Identity pres to wp }, it suffices
to show that $\mathrm{Res}_{\mathcal{C}}\le_{W}\mathrm{Id}:\mathcal{G}_{\mathrm{Pres}}\cap\mathcal{RC}\rightarrow\mathcal{G}_{\mathrm{WP}}$.
But by Theorem \ref{thm: Quasi Var iff ResC pres pres is C0}, because
$\mathcal{RC}$ is a quasivariety, the map 
\[
\mathrm{Res}_{\mathcal{C}}:\mathcal{G}_{\mathrm{WP}}\rightarrow\mathcal{G}_{\mathrm{Pres}}
\]
is continuous. The result then follows from the fact that 
\[
\mathrm{Res}_{\mathcal{C}}:\mathcal{G}_{\mathrm{WP}}\rightarrow\mathcal{G}_{\mathrm{WP}}=(\mathrm{Res}_{\mathcal{C}}:\mathcal{G}_{\mathrm{WP}}\rightarrow\mathcal{G}_{\mathrm{Pres}})\circ(\mathrm{Id}:\mathcal{G}_{\mathrm{Pres}}\cap\mathcal{RC}\rightarrow\mathcal{G}_{\mathrm{WP}}).\qedhere
\]
\end{proof}
\begin{prop}
\label{prop:Not Qvar computes LPO'}Suppose that $\mathcal{RC}$ is
not a quasivariety. Then 
\[
\mathrm{Res}_{\mathcal{C}}\ge_{W}\mathrm{LPO}',
\]
and thus $\mathrm{Res}_{\mathcal{C}}$ is not $\Sigma_{2}^{0}$-measurable. 
\end{prop}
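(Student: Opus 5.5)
We reduce $\exists^{\infty}$, which is equivalent to $\mathrm{LPO}'$ by Example \ref{exa:LPO' Exists infty}, to $\mathrm{Res}_{\mathcal{C}}:\mathcal{G}_{\mathrm{WP}}\rightarrow\mathcal{G}_{\mathrm{WP}}$. Since $\mathcal{RC}$ is not a quasivariety, the proof of Proposition \ref{thm: Quasi Var iff ResC pres pres is C0} provides $k$, relations $r_{1},r_{2},\ldots\in\mathbb{F}_{k}$ and a word $w\in\mathbb{F}_{k}$ with two properties. First, $w=1$ in every group of $\mathcal{RC}$ satisfying all the $r_{i}$. Second, for each finite $A\subseteq\mathbb{N}$ there is a group in $\mathcal{RC}$ satisfying $\{r_{i},i\in A\}$ with $w\neq1$. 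Equivalently, $w=1$ in $\mathrm{Res}_{\mathcal{C}}\langle S\mid r_{i},i\in\mathbb{N}\rangle$ but $w\neq1$ in $\mathrm{Res}_{\mathcal{C}}\langle S\mid r_{i},i\in A\rangle$ for every finite $A$.

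The same holds for every infinite subfamily $(r_{i})_{i\in B}$, since it is again a family with no finite subfamily implying $w$. To see this, it suffices to pass from $(r_{i})_{i\in\mathbb{N}}$ to the cumulative relators. Replace $r_{n}$ by $\rho_{n}$, a word which is trivial in a group exactly when $r_{1},\ldots,r_{n}$ are. If no such single word exists, replace $r_{n}$ by the finite set $\{r_{1},\ldots,r_{n}\}$. Then any infinite $B$ gives a set $\bigcup_{n\in B}\{r_{1},\ldots,r_{n}\}$ equal to all the $r_{i}$, so it implies $w$. A finite $B$ gives only finitely many $r_{i}$, so $w$ is not implied.

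Given $u\in\{0,1\}^{\mathbb{N}}$, let $B_{u}=\{n\mid u_{n}=1\}$ and consider the presentation
\[
\pi_{u}=\langle S\mid r_{1},\ldots,r_{n},\ n\in B_{u}\rangle .
\]
Then $u\mapsto\pi_{u}$ is continuous into $\mathcal{G}_{\mathrm{Pres}}$, and
\[
w=1\text{ in }\mathrm{Res}_{\mathcal{C}}(\pi_{u})\iff B_{u}\text{ is infinite}\iff\exists^{\infty}(u)=1 .
\]
By Lemma \ref{lem:Small-cancellation-lemma}, $\mathrm{Res}_{\mathcal{C}}:\mathcal{G}_{\mathrm{Pres}}\rightarrow\mathcal{G}_{\mathrm{WP}}$ is Weihrauch equivalent to $\mathrm{Res}_{\mathcal{C}}$ on $\mathcal{G}_{\mathrm{WP}}$. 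Its non-triviality hypothesis holds: $\mathcal{RC}$ is not all groups, since $\mathcal{G}$ would be a quasivariety, and it is not just the trivial group, since that is also a quasivariety. The reduction is therefore as follows. Map $u$ continuously to $\pi_{u}$, apply $\mathrm{Res}_{\mathcal{C}}$ in the presentation-to-word-problem form, and read off the bit of the output word problem indexed by $w$. This last step is continuous for $\rho_{\mathrm{WP}}$. This gives $\exists^{\infty}\le_{W}\mathrm{Res}_{\mathcal{C}}$, hence $\mathrm{LPO}'\le_{W}\mathrm{Res}_{\mathcal{C}}$.

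For the last claim, $\mathrm{LPO}'$ is not $\Sigma_{2}^{0}$-measurable, since $\{u\mid\exists^{\infty}n,\,u_{n}=1\}$ is $\Pi_{2}^{0}$-complete and not $\Sigma_{2}^{0}$. Alternatively, $\mathrm{LPO}'\not\le_{W}\mathrm{Lim}$ is standard. If $\mathrm{Res}_{\mathcal{C}}$ were $\Sigma_{2}^{0}$-measurable, Theorem \ref{thm: Lim^(n) is measurable complete} would give $\mathrm{Res}_{\mathcal{C}}\le_{W}\mathrm{Lim}$, and hence $\mathrm{LPO}'\le_{W}\mathrm{Lim}$. That contradicts the fact that $\Sigma_{2}^{0}$-measurable functions into $\{0,1\}$ have $\Delta_{2}^{0}$ fibres.

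The main obstacle is making sure that finitely many relators never force $w$, whichever finite set is enumerated. This is handled by using the cumulative sets $\{r_{1},\ldots,r_{n}\}$ rather than the individual $r_{n}$. Only then is the equivalence between $w=1$ and $B_{u}$ being infinite exact.
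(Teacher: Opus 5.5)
Your proof is correct and is essentially the paper's. The paper also encodes $u$ cumulatively, listing $t_i=r_j$ with $j=\#\{n\le i\mid u_n=1\}$, so that all relators occur exactly when $u$ has infinitely many ones. It leaves implicit the passage from $\mathcal{G}_{\mathrm{Pres}}$ to $\mathcal{G}_{\mathrm{WP}}$ via Lemma \ref{lem:Small-cancellation-lemma}, which you make explicit together with its non-triviality hypothesis. (The paper's indexing also omits $r_0$ when $u_0=1$, a slip your cumulative sets $\{r_1,\dots,r_n\}$ avoid.)

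The one inaccurate line is your claim that every infinite subfamily of the original $(r_i)$ still implies $w$. That is false in general, and the reason you give is a non sequitur. Your construction only uses the cumulative sets, so nothing depends on it.
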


\begin{proof}
If $\mathcal{RC}$ is not a quasivariety, then there is a finite set
$S$, an infinite set of relations $R=\{r_{0},r_{1},r_{2},...\}$
of $\mathbb{F}_{S}$, and a relation $w\in\mathbb{F}_{S}$, such that
every group in $\mathcal{RC}$ which satisfies all the relations of
$R$ also satisfies $w$, and yet for each finite subset $R_{1}$
of $R$, there is a group in $\mathcal{RC}$ satisfying the relations
of $R_{1}$ and in which $w\ne1$.

We build a reduction to prove that $\mathrm{Res}_{\mathcal{C}}\ge_{W}\mathrm{LPO}'$.
Recall from Example \ref{exa:LPO' Exists infty} that the problem
$\mathrm{LPO}'$ can be seen as the problem $\exists^{\infty}$, which
maps a sequence of zeroes and ones to $1$ if and only if it contains
infinitely many ones. 

On input a sequence $(u_{n})_{n\in\mathbb{N}}$, we build a group
presentation $\langle S\mid t_{0},t_{1},...\rangle$, given by $t_{i}=r_{j}$,
where $j=\#\{n\le i\mid u_{n}=1\}$. 

The map $(u_{n})_{n\in\mathbb{N}}\mapsto\langle S\mid t_{0},t_{1},...\rangle$
is clearly a continuous map from $\{0,1\}^{\mathbb{N}}$ to $\mathcal{G}_{\mathrm{Pres}}$.

And by construction $w=1$ in $\mathrm{Res}_{\mathcal{C}}(\langle S\mid t_{0},t_{1},...\rangle)$
if and only if $\exists^{\infty}n,\,u_{n}=1$.
\end{proof}
\begin{thm}
\label{thm:QVAR IFF}Let $\mathcal{C}$ be a set of finitely generated
groups. Then $\mathcal{RC}$ is a quasivariety if and only if $\mathrm{Res}_{\mathcal{C}}\le_{W}\mathrm{Lim}$.
\end{thm}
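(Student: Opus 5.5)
Both directions follow from results already established; the only real work is checking that the measurability classes line up.

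For the direction ``quasivariety $\Rightarrow \mathrm{Res}_{\mathcal{C}}\le_{W}\mathrm{Lim}$'', I will start from Corollary \ref{cor:Quasi Var ResC equiv to Pres->WP}. When $\mathcal{RC}$ is a quasivariety and non-trivial, it gives
\[
\mathrm{Res}_{\mathcal{C}}\equiv_{W}\mathrm{Id}:\mathcal{G}_{\mathrm{Pres}}\cap\mathcal{RC}\rightarrow\mathcal{G}_{\mathrm{WP}}.
\]
The right-hand side is a restriction of $\mathrm{Id}:\mathcal{G}_{\mathrm{Pres}}\rightarrow\mathcal{G}_{\mathrm{WP}}$, which is $\equiv_{W}\mathrm{Lim}$ by Lemma \ref{lem: Pres to WP is Lim }. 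A restriction of a problem reduces to that problem, so $\mathrm{Res}_{\mathcal{C}}\le_{W}\mathrm{Lim}$.

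I then need to handle the trivial cases, where Corollary \ref{cor:Quasi Var ResC equiv to Pres->WP} was not stated with the non-triviality hypothesis made explicit. If $\mathcal{RC}=\mathcal{G}$, then $\mathrm{Res}_{\mathcal{C}}$ is the identity on $\mathcal{G}_{\mathrm{WP}}$. If $\mathcal{RC}$ is reduced to the trivial group, $\mathrm{Res}_{\mathcal{C}}$ is the constant map to the trivial group of the appropriate rank. In both cases it is continuous, hence below $\mathrm{Lim}$.

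I should also double-check the factorisation in the proof of Corollary \ref{cor:Quasi Var ResC equiv to Pres->WP}, since that proof leans on it. The continuous realizer of $\mathrm{Res}_{\mathcal{C}}:\mathcal{G}_{\mathrm{WP}}\rightarrow\mathcal{G}_{\mathrm{Pres}}$ comes from Proposition \ref{thm: Quasi Var iff ResC pres pres is C0}. Composing it with one call to the identity from presentations to word problems gives the reduction. One could alternatively bypass the corollary: compose the continuous map with $\mathrm{Lim}$ directly, using $\mathrm{Lim}$ on the $\mathbb{S}^{\mathbb{N}}$-name to obtain the Cantor-space name.

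For the converse, I argue by contraposition. Suppose $\mathcal{RC}$ is not a quasivariety. Proposition \ref{prop:Not Qvar computes LPO'} gives $\mathrm{Res}_{\mathcal{C}}\ge_{W}\mathrm{LPO}'$. If also $\mathrm{Res}_{\mathcal{C}}\le_{W}\mathrm{Lim}$, then $\mathrm{LPO}'\le_{W}\mathrm{Lim}$.

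To rule this out I will use Theorem \ref{thm: Lim^(n) is measurable complete}: a problem below $\mathrm{Lim}$ is $\Sigma_{2}^{0}$-measurable. But $\mathrm{LPO}'\equiv_{W}\exists^{\infty}$, and the preimage of the open set $\{1\}$ under $\exists^{\infty}$ is the set of binary sequences with infinitely many ones. This set is $\Pi_{2}^{0}$-complete, hence not $\Sigma_{2}^{0}$.

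The step I expect to be the main obstacle is transferring non-measurability along the reduction. Weihrauch reductions involve multi-valued continuous maps, so I will make the argument at the level of realizers. A $\Sigma_{2}^{0}$-measurable realizer of $\mathrm{Lim}$, pre-composed with the continuous $k$ and post-composed with the continuous $h$ (which also receives the input through $\mathrm{id}_{V}$), yields a $\Sigma_{2}^{0}$-measurable realizer of $\exists^{\infty}$. Taking its preimage of the output names of $1$ then gives a $\Sigma_{2}^{0}$ set of sequences. Since $\{0,1\}$ is discrete, that preimage is exactly the $\Pi_{2}^{0}$-complete set above, which is a contradiction.

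Since this is essentially the standard fact $\mathrm{LPO}'\not\le_{W}\mathrm{Lim}$, I would simply cite \cite{Brattka2021a} for it, noting that the non-$\Sigma_{2}^{0}$-measurability is already asserted in Proposition \ref{prop:Not Qvar computes LPO'}.
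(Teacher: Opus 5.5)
Your proposal is correct and is essentially the paper's proof. The forward direction combines Corollary \ref{cor:Quasi Var ResC equiv to Pres->WP} with Lemma \ref{lem: Pres to WP is Lim }, and the converse combines Proposition \ref{prop:Not Qvar computes LPO'} with $\mathrm{LPO}'\not\le_{W}\mathrm{Lim}$; your extra checks (the trivial cases of $\mathcal{RC}$, and the realizer-level argument that everything below $\mathrm{Lim}$ is $\Sigma_{2}^{0}$-measurable while $\exists^{\infty}$ is not) only make explicit what the paper leaves implicit.
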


\begin{proof}
By Corollary \ref{cor:Quasi Var ResC equiv to Pres->WP}, if $\mathcal{RC}$
is a quasivariety, then $\mathrm{Res}_{\mathcal{C}}\equiv_{W}\mathrm{Id}:\mathcal{G}_{\mathrm{Pres}}\cap\mathcal{RC}\rightarrow\mathcal{G}_{\mathrm{WP}}$.
By Lemma \ref{lem: Pres to WP is Lim }, $\mathrm{Id}:\mathcal{G}_{\mathrm{Pres}}\rightarrow\mathcal{G}_{\mathrm{WP}}\equiv_{W}\mathrm{Lim}$.
Thus $\mathrm{Res}_{\mathcal{C}}\le_{W}\mathrm{Lim}$ when $\mathcal{RC}$
is a quasivariety. 

And by Proposition \ref{prop:Not Qvar computes LPO'}, if $\mathcal{RC}$
is not a quasivariety, then $\mathrm{Res}_{\mathcal{C}}\ge_{W}\mathrm{LPO}'$,
and thus $\mathrm{Res}_{\mathcal{C}}\not\le_{W}\mathrm{Lim}$. 
\end{proof}
\begin{thm}
\label{thm:Eq Noeth IFF}Let $\mathcal{C}$ be a set of finitely generated
groups. The class $\mathcal{C}$ is equationally noetherian if and
only if $\mathrm{Res}_{\mathcal{C}}<_{W}\mathrm{Lim}_{\mathbb{N}}$. 
\end{thm}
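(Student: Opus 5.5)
I read the statement as: $\mathrm{Res}_{\mathcal{C}}\le_{W}\mathrm{Lim}_{\mathbb{N}}$ and $\mathrm{Lim}_{\mathbb{N}}\not\le_{W}\mathrm{Res}_{\mathcal{C}}$. This matches Theorem D of the introduction. I will work under the standing hypothesis that $\mathcal{RC}$ is non-trivial, which is what Corollary \ref{cor: Res C above Identity pres to wp } needs. The degenerate case must be treated separately. If $\mathcal{RC}=\mathcal{G}$, then $\mathrm{Res}_{\mathcal{C}}$ is the identity and hence continuous. In that case the reverse direction can only hold under the paper's implicit conventions on $\mathcal{C}$, so I would state this edge case explicitly.

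\emph{Equationally noetherian $\Rightarrow$ strictly below $\mathrm{Lim}_{\mathbb{N}}$.} First, Lemma \ref{lem:Eq noeth -> Res C is Qvar} gives that $\mathcal{RC}$ is a quasivariety. Then Corollary \ref{cor:Quasi Var ResC equiv to Pres->WP} gives $\mathrm{Res}_{\mathcal{C}}\equiv_{W}\mathrm{Id}:\mathcal{G}_{\mathrm{Pres}}\cap\mathcal{RC}\rightarrow\mathcal{G}_{\mathrm{WP}}$. Since the number $k$ of generators is read off the first symbol of a name, this identity problem is the disjoint union over $k$ of the problems $\mathrm{Id}:\mathcal{G}_{\mathrm{Pres}}^{k}\cap\mathcal{RC}\rightarrow\mathcal{G}_{\mathrm{WP}}^{k}$. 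By Proposition \ref{thm: Pres-> WP is below ordinal length }, each of these is $\le_{W}\mathrm{Lim}_{\searrow\alpha_{k}}$, where $\alpha_k$ is the rank of the lattice of residually-$\mathcal{C}$ $k$-marked groups. Each $\alpha_k$ is a countable ordinal, because this lattice is countable: every element is finitely presented relative to $\mathcal{C}$ by Lemma \ref{lem: Eq noeth characterization }. Put $\alpha=\sup_{k}\alpha_{k}$, which is again countable. The reductions can be chosen uniformly in $k$, because the witnesses are indexed by $k$ and $k$ is available from the input. This gives $\mathrm{Res}_{\mathcal{C}}\le_{W}\mathrm{Lim}_{\searrow\alpha}$.

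Next, $\mathrm{Lim}_{\searrow\alpha}\le_{W}\mathrm{Lim}_{\mathbb{N}}$. A weakly descending sequence in an ordinal is eventually constant. Composing with the fixed surjection $f\colon\mathbb{N}\rightarrow\alpha$, replace the name $(u_n)$ by the sequence whose $n$-th term is the $f$-least code of $f(u_n)$. This sequence converges in $\mathbb{N}$, and its limit is a valid output. Hence $\mathrm{Res}_{\mathcal{C}}\le_{W}\mathrm{Lim}_{\mathbb{N}}$.

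For strictness I would use Hertling's level of discontinuity, already invoked in the proof that $\alpha\mapsto\mathrm{Lim}_{\searrow\alpha}$ is injective. $\mathrm{Lim}_{\searrow\alpha}$ has level $\alpha$. Every point of the domain of $\mathrm{Lim}_{\mathbb{N}}$ is a point of discontinuity, and this persists at every stage of the derivative process, so $\mathrm{Lim}_{\mathbb{N}}$ has no countable level. Since the level is monotone under continuous Weihrauch reducibility, $\mathrm{Lim}_{\mathbb{N}}\not\le_{W}\mathrm{Lim}_{\searrow\alpha}$, and therefore $\mathrm{Lim}_{\mathbb{N}}\not\le_{W}\mathrm{Res}_{\mathcal{C}}$. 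This step is where I expect the main obstacle. One must check that Hertling's monotonicity applies to reductions with multi-valued pre- and post-processing, and that the level of $\mathrm{Lim}_{\mathbb{N}}$ is genuinely undefined. If this is delicate, I would instead give a direct topological argument. A reduction $\mathrm{Lim}_{\mathbb{N}}\le_{W}\mathrm{Lim}_{\searrow\alpha}$ would yield a rank-decreasing strategy on sequences that change value arbitrarily many times. This would produce an infinite descending chain in $\alpha$ through repeated diagonalisation, which is impossible.

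\emph{Not equationally noetherian $\Rightarrow$ not strictly below.} Combine Corollary \ref{cor: Res C above Identity pres to wp } with Proposition \ref{thm: Pres-> WP for not eq noeth} to get $\mathrm{Res}_{\mathcal{C}}\ge_{W}\mathrm{Id}:\mathcal{G}_{\mathrm{Pres}}\cap\mathcal{RC}\rightarrow\mathcal{G}_{\mathrm{WP}}\ge_{W}\mathrm{Lim}_{\mathbb{N}}$. So $\mathrm{Res}_{\mathcal{C}}$ is not strictly below $\mathrm{Lim}_{\mathbb{N}}$, which completes the equivalence.
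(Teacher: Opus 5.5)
Your proof is correct and follows the paper's own argument. For the non-noetherian direction you combine the small-cancellation corollary ($\mathrm{Res}_{\mathcal{C}}\ge_{W}\mathrm{Id}:\mathcal{G}_{\mathrm{Pres}}\cap\mathcal{RC}\rightarrow\mathcal{G}_{\mathrm{WP}}$) with the reduction of $\mathrm{Lim}_{\mathbb{N}}$ to that identity problem. For the noetherian direction you pass through the quasivariety lemma, the equivalence of $\mathrm{Res}_{\mathcal{C}}$ with the identity problem, and the bound by $\mathrm{Lim}_{\searrow\alpha}$, exactly as the paper does.

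The points you add are all things the paper leaves implicit:
\begin{itemize}
\item uniformity in $k$, with $\alpha=\sup_k\alpha_k$;
\item the explicit reduction $\mathrm{Lim}_{\searrow\alpha}\le_{W}\mathrm{Lim}_{\mathbb{N}}$;
\item strictness via Hertling's level, which does go through once you run the derived-set induction on realizers, pushing derived sets forward along the input-side realizer;
\item the non-triviality caveat, which is genuinely needed: for $\mathcal{RC}=\mathcal{G}$ (e.g.\ $\mathcal{C}$ the class of all finitely generated groups), $\mathrm{Res}_{\mathcal{C}}$ is the identity, hence continuous, while $\mathcal{C}$ is not equationally noetherian.
\end{itemize}
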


\begin{proof}
Suppose that $\mathcal{C}$ is not equationally noetherian. By Corollary
\ref{cor: Res C above Identity pres to wp }, $\mathrm{Res}_{\mathcal{C}}\ge_{W}\mathrm{Id}:\mathcal{G}_{\mathrm{Pres}}\cap\mathcal{RC}\rightarrow\mathcal{G}_{\mathrm{WP}}$,
and by Proposition \ref{thm: Pres-> WP for not eq noeth} $\mathrm{Id}:\mathcal{G}_{\mathrm{Pres}}\cap\mathcal{RC}\rightarrow\mathcal{G}_{\mathrm{WP}}\ge_{W}\mathrm{Lim}_{\mathbb{N}}$. 

Suppose now that $\mathcal{C}$ is equationally noetherian. By Lemma
\ref{lem:Eq noeth -> Res C is Qvar} and Corollary \ref{cor:Quasi Var ResC equiv to Pres->WP}
we have $\mathrm{Res}_{\mathcal{C}}\equiv_{W}\mathrm{Id}:\mathcal{G}_{\mathrm{Pres}}\cap\mathcal{C}\rightarrow\mathcal{G}_{\mathrm{WP}}$,
and by Proposition \ref{thm: Pres-> WP is below ordinal length }
we have, for some countable ordinal $\alpha$, $\mathrm{Id}:\mathcal{G}_{\mathrm{Pres}}\cap\mathcal{C}\rightarrow\mathcal{G}_{\mathrm{WP}}\le_{W}\mathrm{Lim}_{\searrow\alpha}<_{W}\mathrm{Lim}_{\mathbb{N}}$. 
\end{proof}
The following immediate corollary provides a gap in terms of Weihrauch
complexity for problems of the form $\mathrm{Res}_{\mathcal{C}}$. 
\begin{cor}
Let $\mathcal{C}$ be a set of finitely generated groups. If $\mathrm{Res}_{\mathcal{C}}\le_{W}\mathrm{Lim}_{\mathbb{N}}$,
then there exists a countable ordinal $\alpha$ such that $\mathrm{Res}_{\mathcal{C}}<_{W}\mathrm{Lim}_{\searrow\alpha}$. 
\end{cor}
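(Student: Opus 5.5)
The plan is to show that the hypothesis $\mathrm{Res}_{\mathcal{C}}\le_{W}\mathrm{Lim}_{\mathbb{N}}$ forces $\mathcal{C}$ to be equationally noetherian, or $\mathcal{RC}$ to be trivial. In either situation the ordinal bound then follows from results already proved. I split into three steps.

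\textbf{Step 1 (trivial $\mathcal{RC}$).} If $\mathcal{RC}$ is the set of all marked groups, then $\mathrm{Res}_{\mathcal{C}}$ is the identity. If $\mathcal{RC}$ is reduced to the trivial group, then $\mathrm{Res}_{\mathcal{C}}$ is constant on each $\mathcal{G}^{k}_{\mathrm{WP}}$. In both cases it is continuous, hence strictly below $\mathrm{LPO}\equiv_{W}\mathrm{Lim}_{\searrow2}$, and we take $\alpha=2$.

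\textbf{Step 2 (non-trivial, equationally noetherian).} Assume from now on that $\mathcal{RC}$ is non-trivial. If $\mathcal{C}$ is equationally noetherian, the proof of Theorem~\ref{thm:Eq Noeth IFF} applies directly. By Lemma~\ref{lem:Eq noeth -> Res C is Qvar} and Corollary~\ref{cor:Quasi Var ResC equiv to Pres->WP}, $\mathrm{Res}_{\mathcal{C}}\equiv_{W}\mathrm{Id}:\mathcal{G}_{\mathrm{Pres}}\cap\mathcal{RC}\rightarrow\mathcal{G}_{\mathrm{WP}}$. Proposition~\ref{thm: Pres-> WP is below ordinal length }, applied in each arity $k$ and assembled over the disjoint union, gives $\mathrm{Res}_{\mathcal{C}}\le_{W}\mathrm{Lim}_{\searrow\beta}$ for a countable $\beta$, namely the supremum of the ranks $\alpha_{k}$; the assembly works because a name carries its arity $u_{0}$ and Proposition~\ref{prop: limit (ordinal problem) =00003D problem (limit ordinal)} combines the uniform reductions. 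Strictness is obtained by taking $\alpha=\beta+1$, using the injectivity of $\alpha\mapsto\mathrm{Lim}_{\searrow\alpha}$ on Weihrauch degrees.

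\textbf{Step 3 (non-trivial, not equationally noetherian).} This case must be shown to contradict $\mathrm{Res}_{\mathcal{C}}\le_{W}\mathrm{Lim}_{\mathbb{N}}$.

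First subcase: $\mathcal{RC}$ is not a quasivariety. Proposition~\ref{prop:Not Qvar computes LPO'} gives $\mathrm{Res}_{\mathcal{C}}\ge_{W}\mathrm{LPO}'$. Now $\mathrm{Lim}_{\mathbb{N}}\le_{W}\mathrm{Lim}$ is $\Sigma^{0}_{2}$-measurable, while $\mathrm{LPO}'$ is not, so this is impossible.

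Second subcase: $\mathcal{RC}$ is a quasivariety but $\mathcal{C}$ is not equationally noetherian. This is the main obstacle, since the proof of Theorem~\ref{thm:Eq Noeth IFF} only yields $\mathrm{Res}_{\mathcal{C}}\ge_{W}\mathrm{Lim}_{\mathbb{N}}$, and the case $\mathrm{Res}_{\mathcal{C}}\equiv_{W}\mathrm{Lim}_{\mathbb{N}}$ must still be excluded.

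\emph{Constructing a harder problem.} I would show that $\mathrm{Res}_{\mathcal{C}}$ computes a strictly harder problem, namely the compositional product $\mathrm{LPO}\star\mathrm{Lim}_{\mathbb{N}}$ (or $\mathrm{Lim}_{\mathbb{N}}\times\mathrm{Id}:\mathbb{S}\to\{0,1\}$ with extra dependence). The ingredients are as follows.
\begin{enumerate}
\item Take the strictly descending chain $\mathrm{Res}_{\mathcal{C}}\langle S\mid r_{1},\dots,r_{n}\rangle$ from the proof of Proposition~\ref{thm: Pres-> WP for not eq noeth}.
\item Exploit that every marked group in this chain is itself non-finitely-presentable relative to $\mathcal{C}$, by the second item of Lemma~\ref{lem: Eq noeth characterization }, applied to its infinite tail.
\item At the stage $l$ selected by the limit of a convergent sequence, additionally encode an instance of $\mathrm{Lim}_{\mathbb{N}}$ whose coding chain starts at level $l$.
\end{enumerate}
The word problem of the resulting residually-$\mathcal{C}$ group then determines both answers. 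This gives $\mathrm{Res}_{\mathcal{C}}\ge_{W}\mathrm{Lim}_{\mathbb{N}}\star\mathrm{Lim}_{\mathbb{N}}$.

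\emph{Strictness.} It remains to check $\mathrm{Lim}_{\mathbb{N}}\star\mathrm{Lim}_{\mathbb{N}}\not\le_{W}\mathrm{Lim}_{\mathbb{N}}$. This is the standard fact that $\mathrm{Lim}_{\mathbb{N}}$ is not closed under compositional product. I would verify it by a Baire category argument on the domain: a reduction to $\mathrm{Lim}_{\mathbb{N}}$ makes the problem continuous on countably many $\Pi^{0}_{2}$ pieces covering the domain, and one piece is non-meager somewhere. This contradicts the two-level discontinuity built in above.

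With Step 3 settled, the hypothesis forces equational noetherianity, and Steps 1 and 2 produce the required countable $\alpha$.
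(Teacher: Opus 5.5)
Your Steps 1 and 2 follow the paper. The paper states this corollary as immediate from Theorem~\ref{thm:Eq Noeth IFF}: equational noetherianity gives $\mathrm{Res}_{\mathcal{C}}\le_W\mathrm{Lim}_{\searrow\alpha}$, and enlarging $\alpha$ gives strictness. You are right that the non-strict hypothesis $\le_W$ needs more: in the non-noetherian case the theorem only yields $\mathrm{Res}_{\mathcal{C}}\ge_W\mathrm{Lim}_{\mathbb{N}}$, so $\mathrm{Res}_{\mathcal{C}}\equiv_W\mathrm{Lim}_{\mathbb{N}}$ must still be excluded.

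\textbf{The gap.} Your Step 3, second subcase, cannot do this. The ``standard fact'' you invoke is false. $\mathrm{Lim}_{\mathbb{N}}$ captures exactly continuous computation with finitely many mind changes, and such computations compose: restart the second one each time the first changes its mind. Hence $\mathrm{Lim}_{\mathbb{N}}\star\mathrm{Lim}_{\mathbb{N}}\equiv_W\mathrm{Lim}_{\mathbb{N}}$. Likewise $\mathrm{LPO}\star\mathrm{Lim}_{\mathbb{N}}$ and $\mathrm{Lim}_{\mathbb{N}}\times\mathrm{LPO}$ are below $\mathrm{Lim}_{\mathbb{N}}$. So no nesting of $\mathrm{Lim}_{\mathbb{N}}$-instances inside the chain can separate, and no Baire-category argument aimed at such a problem can succeed. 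Your ingredient (2) is also wrong: each $\mathrm{Res}_{\mathcal{C}}\langle S\mid r_{1},\dots,r_{n}\rangle$ is, by definition, finitely presented as a residually-$\mathcal{C}$ group.

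\textbf{The missing idea.} What $\mathrm{Res}_{\mathcal{C}}$ has and $\mathrm{Lim}_{\mathbb{N}}$ lacks is totality on the divergent case.
\begin{enumerate}
\item Let $R_{n}$ be the relations of $\mathrm{Res}_{\mathcal{C}}\langle S\mid r_{1},\dots,r_{n}\rangle$, and take the chain strict, with $r_{j}\notin R_{j-1}$ for all $j$.
\item Feed the construction an arbitrary nondecreasing $m\in\mathbb{N}^{\mathbb{N}}$, instead of the change-count of a convergent sequence. The presentation $\langle S\mid r_{j},\ j\le m(t),\ t\in\mathbb{N}\rangle$ depends continuously on $m$ and is always a legal input, via Lemma~\ref{lem:Small-cancellation-lemma}.
\item In its residually-$\mathcal{C}$ image, $r_{j}=1$ if and only if $j\le\sup m$. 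This holds also when $\sup m=\infty$, a case excluded from the domain of $\mathrm{Lim}_{\mathbb{N}}$.
\item Diagonalize against a putative reduction $(K,H)$ to $\mathrm{Lim}_{\mathbb{N}}$. Hold $m$ at $s$ until a finite prefix fixes the sequence produced by $K$ past its last change, with limit $n_{s}$, and $H$ (given $n_{s}$) has committed to $r_{s+1}\neq1$.
\item Then raise $m$ to $s+1$. By continuity, $H$ with $n_{s}$ still answers $r_{s+1}\ne1$, so the new limit satisfies $n_{s+1}\neq n_{s}$.
\item The resulting unbounded input is legal, yet $K$'s sequence on it changes infinitely often, contradicting that $K$ lands in $\mathrm{dom}(\mathrm{Lim}_{\mathbb{N}})$.
\end{enumerate}
This shows directly that a non-equationally-noetherian $\mathcal{C}$ with non-trivial $\mathcal{RC}$ satisfies $\mathrm{Res}_{\mathcal{C}}\not\le_W\mathrm{Lim}_{\mathbb{N}}$. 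It covers both of your subcases at once, with no quasivariety split needed.
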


An example of a problem which is below $\mathrm{Lim}_{\mathbb{N}}$
but below no $\mathrm{Lim}_{\searrow\alpha}$ is the problem $\mathrm{Lim}_{\{0,1\}}$,
mapping a converging sequence in $\{0,1\}$ to its limit. 

\section{Below $\mathrm{Lim}_{\mathbb{N}}$: equationally noetherian sets}

Lemma \ref{lem:Eq noeth -> Res C is Qvar}, Corollary \ref{cor:Quasi Var ResC equiv to Pres->WP}
and Theorem \ref{thm: Pres-> WP is below ordinal length } together
yield: 
\begin{thm}
\label{thm: Eq noeth + rank lattice res C}Let $\mathcal{C}$ be a
set of finitely generated groups. Suppose that $\mathcal{C}$ is equationally
noetherian. Denote by $\alpha$ the ordinal rank of the lattice $\mathcal{RC}$.
If $\alpha\le\omega^{\omega}$, then we have the equality:
\[
\mathrm{Res}_{\mathcal{C}}\equiv_{W}\mathrm{Lim}_{\searrow\alpha}.
\]
Otherwise, we have the bounds:
\[
\mathrm{Lim}_{\searrow\omega^{\omega}}\le_{W}\mathrm{Res}_{\mathcal{C}}\le_{W}\mathrm{Lim}_{\searrow\alpha}.
\]
\end{thm}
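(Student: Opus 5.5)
The plan is to chain the earlier results, first reducing $\mathrm{Res}_{\mathcal{C}}$ to the identity problem from presentations to word problems. Since $\mathcal{C}$ is equationally noetherian, Lemma \ref{lem:Eq noeth -> Res C is Qvar} says that $\mathcal{RC}$ is a quasivariety. Corollary \ref{cor:Quasi Var ResC equiv to Pres->WP} then gives
\[
\mathrm{Res}_{\mathcal{C}}\equiv_{W}\mathrm{Id}:\mathcal{G}_{\mathrm{Pres}}\cap\mathcal{RC}\rightarrow\mathcal{G}_{\mathrm{WP}}.
\]
That corollary assumes $\mathcal{RC}$ is non-trivial. The trivial cases are degenerate: if $\mathcal{RC}=\mathcal{G}$, or if it contains only the trivial group, then $\mathrm{Res}_{\mathcal{C}}$ is continuous. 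These cases must be set aside or checked by hand against the corresponding small ordinal. So everything reduces to locating the identity problem $\mathrm{Id}:\mathcal{G}_{\mathrm{Pres}}\cap\mathcal{RC}\rightarrow\mathcal{G}_{\mathrm{WP}}$ relative to $\mathrm{Lim}_{\searrow\alpha}$.

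\textbf{The $k$-marked pieces.} The identity problem is the disjoint union over $k$ of the $k$-marked problems, and the input name reveals $k$ as its first entry. Let $\alpha_{k}$ denote the rank of the lattice of residually-$\mathcal{C}$ $k$-marked groups, and read $\alpha$ as $\sup_{k}\alpha_{k}$. Proposition \ref{thm: Pres-> WP is below ordinal length } gives, for each $k$,
\[
\mathrm{Id}^{k}\le_{W}\mathrm{Lim}_{\searrow\alpha_{k}}\le_{W}\mathrm{Lim}_{\searrow\alpha}.
\]
These reductions glue into a single one, because the reduction may branch on $k$ using its first, discrete, coordinate. This yields the upper bound $\mathrm{Res}_{\mathcal{C}}\le_{W}\mathrm{Lim}_{\searrow\alpha}$ in both cases of the statement.

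\textbf{Lower bounds.} If every $\alpha_{k}<\omega^{\omega}$, Theorem \ref{thm: Pres-> WP is equal to ordinal length if alpha is small} gives $\mathrm{Id}^{k}\ge_{W}\mathrm{Lim}_{\searrow\alpha_{k}}$ for each $k$. Since $\mathrm{Id}^{k}$ is a restriction of the whole identity problem, we get $\mathrm{Res}_{\mathcal{C}}\ge_{W}\mathrm{Lim}_{\searrow\alpha_{k}}$ for all $k$, and we split into two cases.
\begin{itemize}
\item If $\alpha$ is attained as some $\alpha_{k}$, this is exactly the lower bound needed.
\item If $\alpha$ is not attained, then $\alpha$ is a limit of the $\alpha_{k}$, and every $\beta<\alpha$ lies below some $\alpha_{k}$. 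Proposition \ref{prop: limit (ordinal problem) =00003D problem (limit ordinal)} then yields $\mathrm{Res}_{\mathcal{C}}\ge_{W}\mathrm{Lim}_{\searrow\alpha}$.
\end{itemize}
The same reasoning covers $\alpha=\omega^{\omega}$, whether it is attained or is a supremum of smaller ranks. If instead some $\alpha_{k}\ge\omega^{\omega}$, Theorem \ref{thm: Pres-> WP above omega^omega when alpha is } gives $\mathrm{Res}_{\mathcal{C}}\ge_{W}\mathrm{Lim}_{\searrow\omega^{\omega}}$. Together these give the equality when $\alpha\le\omega^{\omega}$ and the stated sandwich otherwise.

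\textbf{Main obstacle.} I expect the hardest part to be the boundary case $\alpha=\omega^{\omega}$ attained by a single $\alpha_{k}$. Theorem \ref{thm: Pres-> WP is equal to ordinal length if alpha is small} requires $\alpha_{k}<\omega^{\omega}$, but Theorem \ref{thm: Pres-> WP above omega^omega when alpha is } still gives the lower bound $\mathrm{Lim}_{\searrow\omega^{\omega}}$, which matches the upper bound. A second point is that the $\mathrm{Lim}_{\searrow\alpha}$ upper bound of Proposition \ref{thm: Pres-> WP is below ordinal length } rests on the Cantor–Bendixson rank estimate of Proposition \ref{prop: CB rank upper bound discretization }. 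I would first verify that this bound is exactly $\alpha_{k}$ and not $\alpha_{k}+1$, by checking the convention in Definition \ref{def:Ordinal_Rank} against the definition of $\mathrm{Lim}_{\searrow\alpha}$.
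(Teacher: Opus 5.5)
Your proposal follows the paper's own argument: Lemma \ref{lem:Eq noeth -> Res C is Qvar}, then Corollary \ref{cor:Quasi Var ResC equiv to Pres->WP}, then the bounds on the identity problem glued over $k$ with $\alpha=\sup_{k}\alpha_{k}$. The loose ends you flag are harmless: $\mathcal{RC}=\mathcal{G}$ is incompatible with equational noetherianity, $\mathcal{RC}=\{1\}$ gives $\alpha=1$ with both sides continuous, and the rank function maps onto $\alpha_{k}$ itself, so the upper bound is exactly $\mathrm{Lim}_{\searrow\alpha_{k}}$.

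One caution, inherited from the paper. The lower bounds of Theorem \ref{thm: Pres-> WP is equal to ordinal length if alpha is small}, and of its companion for $\alpha\ge\omega^{\omega}$, are only justified for the whole problem $\mathrm{Id}$, not for a fixed-$k$ piece $\mathrm{Id}^{k}$. Their proofs merge two oracle calls through a direct product, which changes the number of generators. The fixed-$k$ version really fails: for abelian groups and $k=1$ one has $\alpha_{1}=\omega+1$ but $\mathrm{Id}^{1}\not\ge_{W}\mathrm{Lim}_{\searrow(\omega+1)}$. So you should run that induction with the multi-injection $E$ into the $k$-marked lattice, but with the whole problem $\mathrm{Id}$ as oracle, since it is genuinely finitely parallelizable. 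This yields $\mathrm{Id}\ge_{W}\mathrm{Lim}_{\searrow\alpha_{k}}$ (resp.\ $\mathrm{Lim}_{\searrow\omega^{\omega}}$) directly, and the rest of your argument goes through unchanged.
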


Thus when $\mathcal{C}$ is equationally noetherian, computing the
ordinal rank of the lattice $\mathcal{RC}$ is crucial in order to
place the problem $\mathrm{Res}_{\mathcal{C}}$ in the Weihrauch lattice. 

This rank is always at least $\omega$, as can be seen by using a
sequence of morphisms of direct products $G\leftarrow G\times G\leftarrow G\times G\times G\leftarrow...$,
and using markings with increasingly many generators. Equality is
easily seen to be attained for finite sets of finite groups. 
\begin{prop}
Let $\mathcal{C}$ be a finite set of finite groups. Then $\mathrm{Res}_{\mathcal{C}}\equiv_{W}\mathrm{Lim}_{\searrow\omega}$. 
\end{prop}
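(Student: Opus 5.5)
The plan is to apply Theorem \ref{thm: Eq noeth + rank lattice res C}, which reduces the statement to two facts: a finite set $\mathcal{C}$ of finite groups is equationally noetherian, and the ordinal rank of the lattice $\mathcal{RC}$ is exactly $\omega$. The lower bound $\mathrm{Lim}_{\searrow\omega}\le_{W}\mathrm{Res}_{\mathcal{C}}$ is already given by Corollary \ref{prop: Minimum LPO*} once $\mathcal{RC}$ is non-trivial. In the degenerate case where $\mathcal{C}$ contains only trivial groups, $\mathcal{RC}$ is the trivial group alone, and that case would be set aside or treated separately. So the real work is the upper bound, and it suffices to bound the rank by $\omega$.

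\emph{Step 1: finiteness of each level.} Fix $k$, let $N$ be the maximal order of a group in $\mathcal{C}$, and let $K_{k}$ be the intersection of all normal subgroups of $\mathbb{F}_{k}$ of index at most $N$. There are finitely many such subgroups, so $K_{k}$ has finite index. Every residually-$\mathcal{C}$ $k$-marked group $(G,S)$ is a subdirect product of groups of $\mathcal{C}$, so its kernel contains $K_{k}$. Hence $(G,S)$ is a marked quotient of the finite group $\mathbb{F}_{k}/K_{k}$, and there are only finitely many residually-$\mathcal{C}$ $k$-marked groups.

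\emph{Step 2: consequences.} Any descending chain of quotients in $\mathcal{RC}\cap\mathcal{G}_{k}$ stabilizes, so by Lemma \ref{lem: Eq noeth characterization } the set $\mathcal{C}$ is equationally noetherian. Moreover, the lattice of residually-$\mathcal{C}$ $k$-marked groups is finite, so its rank $\alpha_{k}$ is a finite number. The lattice of all of $\mathcal{RC}$ is the disjoint union over $k$ of these finite posets, so its rank is $\sup_{k}\alpha_{k}\le\omega$.

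\emph{Step 3: conclusion.} The rank is at least $\omega$ by the remark preceding the statement, using $G^{n}$ with $G\in\mathcal{C}$ nontrivial. Therefore $\alpha=\omega\le\omega^{\omega}$, and Theorem \ref{thm: Eq noeth + rank lattice res C} gives $\mathrm{Res}_{\mathcal{C}}\equiv_{W}\mathrm{Lim}_{\searrow\omega}$.

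\emph{Direct check of the upper bound.} If one prefers not to rely on how that theorem handles the disjoint union over $k$, the upper bound can be checked by hand. The space $\mathcal{G}_{\mathrm{Pres}}\cap\mathcal{RC}$ is a disjoint union of finite Alexandrov spaces, and identifying a point from a Scott name amounts to following a weakly descending sequence in a finite poset of height $\alpha_{k}$. This is an instance of $\mathrm{Lim}_{\searrow\alpha_{k}}$, uniformly in $k$, which is read off from the name, so it is an instance of $\mathrm{Lim}_{\searrow\omega}$. The reduction $\mathrm{Res}_{\mathcal{C}}\le_{W}\mathrm{Id}$ from Corollary \ref{cor:Quasi Var ResC equiv to Pres->WP} then finishes the argument.

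The main obstacle is this uniformity over $k$: making sure the whole problem lies below $\mathrm{Lim}_{\searrow\omega}$ itself, not merely below $\mathrm{Lim}_{\searrow n}$ for each $k$. This is handled by reading $k$ from the first symbol of the name and reducing to $\mathrm{Lim}_{\searrow\alpha_{k}}\le_{W}\mathrm{Lim}_{\searrow\omega}$, as in the proof of Proposition \ref{prop: limit (ordinal problem) =00003D problem (limit ordinal)}.
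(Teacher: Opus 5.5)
Your proposal is correct and follows the paper's argument. Both show there are finitely many residually-$\mathcal{C}$ $k$-marked groups for each $k$, deduce equational noetherianity and lattice rank exactly $\omega$, and conclude by Theorem \ref{thm: Eq noeth + rank lattice res C}. Your $K_{k}$ argument, the direct check of the upper bound, and the appeal to Corollary \ref{prop: Minimum LPO*} only spell out what the paper leaves implicit.

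One sharpening concerns the degenerate case, which cannot be ``treated separately''. When every group of $\mathcal{C}$ is trivial, $\mathrm{Res}_{\mathcal{C}}$ is constant on each $\mathcal{G}_{k}$ and hence continuous, so the statement is false there and genuinely needs $\mathcal{C}$ to contain a nontrivial group.
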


\begin{proof}
There are only finitely many $k$-marked residually-$\mathcal{C}$
groups when $\mathcal{C}$ is a finite set of finite groups. Thus
the rank of the lattice of residually $\mathcal{C}$ groups is $\omega$,
we conclude by Theorem \ref{thm: Eq noeth + rank lattice res C}.
\end{proof}
Let  $\mathcal{N}_{k}$ be the set of finitely generated $k$-nilpotent
groups and $\mathcal{M}$ the set of finitely generated metabelian
groups. 
\begin{thm}
\label{thm: Classification Fee Nilp Metab}The following hold: 
\[
\forall k>0,\,\mathrm{Res}_{\mathcal{N}_{k}}\equiv_{W}\mathrm{Lim}_{\searrow\omega^{2}},
\]
\[
\mathrm{Res}_{\mathcal{M}}\ge_{W}\mathrm{Lim}_{\searrow\omega^{\omega}}.
\]
\end{thm}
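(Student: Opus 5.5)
Both claims will follow from Theorem \ref{thm: Eq noeth + rank lattice res C}, once we check equational noetherianity and estimate the ordinal rank of the lattice of residually-$\mathcal{C}$ marked groups. For $\mathcal{N}_{k}$ and $\mathcal{M}$ equational noetherianity is classical. Finitely generated nilpotent groups are Noetherian (every subgroup is finitely generated), and finitely generated metabelian groups satisfy max-n by Hall's theorem. Hence in the relatively free groups $F_{n}/\gamma_{k+1}(F_{n})$, resp. $F_{n}/F_{n}''$, every ascending chain of normal subgroups stabilizes. Both classes are varieties, so $\mathcal{RC}=\mathcal{C}$ on finitely generated groups. Consequently, for each $n$ the lattice of residually-$\mathcal{C}$ $n$-marked groups is the lattice of normal subgroups of the relatively free group, under reverse inclusion. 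By the third characterization in Lemma \ref{lem: Eq noeth characterization }, $\mathcal{C}$ is then equationally noetherian.

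For nilpotent groups I will show that the $n$-marked lattice has rank strictly below $\omega^{2}$ for each $n$, with supremum over $n$ equal to $\omega^{2}$. Let $N=F_{n}/\gamma_{k+1}(F_{n})$, which has Hirsch length $h<\infty$. For a normal subgroup $K\le N$ I will use the invariant $(h(N/K),\,|{\rm torsion\ data}|)$. A strict inclusion $K\subsetneq K'$ either lowers the Hirsch length of the quotient, or preserves it, in which case $K'/K$ is finite. The number of consecutive finite-index steps is unbounded, but each one decreases a quantity in $\omega$ once the Hirsch length is fixed. From this I would prove by induction on $h(N/K)$ that the rank of the marked quotient $N/K$ is less than $\omega\cdot(h(N/K)+1)$. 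The total rank is then at most $\omega\cdot(h+1)<\omega^{2}$.

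For the lower bound I will use markings of $\mathbb{Z}^{m}$ (abelian, hence $k$-nilpotent for all $k\ge1$). The example stated for $\mathbb{Z}^{2}$ gives rank $\omega2+1$, and more generally the $m$-generated free abelian group gives rank at least $\omega m$, via chains $\mathbb{Z}^{m}\to\mathbb{Z}^{m-1}\times\mathbb{Z}/p^{j}\to\cdots$. Taking the supremum over $m$ gives $\omega^{2}$. The corollary following Theorem \ref{thm: Pres-> WP above omega^omega when alpha is } (each $\alpha_{n}<\omega^{\omega}$), together with Corollary \ref{cor:Quasi Var ResC equiv to Pres->WP}, then yields $\mathrm{Res}_{\mathcal{N}_{k}}\equiv_{W}\mathrm{Lim}_{\searrow\omega^{2}}$. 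The delicate step is the upper bound: I must make sure each fixed-Hirsch-length stratum contributes at most $\omega$, and not $\omega\cdot c$. I would handle this by noting that among quotients of $N/K$ with the same Hirsch length, the rank is governed by the finite group $K'/K$ and its order. The rank of a finite-order descent is then bounded by an integer after fixing the first step, which gives limit $\omega$ per stratum.

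For metabelian groups, by Theorem \ref{thm: Eq noeth + rank lattice res C} it suffices to show that the rank of the lattice of residually-metabelian marked groups is at least $\omega^{\omega}$. By Proposition \ref{prop: limit (ordinal problem) =00003D problem (limit ordinal)}, it is enough to embed, for each $m$, a chain pattern of rank $\omega^{m}$. I would use groups $\mathbb{Z}\wr\mathbb{Z}$ and more generally $\mathbb{Z}^{m}\ltimes\mathbb{Z}[x_{1}^{\pm1},\dots,x_{m}^{\pm1}]$, which are 2-generated (resp. $(m+1)$-generated) metabelian groups. Their normal subgroups contained in the base module include all ideals of the Laurent polynomial ring $\mathbb{Z}[x_{1}^{\pm},\dots,x_{m}^{\pm}]$. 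The ideal lattice of an $m$-dimensional Noetherian ring has Krull-type ordinal rank at least $\omega^{m}$: descending through primes of decreasing dimension, and powers of ideals within each level, gives the lexicographic pattern. This step, an explicit lower bound $\omega^{m}$ on the rank of the ideal lattice of a Laurent polynomial ring in $m$ variables, is the main obstacle. I would prove it by induction on $m$, using the quotients $R/(x_{m}-1)^{j}$ to stack $\omega$ copies of the $(m-1)$-variable lattice.
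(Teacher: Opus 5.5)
Your proposal is correct. For $\mathcal{N}_k$ it is essentially the paper's argument. The paper also reduces to the theorem tying $\mathrm{Res}_{\mathcal{C}}$ to the ordinal rank of the lattice of residually-$\mathcal{C}$ marked groups when $\mathcal{C}$ is equationally noetherian, and it also computes that rank by induction on Hirsch length. It shows that a nilpotent group of Hirsch length $l$ has a quotient lattice of rank $\omega l+n$, and it controls each Hirsch-length stratum by noting there are only finitely many quotients of equal Hirsch length. Your ``torsion data'' can be made precise as $|T(N/K)|$, the order of the finite torsion subgroup. It drops strictly because $T(Q/L)=T(Q)/L$ for any finite normal $L\neq 1$, which gives $\mathrm{rank}(N/K)\le\omega\cdot h(N/K)+|T(N/K)|-1$ and cleanly settles your ``delicate step''.

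For $\mathcal{M}$ you take a genuinely different route. The paper computes nothing algebraically. It cites Cornulier's theorem that the space of marked metabelian groups has Cantor--Bendixson rank $\omega^{\omega}$. It then notes three facts: the Scott and Alexandrov topologies agree on metabelian groups (by max-n); the marked-group topology is finer, hence has smaller CB rank; and the CB rank of the Alexandrov topology equals the ordinal rank.

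You instead bound the rank of $\mathbb{Z}\wr\mathbb{Z}^m$ directly, through the ideal lattice of $R_m=\mathbb{Z}[x_1^{\pm1},\dots,x_m^{\pm1}]$. This works, but you should state the lemma your ``stacking'' relies on. For a submodule $N$ of a Noetherian module $M$, one has $\mathrm{rank}(M)\ge\mathrm{rank}(M/N)+\mathrm{rank}(N)$, proved by induction on $\mathrm{rank}(N)$ using continuity of ordinal addition in its right argument. You also need that $x_m-1$ is a non-zero-divisor. Then each layer $(x_m-1)^iR_m/(x_m-1)^{i+1}R_m$ is isomorphic to $R_{m-1}$, with submodules exactly the ideals of $R_{m-1}$. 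This yields $\mathrm{rank}(R_m/(x_m-1)^j)\ge\mathrm{rank}(R_{m-1})\cdot j$, hence $\mathrm{rank}(R_m)\ge\mathrm{rank}(R_{m-1})\cdot\omega$. Starting from $\mathrm{rank}(\mathbb{Z})=\omega$, you actually get $\omega^{m+1}$, since $R_m$ has Krull dimension $m+1$, not $m$.

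Your version is self-contained and uses only the easy (lower-bound) direction. It also gives explicit bounds for each number of generators. The paper's version is shorter and directly links the ordinal rank to the CB rank of the space of marked groups, at the cost of depending on Cornulier's result. You also verify equational noetherianity of $\mathcal{N}_k$ and $\mathcal{M}$ explicitly, which the paper leaves implicit.
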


In \cite{Cornulier2011}, Cornulier asked if $\omega^{\omega}$ is
the Cantor-Bendixson rank of the space of marked groups. We ask the
same question, but for the Scott topology instead:
\begin{problem}
Let $\mathcal{C}$ be an equationally noetherian set of finitely generated
groups. Is $\omega^{\omega}$ an upper bound to the ordinal rank of
the lattice of marked groups in $\mathcal{RC}$? 
\end{problem}

Another problem is the following:
\begin{problem}
Characterize the Weihrauch complexities of problem of the form $\mathrm{Res}_{\{H\}}$,
for $H$ any Gromov hyperbolic group. 
\end{problem}

\subsection{Residually free image }

In \cite{Louder2012}, Louder proves that the rank of the lattice
of marked limit groups is $\omega$. Can this result be extended to
measure the rank of the lattice of residually free groups?
\begin{problem}
What is the ordinal rank of the lattice of residually free groups?
\textcolor{orange}{}
\end{problem}

\subsection{Abelianization and $p$-nilpotent images }

The classification of $\mathrm{Res}_{\mathcal{N}_{p}}$ in the Weihrauch
degrees follows from the following proposition. 
\begin{prop}
Let $G$ be a finitely generated nilpotent group of Hirsch length
$l$. There is $n\in\mathbb{N}$ such that the ordinal rank of the
lattice of marked quotients of $G$ is $\omega l+n$. 
\end{prop}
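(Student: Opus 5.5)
We argue by induction on the Hirsch length $l$ of $G$, and for fixed $l$ by Noetherian induction on the lattice of quotients. Finitely generated nilpotent groups are Noetherian: every subgroup is finitely generated, so every normal subgroup is the normal closure of finitely many elements. Hence the lattice of marked quotients of $G$ is well founded for the order ``$G/N\to G/M$ with $N\subsetneq M$'', and its rank is well defined. Recall that $\mathrm{rank}(a)=\sup\{\mathrm{rank}(b)+1\mid b<a\}$, where $b<a$ means that $b$ is a strict quotient of $a$. We must show that $\mathrm{rank}_{G}(G)=\omega l+n$ for some finite $n$. The lattice rank is then $\mathrm{rank}_G(G)+1$, still of the form $\omega l+n'$.

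\emph{Base case $l=0$.} Here $G$ is finite, so it has finitely many quotients. Every chain of quotients has length at most the number of prime factors of $|G|$, so the rank is finite.

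\emph{Inductive step, upper bound.} Let $l\ge1$ and $N\ne1$ a normal subgroup. Either $h(G/N)<l$, and then by induction $\mathrm{rank}(G/N)\le\omega(l-1)+m$ for some finite $m$; or $h(G/N)=l$, which forces $N$ to be finite, and then $N\subseteq T(G)$, the torsion subgroup. Since $T(G)$ is finite, only finitely many quotients of the second kind exist. Each of them is handled by a secondary Noetherian induction (the quotient $G/N$ has the same Hirsch length but smaller torsion subgroup) and has rank of the form $\omega l+n_N$. The strict quotients of the first kind contribute a supremum that is at most $\omega l$; to see this one needs only that their ranks are $<\omega l$, which holds since $h\le l-1$ gives rank $<\omega(l-1)+\omega=\omega l$. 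So $\mathrm{rank}(G)\le\max(\omega l,\ \max_N(\omega l+n_N+1))$, which has the form $\omega l+n$.

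\emph{Inductive step, lower bound.} We need $\mathrm{rank}(G)\ge\omega l$, i.e.\ quotients of Hirsch length $l-1$ with unbounded finite parts. Choose an infinite cyclic central subgroup $\langle z\rangle$ of $G$; it exists because the last nontrivial term of the upper central series of $G/T(G)$ is torsion-free, and $G$ can be replaced by a finite-index-torsion-adjusted lift, or alternatively one works with $Z(G)$ after quotienting by torsion. Consider $G_p=G/\langle z^{p^{k}}\rangle$ for a prime $p$ and $k\ge1$. Then $h(G_p)=l-1$, and $G_p$ contains a central cyclic subgroup $\langle z\rangle/\langle z^{p^k}\rangle$ of order $p^k$. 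The chain $G/\langle z^{p^k}\rangle\to G/\langle z^{p^{k-1}}\rangle\to\dots\to G/\langle z\rangle$ shows $\mathrm{rank}(G_p)\ge\mathrm{rank}(G/\langle z\rangle)+k$. By induction, $\mathrm{rank}(G/\langle z\rangle)\ge\omega(l-1)$. Letting $k\to\infty$ gives $\sup_k\mathrm{rank}(G_p)+1\ge\omega(l-1)+\omega=\omega l$.

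\emph{Combining and the main obstacle.} The two bounds give $\mathrm{rank}(G)=\omega l+n$ with $n$ finite. The main obstacle is to make the upper-bound step rigorous: we must check that the ranks $\omega l+n_N$ coming from finite $N$ stay in $[\omega l,\omega l+\omega)$, and we must handle the lower bound when $T(G)\neq 1$, where the central element must be chosen in $G$ itself. For this we pick $z$ in the centre of $G$ of infinite order. Such a $z$ exists: the torsion-free nilpotent group $G/T(G)$ has nontrivial torsion-free centre, and a suitable power of any lift of one of its central elements is central in $G$, because $T(G)$ is finite and conjugation acts through a finite group.

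Applying the proposition to $\mathbb{F}_k/\gamma_{p+1}(\mathbb{F}_k)$ shows that the lattice of $p$-nilpotent $k$-marked groups has rank $<\omega^2$, and that these ranks are unbounded below $\omega^2$. Theorem \ref{thm: Eq noeth + rank lattice res C} then gives $\mathrm{Res}_{\mathcal{N}_p}\equiv_W\mathrm{Lim}_{\searrow\omega^2}$.
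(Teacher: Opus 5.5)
Your proof is correct and follows essentially the paper's route: induction on Hirsch length; the finitely many same-Hirsch-length quotients (by subgroups of the finite torsion subgroup) contribute only a finite tail; lower-Hirsch-length quotients have rank below $\omega l$; and a central element $z$ of infinite order gives chains $G/\langle z^{p^k}\rangle$ that reach $\omega l$. The only differences are minor: you handle the finite tail by a secondary induction on $|T(G)|$ where the paper passes to a quotient with no proper quotient of the same Hirsch length, and your first justification for $z$ (via the ``last nontrivial term of the upper central series'') is garbled, though your later power-of-a-lift argument, or the standard fact that an infinite finitely generated nilpotent group has infinite centre, settles it.
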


\begin{proof}
By induction on $l$, the case $l=0$ being trivial. 

Suppose the result true up to length $l$. 

A nilpotent group of Hirsch length $l+1$ has finitely many quotients
that also have Hirsch length $l+1$. And thus the rank of the lattice
of marked quotient of $G$ can be written as $\alpha+n$, where $n\in\mathbb{N}$
and $\alpha$ is the rank of the lattice of marked quotients of a
group $H$ of Hirsch length $l+1$ which admits no proper quotient
of Hirsch length $l+1$. 

By definition, the rank of the lattice of marked quotient of $H$
is the supremum of the ranks of the lattices of its proper quotients
plus one, by induction hypothesis each of these has the form $\omega l+p$,
and thus their supremum is at most $\omega(l+1)$. This supremum is
in fact attained, as can easily be seen as follows. The center of
$H$ must contain a copy of $\mathbb{Z}$ (because an infinite finitely
generated nilpotent group has an infinite center). Denote by $a$
a generator of this copy of $\mathbb{Z}$. We get a sequence of marked
quotients 
\[
H/\langle a^{2}\rangle\leftarrow H/\langle a^{4}\rangle\leftarrow H/\langle a^{8}\rangle\leftarrow...
\]
which shows that the supremum of the ranks of strict quotients of
$H$ is indeed $\omega(l+1)$. 
\end{proof}
\begin{cor}
For any $p\in\mathbb{N}$, the ordinal rank of the lattice of marked
$p$-nilpotent groups is $\omega^{2}$. 
\end{cor}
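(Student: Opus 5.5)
The plan is to deduce this directly from the preceding proposition, treating each $k$ separately and then taking the supremum over $k$.

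\emph{Upper bound: $\alpha_k\le\omega^2$.} Fix $k$ and let $\alpha_k$ be the ordinal rank of the lattice of $p$-nilpotent $k$-marked groups. This lattice is the lattice of marked quotients of the free $p$-nilpotent group $N_{k,p}$ of rank $k$. That group is a finitely generated nilpotent group, with Hirsch length $l_k<\infty$. By the previous proposition, $\alpha_k=\omega l_k+n_k$ for some $n_k\in\mathbb{N}$. In particular $\alpha_k<\omega^2$. I will take care to apply the proposition with the convention of Definition \ref{def:Ordinal_Rank}, where rank is $\sup(\mathrm{rank}(a)+1)$. Any off-by-one between the rank of the top element and the rank of the lattice only changes $n_k$. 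So the rank of the whole lattice, being the supremum over $k$, is at most $\omega^2$.

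\emph{Lower bound: $\sup_k\alpha_k\ge\omega^2$.} It suffices to see that $l_k\to\infty$ as $k\to\infty$. For $p\ge1$ the abelianization of $N_{k,p}$ is $\mathbb{Z}^k$, and Hirsch length does not increase under quotients. Hence $l_k\ge k$, which gives $\alpha_k\ge\omega k$. Therefore
\[
\sup_k\alpha_k\ge\sup_k\omega k=\omega^2 .
\]

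\emph{Conclusion.} The $k$-marked lattices for different $k$ are disjoint, and the rank of the lattice of all marked $p$-nilpotent groups is the supremum of the $\alpha_k$. This supremum equals $\omega^2$: it is a supremum of ordinals each strictly below $\omega^2$ that is cofinal in $\omega^2$. This matches the corollary to Theorem \ref{thm: Pres-> WP above omega^omega when alpha is } with $\alpha=\sup(\alpha_k)$.

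The only delicate step is the bookkeeping in the upper bound: making sure the previous proposition really bounds the rank of the full lattice of quotients of $N_{k,p}$ by something of the form $\omega l+n$, rather than by $\omega(l+1)$. If that bookkeeping were off, the bound would still be $<\omega^2$, so the conclusion is unaffected.
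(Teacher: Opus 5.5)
Your argument is correct and is exactly the deduction the paper intends: the paper states this corollary without proof, as an immediate consequence of the Hirsch-length proposition. You apply that proposition to the free $p$-nilpotent group of rank $k$, whose Hirsch length is finite and at least $k$, and then take the supremum over the pairwise incomparable $k$-marked lattices. Your implicit restriction to $p\ge1$ is also the right reading, since for $p=0$ the lattice contains only trivial groups.
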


\subsection{Metabelian image }

The computation of the ordinal length of quotients of metabelian groups
relies on the following result of Cornulier: 
\begin{thm}
[\cite{Cornulier2011}]The Cantor-Bendixson rank of the space of
marked metabelian groups is $\omega^{\omega}$. 
\end{thm}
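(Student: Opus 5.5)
The plan is to follow Cornulier's strategy and bound the Cantor--Bendixson rank by an ordinal length attached to modules over commutative noetherian rings. We prove an upper bound $<\omega^{\omega}$ at every point, and a lower bound at least $\omega^{n}$ for every $n$.

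\emph{Preliminaries.} By P.~Hall's theorem, finitely generated metabelian groups satisfy the maximal condition on normal subgroups. Hence the set of $k$-marked metabelian groups is countable, so its perfect kernel is empty and the Cantor--Bendixson rank is well defined and countable. Hall's theorem also shows that every finitely generated metabelian group is finitely presented relative to the variety of metabelian groups. Consequently, as in Proposition \ref{prop:Scott=00003DAlexandrov }, a small neighbourhood of $(G,S)$ in the space of marked metabelian groups consists only of marked quotients of $(G,S)$. It follows that the Cantor--Bendixson rank of $(G,S)$ is at most the ordinal rank of the lattice of marked quotients of $G$, i.e.\ of the reverse-ordered, noetherian lattice of normal subgroups of $G$.

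\emph{Upper bound.} Fix a short exact sequence $1\to M\to G\to Q\to1$ with $M$ and $Q$ abelian, $Q$ finitely generated. By Hall, $M$ is a finitely generated module over the noetherian ring $\mathbb{Z}Q$, whose Krull dimension is $d\le \mathrm{rk}(Q)+1$. A normal subgroup $N$ is determined by two pieces of data: the $\mathbb{Z}Q$-submodule $N\cap M$, and the image of $N$ in $Q$ (together with a bounded amount of extension data). We therefore bound the rank of the lattice of normal subgroups by a natural product/sum of two ranks. The first is the rank of the lattice of submodules of $M$. Gulliksen's theorem on the length of noetherian modules shows this is $<\omega^{d+1}$. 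The second is the rank of the lattice of subgroups of the finitely generated abelian group $Q$, which is $<\omega^{2}$. Ordinal arithmetic then gives a bound $<\omega^{\omega}$ for each point. Taking the supremum over all $k$ and all groups yields rank $\le\omega^{\omega}$.

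\emph{Lower bound and main obstacle.} We must show, for every $n$, that some point has Cantor--Bendixson rank $\ge\omega^{n}$ in the actual topology. Long chains in the quotient lattice are not enough here: we need sequences of \emph{distinct} groups converging to a given point. I would take $G_{n}=\mathbb{Z}\wr\mathbb{Z}^{n}$, with $M=\mathbb{Z}[x_{1}^{\pm1},\dots,x_{n}^{\pm1}]$, and argue by induction on the Krull dimension of the module. Let $I\subseteq M$ be a submodule whose quotient $M/I$ has Krull dimension $m$. Consider the submodules
\[
I_{j}=I+(x_{m}^{j!}-1)M.
\]
The groups $G/I_{j}$ converge to $G/I$ in the space of marked groups, because the elements $x_{m}^{j!}-1$ escape every finite ball. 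Moreover, each quotient $M/I_{j}$ has Krull dimension $m-1$ and contains many rank-$(m-1)$ quotients itself. A diagonal argument, mirroring the chains $H/\langle a^{2^{i}}\rangle$ used for nilpotent groups, should then give rank $\ge\omega^{m}\cdot c$ at $G/I$ and hence $\ge\omega^{n}$ at $G_{n}$. I expect the delicate step to be checking that these limits are genuine accumulation points at the right derived level: each stage of the induction must produce points of rank exactly the previous bound, not merely chains. This is where I would spend the most care, using the explicit description of submodules of Laurent polynomial rings.
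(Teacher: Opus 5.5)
The paper does not prove this statement; it quotes it from Cornulier and only uses it, via the fact that the marked-group topology is finer than the Scott topology, to bound the lattice rank from below. Taken on its own, your upper bound is sound in outline, with one correction. A normal subgroup is \emph{not} determined by $N\cap M$, $NM/M$ and a bounded amount of extra data: in $\mathbb{Z}^2$ with $M=\mathbb{Z}\times 0$, the infinitely many graphs $\{(kt,t)\}$ share both invariants. What you need, and what holds by the modular law, is that $N\mapsto (N\cap M,NM/M)$ is strictly monotone. That bounds the rank at $\{1\}$ by the natural sum of the two lengths, which is $<\omega^\omega$.

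The genuine gap is the lower bound, which is the real content of the theorem.

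First, the convergence claim is mis-justified. Since $I_{j+1}\subseteq I_j$, the groups $G/I_j$ converge to $G/\bigcap_j I_j$. So you need $\bigcap_j (x_m^{j!}-1)(M/I)=0$, not that the words $x_m^{j!}-1$ are long. If $x_m$ acts on $M/I$ with finite order, then $I_j=I$ for large $j$, there is no accumulation, and the dimension does not drop.

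Second, and more seriously, the inductive statement ``Krull dimension $m$ gives Cantor--Bendixson rank $\ge\omega^m\cdot c$'' is false already for $m=1$. Take $(\mathbb{Z}/p)\wr\mathbb{Z}=(\mathbb{Z}\wr\mathbb{Z})/pM$ in the space of marked metabelian groups. Its only nearby points are its quotients: finite groups and the virtually cyclic groups $(\mathbb{F}_p[x^{\pm1}]/J)\rtimes\mathbb{Z}$ with $J\neq 0$, which have rank $1$. So its rank is exactly $2$. Likewise $\mathbb{Z}^{n+1}$, the quotient of $\mathbb{Z}\wr\mathbb{Z}^n$ by the augmentation ideal, has rank $n+1$. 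Finite-length chains contribute nothing topologically, so the exponent drops by one compared with Gulliksen length. The analogy with $H/\langle a^{2^i}\rangle$, which computes lattice rank rather than Cantor--Bendixson rank, is exactly the trap you flagged.

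What is missing is a mechanism by which multiplicities genuinely add in the topology. One that works is the following.
\begin{enumerate}
\item Let $R=\mathbb{F}_q[x_1^{\pm1},\dots,x_m^{\pm1}]$ with $q$ a power of $p$, and let $p\nmid k$. By the Chinese remainder theorem, $R/(x_m^k-1)$ splits into $c(k)\ge\#\{d\mid k\}$ factors of the form $\mathbb{F}_{q'}[x_1^{\pm1},\dots,x_{m-1}^{\pm1}]$.
\item Hence the ideals containing $(x_m^k-1)$ form a closed set homeomorphic to a product of $c(k)$ ideal spaces. In such a product, the rank of a point is at least the natural sum of the ranks of its coordinates.
\item Take $k_j$ to be the product of the first $j$ primes different from $p$. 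Then the ideals $(x_m^{k_j}-1)$ decrease to $0$.
\item The map $J\mapsto (R/J)\rtimes\mathbb{Z}^m$ is a continuous injection of the compact space of ideals into the space of marked groups, hence a homeomorphism onto its image.
\end{enumerate}
Induction over all $q$ then shows that $0$ has rank $\ge\omega^{m-1}$ in the space of ideals. So $(\mathbb{Z}/p)\wr\mathbb{Z}^m$ has rank $\ge\omega^{m-1}$, which is enough for the bound $\omega^\omega$. Without an argument of this kind, your lower bound remains a heuristic.
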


By ``space of metabelian groups'' we mean the subset of the space
of marked groups consisting of metabelian groups, equipped with the
subset topology. 

Because the topology of the space of marked groups is finer than the
Scott topology, and because the Scott topology coincides with the
Alexandrov topology on the lattice of marked metabelian groups (Proposition
\ref{prop:Scott=00003DAlexandrov }), we get:
\begin{cor}
The ordinal rank of the lattice of marked metabelian groups is at
least $\omega^{\omega}$. 
\end{cor}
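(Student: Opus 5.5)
The plan is to deduce the corollary from Cornulier's theorem by comparing the Cantor--Bendixson rank of the space of marked metabelian groups with the Cantor--Bendixson rank of the Alexandrov topology. Cantor--Bendixson rank is monotone under refinement of topologies: if $\tau\subseteq\tau'$ on the same set, then every $\tau'$-perfect subset is $\tau$-perfect (a point isolated for $\tau$ is isolated for $\tau'$). Hence each $\tau'$-derivative contains the corresponding $\tau$-derivative, and the $\tau'$-rank is at most the $\tau$-rank. We apply this with $\tau'$ the topology of $\mathcal{G}_{\mathrm{WP}}$ restricted to metabelian marked groups and $\tau$ the Scott topology on the same set. As noted before the statement, $\tau\subseteq\tau'$, so the Scott Cantor--Bendixson rank is at least $\omega^{\omega}$.

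Next we identify the Scott topology with the Alexandrov topology. Metabelian groups form a variety, so they are stable under quotients, and every finitely generated metabelian group is finitely presented relative to the metabelian class; this is Hall's theorem that finitely generated metabelian groups satisfy max-n, so kernels are finitely normally generated modulo the metabelian identity. Proposition~\ref{prop:Scott=00003DAlexandrov } then shows that the Scott topology equals the Alexandrov topology of the quotient order, where open sets are sets of marked groups closed under taking quotients. In the paper's convention, $(G,S)\le(H,S')$ when $(G,S)\to(H,S')$, so the relevant Alexandrov order is $\le^{\mathrm{op}}$ relative to the well-founded quotient order, and this order is indeed well founded by the same max-n property.

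Proposition~\ref{prop: ordinal-rank order is CB rank alexandrov top} says that the Cantor--Bendixson rank of this Alexandrov topology equals the ordinal rank of the lattice of marked metabelian groups. That ordinal rank is therefore at least $\omega^{\omega}$.

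The main point to check is the orientation. We must confirm that the well-founded order whose reverse gives the Scott or Alexandrov opens is exactly the order used in Definition~\ref{def:Ordinal_Rank}. If we also restrict to $k$-marked groups, Cornulier's bound $\omega^{\omega}$ is a supremum over $k$, and the ordinal rank of the disjoint union is the supremum of the ranks over $k$, so the bound carries over.
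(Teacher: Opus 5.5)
Your argument is the paper's own. It takes Cornulier's bound in the space of marked groups, passes to the coarser Scott topology, identifies the Scott and Alexandrov topologies on marked metabelian groups (via Hall's max-n, which the paper leaves implicit), and uses the equality of the Alexandrov Cantor--Bendixson rank with the ordinal rank.

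There are two slips to fix, neither of which affects the argument:
\begin{enumerate}
\item A finer topology has more isolated points, so each $\tau'$-derivative is \emph{contained in} the corresponding $\tau$-derivative, not the other way round. This inclusion is what gives $\tau'$-rank $\le$ $\tau$-rank.
\item The paper's convention is $(G,S)\succeq(H,S')$ when $(G,S)\to(H,S')$, the reverse of what you state. So $\succeq$ itself is the well-founded order in the rank definition. Its down-sets, the quotient-closed sets, are exactly the open sets of the Alexandrov topology of $\succeq^{\mathrm{op}}$, which here coincides with the Scott topology. This settles the orientation.
\end{enumerate}
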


\section{Between $\mathrm{Lim}_{\mathbb{N}}$ and $\mathrm{Lim}$}
\begin{thm}
Let $\mathcal{C}$ be a set of finitely generated groups. Suppose
that $\mathcal{RC}$ is a quasivariety and that it contains a group
INIP relative to $\mathcal{RC}$. Then 
\[
\mathrm{Res}_{\mathcal{C}}:\mathcal{G}_{\mathrm{WP}}\rightarrow\mathcal{G}_{\mathrm{WP}}\equiv_{W}\mathrm{Lim}.
\]
\end{thm}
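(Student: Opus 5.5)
The plan is to combine the two general reductions already available for quasivarieties with an explicit coding of $\mathrm{Lim}$ into $\mathcal{RC}$ built from an INIP group.

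\textbf{Upper bound.} Theorem \ref{thm:QVAR IFF} gives $\mathrm{Res}_{\mathcal{C}}\le_{W}\mathrm{Lim}$ directly, because $\mathcal{RC}$ is a quasivariety.

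\textbf{Lower bound: reduction to $\mathrm{Id}$.} Corollary \ref{cor:Quasi Var ResC equiv to Pres->WP} gives $\mathrm{Res}_{\mathcal{C}}\equiv_{W}\mathrm{Id}:\mathcal{G}_{\mathrm{Pres}}\cap\mathcal{RC}\rightarrow\mathcal{G}_{\mathrm{WP}}$. So it suffices to show that this identity problem is above $\mathrm{Lim}$. For that I will build a map $\Psi$ satisfying the hypotheses of Lemma \ref{lem: Pres to WP is Lim -1} with $\mathcal{RC}$ in place of $\mathcal{C}$.

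\textbf{Construction of $\Psi$.} Let $G\in\mathcal{RC}$ be INIP relative to $\mathcal{RC}$, witnessed by $\pi:\mathbb{F}_{k}\twoheadrightarrow G$ and a family $\{N_{i}\}_{i\in\mathbb{N}}$ of normal subgroups of $\mathbb{F}_{k}$ generating $\ker\pi$. The intended definition is
\[
\Psi(A)=\mathrm{Res}_{\mathcal{C}}\bigl(\mathbb{F}_{k}/\langle\langle N_{i},\,i\notin A\rangle\rangle\bigr).
\]
With this choice $\Psi(\emptyset)=\mathrm{Res}_{\mathcal{C}}(G)=G$, and adding an index $i$ to $A$ removes the relations of $N_{i}$.

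This is backwards for the Scott monotonicity in Lemma \ref{lem: Pres to WP is Lim -1}: there $\Psi$ must be order preserving, with $A\subseteq B$ giving a quotient map $\Psi(A)\to\Psi(B)$. So I will instead take the lattice of quotients of the free group and add relators, setting
\[
\Psi(A)=\mathrm{Res}_{\mathcal{C}}\bigl(\mathbb{F}_{k}/\langle\langle N_{i},\,i\in A\rangle\rangle\bigr).
\]
This requires the independence to be read as follows: for each $i_{0}$ there is a quotient of $\mathbb{F}_{k}$ in $\mathcal{RC}$ killing all $N_{i}$, $i\ne i_{0}$, but not $N_{i_{0}}$. I will rephrase Definition \ref{def:INIP-1} accordingly, applying it with $G$ replaced by the free group, since a quotient of $\mathbb{F}_{k}$ onto a group of $\mathcal{RC}$ is exactly what independence relative to $\mathcal{RC}$ provides.

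\textbf{Verifying the hypotheses.} Fix a nontrivial element $g_{i}\in N_{i}$ that survives in the witnessing quotient $\psi_{i}$. Then:
\begin{enumerate}
\item \emph{Injectivity and continuity of $\Psi^{-1}$ on $\mathcal{G}_{\mathrm{WP}}$.} We have $i\in A$ iff $g_{i}=1$ in $\Psi(A)$. For $i\in A$ this is clear. For $i\notin A$, the group $\psi_{i}(\mathbb{F}_{k})\in\mathcal{RC}$ is a quotient of $\mathbb{F}_{k}/\langle\langle N_{j},j\in A\rangle\rangle$ in which $g_{i}\ne1$, so $g_{i}$ survives in the greatest residually-$\mathcal{C}$ quotient. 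Hence $A$ is read off from the word problem of $\Psi(A)$ one bit at a time, which gives both injectivity and continuity of $\Psi^{-1}$ for the word-problem topology.
\item \emph{Continuity of $\Psi$ from $\mathbb{S}^{\mathbb{N}}$ to $\mathcal{G}_{\mathrm{Pres}}$.} Because $\mathcal{RC}$ is a quasivariety, Proposition \ref{thm: Quasi Var iff ResC pres pres is C0} says $\mathrm{Res}_{\mathcal{C}}:\mathcal{G}_{\mathrm{Pres}}\to\mathcal{G}_{\mathrm{Pres}}$ is continuous. The map sending $A$ to a presentation listing enumerations of the $N_{i}$ for $i$ as they appear in $A$ is continuous from $\mathbb{S}^{\mathbb{N}}$: an index is listed once it is seen to be $1$, and each $N_{i}$ is countable. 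Composing the two gives continuity of $\Psi$.
\end{enumerate}

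\textbf{Conclusion.} Lemma \ref{lem: Pres to WP is Lim -1} then yields $\mathrm{Id}:\mathcal{G}_{\mathrm{Pres}}\cap\mathcal{RC}\to\mathcal{G}_{\mathrm{WP}}\equiv_{W}\mathrm{Lim}$, and combined with the upper bound this proves the theorem.

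\textbf{Main obstacle.} The delicate step is matching the INIP definition to the needed statement that $g_{i}$ survives in $\Psi(A)$ for every $A$ not containing $i$. This must hold for all such $A$, including infinite ones, and it is exactly where the relative (to $\mathcal{RC}$) independence is needed rather than plain independence. If the definition is read in $G$ rather than in $\mathbb{F}_{k}$, I would work in the lattice of quotients of $\mathbb{F}_{k}$ and pull the witnessing quotients $\psi_{i_{0}}$ back along $\pi$, which is harmless.
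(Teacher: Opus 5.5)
Your argument is essentially the paper's proof: you pass through Corollary~\ref{cor:Quasi Var ResC equiv to Pres->WP} to the identity problem and feed $\Psi(A)=\mathrm{Res}_{\mathcal{C}}(\mathbb{F}_{k}/\langle\langle N_{i},\,i\in A\rangle\rangle)$ into Lemma~\ref{lem: Pres to WP is Lim -1}, with Scott continuity from Proposition~\ref{thm: Quasi Var iff ResC pres pres is C0} and $\Psi^{-1}$ read off from the bits at the $g_{i}$. Two caveats apply, and both are shared with the paper. First, the independence you use (a group of $\mathcal{RC}$ killing every $N_{i}$, $i\neq i_{0}$, but not $N_{i_{0}}$) is strictly stronger than Definition~\ref{def:INIP-1} as literally written, because $\mathcal{RC}$ need not be closed under quotients: for torsion-free groups, $\langle\langle[x,y]\rangle\rangle$ and $\langle\langle x^{2}\rangle\rangle$ are independent in $\mathbb{F}_{2}$ in the literal sense, yet every torsion-free group killing $x^{2}$ kills $[x,y]$, so this stronger form has to be adopted as the definition (it is what the paper's examples actually verify) rather than derived from it as you assert. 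Second, Corollary~\ref{cor:Quasi Var ResC equiv to Pres->WP} relies on Lemma~\ref{lem:Small-cancellation-lemma} and hence on $\mathcal{RC}\neq\mathcal{G}$, a hypothesis the statement genuinely needs, since for $\mathcal{RC}=\mathcal{G}$ the map $\mathrm{Res}_{\mathcal{C}}$ is the identity and thus continuous.
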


\begin{proof}
By Corollary \ref{cor:Quasi Var ResC equiv to Pres->WP} it suffices
to show that 
\[
\mathrm{Id}:\mathcal{G}_{\mathrm{Pres}}\cap\mathcal{RC}\rightarrow\mathcal{G}_{\mathrm{WP}}\equiv_{W}\mathrm{Lim}.
\]
By Lemma \ref{lem: Pres to WP is Lim -1} it suffices to build an
injective map $\Psi:\{0,1\}^{\mathbb{N}}\rightarrow\mathcal{G}\cap\mathcal{RC}$
such that 
\[
\Psi:\mathbb{S}^{\mathbb{N}}\rightarrow\mathcal{G}_{\mathrm{Pres}}
\]
 is continuous, and 
\[
\Psi^{-1}:\mathcal{G}_{\mathrm{WP}}\cap\mathrm{Im}(\Psi)\rightarrow\{0,1\}^{\mathbb{N}}
\]
is also continuous. Let $(G,S)$ be a INIP group relative to $\mathcal{RC}$.
Let $\{r_{i},i\in\mathbb{N}\}\in\mathbb{F}_{k}^{\mathbb{N}}$ be a
set of relations that are independent relative to $\mathcal{RC}$:
for every $i_{0}$ there is a quotient $H$ of $\mathbb{F}_{k}$ in
$\mathcal{RC}$ such that the normal subgroup of $H$ generated by
$\{r_{i},i\neq i_{0}\}$ does not contain $r_{i_{0}}$. 

We define $\Psi:\{0,1\}^{\mathbb{N}}\rightarrow\mathcal{G}\cap\mathcal{RC}$
by 
\[
\Psi((u_{i})_{i\in\mathbb{N}})=\mathrm{Res}_{\mathcal{C}}(\mathbb{F}_{k}/\langle\langle r_{i},u_{i}=1\rangle\rangle).
\]
The fact that the set $\{r_{i},i\in\mathbb{N}\}$ is independent relative
to $\mathcal{RC}$ guarantees that $\Psi$ is injective. 

The fact that $\Psi:\mathbb{S}^{\mathbb{N}}\rightarrow\mathcal{G}_{\mathrm{Pres}}$
is continuous follows from the fact that $\mathcal{RC}$ is a quasivariety:
$\Psi$ is the composition of $\mathrm{Res}_{\mathcal{C}}:\mathcal{G}_{\mathrm{Pres}}\rightarrow\mathcal{G}_{\mathrm{Pres}}$,
which is continuous because $\mathcal{RC}$ is a quasivariety (Proposition
\ref{thm: Quasi Var iff ResC pres pres is C0}), with $(u_{i})_{i\in\mathbb{N}}\mapsto\mathbb{F}_{k}/\langle\langle r_{i},i\in A\rangle\rangle$,
which is obviously a continuous function between $\mathbb{S}^{\mathbb{N}}$
and $\mathcal{G}_{\mathrm{Pres}}$. 

Is is easy to see that $\Psi^{-1}:\mathcal{G}_{\mathrm{WP}}\cap\mathrm{Im}(\Psi)\rightarrow\{0,1\}^{\mathbb{N}}$
is continuous. Indeed, the $\rho_{\mathrm{WP}}$-name of a marked
group $(G,S)$ is the sequences $(u_{i})_{i\in\mathbb{N}}\in\{0,1\}^{\mathbb{N}}$
given by $u_{i}=1$ iff $\theta_{k}(i)=1$ in $(G,S)$, where $\#S=k$.
Then, $\Psi^{-1}((G,S))$ is the sub-sequence $(v_{i})_{i\in\mathbb{N}}$
of $(u_{i})_{i\in\mathbb{N}}$ given by 
\[
v_{i}=u_{\theta_{k}^{-1}(r_{i})}.
\]
The extraction map $(u_{i})_{i\in\mathbb{N}}\mapsto(v_{i})_{i\in\mathbb{N}}$
is continuous. 
\end{proof}
\begin{cor}
\label{cor: Res LEF k-solvable torsion free }Let $\mathcal{C}$ be
any of: 
\begin{itemize}
\item the set of finitely generated $k$-solvable groups, for $k>2$, 
\item the set of finitely generated exponent $k$ groups, for $k$ a big
enough odd number, 
\item the set of finitely generated LEF groups, 
\item the set of finitely generated torsion free groups, 
\item the set of finitely generated left orderable groups. 
\end{itemize}
Then 
\[
\mathrm{Res}_{\mathcal{C}}:\mathcal{G}_{\mathrm{WP}}\rightarrow\mathcal{G}_{\mathrm{WP}}\equiv_{W}\mathrm{Lim}.
\]
\end{cor}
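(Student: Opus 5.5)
The plan is to apply the preceding theorem to each class in turn. For each $\mathcal{C}$ in the list we must check two things: that $\mathcal{RC}$ is a quasivariety, and that $\mathcal{RC}$ contains a finitely generated group which is INIP relative to $\mathcal{RC}$. Since each listed $\mathcal{C}$ is closed under subgroups (of finitely generated groups) and finite direct products, a group is residually $\mathcal{C}$ exactly when it embeds in a product of groups of $\mathcal{C}$. It then suffices to show that the class of finitely generated members of the corresponding (quasi)variety coincides with $\mathcal{RC}$, and that this class is a quasivariety.

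\textbf{Quasivariety.} For the varieties ($k$-solvable groups, exponent $k$ groups) the class $\mathcal{C}$ is already closed under subgroups and products, so $\mathcal{RC}=\mathcal{C}$ on finitely generated groups, and it is defined by identities. For torsion free groups and left orderable groups, both classes are quasivarieties (as recalled in the examples above) and are closed under subgroups and products, so again $\mathcal{RC}=\mathcal{C}$. For LEF groups, the example list records them as a quasivariety; residually LEF groups are LEF, since LEF is closed under subgroups and products and is a local property. So $\mathcal{RC}=\mathcal{C}$ in this case as well.

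\textbf{INIP witnesses.} Here I would quote known constructions of continuum many marked groups, built as quotients $\mathbb{F}_k/\langle\langle r_i, i\in A\rangle\rangle$ of a single group by an independent family of relators, which stay inside the class.
\begin{itemize}
\item \emph{$k$-solvable groups, $k>2$:} use P.~Hall's construction of $2^{\aleph_0}$ center-by-metabelian $3$-solvable groups. These are quotients of one finitely generated group by independently chosen central subgroups, so they are INIP inside the variety, and the variety is stable under quotients.
\item \emph{Exponent $k$, $k$ odd and large:} use the Adian--Ol'shanskii infinite independent systems of identities and relators in free Burnside groups.
\item \emph{Torsion free and left orderable groups:} the amalgam family $G_A=\langle a,b,c,d\mid a^{-i}ba^{i}=c^{-i}dc^{i}, i\in A\rangle$ of Lemma \ref{lem: Pres to WP is Lim } works. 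These groups are iterated amalgams and HNN-type constructions of free groups along cyclic subgroups, hence locally indicable and therefore left orderable and torsion free. Independence is already established there.
\item \emph{LEF groups:} LEF contains these as well. Alternatively, I would use an independent family of relators in a lamplighter-like or wreath product group whose quotients remain residually finite, hence LEF.
\end{itemize}

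\textbf{Main obstacle.} The difficult step is the relative independence. We need, for each $i_0$, a quotient in $\mathcal{RC}$ of the relevant group in which $r_{i_0}$ survives modulo the other relators. In the variety cases this is free, because the quotient $G_{\mathbb{N}\setminus\{i_0\}}$ itself lies in the variety. For the torsion free, left orderable and LEF cases, I would first check that $G_{\mathbb{N}\setminus\{i_0\}}$ itself lies in $\mathcal{C}$, using local indicability of the amalgams. I would then take $H=G_{\mathbb{N}\setminus\{i_0\}}$, where $r_{i_0}\neq1$ by the normal form in the amalgamated product. Once both ingredients hold, the previous theorem yields $\mathrm{Res}_{\mathcal{C}}\equiv_{W}\mathrm{Lim}$.
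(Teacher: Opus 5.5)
Your overall plan is the paper's: check that $\mathcal{RC}$ is a quasivariety, exhibit a group INIP relative to it, and apply the preceding theorem. For $k$-solvable and exponent-$k$ groups you use the same kind of witnesses as the paper (Hall's group, Adian's Burnside-group constructions). In those variety cases, relative independence indeed reduces to ordinary independence. You diverge for the torsion free, left orderable and LEF cases. There you use the amalgams $G_A=\langle a,b,c,d\mid a^{-i}ba^{i}=c^{-i}dc^{i},\ i\in A\rangle$, whereas the paper uses Hall's group (whose torsion free central quotients stay left orderable) and Dyson's doubles $L(A)$. This choice can be made to work, but what you wrote does not establish the two membership claims it rests on.

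First, your reason for local indicability is incorrect. $G_A$ is a single amalgam $F(a,b)*_{C_A}F(c,d)$ along $C_A=\langle a^{-i}ba^{i}\mid i\in A\rangle$, which is free of rank $|A|$, infinite when $A$ is. It is not an iterated amalgam or HNN extension along cyclic subgroups. The conclusion is still true, for a different reason:
\begin{itemize}
\item The identification $a^{-i}ba^{i}\mapsto c^{-i}dc^{i}$ is the restriction of the isomorphism $a\mapsto c$, $b\mapsto d$, so $G_A$ is the double of $F(a,b)$ along $C_A$.
\item The folding map $c\mapsto a$, $d\mapsto b$ retracts $G_A$ onto $F(a,b)$. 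Its kernel is normal and meets each factor trivially, so it meets every conjugate of a factor trivially.
\item Hence the kernel acts freely on the Bass--Serre tree and is free. So $G_A$ is free-by-free, hence locally indicable, left orderable and torsion free.
\end{itemize}

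Second, and this is the real gap: for LEF you only assert that the $G_A$ are LEF, and the verification you propose (local indicability) says nothing about LEF. You need a separate argument, for example:
\begin{itemize}
\item For finite $A$, the subgroup $C_A$ is finitely generated, hence separable in $F(a,b)$ by M.~Hall's theorem. A double of a residually finite group along a separable subgroup is residually finite, so $G_A$ is residually finite.
\item For arbitrary $A$, $G_A$ is the limit in $\mathcal{G}_{\mathrm{WP}}$ of the groups $G_{A\cap[-n,n]}$. This is the continuity of $A\mapsto G_A$ that the paper uses to show $\mathrm{Id}:\mathcal{G}_{\mathrm{Pres}}\rightarrow\mathcal{G}_{\mathrm{WP}}\equiv_{W}\mathrm{Lim}$. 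So $G_A$ is a limit of finite groups, hence LEF.
\end{itemize}
Only then is $G_{\mathbb{N}\setminus\{i_0\}}$ a valid witness of independence relative to LEF. Alternatively, use the paper's $L(A)$, which are all LEF by the same limit argument.

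With these repairs, your route handles the three non-variety classes with one explicit free-by-free family, which is more uniform and elementary than the paper's. The paper's choice of Hall's group has a different advantage: being solvable, it also covers the $k$-solvable case, which $G_A$ cannot since it contains nonabelian free subgroups.
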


\begin{proof}
For each $\mathcal{C}$, it suffices to exhibit a group which is INIP
relative to the quasivariety. 

For finitely generated $k$-solvable groups, finitely generated torsion
free groups, we can use P. Hall's central-by-metabelian group whose
center is $\bigoplus_{i\in\mathbb{N}}\mathbb{Z}$ \cite{Hall1954}.
See Section \ref{subsec:Hall's-central-by-metabelian-gro}. 

This group also works for left orderability. Indeed, an extension
of a left-orderable group by a left-orderable group is left orderable
\cite[Problem 1.8]{Clay2016}. And thus any torsion free central quotient
of Hall's group remains left-orderable.

For LEF groups, we can use Dyson's doubles of lamplighter groups from
\cite{Dyson1974}, see Proposition \ref{prop:INIP for LEF groups}. 

For groups of finite, big enough, odd exponent, see \cite{Adian2018},
where groups of finite exponent whose center is infinite dimensional
are constructed. 
\end{proof}

\section{Functions equivalent to $\mathrm{Lim}'$}

Here we establish: 
\begin{thm}
\label{thm:Finite nilpotent FP}Let $\mathcal{C}$ be any of: 
\begin{itemize}
\item the set of finite groups, 
\item the set of finitely generated nilpotent groups, 
\item the set of finitely presentable groups. 
\end{itemize}
Then 
\[
\mathrm{Res}_{\mathcal{C}}:\mathcal{G}_{\mathrm{WP}}\rightarrow\mathcal{G}_{\mathrm{WP}}\equiv_{W}\mathrm{Lim}'.
\]
\end{thm}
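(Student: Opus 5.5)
The proof splits into an upper bound $\mathrm{Res}_{\mathcal{C}}\le_{W}\mathrm{Lim}'$ and a lower bound $\mathrm{Res}_{\mathcal{C}}\ge_{W}\mathrm{Lim}'$.

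\emph{Upper bound.} In all three cases $\mathcal{C}$ is countable up to isomorphism of marked groups. Indeed, finite groups, finitely generated nilpotent groups (which are finitely presented) and finitely presentable groups each have only countably many $k$-marked representatives for each $k$. Proposition \ref{prop:Countable} then gives $\mathrm{Res}_{\mathcal{C}}\le_{W}\mathrm{Lim}'$ immediately.

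\emph{Lower bound.} By Proposition \ref{prop:Lim =00003D LPO parallalized}, $\mathrm{Lim}'\equiv_{W}\widehat{\mathrm{LPO}'}$. By Example \ref{exa:LPO' Exists infty}, $\mathrm{LPO}'\equiv_{W}\exists^{\infty}$. So it suffices to reduce countably many parallel instances of $\exists^{\infty}$ to a single application of $\mathrm{Res}_{\mathcal{C}}$.

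The plan is a parallel version of the proof of Proposition \ref{prop:Not Qvar computes LPO'}. That proposition used one failure of the quasivariety property: an infinite family $r_{0},r_{1},\dots$ which implies $w$ relative to $\mathcal{C}$, while no finite subfamily does. Here I need infinitely many such failures that are \emph{independent} inside one finitely generated group. Concretely, I want a $k$, words $w_{m}\in\mathbb{F}_{k}$ and words $r_{m,j}\in\mathbb{F}_{k}$ with the following property. For any $J=(J_{m})_{m}$ where each $J_{m}$ is either all of $\mathbb{N}$ or a finite initial segment of $\mathbb{N}$, let $G_{J}=\langle S\mid r_{m,j},\ j\in J_{m}\rangle$. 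Then $w_{m}=1$ in $\mathrm{Res}_{\mathcal{C}}(G_{J})$ if and only if $J_{m}=\mathbb{N}$.

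Given such data, the reduction is clear. On input $(u^{(m)})_{m}$, list $r_{m,j}$ as a relator once $u^{(m)}$ has shown $j$ ones. This is continuous into $\mathcal{G}_{\mathrm{Pres}}$, and by Lemma \ref{lem:Small-cancellation-lemma} it can be treated as input to $\mathrm{Res}_{\mathcal{C}}$ on $\mathcal{G}_{\mathrm{WP}}$. We then read off the bits $w_{m}=1$ from the word problem of the output.

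The main work is building these families. I would try the following for each class.

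For \emph{finite} groups, I would use a residually finite group with infinitely many independent ``finite-quotient-invisible'' relations. A natural starting point is a free product or amalgam of Baumslag--Solitar type pieces, or copies of one Slobodskoi-style gadget placed in commuting or free factors. For example, in $BS(1,2)$-like relations $t^{-1}a^{j}t=a^{j+1}$, adding all the relations forces $a$ to be trivial in finite quotients. Any finite set of them, however, leaves a residually finite group in which $a\neq1$.

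For \emph{nilpotent} groups, I would use relations $[\dots[a,b],b,\dots,b]=1$ of increasing length combined with a relation like $[a,b]=a$. The full family makes $a$ trivial in every nilpotent quotient, since $a$ lies in every term of the lower central series. Each finite subfamily still admits a nilpotent quotient with $a\ne1$.

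For \emph{finitely presented} groups, the residually finitely presented image of an infinitely presented group should work. For instance, a finitely presented group relative to which $r_{0},r_{1},\dots$ are independent gives the needed non-quasivariety witness.

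To get infinitely many instances in parallel with finitely many generators, I would embed the $m$-th gadget via words in two free generators (e.g. conjugates by $c^{m}$). I would check independence using free products, since $\mathcal{C}$ is closed under free products for finite groups only up to residual finiteness. I expect this independence verification to be the main obstacle, especially for the finitely presented case. An alternative that avoids it is Theorem \ref{thm: Lim^(n) is measurable complete}: exhibit a $\Sigma_{3}^{0}$-complete fibre $\{G\mid w=1\text{ in }\mathrm{Res}_{\mathcal{C}}(G)\}$ and use $\Sigma_{3}^{0}$-completeness of the resulting map.
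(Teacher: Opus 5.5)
\textbf{There is a genuine gap: the lower bound is not proved.} Your upper bound is the paper's argument: countably many $k$-marked groups in $\mathcal{C}$, then Proposition~\ref{prop:Countable}. Your reduction scheme for the lower bound is also sound. It encodes the number of ones of each $u^{(m)}$ into a presentation, passes from $\mathcal{G}_{\mathrm{Pres}}$ to $\mathcal{G}_{\mathrm{WP}}$ with Lemma~\ref{lem:Small-cancellation-lemma}, and reads off the bits ``$w_m=1$''. But the entire content of the lower bound is the existence of the data $(w_m,r_{m,j})$ inside one finitely generated group. You do not supply these data, and each gadget you sketch fails:
\begin{itemize}
\item \emph{Finite groups.} The relations $t^{-1}a^{j}t=a^{j+1}$ for $j=1,2$ already give $a^{4}=t^{-1}a^{2}t=a^{3}$. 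So $a=1$ in the group itself, not only in its finite quotients, and finite subfamilies do not leave $a\neq1$.
\item \emph{Nilpotent groups.} The single relation $[a,b]=a$ puts $a$ in every $\gamma_n$: if $a\in\gamma_n$ then $a=[a,b]\in\gamma_{n+1}$. So it alone kills $a$ in every nilpotent quotient. Meanwhile every relation $[a,b,\dots,b]=1$ is a consequence of the shortest one present. Neither family gives a failure of the quasivariety property.
\item \emph{Finitely presented groups.} ``A group relative to which $r_0,r_1,\dots$ are independent'' is the INIP condition of Definition~\ref{def:INIP-1}. That condition is what produces $\mathrm{Lim}$ for quasivarieties, and it is the opposite of what you need: infinite subfamilies that imply $w$ relative to $\mathcal{C}$ while no finite subfamily does.
\item \emph{Fallback.} The single-fibre route cannot work. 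For countable $\mathcal{C}$, the computation in the proof of Proposition~\ref{prop:Countable} shows that $\{G\mid w=1\text{ in }\mathrm{Res}_{\mathcal{C}}(G)\}$ is a $G_\delta$ set, so it is never $\Sigma^0_3$-complete. In any case, one membership bit cannot carry the Baire-space output of $\mathrm{Lim}'$; the parallelism you were trying to build is unavoidable.
\end{itemize}

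\textbf{What the paper uses instead.} It relies on specific groups in which infinitely many independent limit phenomena coexist.
\begin{itemize}
\item \emph{Finite groups.} It uses Dyson's doubles $L(\mathcal{A})$ of the lamplighter group. Two facts drive the argument: $\mathrm{Res}_{\mathrm{Fin}}(L(\mathcal{A}))=L(\overline{\mathcal{A}})$, where the bar is closure in the profinite topology on $\mathbb{Z}$; and $\mathcal{A}\mapsto L(\mathcal{A})$ is a homeomorphism onto its image. Since profinite $\mathbb{Z}$ is homeomorphic to $\mathbb{Q}$, encoding $u_{n,i}=1$ as $i+\frac{1}{n+2}\in\mathcal{A}$ gives $i\in\overline{\mathcal{A}}\iff\exists^{\infty}n,\,u_{n,i}=1$.
\item \emph{Nilpotent groups.} It uses P.~Hall's central-by-metabelian group. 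Its centre is free abelian on the elements $f_n=[b,a,\dots,a,b]$ ($n$ copies of $a$), with $f_n\in\gamma_{n+2}$. One adds the central relations $f_{\langle n,0\rangle}=f_{\langle n,m\rangle}$ whenever $u_{n,m}=1$.
\item \emph{Finitely presented groups.} The same Hall quotients are combined with a simple set $A$. One kills $f_{\langle n,p\rangle}$ for $p\in A$, and identifies $f_{\langle n,0\rangle}$ with $f_{\langle n,a_m\rangle}$, where $(a_m)$ enumerates the complement of $A$. In any finitely presented quotient, the set $\{p\mid f_{\langle n,0\rangle}=f_{\langle n,p\rangle}\}$ is c.e. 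If it is infinite it must therefore meet $A$, which forces $f_{\langle n,0\rangle}=1$.
\end{itemize}
This computability-theoretic ingredient is entirely missing from your sketch, and it is what makes the finitely presented case work.
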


The proofs are via direct reductions and do not rely on previous lemmas. 

We use the construction used to prove Theorem \ref{thm:Finite nilpotent FP}
to also answer a problem from \cite{Benli2019}: 
\begin{prop}
The set $\mathcal{R}\mathrm{Fin}$ of residually finite groups is
$\Pi_{3}^{0}$ complete in the Borel hierarchy on the space of marked
groups. So are the set of residually nilpotent and residually finitely
presented groups. 
\end{prop}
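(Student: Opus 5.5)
Two bounds are needed: membership in $\Pi_3^0$, and a continuous reduction of a known $\Pi_3^0$-complete set to $\mathcal{R}\mathrm{Fin}$ (and to the other two classes). For the benchmark I will use $\mathrm{Cof}^c$, or more conveniently the set
\[
P_3=\{x\in\{0,1\}^{\mathbb{N}\times\mathbb{N}}\mid \forall i,\ \exists^{<\infty}j,\ x(i,j)=1\},
\]
which is $\Pi_3^0$-complete. This matches the jump structure $\mathrm{Lim}'\equiv\widehat{\mathrm{LPO}'}$ and Example \ref{exa:LPO' Exists infty}.

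\textbf{Upper bound.} For $\mathcal{C}$ a countable set of finitely generated groups (finite, nilpotent, finitely presented groups are all countable up to marking), a $k$-marked group $(H,S)$ is residually $\mathcal{C}$ iff
\[
\forall w\in\mathbb{F}_k\,\big(w=1\text{ in }H\ \vee\ \exists n\,[w\neq1\text{ in }G_n\ \wedge\ H\to G_n]\big).
\]
As in Proposition \ref{prop:Countable}, the set $\{H\to G_n\}$ is closed, so the bracket is $\Sigma_2^0$, and the disjunction with the clopen condition is $\Sigma_2^0$. Intersecting over countably many $w$ gives $\Pi_3^0$.

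\textbf{Hardness.} I would adapt the reduction used for $\mathrm{Res}_{\mathcal{C}}\ge_W\mathrm{Lim}'$ in Theorem \ref{thm:Finite nilpotent FP}. Since that proof is not visible, I build it directly. The idea is to take a group with infinitely many independent ``slots'' $i$, such that slot $i$ carries a non-quasivariety-type obstruction. By Proposition \ref{prop:Not Qvar computes LPO'}, a non-quasivariety gives an infinite family $r^i_0,r^i_1,\dots$ and an element $w_i$ with the following property: adding infinitely many of the $r^i_j$ forces $w_i$ to die in every $\mathcal{C}$-quotient, while adding finitely many keeps $w_i$ alive in some $\mathcal{C}$-quotient.

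On input $x$, add the relator $r^i_{j'}$ with $j'=\#\{j'\le j: x(i,j')=1\}$ for each $(i,j)$, as in the proof of Proposition \ref{prop:Not Qvar computes LPO'}. Then, provided $w_i\neq1$ in the group itself (which the construction must ensure), the group is residually $\mathcal{C}$ iff no $w_i$ is killed, iff $x\in P_3$. For finite groups, a concrete slot is a Baumslag--Solitar-type or lamplighter-type piece in which $w_i$ survives finitely many relations residually finitely but not infinitely many. Slots are combined by a free product or a direct product of infinitely many pieces inside a fixed finitely generated group, e.g. via Hall's central-by-metabelian group or a wreath-product embedding.

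\textbf{Continuity.} The map $x\mapsto$ marked group must be continuous into $\mathcal{G}_{\mathrm{WP}}$, not only into $\mathcal{G}_{\mathrm{Pres}}$. I would enforce this with the small cancellation trick of Lemma \ref{lem:Small-cancellation-lemma}: pad the relators with long distinct words so that relations of length $<n$ depend on only finitely many relators.

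\textbf{Main obstacle.} The crux is building, inside one finitely generated group, infinitely many independent slots on which residual finiteness fails slot-by-slot exactly as in $\exists^{\infty}$, without interaction between slots. Interaction would create extra relations or destroy residual finiteness of the finite-$x$ case. The small cancellation padding only controls the word problem, not residual finiteness. My first attempt would be a free product of copies of a single slot group $K$, indexed by conjugates, embedded into a 2-generated group in a residual-$\mathcal{C}$-preserving way. The free product preserves residual finiteness and residual nilpotence of suitable factors, and residual finite presentability. The same scheme should work for the nilpotent and finitely presented cases with the corresponding slot groups.
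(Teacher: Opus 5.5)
Your upper bound is correct and is the paper's computation in a slightly different form. You use the closedness of $\{H\to G_n\}$, as in Proposition~\ref{prop:Countable}, whereas the paper writes out the $\forall w\,\exists(F,S')\,\forall r$ formula directly. Your benchmark $P_3$ is also the paper's $S_3$.

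\textbf{The gap is the hardness half.} You never construct the reduction, and the ingredients you propose would not work as stated.

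\emph{The padding trick fails here.} The padding of Lemma~\ref{lem:Small-cancellation-lemma} inserts copies of an element $w$ that is nontrivial in $\Gamma$ but dies in every $\mathcal{C}$-quotient. So the padded group is \emph{never} residually $\mathcal{C}$. That lemma is a device for computing $\mathrm{Res}_{\mathcal{C}}$ of an auxiliary group. A Wadge reduction to $\mathcal{RC}$ needs something different: the group itself, depending continuously on $x$ in $\mathcal{G}_{\mathrm{WP}}$, must lie in $\mathcal{RC}$ exactly when $x\in P_3$. So the one tool you propose for continuity into $\mathcal{G}_{\mathrm{WP}}$ is unavailable.

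\emph{The non-quasivariety witness is too weak.} The witness behind Proposition~\ref{prop:Not Qvar computes LPO'} only says that $w_i$ survives in \emph{some} $\mathcal{C}$-quotient once finitely many $r^i_j$ are imposed. It gives no control over other elements, so your claim ``residually $\mathcal{C}$ iff no $w_i$ is killed'' is unsupported.

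\emph{The assembly step is missing.} Putting infinitely many non-interacting slots into one finitely generated group is exactly the step you leave open.

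\textbf{The missing idea} is to use concrete groups in which the slots are already built in and residual $\mathcal{C}$-ness is characterized exactly.

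For finite groups the paper uses Dyson's doubles $L(A)$. The map $A\mapsto L(A)$ is a homeomorphism onto its image from $\{0,1\}^{\mathbb{Z}}$ into $\mathcal{G}_{\mathrm{WP}}$. Moreover $\mathrm{Res}_{\mathrm{Fin}}(L(A))=L(\overline{A})$, so $L(A)$ is residually finite iff $A$ is closed in the profinite topology on $\mathbb{Z}$. That space is homeomorphic to $\mathbb{Q}$, so it remains to reduce $P_3$ to closedness in $\mathbb{Q}$. Send $x$ to a set of points accumulating at the integer $n$ along those $m$ with $x(n,m)=1$. For example, use $n+\frac{1}{m+2}$, which avoids hitting the integers themselves.

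For nilpotent groups the slots are the blocks $\{f_{\langle n,m\rangle}\}_m$ of a basis of the free abelian center of Hall's central-by-metabelian group. Impose $f_{\langle n,0\rangle}=f_{\langle n,m\rangle}$ whenever $x(n,m)=1$. This gives a central quotient whose word problem depends continuously on $x$. It is residually nilpotent iff each block has only finitely many identifications.

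For finitely presented groups, additionally kill $f_{\langle n,m\rangle}$ for $m$ in a simple set, and identify $f_{\langle n,0\rangle}$ only with indices from its complement. Then infinitely many identifications in a block force $f_{\langle n,0\rangle}=1$ in every recursively presented quotient.

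As written, your proposal establishes only the $\Pi^0_3$ upper bound.
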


\subsection{Residually finite image }

We use a construction of Dyson from \cite{Dyson1974} to classify
$\mathrm{Res}_{\mathrm{Fin}}$, where $\mathrm{Fin}$ is the set of
finite groups. 

\subsubsection{Dyson's doubles of the lamplighter group}

The lamplighter group $L$ is the wreath product of $\mathbb{Z}/2$
and $\mathbb{Z}$, noted $\mathbb{Z}/2\wr\mathbb{Z}$, which is by
definition the semi-direct product $\underset{\mathbb{Z}}{\bigoplus}\mathbb{Z}/2\mathbb{Z}\rtimes\mathbb{Z}$,
where $\mathbb{Z}$ acts on $\underset{\mathbb{Z}}{\bigoplus}\mathbb{Z}/2\mathbb{Z}$
by permuting the indices. It admits the following presentation:
\[
\langle a,t\vert\,a^{2},\,\left[a,t^{i}at^{-i}\right],i\in\mathbb{Z}\rangle
\]
The element $u_{i}\overset{\text{def}}{=}t^{i}at^{-i}$ of $L$ corresponds
to the element of $\underset{\mathbb{Z}}{\bigoplus}\mathbb{Z}/2\mathbb{Z}$
with only one non-zero coordinate in position $i\in\mathbb{Z}$. Consider
another copy $\hat{L}$ of the lamplighter group, together with an
isomorphism from $L$ to $\hat{L}$ we note $g\mapsto\hat{g}$. For
each subset $\mathcal{A}$ of $\mathbb{Z}$, define $L(\mathcal{A})$
to be the amalgamated product of $L$ and $\hat{L}$, with $u_{i}=t^{i}at^{-i}$
identified with $\hat{u}_{i}=\hat{t}^{i}\hat{a}\hat{t}^{-i}$ for
each $i$ in $\mathcal{A}$. It has the following presentation:
\[
\langle a,\hat{a},t,\hat{t}\vert\,a^{2},\,\hat{a}^{2},\,\left[a,t^{i}at^{-i}\right],\left[\hat{a},\hat{t}^{i}\hat{a}\hat{t}^{-i}\right],i\in\mathbb{Z},\,t^{j}at^{-j}=\hat{t}{}^{j}\hat{a}\hat{t}{}^{-j},\,j\in\mathcal{A}\rangle.
\]

Recall that the profinite topology on $\mathbb{Z}$ is defined as
follows: $U$ is open if for any $z\in U$ there is $k\in\mathbb{Z}$
such that $z+k\mathbb{Z}\subseteq U$. For $\mathcal{A}\subseteq\mathbb{Z}$,
we denote by $\overline{\mathcal{A}}$ its closure in the profinite
topology. 
\begin{prop}
[Dyson, \cite{Dyson1974}, Theorem 1]For any $\mathcal{A}\subseteq\mathbb{Z}$,
we have 
\[
\mathrm{Res}_{\mathrm{Fin}}(L(\mathcal{A}))=L(\overline{\mathcal{A}}).
\]
In particular, $L(\mathcal{A})$ is residually finite if and only
if $\mathcal{A}$ is closed in the profinite topology on $\mathbb{Z}$.
\end{prop}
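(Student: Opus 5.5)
The plan is to prove the two inequalities between $\mathrm{Res}_{\mathrm{Fin}}(L(\mathcal{A}))$ and $L(\overline{\mathcal{A}})$ separately; both are quotients of $L(\mathcal{A})$ in the lattice of $4$-marked groups on $(a,\hat a,t,\hat t)$. Recall that $\mathrm{Res}_{\mathrm{Fin}}(L(\mathcal{A}))$ is the greatest residually finite marked quotient of $L(\mathcal{A})$. It therefore suffices to show two things. (i) Every relation $u_i=\hat u_i$ with $i\in\overline{\mathcal{A}}$ holds in every finite quotient of $L(\mathcal{A})$, so $L(\overline{\mathcal{A}})\to\mathrm{Res}_{\mathrm{Fin}}(L(\mathcal{A}))$. (ii) $L(\mathcal{B})$ is residually finite whenever $\mathcal{B}$ is profinitely closed. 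Since $L(\overline{\mathcal{A}})$ is a marked quotient of $L(\mathcal{A})$, (ii) gives $\mathrm{Res}_{\mathrm{Fin}}(L(\mathcal{A}))\to L(\overline{\mathcal{A}})$, and the two morphisms give equality. The "in particular" clause then follows, because $L(\mathcal{A})\ne L(\mathcal{B})$ as marked groups when $\mathcal{A}\ne\mathcal{B}$ (normal forms in the amalgam).

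For (i), take a morphism $f\colon L(\mathcal{A})\to F$ with $F$ finite, and let $k$ be a common multiple of the orders of $f(t)$ and $f(\hat t)$. Then
\[
f(u_{i+k})=f(t)^k f(u_i)f(t)^{-k}=f(u_i),
\]
and likewise $f(\hat u_{i+k})=f(\hat u_i)$. Hence the set $B_f=\{i\mid f(u_i)=f(\hat u_i)\}$ is a union of cosets of $k\mathbb{Z}$, so it is profinitely closed. It contains $\mathcal{A}$, so it contains $\overline{\mathcal{A}}$. This step is routine.

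For (ii), write $L(\mathcal{B})=L*_{C_{\mathcal{B}}}\hat L$, where $C_{\mathcal{B}}\cong\bigoplus_{i\in\mathcal{B}}\mathbb{Z}/2$ sits in the base of each copy of the lamplighter group. For $n\ge1$ let $L_n=\mathbb{Z}/2\wr\mathbb{Z}/n$ and let $\mathcal{B}_n$ be the image of $\mathcal{B}$ in $\mathbb{Z}/n$. The natural maps $L\to L_n$ send $C_{\mathcal{B}}$ into $C_{\mathcal{B}_n}$, so they induce a morphism $L(\mathcal{B})\to L_n*_{C_{\mathcal{B}_n}}\hat L_n$. The target is an amalgam of two finite groups, hence virtually free and therefore residually finite.

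Now fix $g\ne1$ and write it in reduced form $g=x_1\cdots x_r$, with the $x_j$ alternating between the two factors and lying outside $C_{\mathcal{B}}$ (when $r\ge2$). It is enough to choose $n$ so that each image $\bar x_j$ still lies outside $C_{\mathcal{B}_n}$, and is nontrivial when $r=1$. The image word is then reduced, so $\bar g\ne1$ by the normal form theorem, and composing with a finite quotient of the finite amalgam separates $g$ from $1$.

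The main obstacle is choosing $n$ uniformly for the finitely many $x_j$. There are two cases. If $x_j$ has nonzero $t$-exponent $e$, any $n>|e|$ keeps its image outside the base. If $x_j$ lies in the base but not in $C_{\mathcal{B}}$, its support contains some $m\notin\mathcal{B}$. Since $\mathcal{B}$ is closed, there is $k_m$ with $(m+k_m\mathbb{Z})\cap\mathcal{B}=\emptyset$. I will take $n$ to be a common multiple of all these $k_m$ that is also larger than the diameter of the union of all supports and all $|e|$. Then reduction mod $n$ is injective on the relevant supports, so the coordinate $m$ survives in $\bar x_j$. Moreover $m+n\mathbb{Z}\subseteq m+k_m\mathbb{Z}$ misses $\mathcal{B}$, so $m\bmod n\notin\mathcal{B}_n$ and $\bar x_j\notin C_{\mathcal{B}_n}$.
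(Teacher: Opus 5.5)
Your proof is correct. The paper itself gives no argument for this proposition: it is quoted as Theorem~1 of Dyson \cite{Dyson1974}. Your write-up therefore replaces a citation with a self-contained proof, splitting the statement into its two natural halves.

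For (i), the periodicity argument does the work. In any finite quotient the map $i\mapsto(f(u_i),f(\hat u_i))$ is $k$-periodic, so the set of identified indices is a union of cosets of $k\mathbb{Z}$. Hence passing to $\mathrm{Res}_{\mathrm{Fin}}$ can add at most the relations indexed by $\overline{\mathcal{A}}$, and this is exactly why the profinite topology on $\mathbb{Z}$ appears.

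Part (ii) is a hands-on instance of the standard compatible-finite-quotients criterion for residual finiteness of amalgamated products. The quotients $L\to\mathbb{Z}/2\wr\mathbb{Z}/n$ and $\hat L\to\mathbb{Z}/2\wr\mathbb{Z}/n$ agree on the amalgamated subgroup. Closedness of $\mathcal{B}$ is precisely what makes $C_{\mathcal{B}}$ separable with respect to this family: if $m\in\overline{\mathcal{B}}\setminus\mathcal{B}$, then $u_m$ lands in $C_{\mathcal{B}_n}$ for every $n$.

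The details check out:
\begin{itemize}
\item The map $L(\mathcal{B})\to L_n*_{C_{\mathcal{B}_n}}\hat L_n$ is well defined, since $u_i$ and $\hat u_i$ go to $\delta_{i\bmod n}$ and $\hat\delta_{i\bmod n}$, which are identified whenever $i\in\mathcal{B}$.
\item Taking $n$ a common multiple of the $k_m$ forces $m\bmod n\notin\mathcal{B}_n$.
\item Your diameter condition on $n$ also covers the case $r=1$ with $x_1\in C_{\mathcal{B}}\setminus\{1\}$, which you did not state explicitly.
\end{itemize}

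The citation buys the paper brevity. Your argument makes visible, via the normal form theorem, both where closedness enters and the injectivity of $\mathcal{A}\mapsto L(\mathcal{A})$ that the paper later treats as immediate.
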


The following result is immediate: 
\begin{prop}
\label{prop:L is an homeo }The map
\begin{align*}
L:\mathcal{P}(\mathbb{Z}) & \rightarrow\mathcal{G}\\
A & \mapsto L(A)
\end{align*}
is injective, and it defines a homeomorphism onto its image when $\mathcal{P}(\mathbb{Z})$
and $\mathcal{G}$ have either their Scott topologies ($\mathbb{S}^{\mathbb{Z}}$
and $\mathcal{G}_{\mathrm{Pres}}$), or their Polish topologies ($\{0,1\}^{\mathbb{Z}}$
and $\mathcal{G}_{\mathrm{WP}}$). \qed
\end{prop}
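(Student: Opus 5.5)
The plan is to first establish injectivity from the amalgamated product structure, and then to show separately that $L$ and $L^{-1}$ are continuous for each pair of topologies, using the normal form theorem for amalgamated free products.

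\emph{Injectivity.} For each $\mathcal{A}\subseteq\mathbb{Z}$, the group $L(\mathcal{A})$ is the free product of $L$ and $\hat{L}$ amalgamated along the subgroup $B_{\mathcal{A}}=\langle u_i, i\in\mathcal{A}\rangle\subseteq\bigoplus_{\mathbb{Z}}\mathbb{Z}/2$, identified with $\hat{B}_{\mathcal{A}}$ via $g\mapsto\hat g$. Both factors embed. Hence, for $j\in\mathbb{Z}$, the relation $u_j\hat{u}_j^{-1}=1$ holds in $L(\mathcal{A})$ if and only if $u_j\in B_{\mathcal{A}}$. Since the $u_i$ form a basis of the $\mathbb{F}_2$-vector space $\bigoplus_{\mathbb{Z}}\mathbb{Z}/2$, this happens if and only if $j\in\mathcal{A}$. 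So
\[
\mathcal{A}=\{j\mid t^jat^{-j}=\hat t^j\hat a\hat t^{-j}\text{ in }L(\mathcal{A})\},
\]
which gives injectivity. It also gives continuity of the inverse for both pairs of topologies: $L^{-1}$ just reads off the membership of the fixed words $v_j=t^jat^{-j}\hat t^{j}\hat a^{-1}\hat t^{-j}$ in the set of relations. This is a coordinate extraction, continuous from $\mathcal{G}_{\mathrm{WP}}$ to $\{0,1\}^{\mathbb{Z}}$, and likewise from the Sierpi\'nski-word-problem representation of $\mathcal{G}_{\mathrm{Pres}}$ to $\mathbb{S}^{\mathbb{Z}}$. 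Here I would invoke Lemma \ref{lem:The-representations-Pres and Sier are equiv}.

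\emph{Continuity of $L$ for the Scott topologies.} This is immediate from the displayed presentation. Given a $c_{\mathbb{S}^{\mathbb{Z}}}$-name of $\mathcal{A}$, we enumerate the fixed lamplighter relators together with $v_j$ whenever $j$ is seen to lie in $\mathcal{A}$, and this produces a $\rho_{\mathrm{Pres}}$-name of $L(\mathcal{A})$.

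\emph{Continuity of $L$ for the Polish topologies.} This is the only real step. We must show that, for each word $w$ in $a,\hat a,t,\hat t$, whether $w=1$ in $L(\mathcal{A})$ depends only on $\mathcal{A}\cap[-N,N]$ for some $N=N(w)$. Write $w$ as an alternating product of elements of $L$ and $\hat L$, and run the normal form reduction for the amalgam. At each step we test whether some syllable $g$ (an element of $L$ or $\hat L$ appearing in the reduction of $w$) lies in $B_{\mathcal{A}}$. An element $g\in L$ lies in $B_{\mathcal{A}}$ if and only if $g\in\bigoplus\mathbb{Z}/2$ and its finite support is contained in $\mathcal{A}$. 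The support is bounded by the length of $w$, since conjugation by $t^{\pm1}$ shifts indices by one and each letter moves the lamplighter at most one step. If the syllable does lie in $B_{\mathcal{A}}$, it is transported across to the other factor and merged with its neighbour; the merged element again has support controlled by $|w|$. Thus every test in the reduction of $w$ only queries $\mathcal{A}\cap[-|w|,|w|]$, so $w=1$ in $L(\mathcal{A})$ is determined by this finite window, which is exactly continuity into $\mathcal{G}_{\mathrm{WP}}$.

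The main obstacle is making the support bound rigorous through the successive merges: we need that the supports of all intermediate syllables stay within $[-|w|,|w|]$. I would handle this by mapping to the abelian quotient: the $t$-exponent of any prefix of $w$ is at most $|w|$ in absolute value, which bounds the lamp positions occurring throughout. Finally, since $L$ is a continuous injection from the compact space $\{0,1\}^{\mathbb{Z}}$ into a Hausdorff space, it is automatically a homeomorphism onto its image, which gives a second proof of that direction.
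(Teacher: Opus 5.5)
Your proposal is correct and supplies the details the paper omits when it calls this statement immediate: injectivity and continuity of $L^{-1}$ come from reading off the relators $u_j\hat u_j^{-1}$ (using $L\cap\hat L=B_{\mathcal{A}}$ in the amalgam), Scott continuity comes from the displayed presentation, and Polish continuity comes from the normal form theorem for amalgamated products. The one step to tighten is your support bound: after a transport, lamp positions also depend on $\hat t$-exponents (e.g.\ $\hat t^{5}a\hat t^{-5}$ becomes $\hat u_5$ when $0\in\mathcal{A}$, although every prefix of it has $t$-exponent $0$), so argue instead that every intermediate syllable is spelled by a contiguous subword of $w$ with its letters re-hatted or un-hatted, hence has length at most $|w|$ and lamp support in $[-|w|,|w|]$.
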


As a corollary of the above, we have a reduction: $\mathrm{Res}_{\mathrm{Fin}}$
reduces to the closure map $C:\mathcal{P}(\mathbb{Z})\rightarrow\mathcal{P}(\mathbb{Z}),A\mapsto\overline{A}$. 
\begin{cor}
\label{cor: reduc via Dyson's groups}There is a Weihrauch reduction
\[
C\le_{W}\mathrm{Res}_{\mathrm{Fin}}.
\]
\end{cor}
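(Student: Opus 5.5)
We prove $C\le_{W}\mathrm{Res}_{\mathrm{Fin}}$ directly from Dyson's theorem and Proposition \ref{prop:L is an homeo }, taking both $\mathrm{Res}_{\mathrm{Fin}}$ and $C$ with their Polish topologies: $\mathcal{G}_{\mathrm{WP}}$ for $\mathrm{Res}_{\mathrm{Fin}}$, and $\{0,1\}^{\mathbb{Z}}$ on both sides for $C$. The reduction factors as
\[
C=L^{-1}\circ\mathrm{Res}_{\mathrm{Fin}}\circ L ,
\]
which holds pointwise because $\mathrm{Res}_{\mathrm{Fin}}(L(A))=L(\overline{A})$ and $L$ is injective. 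So we take the trivial space $V$ (no need to pass the input through) and choose the pre- and post-processing maps as follows.

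For the forward map $k$, given a characteristic sequence of $A\subseteq\mathbb{Z}$, we output a $\rho_{\mathrm{WP}}$-name of $L(A)$. This is continuous by Proposition \ref{prop:L is an homeo } applied with the Polish topologies $\{0,1\}^{\mathbb{Z}}\to\mathcal{G}_{\mathrm{WP}}$. By Lemma \ref{lem:Admissibility C0 and C0 realizable } it then has a continuous realizer, since $\rho_{\mathrm{WP}}$ is admissible.

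We apply $\mathrm{Res}_{\mathrm{Fin}}$ to get a $\rho_{\mathrm{WP}}$-name of $L(\overline{A})$, which lies in the image of $L$.

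For the backward map $h$, we use the inverse $L^{-1}:\mathcal{G}_{\mathrm{WP}}\cap\mathrm{Im}(L)\to\{0,1\}^{\mathbb{Z}}$, which is continuous by the homeomorphism part of Proposition \ref{prop:L is an homeo }. Concretely, it can be realized by reading off, for each $j\in\mathbb{Z}$, the bit of the word-problem name recording whether $t^{j}at^{-j}\hat{t}^{j}\hat{a}^{-1}\hat{t}^{-j}$ is a relation. This is an extraction map, continuous exactly as in the proof of the INIP theorem above.

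The only point requiring care is justifying that this bit correctly decodes $B$ from $L(B)$: in the amalgam $L(B)$ one needs $u_{j}=\hat{u}_{j}$ if and only if $j\in B$. This is the standard fact that the amalgamated subgroup is exactly $\langle u_{j},j\in B\rangle\cong\bigoplus_{B}\mathbb{Z}/2$, and that $u_{j}\notin\langle u_{i},i\in B\rangle$ for $j\notin B$ in the lamplighter group. This fact is already contained in the injectivity and homeomorphism assertions of Proposition \ref{prop:L is an homeo }, which we take as given, so no real obstacle remains. Composing realizers gives the reduction.
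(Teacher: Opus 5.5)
Your proof is correct and is essentially the paper's argument: the reduction $C=L^{-1}\circ\mathrm{Res}_{\mathrm{Fin}}\circ L$ rests on Dyson's theorem $\mathrm{Res}_{\mathrm{Fin}}(L(\mathcal{A}))=L(\overline{\mathcal{A}})$ together with the fact that $L$ is a homeomorphism onto its image for the Polish topologies. Your extra steps only make explicit what the paper leaves implicit: restricting $L^{-1}$ to $\mathcal{G}_{\mathrm{WP}}\cap\mathrm{Im}(L)$, realizing it by extracting the bit for $u_{j}\hat{u}_{j}^{-1}$, and the amalgam fact $L\cap\hat{L}=\langle u_{i},i\in B\rangle$.
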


\begin{proof}
By Proposition \ref{prop:L is an homeo }, we can introduce $L^{-1}$,
which is a continuous map from the space of marked groups to $\mathcal{P}(\mathbb{Z})$
with the Cantor space topology. The reduction is then given by: $C(\mathcal{A})=L^{-1}(\mathrm{Res}_{\mathrm{Fin}}(L(\mathcal{A})))$
for any $\mathcal{A}\subseteq\mathbb{Z}$.
\end{proof}

\subsubsection{Classification of $\mathrm{Res}_{\mathrm{Fin}}$}
\begin{lem}
\label{lem:classifying closure in ptz }We have the Weihrauch equivalence:
$C\equiv_{W}\mathrm{Lim}'$ .
\end{lem}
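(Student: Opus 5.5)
The equivalence will be proved by establishing the two reductions separately. Here $C$ acts on $\mathcal{P}(\mathbb{Z})$ with the Cantor topology on both sides.

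\emph{Upper bound $C\le_W\mathrm{Lim}'$.} By Theorem \ref{thm: Lim^(n) is measurable complete} it suffices to show that $C$ is $\Sigma^0_3$-measurable, i.e.\ that for each $z\in\mathbb{Z}$ both $\{A\mid z\in\overline{A}\}$ and its complement are $\Sigma^0_3$. We have
\[
z\in\overline{A}\iff\forall k\ge1,\ \exists a\in A,\ a\equiv z \bmod k .
\]
For fixed $k$, the condition $\exists a\in A$ with $a\equiv z\bmod k$ is open in Cantor space. So $\{A\mid z\in\overline{A}\}$ is $G_\delta=\Pi^0_2\subseteq\Sigma^0_3$, and its complement is $F_\sigma=\Sigma^0_2\subseteq\Sigma^0_3$. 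Preimages of basic clopen sets are finite intersections of these, hence lie in $\Sigma^0_3$. Alternatively, a direct realizer is available: $z\in\overline{A}$ is a $\Pi^0_2$ question, which $\mathrm{LPO}'$ answers, and parallelizing over all $z$ gives $\widehat{\mathrm{LPO}'}\equiv_W\mathrm{Lim}'$ by Proposition \ref{prop:Lim =00003D LPO parallalized}.

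\emph{Lower bound $\mathrm{Lim}'\le_W C$.} By Proposition \ref{prop:Lim =00003D LPO parallalized} it suffices to show $\widehat{\mathrm{LPO}'}\le_W C$, and by Example \ref{exa:LPO' Exists infty} we may replace $\mathrm{LPO}'$ with $\exists^\infty$. The plan has two steps.

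\begin{enumerate}
\item Reduce a single instance of $\exists^\infty$ to membership of a fixed point in $\overline{A}$. Given $(u_n)\in\{0,1\}^{\mathbb{N}}$, continuously build $A\subseteq\mathbb{Z}$ as follows: each time a new $1$ appears at stage $n$, with $m$ ones seen so far, put into $A$ an element $a\ne0$ with $a\equiv0\bmod m!$. Then:
\begin{itemize}
\item If there are infinitely many ones, $0$ is approximated modulo every $k$, so $0\in\overline{A}$.
\item If there are only finitely many ones, $A$ is finite and does not contain $0$, so $0\notin\overline{A}$, since finite sets are closed in the profinite topology.
\end{itemize}
This map is continuous because membership of each integer in $A$ is decided after finitely many steps; for instance, use the element $m!\cdot(\text{stage index}+1)$ so that distinct stages give distinct elements.
\item Parallelize this within a single instance of $C$. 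Partition $\mathbb{Z}$ into infinitely many profinitely \emph{clopen-separated} pieces, each carrying one instance. The natural attempt is to use the cosets $2^{j}+2^{j+1}\mathbb{Z}$ for $j\ge0$, indexed by the $2$-adic valuation. These are clopen, and together with the leftover point $0$ they cover $\mathbb{Z}$. Encode instance $j$ inside $2^j+2^{j+1}\mathbb{Z}$ by the same construction: use the point $p_j=2^j$ and add elements $2^j+2^{j+1}\cdot m!\cdot t$. Since the coset is clopen, $\overline{A}\cap(2^j+2^{j+1}\mathbb{Z})=\overline{A\cap(2^j+2^{j+1}\mathbb{Z})}$, so the instances do not interact. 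The answer to instance $j$ is then read off as whether $p_j\in\overline{A}$, and this read-off is continuous from the Cantor name of $\overline{A}$.
\end{enumerate}

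The main obstacle is the finite case in step 2: I must verify that $p_j\notin\overline{A_j}$ when only finitely many points are added to the coset. This holds because the restricted set is finite and avoids $p_j$. One must also make sure that points added for different instances never land in the wrong coset, which holds by construction. With both reductions in place, $C\equiv_W\mathrm{Lim}'$.
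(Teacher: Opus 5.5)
Your proof is correct, but the lower bound takes a different route from the paper's.

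\textbf{Upper bound.} This is essentially the paper's argument. The paper observes that $r\in\overline{A}$ is a $\forall\exists$ statement, turns it into an instance of $\mathrm{LPO}'$, and parallelizes. Your $\Sigma^0_3$-measurability argument, in the style of the paper's treatment of countable classes $\mathcal{C}$, is an equally valid alternative.

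\textbf{Lower bound.} The paper first uses the Sierpi\'{n}ski--Fr\'echet characterization of $\mathbb{Q}$ as the unique nonempty countable metrizable perfect space. This identifies $\mathbb{Z}$ with its profinite topology with $\mathbb{Q}$, so that $C\equiv_W C_{\mathbb{Q}}$. It then encodes $u_{n,i}=1$ as $i+\frac{1}{n+2}\in A$, which gives $i\in\overline{A}$ iff $\exists^\infty n,\,u_{n,i}=1$.

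You stay inside $\mathbb{Z}$ and build the same picture by hand:
\begin{itemize}
\item the points $2^j$ play the role of the integers $i$;
\item the clopen cosets $2^j+2^{j+1}\mathbb{Z}$ play the role of the disjoint neighbourhoods $(i,i+\frac12]$;
\item the elements $2^j+2^{j+1}m!\,t$ accumulate profinitely at $2^j$ as $m\to\infty$, just as $i+\frac{1}{n+2}\to i$.
\end{itemize}

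\textbf{Comparison.} The paper's route is shorter and makes the geometry transparent. However, it rests on a classification theorem, plus the unstated (though easy) fact that a homeomorphism $\mathbb{Z}\cong\mathbb{Q}$ transports $C$ to $C_{\mathbb{Q}}$ via a permutation of Cantor-space coordinates. Your route is self-contained and fully explicit. Your encoding is also manifestly computable, which would matter for the computable rather than continuous version of the reduction. Your handling of the two delicate points is sound: finite sets are profinitely closed, and closure commutes with intersection by a clopen set, so the instances do not interfere.
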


\begin{proof}
We first remark that $\mathbb{Z}$ equipped with the profinite topology
is homeomorphic to $\mathbb{Q}$ equipped with the Euclidian topology:
by classical results of Sierpirlski and Frechet, $\mathbb{Q}$ is,
up to homeomorphism, the only nonempty, countable, metrizable and
perfect space (see \cite[7.12]{Kechris1995}). Thus $C$ is equivalent
to the closure map on $\mathbb{Q}$, which we denote by $C_{\mathbb{Q}}$:
\begin{align*}
C_{\mathbb{Q}}:\mathcal{P}(\mathbb{Q})\rightarrow\mathcal{P}(\mathbb{Q}),\,A\mapsto\overline{A}.
\end{align*}
(The representation of $\mathbb{Q}$ that we use is associated to
the discrete topology, where $\pm\frac{p}{q}$ is given by the triple
$(\pm1,p,q)$.)

The problem $\mathrm{Lim}'$ is equivalent to the infinite parallelization
of the problem $\mathrm{LPO}'$ \cite{Brattka2021a}. In other words,
the input is a double sequence $(u_{n,i})_{(n,i)\in\mathbb{N}^{2}}$
of digits in $\{0,1\}$, and its output is the sequence $(v_{i})_{i\in\mathbb{N}}\in\{0,1\}^{\mathbb{N}}$
given by 
\[
v_{i}=1\iff\exists^{\infty}n,\,u_{n,i}=1.
\]

We first show that $C_{\mathbb{Q}}\ge_{W}\mathrm{Lim}'$. 

For a double sequence $(u_{n,i})_{(n,i)\in\mathbb{N}^{2}}$, define
a subset $A$ of $\mathbb{Q}$ by 
\[
i+\frac{1}{n+2}\in A\iff u_{n,i}=1.
\]
If follows immediately that $i\in\overline{A}\iff\exists^{\infty}n,\,u_{n,i}=1$.
This provides the desired reduction. 

Conversely, we show that $\mathrm{Lim}'\ge_{W}C_{\mathbb{Q}}$.

Consider $A\subseteq\mathbb{Q}$ and $r\in\mathbb{Q}$. We then have
\[
r\in\overline{A}\iff(\forall n\in\mathbb{N}\exists q\in A,\,\vert q-r\vert<2^{-n}).
\]
This $\forall\exists$ statement can be converted to an instance of
$\mathrm{LPO}'$: we define a double sequence $d_{n,p}$ given by
$d_{n,p}=1$ if there exists a rational $q$ with numerator and denominator
both at most $p$ which is in $A$ and at distance at most $2^{-n}$
of $r$, and if furthermore $d_{n,t}=0$ for $t<p$. We then have
\[
r\in\overline{A}\iff\exists^{\infty}(n,p),d_{n,p}=1
\]
It follows that $C_{\mathbb{Q}}$ is computed by the parallelization
of $\mathrm{LPO}'$, which is precisely $\mathrm{Lim}'$. 
\end{proof}
\begin{cor}
\label{cor:Res Fin =00003D Lim'}We have the equivalence
\[
\mathrm{Lim}'\equiv_{W}\mathrm{Res}_{\mathrm{Fin}}.
\]
\end{cor}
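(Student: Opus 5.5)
The statement follows from two reductions, one for each direction.

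For $\mathrm{Lim}'\le_{W}\mathrm{Res}_{\mathrm{Fin}}$, I would chain Lemma \ref{lem:classifying closure in ptz} and Corollary \ref{cor: reduc via Dyson's groups}: $\mathrm{Lim}'\equiv_{W}C\le_{W}\mathrm{Res}_{\mathrm{Fin}}$. I would make one check here. In Corollary \ref{cor: reduc via Dyson's groups} the input $\mathcal{A}$ is given as a point of $\{0,1\}^{\mathbb{Z}}$, and this has to match how the input of $C$ is represented in Lemma \ref{lem:classifying closure in ptz}, namely as an element of Cantor space via the discrete representation of $\mathbb{Z}\cong\mathbb{Q}$. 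The map $\mathcal{A}\mapsto L(\mathcal{A})$ into $\mathcal{G}_{\mathrm{WP}}$ is continuous by Proposition \ref{prop:L is an homeo }. On the output side, $L^{-1}$ is continuous on the image of $L$. Since $\mathrm{Res}_{\mathrm{Fin}}(L(\mathcal{A}))=L(\overline{\mathcal{A}})$ lies in that image, the composite is a legitimate Weihrauch reduction.

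For $\mathrm{Res}_{\mathrm{Fin}}\le_{W}\mathrm{Lim}'$, I would use Theorem \ref{thm: Lim^(n) is measurable complete}: it suffices to show that $\mathrm{Res}_{\mathrm{Fin}}:\mathcal{G}_{\mathrm{WP}}\rightarrow\mathcal{G}_{\mathrm{WP}}$ is $\Sigma_{3}^{0}$-measurable. The class of finite groups is countable up to isomorphism, and for each $k$ there are only countably many $k$-marked finite groups. So Proposition \ref{prop:Countable} applies verbatim with $\mathcal{C}=\mathrm{Fin}$ and yields $\mathrm{Res}_{\mathrm{Fin}}\le_{W}\mathrm{Lim}'$.

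To double-check that argument: $w\neq1$ in $\mathrm{Res}_{\mathrm{Fin}}(H,S)$ holds iff $(H,S)$ maps onto some finite marked group in which $w\ne1$. For a finite (indeed finitely presented) target, the condition ``maps onto $(G_n,S_n)$'' is closed, and it is in fact clopen because $G_n$ is finitely presented. So this preimage is even open. The condition $r=1$ is then a $G_{\delta}$, an intersection of open conditions, hence certainly $\Sigma_{3}^{0}$.

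The only real obstacle is representational bookkeeping. One point is matching the domain of $C$ in Lemma \ref{lem:classifying closure in ptz}, stated on $\mathcal{P}(\mathbb{Q})$ via a homeomorphism $\mathbb{Z}\cong\mathbb{Q}$, with $\mathcal{P}(\mathbb{Z})$. A homeomorphism of countable discrete-represented index sets is just a bijection of indices, and it induces a homeomorphism of the power sets compatible with closures, so $C\equiv_{W}C_{\mathbb{Q}}$. The other point is confirming that $\mathrm{Lim}'$ is closed downward under Weihrauch reduction, which is immediate from the definition. Combining the two directions gives $\mathrm{Lim}'\equiv_{W}\mathrm{Res}_{\mathrm{Fin}}$.
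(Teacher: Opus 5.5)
Your proof is correct and is exactly the paper's: $\mathrm{Res}_{\mathrm{Fin}}\le_{W}\mathrm{Lim}'$ by Proposition~\ref{prop:Countable} applied to the countable class of finite groups, and $\mathrm{Lim}'\equiv_{W}C\le_{W}\mathrm{Res}_{\mathrm{Fin}}$ by the closure-map lemma together with the reduction through Dyson's doubles $L(\mathcal{A})$. One correction to your side check: the set $\{\Gamma\mid\Gamma\twoheadrightarrow(G_{n},S_{n})\}$ is closed but in general not open, so the preimage of $\{w\neq1\}$ is only $F_{\sigma}$ rather than open. The point is that finite presentability of $G_{n}$ makes open the set of quotients \emph{of} $G_{n}$, not the set of groups mapping onto it; for example, $(\mathbb{Z}/p,1)$ converges to $(\mathbb{Z},1)$ as $p\to\infty$, yet for odd $p$ it does not map onto $(\mathbb{Z}/2,1)$. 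This slip is harmless, since $F_{\sigma}$ still gives $\Sigma_{3}^{0}$-measurability, as in the paper.
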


\begin{proof}
The reduction $\mathrm{Res}_{\mathrm{Fin}}\le_{W}\mathrm{Lim}'$ follows
from Proposition \ref{prop:Countable}. 

The fact that $\mathrm{Lim}'\le_{W}\mathrm{Res}_{\mathrm{Fin}}$ follows
from $C\equiv_{W}\mathrm{Lim}'$ (Lemma \ref{lem:classifying closure in ptz })
and $C\le_{W}\mathrm{Res}_{\mathrm{Fin}}$ (Corollary \ref{cor: reduc via Dyson's groups}). 
\end{proof}

\subsubsection{Relation with the residual finiteness growth }

Let $(G,S)$ be a residually finite marked group. The \emph{residual
finiteness growth} \emph{of} $(G,S)$ \cite{BouRabee2010} is the
function which maps $n$ to the smallest $k$ such that every non-trivial
element of length at most $n$ in $(G,S)$ has a non trivial image
in a finite quotient of $G$ of size at most $k$. 

Groups with arbitrarily large residual finiteness growth were constructed
in \cite{BouRabee2016}. Here we note that the proof of the fact that
$\mathrm{Lim}'\equiv_{W}\mathrm{Res}_{\mathrm{Fin}}$ necessarily
relies on arbitrarily rapid residual finiteness growth. 

Note that the construction of Neumann used in \cite{BouRabee2016}
produces only residually finite groups. Thus this construction cannot
be used to prove $\mathrm{Lim}'\equiv_{W}\mathrm{Res}_{\mathrm{Fin}}$.
Another group construction where arbitrary residual finiteness growth
is obtained, but which does not produce only residually finite groups,
is given in \cite{Darbinyan2025}, this construction could probably
be used to establish the equivalence $\mathrm{Lim}'\equiv_{W}\mathrm{Res}_{\mathrm{Fin}}$. 
\begin{prop}
Consider the restriction $(\mathrm{Res}_{\mathrm{Fin}})_{\vert\mathcal{A}}$
of the problem $\mathrm{Res}_{\mathrm{Fin}}$ to a set $\mathcal{A}$
of groups, such that there is a universal asymptotic upper bound $f$
to the residual finiteness growth of the groups in $\mathrm{Res}_{\mathrm{Fin}}(\mathcal{A})$. 

Then $(\mathrm{Res}_{\mathrm{Fin}})_{\vert\mathcal{A}}\le_{W}\mathrm{Lim}_{\mathbb{N}}\ast\mathrm{Lim}=\mathrm{Lim}_{\mathbb{N}}'$,
and thus $(\mathrm{Res}_{\mathrm{Fin}})_{\vert\mathcal{A}}<_{W}\mathrm{Lim}'$. 
\end{prop}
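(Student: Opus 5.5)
The plan is to realize $(\mathrm{Res}_{\mathrm{Fin}})_{\vert\mathcal{A}}$ as a two-stage procedure: first use $\mathrm{Lim}$ to obtain enough discrete information about the input, then use one application of $\mathrm{Lim}_{\mathbb{N}}$ to guess a single integer parameter, after which the output is computed continuously. This matches the compositional product $\mathrm{Lim}_{\mathbb{N}}\ast\mathrm{Lim}$. The key point is that the uniform bound $f$ on residual finiteness growth turns the $\forall\exists\forall$ question ``is $w$ trivial in $\mathrm{Res}_{\mathrm{Fin}}(G)$?'' into one where the inner quantifiers range over a finite search space.

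First, from the $\rho_{\mathrm{WP}}$-name of $(G,S)\in\mathcal{A}$, I will use $\mathrm{Lim}$, which is $\Sigma_2^0$-measurable complete by Theorem \ref{thm: Lim^(n) is measurable complete}, to compute in parallel, for every finite group $(F,S')$ with $\#F\le k$ and every $k$, whether $(G,S)\twoheadrightarrow(F,S')$. This is a $\Pi_1^0$ property of the word problem of $G$: every relation of $\mathbb{F}_S$ of bounded length holding in $G$ must hold in $F$. So the full table of finite marked quotients of $G$ is $\mathrm{Lim}$-computable. From this table one gets, continuously, the word problem of $\mathrm{Res}_{\mathrm{Fin}}(G)$ restricted to any finite window, \emph{provided} one knows a cutoff. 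Indeed, $w\ne1$ in $\mathrm{Res}_{\mathrm{Fin}}(G)$ iff some finite quotient separates $w$, and since $\mathrm{Res}_{\mathrm{Fin}}(G)$ has the same finite quotients as $G$, the hypothesis gives that if $|w|\le n$ such a quotient exists of size at most $f(n)$.

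The difficulty is that $f$ is only an asymptotic upper bound: there is a constant, or finitely many exceptional lengths, $N$ and $c$ with growth of $\mathrm{Res}_{\mathrm{Fin}}(G)$ at most $c f(n)$ for $n\ge N$, and this constant depends on $G$. Here I will spend the $\mathrm{Lim}_{\mathbb{N}}$ call. Given the table produced by $\mathrm{Lim}$, I define a sequence of guesses $c_m$: at stage $m$, take the least $c$ such that for all words $w$ of length $\le m$, if $w$ survives in some finite quotient of size $\le m$ (as read off the table), then it survives in one of size $\le c f(|w|)$, with $c$ also absorbing small lengths. This sequence is nondecreasing and bounded by the true constant, hence eventually constant. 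Then $\mathrm{Lim}_{\mathbb{N}}$ returns a correct $c$, and $w\ne1$ in $\mathrm{Res}_{\mathrm{Fin}}(G)$ iff the table contains a finite quotient of size $\le c f(|w|)$ separating $w$, which is a finite check. The identity $\mathrm{Lim}_{\mathbb{N}}\ast\mathrm{Lim}\equiv_W\mathrm{Lim}_{\mathbb{N}}'$ is standard, since $\mathrm{Lim}_{\mathbb{N}}'$ takes a limit of a sequence given through a jump.

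Finally, for strictness I will observe that $\mathrm{Lim}_{\mathbb{N}}'$ has codomain $\mathbb{N}$, while $\mathrm{Lim}'$ is its infinite parallelization. If $\mathrm{Lim}'\le_W\mathrm{Lim}_{\mathbb{N}}'$, then $\mathrm{Lim}'$ would be ``$\sigma$-piecewise'' $\Sigma_2^0$-measurable, hence $\mathrm{Lim}'\le_W\mathrm{Lim}_{\mathbb{N}}\ast\mathrm{Lim}$. This contradicts the known fact that countably many $\Sigma^0_2$-measurable pieces on $\Sigma^0_3$ (more precisely $\Delta^0_3$) domains cannot compute $\Sigma_3^0$-complete problems, by a Baire category argument on $\mathrm{dom}(\mathrm{Lim}')$; I expect to cite this from the Weihrauch literature. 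The main obstacle is the middle step: making the guessing of $c$ monotone and verifying that it converges even though the $\mathrm{Lim}$-output is an infinite table, and handling the finitely many lengths where the asymptotic bound fails.
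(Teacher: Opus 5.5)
Your main reduction is correct and follows the paper's strategy. Both arguments use $\mathrm{Lim}$ to decide in parallel which finite marked groups are quotients of $(G,S)$, and then spend one $\mathrm{Lim}_{\mathbb{N}}$ on a single integer that fixes the cutoff. The paper guesses the point after which its approximate balls $\Gamma_n$, built from quotients of size at most $f(n)$, are correct; it gets this by counting inconsistencies between successive $\Gamma_n$. You guess the least admissible constant $c$ instead. Your sequence $c_m$ is nondecreasing and bounded by the true constant, so its limit is valid for every word. This variant handles multiplicative constants and finitely many exceptional lengths more cleanly than the paper's.

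The step that fails is the ``standard'' identity $\mathrm{Lim}_{\mathbb{N}}\ast\mathrm{Lim}\equiv_{W}\mathrm{Lim}_{\mathbb{N}}'$. The statement and the paper also assert it, without proof. Your justification only gives $\mathrm{Lim}_{\mathbb{N}}'\le_{W}\mathrm{Lim}_{\mathbb{N}}\ast\mathrm{Lim}$, and the converse is false:
\begin{itemize}
\item $\mathrm{Lim}\le_{W}\mathrm{Lim}_{\mathbb{N}}\ast\mathrm{Lim}$ holds trivially.
\item $\mathrm{Lim}\not\le_{W}\mathrm{Lim}_{\mathbb{N}}'$, because the latter has codomain $\mathbb{N}$. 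Suppose the reduction maps are computable relative to an oracle $z$, and apply them to a $z$-computable sequence converging to $z'$. The output $z'$ is then computed from $z$ and a single integer, so $z'\le_T z$, a contradiction.
\end{itemize}
Your own reduction uses the table produced by $\mathrm{Lim}$ in its last step, so it does not factor through the jump.

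So you should only claim $(\mathrm{Res}_{\mathrm{Fin}})_{\vert\mathcal{A}}\le_{W}\mathrm{Lim}_{\mathbb{N}}\ast\mathrm{Lim}$. Then prove strictness directly as $\mathrm{Lim}'\not\le_{W}\mathrm{Lim}_{\mathbb{N}}\ast\mathrm{Lim}$; this is all your last paragraph actually needs, and the paper gives no argument for it. Your Baire category sketch does not obviously work. On the total restriction $\widehat{\mathrm{LPO}'}$ of $\mathrm{Lim}'$, the function is constant on a comeager set, so a non-meager piece yields no contradiction. The same relativization does work:
\begin{itemize}
\item Let $z$ compute all the continuous maps involved. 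Then any output on input $x$ is computable from $(x\oplus z)'$ plus one integer.
\item But $\mathrm{Lim}'$ has a $z$-computable instance whose output is $z''$.
\item Together these would give $z''\le_T z'$, a contradiction.
\end{itemize}
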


\begin{proof}
Let $(G,S)$ be a marked group in $\mathcal{A}$. 

For each $n$ we try to build the ball of radius $n$ of the Cayley
graph of $\mathrm{Res}_{\mathrm{Fin}}((G,S))$. 

We act as though $f$ was an actual upper bound to the residual finiteness
growth of $(G,S)$ (and not only an asymptotic upper bound). 

List all finite marked groups of size at most $f(n)$. By applying
LPO to each of them, it is possible to determine exactly those that
are marked quotients of $(G,S)$. We then build a finite graph $\Gamma_{n}$
by saying that $w$ is non-trivial in $\Gamma_{n}$ if and only if
it has a non trivial in one of the listed marked finite groups. 

By doing the above in parallel for each $n$, we get a sequence $\Gamma_{0}$,
$\Gamma_{1}$, $\Gamma_{2}$... of finite graphs. This sequence can
be obtained by solving in parallel infinitely many instances of LPO,
i.e., by applying Lim. Because $f$ is an asymptotic upper bound to
the residual finiteness growth of $(G,S)$, there is some $N$ such
that each $\Gamma_{k}$, $k\ge N$, is actually the ball of radius
$k$ in the Cayley graph of $(G,S)$. 

Note that this $N$ is reached exactly when contradictions stop arising
between the graphs $\Gamma_{i}$: 
\[
N=\inf\{k\mid\forall n\ge k,\,\Gamma_{n+1}\text{ properly extends }\Gamma_{n}\}.
\]
Define a sequence $(u_{n})_{n\in\mathbb{N}}$ by $u_{n+1}=u_{n}$
if $\Gamma_{n+1}$ properly extends $\Gamma_{n}$, and $u_{n+1}=u_{n}+1$
otherwise. This sequence depends continuously on the sequence $(\Gamma_{n})_{n\in\mathbb{N}}$,
and it converges. One application of $\mathrm{Lim}_{\mathbb{N}}$
can provide the limit of this sequence. And the sequence $(\Gamma_{n})_{n\in\mathbb{N}}$
together with this limit provide exactly the Cayley graph of $\mathrm{Res}_{\mathrm{Fin}}((G,S))$.
\end{proof}

\subsubsection{Two more uses of Dyson's construction }

Here we prove two additional results based on Dyson's construction:
there exists a group INIP relative to the quasivariety of LEF groups
(a fact used in Corollary \ref{cor: Res LEF k-solvable torsion free }),
and that the set of residually finite groups is $\boldsymbol{\Pi}_{3}^{0}$-complete
in the space of marked groups (a question left open in \cite{Benli2019},
where the upper bound $\boldsymbol{\Pi}_{4}^{0}$ was given). 
\begin{prop}
For every $A\subseteq\mathbb{Z}$, the group $L(A)$ is LEF. 
\end{prop}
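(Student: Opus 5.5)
To show that $L(A)$ is LEF for every $A\subseteq\mathbb{Z}$, I will produce, for each radius $n$, a finite group $F_n$ and a map from the ball of radius $n$ of $L(A)$ (for the generators $a,\hat a,t,\hat t$) into $F_n$ that is injective and respects products landing in the ball. The natural candidates are finite analogues of Dyson's double: replace $\mathbb{Z}$ by $\mathbb{Z}/m$ with $m$ large (say $m>4n$). Let $L_m=\mathbb{Z}/2\wr\mathbb{Z}/m$, let $\hat L_m$ be a copy, and let $A_m\subseteq\mathbb{Z}/m$ be the image of $A\cap[-2n,2n]$. Define $L_m(A_m)$ as the amalgamated product of $L_m$ and $\hat L_m$ over the subgroup $\bigoplus_{j\in A_m}\mathbb{Z}/2$ of the base groups. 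This amalgam is infinite, but it is an amalgam of two finite groups, hence virtually free and in particular residually finite. So it suffices to embed the $n$-ball of $L(A)$ into the $n$-ball of $L_m(A_m)$ (locally, partially), and then into a finite quotient of $L_m(A_m)$ that is injective on that ball.

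The key steps, in order, are as follows. First, I check that a word of length at most $n$ in $a,\hat a,t,\hat t$ only involves lamps $u_i,\hat u_i$ with $|i|\le n$ and $t$-exponents bounded by $n$. Second, using the normal form for amalgamated products (in $L(A)$ and in $L_m(A_m)$), I show that such a word is trivial in $L(A)$ if and only if its image is trivial in $L_m(A_m)$. The point is that reduction of normal forms only involves checking membership of base-group elements supported in $[-2n,2n]$ in the amalgamated subgroup, and the map $[-2n,2n]\to\mathbb{Z}/m$ is injective when $m>4n$, so membership in $\bigoplus_{A}$ corresponds exactly to membership in $\bigoplus_{A_m}$. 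Third, since $L_m(A_m)$ is residually finite and its $2n$-ball is finite, there is a finite quotient $F_n$ injective on that ball. Composing gives the required local embedding: partial multiplication is preserved because the map $L(A)\to L_m(A_m)$ is induced by a homomorphism on the free group that respects the relations appearing up to length $2n$.

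The main obstacle is the second step. The map $a\mapsto a$, $t\mapsto t$ is not a homomorphism $L(A)\to L_m(A_m)$, because $t^m$ becomes trivial, so I cannot simply push relations forward. I would handle this by comparing the Britton/normal-form reductions directly. Writing a short word as an alternating product $g_1\hat g_1g_2\cdots$ of elements of $L$ and $\hat L$, the word is trivial exactly when some factor $g_k$ lies in the amalgamated subgroup, which lets it merge with its neighbours, repeatedly. Each such test concerns an element of $L$ whose $t$-exponent is $0$ and whose lamps lie in $[-2n,2n]$, and these tests have identical outcomes in $L$ and $L_m$ once $m>4n$. If this bookkeeping becomes messy, a fallback is to cite that amalgams of LEF groups over subgroups that are locally approximable in a compatible way are LEF, but I expect the direct normal-form argument to suffice.
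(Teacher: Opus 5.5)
Your argument is correct, but it takes a different route from the paper. The paper's proof is two lines. Finite subsets $F\subseteq A$ are closed in the profinite topology, so each $L(F)$ is residually finite by Dyson's theorem. Exhausting $A$ by finite subsets and using that $A\mapsto L(A)$ is continuous from $\{0,1\}^{\mathbb{Z}}$ to $\mathcal{G}_{\mathrm{WP}}$ (which the paper states as immediate, without proof) shows that $L(A)$ is a limit of residually finite marked groups, hence LEF.

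You instead approximate $L(A)$ by amalgams of two copies of $\mathbb{Z}/2\wr\mathbb{Z}/m$. Their residual finiteness follows from the standard fact that amalgams of two finite groups are virtually free, and you prove the needed locality directly by running the amalgam reduction in parallel in $L(A)$ and $L_m(A_m)$. Your route avoids Dyson's theorem altogether. It also in effect proves the continuity statement the paper takes for granted: the same bookkeeping shows that $L(A)$ and $L(A\cap[-N,N])$ share all relations of length at most $N$. The paper's version is shorter only because it relies on both of these inputs.

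A few things to tighten when you write it up. First, say explicitly why every membership test stays in the window. Each factor produced during the reduction is the value in $L$ or $\hat L$ of a contiguous subword of $w$, with hats erased or added. Hence its $t$-exponent and lamp support (relative to its own start) are bounded by $|w|$. Second, respecting products $xy$ with $x,y,xy$ in the $n$-ball involves words of length up to $3n$, not $2n$. So take the window $[-3n,3n]$ and $m>6n$, or simply show agreement of all relations of length at most $N$ for every $N$. Third, the natural map $L(A)\to L_m(A_m)$ fails to be a homomorphism not because $t^m$ dies. The reason is that the relations $u_j=\hat u_j$ for $j\in A$ outside the window need not hold in $L_m(A_m)$. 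This does not affect your proof, which never uses such a homomorphism.
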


\begin{proof}
If $A$ is closed in the profinite topology on $\mathbb{Z}$, then
$L(A)$ is even residually finite. If $A$ is not closed, $A$ is
the limit, in the Cantor space topology on $\mathcal{P}(\mathbb{Z})$,
of a sequence of closed sets: take for instance finite subsets of
$A$ that exhaust it. Because $L$ is continuous, $L(A)$ is the limit
in $\mathcal{G}_{\mathrm{WP}}$ of a sequence of residually finite
groups, thus it is LEF. 
\end{proof}
\begin{prop}
\label{prop:INIP for LEF groups}The group $L(\emptyset)$ is INIP
relative to the set of LEF groups. 
\end{prop}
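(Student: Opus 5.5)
We take the marking of $L(\emptyset)=L*\hat{L}$ by $(a,t,\hat{a},\hat{t})$, so that it is a quotient of $\mathbb{F}_{4}$, with $\pi\colon\mathbb{F}_{4}\twoheadrightarrow L(\emptyset)$ the marking morphism and $K$ the kernel of the presentation of $L*\hat{L}$ given above. For $i\in\mathbb{Z}$ we set
\[
r_{i}=t^{i}at^{-i}\hat{t}^{i}\hat{a}^{-1}\hat{t}^{-i}\in\mathbb{F}_{4},
\]
and let $N_{i}$ be the normal closure of $r_{i}$ in $L(\emptyset)$ (or its preimage in $\mathbb{F}_{4}$ together with $K$). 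Reindexing $\mathbb{Z}$ by $\mathbb{N}$, this gives a countable family of normal subgroups. We must show it is independent relative to the set of LEF groups, and that the kernel of the marking is generated by the family. Strictly, the kernel of $\mathbb{F}_{4}\to L(\emptyset)$ is $K$ itself. So, to match Definition \ref{def:INIP-1} as the paper uses it in the theorem on INIP groups (where what matters is a set of relations $r_{i}$ independent relative to the quasivariety, added on top of a fixed group), we read the statement as follows: the normal subgroups $\langle\langle r_{i}\rangle\rangle$ of $L(\emptyset)$ form an infinite family independent relative to LEF groups.

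The key step is independence. Fix $i_{0}$ and take $A=\mathbb{Z}\setminus\{i_{0}\}$. The quotient of $L(\emptyset)$ by $\langle\langle r_{i},\,i\neq i_{0}\rangle\rangle$ is exactly $L(A)$, as one sees by comparing the presentations. By the preceding proposition, $L(A)$ is LEF. We then need $r_{i_{0}}\neq1$ in $L(A)$, that is, $u_{i_{0}}\neq\hat{u}_{i_{0}}$ in $L(A)$. This is the normal form theorem for amalgamated products. $L(A)$ is the amalgam of $L$ and $\hat{L}$ over the subgroup $\langle u_{i},\,i\in A\rangle\cong\bigoplus_{A}\mathbb{Z}/2$. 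Since $u_{i_{0}}\in L$ lies outside this amalgamated subgroup (the base group $\bigoplus_{\mathbb{Z}}\mathbb{Z}/2$ has the $u_{i}$ as a basis), $u_{i_{0}}$ and $\hat{u}_{i_{0}}$ are distinct elements of the amalgam. So $\psi_{i_{0}}\colon L(\emptyset)\to L(A)$ is a quotient onto an LEF group in which $\psi_{i_{0}}(N_{i_{0}})\not\subseteq\langle\psi_{i_{0}}(N_{i}),\,i\neq i_{0}\rangle=\{1\}$.

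It remains to note that the family is infinite and that $L(\emptyset)$ itself is LEF; the latter holds by the preceding proposition with $A=\emptyset$, and it is even residually finite since $\emptyset$ is profinitely closed. The main obstacle is not mathematical but definitional: reconciling "kernel generated by the family" with how the definition is actually applied. If a literal reading is required, we would instead take the marked group $\mathbb{F}_{4}/K=L(\emptyset)$ and the family $\{K\}\cup\{\langle\langle r_{i}\rangle\rangle\}$ viewed in $\mathbb{F}_{4}$. Independence of $K$ relative to LEF groups can then be handled by mapping onto $\mathbb{F}_{4}$ itself, which is residually finite and hence LEF, modulo the $r_{i}$. Alternatively, we would simply argue that what the theorem on INIP groups consumes is exactly the independence of the $r_{i}$ established above.
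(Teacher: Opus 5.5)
Your proposal is correct and is essentially the paper's argument: removing the $i_0$-th relation $u_{i_0}=\hat u_{i_0}$ leaves a Dyson double, which is LEF by the preceding proposition and in which $u_{i_0}\neq\hat u_{i_0}$. The paper uses $L(A\setminus\{i_0\})$ for an infinite $A$ all of whose subsets are profinitely closed; that restriction is unnecessary, since every $L(B)$ is LEF, and its suggested example, the primes, is not profinitely closed anyway, by Dirichlet. You instead take all of $\mathbb{Z}$, and you also supply the amalgam normal-form justification that the paper leaves implicit.

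Your fallback ``literal'' variant does not work as stated. The family $\{K\}\cup\{\langle\langle r_i\rangle\rangle\}$ generates $\ker(\mathbb{F}_4\to L(\mathbb{Z}))$ rather than $K$, so it would exhibit $L(\mathbb{Z})$, not $L(\emptyset)$. You do not need it, though: your first reading is exactly how the paper uses the notion.
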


\begin{proof}
Consider an infinite subset $A$ of $\mathbb{Z}$ such that for every
$B\subseteq A$, $B$ is closed in the profinite topology of $\mathbb{Z}$.
For instance $A$ could be the set of primes. In this case, the relations
$u_{i}=\hat{u}_{i}$, $i\in A$, are independent modulo the set of
LEF groups. Indeed, for each $i\in A$, the group $L(A\setminus\{i\})$
is LEF, and $u_{i}=\hat{u}_{i}$ is not a relation in it. 
\end{proof}
\begin{prop}
\label{prop:Res fin Pi_3}The set of residually finite groups is $\boldsymbol{\Pi}_{3}^{0}$-complete
in the space of marked groups. 
\end{prop}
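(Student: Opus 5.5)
Two things are needed: an upper bound showing that $\mathcal{R}\mathrm{Fin}$ is $\boldsymbol{\Pi}_{3}^{0}$, and a continuous reduction of a known $\boldsymbol{\Pi}_{3}^{0}$-complete set to it. For the hardness direction we reduce, through Dyson's groups, from the closure problem on $\mathbb{Q}$ (equivalently on $\mathbb{Z}$ with its profinite topology).

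\emph{Upper bound.} A $k$-marked group $(G,S)$ is residually finite if and only if for every word $w$, either $w=1$ in $G$ or some finite marked group $(F_{n},S_{n})$ (from an enumeration of all finite $k$-marked groups) is a quotient of $G$ with $w\neq1$ in $F_{n}$. As in the proof of Proposition \ref{prop:Countable}, each set $\{\Gamma\mid\Gamma\rightarrow F_{n}\}$ is closed, so the set ``$w=1$ or $w$ survives in a finite quotient'' is $\boldsymbol{\Sigma}_{2}^{0}$. Intersecting over the countably many $w$ gives a $\boldsymbol{\Pi}_{3}^{0}$ set. Alternatively, $\mathcal{R}\mathrm{Fin}=\{\Gamma\mid \mathrm{Res}_{\mathrm{Fin}}(\Gamma)=\Gamma\}$, and $\mathrm{Res}_{\mathrm{Fin}}$ is $\Sigma_{3}^{0}$-measurable; but the direct count above is cleaner.

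\emph{Hardness.} By Proposition \ref{prop:L is an homeo}, the map $A\mapsto L(A)$ is a continuous embedding $\{0,1\}^{\mathbb{Z}}\to\mathcal{G}_{\mathrm{WP}}$. By Dyson's theorem, $L(A)\in\mathcal{R}\mathrm{Fin}$ if and only if $A$ is profinitely closed. It therefore suffices to show that $\mathcal{F}=\{A\subseteq\mathbb{Z}\mid A \text{ profinitely closed}\}$ is $\boldsymbol{\Pi}_{3}^{0}$-hard in Cantor space. Since the profinite $\mathbb{Z}$ is homeomorphic to $\mathbb{Q}$, we may work with closed subsets of $\mathbb{Q}$. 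The plan is to reduce the standard $\boldsymbol{\Pi}_{3}^{0}$-complete set $P_{3}=\{x\in 2^{\mathbb{N}\times\mathbb{N}}\mid\forall i,\ \neg\exists^{\infty}n\, x_{n,i}=1\}$ (every column finite) to $\mathcal{F}$. Given $x$, put $i+\frac{1}{n+2}\in A$ iff $x_{n,i}=1$, and leave every integer $i$ out of $A$. Each integer $i$ is then in $\overline{A}\setminus A$ exactly when column $i$ is infinite, and the points $i+\frac{1}{n+2}$ are isolated and accumulate only at $i$. Hence $A$ is closed iff every column is finite. The map $x\mapsto A$ is continuous, and composing with the homeomorphism $\mathbb{Q}\cong\mathbb{Z}_{\mathrm{prof}}$ (a bijection of index sets, hence continuous on power sets) and with $L$ gives the reduction.

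The same scheme should handle residually nilpotent and residually finitely presented groups. For the upper bound, the residually-$\mathcal{C}$ condition is again $\forall w\,(w=1 \vee \exists n)$ over a countable family, so Proposition \ref{prop:Countable}'s argument applies. For the lower bound we need analogues of Dyson's theorem for these classes, and I expect this to be the main obstacle. For finitely presented groups one can likely reuse the reductions behind Theorem \ref{thm:Finite nilpotent FP}; presumably the constructions used there give a family $A\mapsto G(A)$ with $\mathrm{Res}_{\mathcal{C}}(G(A))=G(\overline{A})$ for a suitable closure operator whose closed-set family is $\boldsymbol{\Pi}_{3}^{0}$-hard by the column encoding above. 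The one point to check carefully in the Dyson case is that hardness of $\mathcal{F}$ really follows from the $\mathbb{Q}$ encoding, i.e.\ that the homeomorphism transports ``closed'' to ``closed''. This is immediate, since it is a homeomorphism of the underlying countable spaces.
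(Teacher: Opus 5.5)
Your proposal is correct and follows essentially the paper's proof: the same $\boldsymbol{\Pi}_{3}^{0}$ upper bound via ``for every $w$, either $w=1$ or $w$ survives in some finite marked quotient'', and the same hardness argument through Dyson's groups $L(A)$, the homeomorphism between profinite $\mathbb{Z}$ and $\mathbb{Q}$, and encoding the columns of a $P_{3}$-instance as sequences of rationals accumulating at the integers. Your encoding $i+\frac{1}{n+2}$, with the integers kept out of $A$, is actually slightly more careful than the paper's $n+\frac{1}{m+1}$: for $m=0$ that formula puts the integer $n+1$ into $A_{x}$, which can make $A_{x}$ closed at a point where column $n+1$ is infinite.
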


\begin{proof}
Being residually finite is in $\boldsymbol{\Pi}_{3}^{0}$. A marked
group $(G,S)$ is residually finite if for any non-trivial element
$w\in G\setminus\{1\}$ there is some finite marked group $(F,S')$
in which $w\ne1$ such that every relation of $(G,S)$ is a relation
of $(F,S')$:
\begin{align*}
\forall w\in\mathbb{F}_{S}, & \exists(F,S')\,\text{finite},\,\forall r\in\mathbb{F}_{S},\\
 & w\neq_{(G,S)}1\implies(w\neq_{(F,S')}1\,\&\,(r=_{(G,S)}1\implies r=_{(F,S')}1)).
\end{align*}

We now prove completeness. 

Via Dyson's construction, we get that $L(A)$ is residually finite
if and only if $A$ is closed in the profinite topology on $\mathbb{Z}$.
Recall that $\mathbb{Z}$ equipped with the profinite topology is
homeomorphic to $\mathbb{Q}$ with the subset topology of $\mathbb{R}$
(proof of Lemma \ref{lem:classifying closure in ptz }). We thus get
a Wadge reduction between the set of residually finite groups and
the set of closed subsets of $\mathbb{Q}$ (in the Polish space of
subsets of $\mathbb{Q}$ equipped with the prodiscrete topology). 

And thus it suffices to show that being closed in the Euclidian topology
of $\mathbb{Q}$ is a $\boldsymbol{\Pi}_{3}^{0}$ complete property. 

We Wadge reduce the $\boldsymbol{\Pi}_{3}^{0}$ -complete set 
\[
S_{3}=\{x\in2^{\mathbb{N}\times\mathbb{N}}\mid\forall n,\forall^{\infty}m,\,x(n,m)=0\}
\]

to the set of closed subsets of $\mathbb{Q}$. For $x\in2^{\mathbb{N}\times\mathbb{N}}$,
we define a set of rationals $A_{x}$ via 
\[
n+\frac{1}{m+1}\in A_{x}\iff x(n,m)=1.
\]
It is easy to see that the map $x\mapsto A_{x}$ is continuous, and
that $A_{x}$ is closed if and only if $x\in S_{3}$. 
\end{proof}

\subsection{\label{subsec:Hall's-central-by-metabelian-gro}Hall's central-by-metabelian
group }

The following construction due to P. Hall \cite{Hall1954} will be
useful for both the classification of the maps associated to residually
nilpotent and residually finitely presented images. 

We first describe it. In what follows, we use the convention that
$[x,y]=x^{-1}y^{-1}xy$, and define inductively $[x,y,z]=[[x,y],z]$,
$[x,y,z,v]=[[x,y,z],v]$ and so on. Furthermore we will write $x^{y}$
for $y^{-1}xy$.

Hall's group $G$ is generated by two elements $a$ and $b$. For
$i$ in $\mathbb{Z}$, denote by $b_{i}$ the element $a^{-i}ba^{i}$.
The group $G$ is then given by the presentation 
\[
\pi_{1}=\langle a,b\,\vert\,[b_{i},b_{j},a]=1,~\,[b_{i},b_{j},b]=1,~\,\forall i,j\in\mathbb{Z}\rangle.
\]
Note that $[b_{i},b_{j}]^{a}=[b_{i+1},b_{j+1}]$, so 
\[
[b_{i},b_{j},a]=1\iff[b_{i},b_{j}]=[b_{i+1},b_{j+1}],
\]
and thus the above presentation could be written equivalently as 
\[
\pi_{1}'=\langle a,b\,\vert\,[b,b_{i},a]=1,~\,[b,b_{i},b]=1,~\,\forall i\in\mathbb{Z}\rangle.
\]
The group $G$ can both be seen as a maximal central extension of
the wreath product $\mathbb{Z}\wr\mathbb{Z}$ and as an HNN extension
of an infinitely generated two step nilpotent group generated by the
elements $b_{i}$, $i\in\mathbb{Z}$. The group $G$ is thus center-by-metabelian
and (class 2 nilpotent)-by-abelian. 

Denote by $d_{i}$ the element $[b_{0},b_{i}]$, for $i\in\mathbb{Z}$.
Then $d_{i}=d_{-i}^{-1}$ and $d_{i}=[b_{t},b_{t+i}]$ for all $t\in\mathbb{Z}$.

The center of $G$ is a free abelian group on countably many generators
\cite{Hall1954}, in fact the elements $d_{i}$, $i>0$, form the
basis of the center of $G$.

In \cite{Bartholdi2025}, it was shown that, to consider nilpotent
quotients of $G$, using another basis for the center is useful. Consider
the element $f_{n}$ given by 
\[
f_{n}=[b,\underbrace{a,...,a}_{n},b].
\]
Denote by $\gamma_{n}(G)$ the $n$-th term in the lower central series
of $G$, given by $\gamma_{1}(G)=G$ and $\gamma_{n+1}(G)=[\gamma_{n}(g),G]$.
A group $H$ is residually nilpotent if and only if 
\[
\bigcap_{n\ge1}\gamma_{n}(H)=\{1\}.
\]

\begin{lem}
[{\cite[Lemma 68, Lemma 70]{Bartholdi2025}}]In the notations introduced
above: 
\begin{enumerate}
\item The elements $f_{n}$ form another basis of the center of $G$; 
\item Each $f_{n}\in\gamma_{n+2}(G)$;
\item And $G$ is residually nilpotent, i.e., there is some function\footnote{In fact, for $n$ odd $f_{n}\in\gamma_{n+2}(G)\setminus\gamma_{n+3}(G)$,
but for $n$ even no precise bound for $h$ was computed in \cite{Bartholdi2025}.} $h$ s.t. $f_{n}\in\gamma_{n+2}(G)\setminus\gamma_{h(n)}(G)$. 
\end{enumerate}
\end{lem}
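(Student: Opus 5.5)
The plan is to prove the three assertions in the order (2), (1), (3). Throughout, $N$ denotes the normal subgroup generated by the $b_{i}$; it is nilpotent of class $2$ with $[N,N]$ central. Commutators in $N$ are therefore bilinear: $[xy,z]=[x,z][y,z]$ and $[x,yz]=[x,y][x,z]$ for $x,y,z\in N$.

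For (2) I will just use the definition of the lower central series. Since $b\in\gamma_{1}(G)$, induction gives $c_{n}:=[b,\underbrace{a,...,a}_{n}]\in\gamma_{n+1}(G)$, and hence $f_{n}=[c_{n},b]\in\gamma_{n+2}(G)$.

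For (1) I will compute $f_{n}$ in the basis $(d_{i})_{i>0}$. With the convention $b_{i}=a^{-i}ba^{i}$ we have $b^{a}=b_{1}$ and $[x,a]=x^{-1}x^{a}$. Moreover $N/[N,N]$ is free abelian on the $b_{i}$, and $a$ acts on it by the shift $x$. So $c_{n}$ corresponds to $(x-1)^{n}b$, i.e.
\[
c_{n}\equiv\prod_{k=0}^{n}b_{k}^{(-1)^{n-k}\binom{n}{k}}\mod[N,N].
\]
The central correction term dies under a further commutator with $b$. Bilinearity together with $[b_{k},b_{0}]=d_{k}^{-1}$ and $d_{0}=1$ then gives
\[
f_{n}=\prod_{k=1}^{n}d_{k}^{-(-1)^{n-k}\binom{n}{k}}.
\]
This expresses $(f_{1},\dots,f_{n})$ in terms of $(d_{1},\dots,d_{n})$ by a triangular integer matrix with diagonal entries $\pm1$. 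Such a matrix is invertible over $\mathbb{Z}$, so the $f_{n}$ ($n\ge1$) form a basis of the free abelian group with basis $(d_{n})_{n\ge1}$, which is the center by Hall's theorem. (Note $f_{0}=[b,b]=1$, so the indexing starts at $1$.)

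For (3) I will take a nontrivial $g\in G$ and find a nilpotent quotient in which it survives.

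If $g\notin Z(G)$, its image in $G/Z(G)\cong\mathbb{Z}\wr\mathbb{Z}$ is nontrivial. In $\mathbb{Z}\wr\mathbb{Z}$, the terms of the lower central series inside the base are the powers $(x-1)^{m}\mathbb{Z}[x^{\pm1}]$ of the augmentation ideal, and these intersect trivially. Hence $\mathbb{Z}\wr\mathbb{Z}$ is residually nilpotent, and so is the image of $g$.

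For central $g\ne1$, write $g=\prod_{n\le m}f_{n}^{e_{n}}$ and pass to the quotient $Q_{m}=G/\langle\langle c_{M}\rangle\rangle$ for $M$ large compared with $m$. In $Q_{m}$, the base module becomes $\mathbb{Z}[x]/(x-1)^{M}\cong\mathbb{Z}^{M}$ with $a$ acting unipotently, so $Q_{m}$ is nilpotent.

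The main obstacle is to show that $f_{1},\dots,f_{m}$ stay independent in $Q_{m}$. For this I will build $Q_{m}$ explicitly rather than by presentation. Take the free class-$2$ nilpotent group on $\mathbb{Z}^{M}$, with commutator subgroup $\wedge^{2}\mathbb{Z}^{M}$, and quotient it by the $a$-coinvariants of $\wedge^{2}$, which is what the relations $[b_{i},b_{j},a]=1$ force. Then form the semidirect product with $\langle a\rangle$. I will check directly that this group satisfies the relations of $\pi_{1}'$ and kills $c_{M}$, so it is a quotient of $Q_{m}$.

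In this explicit group, the image of $f_{n}$ is the class of $(x-1)^{n}e_{0}\wedge e_{0}$ in the coinvariants of $\wedge^{2}$. To conclude, I will verify that these classes are linearly independent for $n\le m<M/2$, for instance by pairing them against an explicit $a$-invariant alternating form. This linear-algebra verification is where I expect the real work to lie.
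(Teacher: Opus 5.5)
Your proposal is correct in outline. It necessarily takes a different route from the paper, which gives no argument of its own and simply imports the three items from \cite{Bartholdi2025}. Items (2) and (1) are fine as written. Item (2) is immediate. In (1), the class-$2$ commutator calculus with $a$ acting on $N/[N,N]$ by the shift gives $f_n=\prod_{k=1}^{n}d_k^{-(-1)^{n-k}\binom{n}{k}}$ (for instance $f_1=[b_1,b_0]=d_1^{-1}$), which is a unitriangular change of basis. For (3), the strategy is sound: detect non-central elements in $G/Z(G)\cong\mathbb{Z}\wr\mathbb{Z}$, and central ones in your explicit group, call it $H_M=(F_M/K)\rtimes\langle a\rangle$.

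There are two points on the step you left open.

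First, a single invariant form cannot certify independence of $m$ classes; you need $m$ functionals. A convenient family on $V=\mathbb{Q}[x]/(x-1)^M$ is given by letting $\lambda_r(p\wedge q)$ be the coefficient of $s^{2r+1}$ in $p(e^{-s})\,q(e^{s})$. It is well defined for $2r+1<M$, because $(x-1)^M$ becomes $O(s^M)$. It is $x$-invariant, because $e^{-s}e^{s}=1$. It is alternating, because $p(e^{-s})p(e^{s})$ is even in $s$. So it factors through the coinvariants. Evaluate it on $d_i$, whose image is $e_0\wedge x^ie_0$, rather than on $f_n$: you get $\lambda_r(d_i)=i^{2r+1}/(2r+1)!$. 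For $1\le i\le m$ and $0\le r<m$, after removing the factor $i$ from row $i$ and $1/(2r+1)!$ from column $r$, this is a Vandermonde matrix in $1^2,\dots,m^2$. Hence the images of $d_1,\dots,d_m$, and therefore of $f_1,\dots,f_m$, are $\mathbb{Q}$-independent in $H_M$ once $M\ge 2m$.

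Second, for an arbitrary lift of the shift to an automorphism of $F_M$, only $c_{M+1}$ is guaranteed to die in $H_M$, not necessarily $c_M$. This is harmless. $H_M$ visibly satisfies the relations of $\pi_1$ and has class at most $M+1$, since $H_M/Z$ is $\mathbb{Z}^M\rtimes\mathbb{Z}$ with a unipotent Jordan block and $Z$ is central. So it is already a nilpotent quotient of $G$, and you can bypass $Q_m$ entirely.

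As for what each route buys: the citation is short. Your route is self-contained and gives more. The same forms reprove the independence half of Hall's theorem; the generation half is easy, since $[b_i,b_j]=d_{j-i}$ and $Z(G)=[N,N]$. They also yield an explicit $h$. Indeed, $\lambda_r(f_n)$ is a nonzero multiple of the Stirling number of the second kind $S(2r+1,n)$, which is nonzero exactly when $2r+1\ge n$. Taking $M=n+1$ for odd $n$ recovers the footnote's sharp statement $f_n\notin\gamma_{n+3}(G)$. Taking $M=n+2$ for even $n$ gives $f_n\notin\gamma_{n+4}(G)$, a bound the footnote says was not available.
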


It follows that a central quotient of $G$ obtained by adding relations
of the form $f_{i}=f_{j}$ is residually nilpotent if and only if,
for each $i$, $f_{i}$ is identified to only finitely many $f_{j}$,
$j\ne i$. 

\subsection{\label{subsec:Residually-nilpotent-image}Residually nilpotent image }

By Proposition \ref{prop:Countable}, $\mathrm{Res}_{\mathcal{N}}\le_{W}\widehat{\mathrm{LPO}}'$.
\begin{lem}
Let $\mathcal{N}$ be the set of finitely generated nilpotent groups.
Then $\mathrm{Res}_{\mathcal{N}}\ge_{W}\widehat{\mathrm{LPO}}'$. 
\end{lem}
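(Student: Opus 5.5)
We reduce $\widehat{\mathrm{LPO}'}$ to $\mathrm{Res}_{\mathcal{N}}$ using central quotients of Hall's group $G$. The $f_{n}$ form a basis of the free abelian centre, and $G$ is residually nilpotent with $f_{n}\in\gamma_{n+2}(G)\setminus\gamma_{h(n)}(G)$. Recall that an instance of $\widehat{\mathrm{LPO}'}$ is a double sequence $(u_{n,i})$, and the answer is the sequence $v_{i}=1\iff\exists^{\infty}n,\,u_{n,i}=1$.

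\textbf{Encoding.} For each $i$, reserve an infinite set of central basis elements $f_{\langle i,m\rangle}$, $m\in\mathbb{N}$, together with a distinguished element $e_{i}=f_{\langle i,0\rangle}$. Given the input, build a presentation of a central quotient $\Gamma_{u}$ of $G$ as follows: each time a new $1$ appears in row $i$ (the $j$-th one, say), add the relation $e_{i}=f_{\langle i,j\rangle}$. The presentation depends continuously on $u$, giving a continuous map into $\mathcal{G}_{\mathrm{Pres}}$. By the Small cancellation lemma (Lemma \ref{lem:Small-cancellation-lemma}), $\mathrm{Res}_{\mathcal{N}}$ on presentations is equivalent to $\mathrm{Res}_{\mathcal{N}}$ on $\mathcal{G}_{\mathrm{WP}}$, so it is legitimate to feed presentations.

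\textbf{Decoding.} The claim is that $e_{i}=1$ in $\mathrm{Res}_{\mathcal{N}}(\Gamma_{u})$ if and only if row $i$ contains infinitely many ones.

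If row $i$ has infinitely many ones, then $e_{i}$ equals $f_{\langle i,j\rangle}\in\gamma_{\langle i,j\rangle+2}$ for infinitely many $j$. Hence $e_{i}\in\bigcap_{n}\gamma_{n}(\Gamma_{u})$, which is killed in every nilpotent quotient.

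Conversely, suppose row $i$ has finitely many ones. The quotient identifies $e_{i}$ with only finitely many other basis elements, and identifies other basis elements only within their own rows. Following the remark after Hall's lemma, a central quotient by relations $f_{a}=f_{b}$ is residually nilpotent precisely when each $f_{a}$ is identified with finitely many others. This is not true globally here, since other rows may have infinitely many ones. So I would instead argue locally. Quotient $\Gamma_{u}$ further by all central $f_{\langle i',m\rangle}$ with $i'\neq i$, together with the identified class of row $i$ except $e_{i}$'s finite class. The resulting group is a central quotient of $G$ in which the image of $e_{i}$ is, up to finitely many identifications, a nontrivial element of the free abelian centre (a product of finitely many $f$'s). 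Using $f_{n}\notin\gamma_{h(n)}(G)$ and a careful argument on the image in $G/\gamma_{N}(G)$ for $N$ large, one shows $e_{i}$ survives in some nilpotent quotient. Thus $e_{i}\ne1$ in $\mathrm{Res}_{\mathcal{N}}(\Gamma_{u})$.

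Given this claim, $v_{i}$ is read off continuously from the word problem of $\mathrm{Res}_{\mathcal{N}}(\Gamma_{u})$ as the bit ``$e_{i}=1$?''. This yields the reduction.

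\textbf{Main obstacle.} The hard step is the converse direction: showing that $e_{i}$ survives in a nilpotent quotient when only finitely many identifications involve it. The delicate point is that for a finite product $g=f_{a_{1}}\cdots f_{a_{r}}$ of distinct basis elements, one needs $g\notin\gamma_{N}$ modulo the subgroup generated by the other central elements that were killed or identified. My first attempt would be to work in $G/\gamma_{N}(G)$ with $N>h(a_{r})$ and use the fact that $f_{n}\in\gamma_{N}$ for all $n\geq N-2$. This means only finitely many of the added relations matter in that quotient, reducing everything to a finitely generated abelian computation in the centre of a nilpotent group. That computation would need a linear-independence statement for the images of $f_{1},\dots,f_{N}$ in $\gamma_{\cdot}/\gamma_{N}$, which should follow from \cite{Bartholdi2025}. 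If only $f_{n}$ with $n$ odd are sufficiently controlled, I would restrict the whole encoding to odd indices, where $f_{n}\in\gamma_{n+2}\setminus\gamma_{n+3}$ gives a clean weight filtration.
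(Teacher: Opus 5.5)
\textbf{Verdict: the proposal has a genuine gap.} The construction is essentially the paper's: central quotients of Hall's group, relations $f_{\langle i,0\rangle}=f_{\langle i,j\rangle}$ row by row. The argument for rows with infinitely many ones is also the paper's. Two minor remarks: going through presentations and Lemma~\ref{lem:Small-cancellation-lemma} is harmless but unnecessary, since whether a given word is trivial in $\Gamma_u$ depends on finitely many bits of $u$; and $f_0=[b,b]=1$, so the indexing must be shifted.

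\textbf{The gap.} What is missing is the converse, which is the real content of the lemma: $e_i$ survives in some nilpotent quotient of $\Gamma_u$ when row $i$ is finite, even though other rows impose infinitely many relations. The paper simply asserts this. You rightly see that the remark on global residual nilpotence does not give it, but you do not prove it, and both routes you sketch fail. The reason is that the $f_n$ are not adapted to the lower central series.

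Identify $Z(G)$ with the odd Laurent polynomials via $d_k\mapsto t^k-t^{-k}$. Then:
\begin{itemize}
\item $f_n\mapsto(t^{-1}-1)^n-(t-1)^n$.
\item The commutators $[[b,a,\dots,a]^{a^c},b_{c'}]$ with $m$ letters $a$ lie in $\gamma_{m+2}(G)$ and generate $(t-1)^m\mathbb{Z}[t^{\pm1}]\cap Z(G)$.
\item With $v=t-2+t^{-1}$, one has $Z(G)=(t-t^{-1})\mathbb{Z}[v]$ and $f_n=-(t-t^{-1})\sum_d\binom{d}{n-1-d}v^d$.
\end{itemize}
So $f_{2r}$ and $f_{2r+1}$ have the same leading weight. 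For example $f_3=f_2\,[b,a,a,a,a,a,b_3]$, so $f_2$ and $f_3$ coincide in $G/\gamma_7(G)$.

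This breaks your two routes as follows.
\begin{itemize}
\item \emph{Linear independence.} The independence of the images of $f_1,\dots,f_N$ modulo $\gamma_N$, which you hope to extract from \cite{Bartholdi2025}, is false. The image of $Z(G)$ in $G/\gamma_N(G)$ has rank only about $N/2$.
\item \emph{Auxiliary quotient.} This step is actually harmful. For row $2$ with no ones it is $G/\langle f_k : k\neq 3\rangle$. Since $f_{2r+1}\equiv-(t-t^{-1})v^r$ modulo $(t-t^{-1})v^{r+1}\mathbb{Z}[v]\subseteq\gamma_{2r+4}(G)$, triangular elimination puts $f_3f_2^{-1}$ (which corresponds to $-(t-t^{-1})v^2$) into $\langle f_5,f_7,\dots\rangle\,\gamma_N(G)$ for every $N$. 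Thus $e_2=f_3$ is a nontrivial element of every term of the lower central series of that quotient, so this quotient cannot witness survival.
\end{itemize}
These coincidences also mean that, with all indices in play as in your encoding and the paper's, it is not clear that the survival claim holds at all.

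\textbf{The repair.} Your fallback of encoding only with odd indices is the right fix, since $f_1,f_3,f_5,\dots$ have strictly increasing weights. It still needs an upper bound on $\gamma_N(G)\cap Z(G)$, which the quoted lemma does not give: $f_n\notin\gamma_{h(n)}(G)$ for each single $n$ says nothing about combinations.

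One way to obtain the bound is an explicit nilpotent quotient.
\begin{itemize}
\item Let $A_k=\mathbb{Z}[x^{\pm1}]/(x-1)^k$ with the involution $\bar{x}=x^{-1}$.
\item Let $\mathcal{H}_k=A_k^3$ with product $(p,q,c)(p',q',c')=(p+p',q+q',c+c'+p\bar{q}')$.
\item Send $b\mapsto(1,1,0)$ and send $a$ to the automorphism $(p,q,c)\mapsto(xp,xq,c)$.
\end{itemize}
The relations of $G$ hold. The image is nilpotent because $x-1$ is nilpotent in $A_k$. The kernel meets $Z(G)$ exactly in $(t-1)^k\mathbb{Z}[t^{\pm1}]\cap Z(G)$.

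Take $k=2R+2$. Then the images of $f_1,f_3,\dots,f_{2R+1}$ form a basis of the image of $Z(G)$, which is $\cong\mathbb{Z}^{R+1}$, and $f_{2j+1}\mapsto1$ for $j>R$. Now suppose all relations identify odd-indexed $f$'s, and all indices in row $i$'s finite class are at most $2R+1$. Then the sum of coordinates over that class is a homomorphism that vanishes on the image of the relators and takes the value $1$ on $e_i$. Quotienting the image of $G$ by the image of the relators therefore gives a nilpotent quotient of $\Gamma_u$ in which $e_i\neq1$.

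With this ingredient and the encoding changed accordingly, the proof goes through. As written, the key implication remains unproved.
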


\begin{proof}
A double sequence $u=(u_{n,m})\in\{0,1\}^{\mathbb{N}^{2}}$ is mapped,
via $\widehat{\mathrm{LPO}}'$, to the sequence $(v_{n})\in\{0,1\}^{\mathbb{N}}$
given by 
\[
v_{n}=1\iff\exists^{\infty}m,\,u_{n,m}=1.
\]

Associated to the double sequence $u$, we consider a central quotient
of Hall's group $G$, obtained by adding to it the following relations:
\[
f_{\langle n,0\rangle}=f_{\langle n,m\rangle}\text{ for each \ensuremath{m} s.t. }u_{n,m}=1.
\]
(Recall that $n,m\mapsto\langle n,m\rangle$ is Cantor's pairing function.)
Denote by $G_{u}$ the obtained group. It is easy to see that the
word-problem of $G_{u}$ can be continuously constructed from the
sequence $u$. And:
\begin{itemize}
\item If there are only finitely many $m$ with $u_{n,m}=1$, $f_{\langle n,0\rangle}$
has a non-trivial image in a nilpotent quotient of $G_{u}$.
\item If there are infinitely many $m$ with $u_{n,m}=1$, $f_{\langle n,0\rangle}\in\gamma_{m}(G)$
for every $m$, and thus $f_{\langle n,0\rangle}$ is trivial in the
residually nilpotent image of $G$. 
\end{itemize}
It follows from this that $f_{\langle n,0\rangle}=1$ in $\mathrm{Res}_{\mathcal{N}}((G_{u},(a,b)))$
if and only if $\exists^{\infty}m,\,u_{n,m}=1$. This gives the desired
reduction. 
\end{proof}
\begin{prop}
The set of residually nilpotent groups is $\boldsymbol{\Pi}_{3}^{0}$-complete
in the space of marked groups. 
\end{prop}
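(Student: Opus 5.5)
We must show two things: the set of residually nilpotent marked groups lies in $\boldsymbol{\Pi}_{3}^{0}$, and every $\boldsymbol{\Pi}_{3}^{0}$ set Wadge reduces to it. The reduction will be the same continuous map built from Hall's group in the proof of the preceding lemma. For the upper bound, we mimic the proof of Proposition \ref{prop:Res fin Pi_3}. A marked group $(G,S)$ is residually nilpotent iff $\bigcap_{c}\gamma_{c}(G)=\{1\}$, i.e.
\[
\forall w\in\mathbb{F}_{S},\ \bigl(w=_{(G,S)}1\ \vee\ \exists c\in\mathbb{N},\ w\notin\gamma_{c}(G)\bigr).
\]
The condition $w\in\gamma_{c}(G)$ holds iff $w\in\gamma_{c}(\mathbb{F}_{S})\cdot N$, where $N$ is the kernel of $\mathbb{F}_{S}\to G$. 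This in turn holds iff there exist $v\in\gamma_{c}(\mathbb{F}_{S})$ and a relation $r$ of $(G,S)$ with $w=vr$. That is an existential quantification over a countable set of a condition which is clopen in the word problem, so it is open. Hence $w\notin\gamma_{c}(G)$ is closed, $\exists c$ of it is $\boldsymbol{\Sigma}_{2}^{0}$, the disjunction with the clopen condition $w=1$ stays $\boldsymbol{\Sigma}_{2}^{0}$, and $\forall w$ yields $\boldsymbol{\Pi}_{3}^{0}$.

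For hardness, we Wadge reduce the $\boldsymbol{\Pi}_{3}^{0}$-complete set $S_{3}=\{x\in2^{\mathbb{N}\times\mathbb{N}}\mid\forall n,\forall^{\infty}m,\,x(n,m)=0\}$. To $x$ we associate the central quotient $G_{x}$ of Hall's group obtained by adding the relations $f_{\langle n,0\rangle}=f_{\langle n,m\rangle}$ for each $m\ge1$ with $x(n,m)=1$. As in the previous lemma, the word problem of $G_{x}$ depends continuously on $x$. The reason is that the $f_{k}$ are a basis of the free abelian center, so $G_{x}$ is determined by a subgroup of the center generated by differences of basis elements, and finitely many coordinates decide any given finite part of the word problem.

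We claim $G_{x}$ is residually nilpotent iff $x\in S_{3}$. If some $n$ has infinitely many $m$ with $x(n,m)=1$, then $f_{\langle n,0\rangle}$ is identified with $f_{\langle n,m\rangle}\in\gamma_{\langle n,m\rangle+2}$ for infinitely many $m$, so it lies in every $\gamma_{c}(G_{x})$. It is nontrivial in $G_{x}$, since the added relations only identify distinct basis elements within one class and never kill a basis element. So $G_{x}$ is not residually nilpotent.

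Conversely, suppose $x\in S_{3}$. Each identification class $\{f_{\langle n,0\rangle}\}\cup\{f_{\langle n,m\rangle}:x(n,m)=1\}$ is then finite. We invoke the remark after the lemma from \cite{Bartholdi2025}: a central quotient of $G$ by relations $f_{i}=f_{j}$ is residually nilpotent iff each $f_{i}$ is identified with only finitely many $f_{j}$. This gives that $G_{x}$ is residually nilpotent.

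The main obstacle is this converse direction. We need to justify that remark carefully: non-central elements must survive in nilpotent quotients, and a central element given as an arbitrary finite combination of classes must survive too. We would handle this as follows. A central element $z\ne1$ of $G_{x}$ involves finitely many classes. Choose $c$ larger than $h(k)$ for every index $k$ appearing in those finitely many finite classes. Then use that $G/\gamma_{c}(G)$ is nilpotent with $f_{k}$ having nontrivial, and (after passing to a suitable further quotient) independent, images, so that the identifications remain consistent. Non-central elements are handled through the metabelian quotient $\mathbb{Z}\wr\mathbb{Z}$, which is residually nilpotent, or rather residually finite-$p$. If the independence of the images of the $f_{k}$ modulo $\gamma_{c}$ is not available from \cite{Bartholdi2025}, we would instead restrict to a sparse subsequence of indices (e.g. odd $k$, where $f_{k}\in\gamma_{k+2}\setminus\gamma_{k+3}$ exactly) when defining $G_{x}$, which keeps the reduction intact.
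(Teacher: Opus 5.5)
Your route is the paper's: a $\boldsymbol{\Pi}_3^0$ upper bound (your formula via $w\in\gamma_c(\mathbb{F}_S)N$ is correct), and hardness via central quotients of Hall's group. Your continuity argument and your ``some row infinite $\Rightarrow$ not residually nilpotent'' half are handled as in the paper. One indexing fix is needed: $f_0=[b,b]=1$ and $\langle 0,0\rangle=0$, so row $n=0$ is degenerate; shift the indices.

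The genuine gap is the converse, $x\in S_3\Rightarrow G_x$ residually nilpotent. You rest it, as the paper does, on the remark following the lemma from \cite{Bartholdi2025}. That remark does not follow from the lemma, which only says that each single $f_n$ escapes some $\gamma_{h(n)}(G)$. What you actually need is $\bigcap_c\gamma_c(G)K=K$ for the central subgroup $K$ generated by the imposed differences, and the remark is in fact false. To see this, identify $Z(G)$ with $\mathbb{Z}[t^{\pm1}]$ modulo symmetric Laurent polynomials via $d_k\mapsto t^k$. Then:
\begin{itemize}
\item $f_n$ corresponds to $\sigma^n$, where $\sigma=t^{-1}-1$.
\item $\gamma_c(G)$ contains the central elements $[b,a,\dots,a,b_j]$ (with $c-2$ letters $a$), which correspond to $\sigma^{c-2}t^j$.
\item Since $t^k\sigma^{2k}=(t+t^{-1}-2)^k$ is symmetric, $f_{2k}$ also corresponds to $\sigma^{2k}(1-t^k)\in\sigma^{2k+1}\mathbb{Z}[t^{\pm1}]$. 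Hence modulo $\gamma_c(G)$, each $f_{2k}$ is an integral combination of the $f_{2l+1}$ with $2k<2l+1<c-2$.
\end{itemize}
Now impose $f_{2j+1}=f_{2j+2}$ for all $j\ge1$, so every class has two elements. Modulo $\gamma_c(G)K$, each $f_{2j+1}$ equals a combination of strictly deeper odd $f$'s, and $f_{2l+1}\in\gamma_c(G)$ once $2l+1\ge c-2$. By downward induction, all $f_{2l+1}$ with $l\ge1$, and then $f_2$, lie in $\gamma_c(G)K$ for every $c$. But $f_2\notin K$, so this quotient is not residually nilpotent.

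This is exactly what your sketch misses. Taking $c$ beyond the finitely many classes met by $z$ does not help, because infinitely many other classes have a member of index $\ge c-2$. Modulo $\gamma_c(G)K$, such a class has all its members trivial. The image of $Z(G)$ in $G/\gamma_c(G)$ has rank at most $\lfloor (c-2)/2\rfloor$, so the images of the $f_k$ are far from independent, and these forced trivialities can kill $z$. Passing to further quotients cannot repair this, since every nilpotent quotient of $G_x$ of class $<c$ factors through $G/\gamma_c(G)K$.

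Your odd-index fallback removes this particular example. You would still have to show that no element outside $K$ lies in $\bigcap_c\gamma_c(G)K$, for instance a combination of the $f_{2k}$, whose expansions in the odd $f$'s are infinite. Neither your proposal nor the cited lemma provides this. Since the paper's proof relies on the same remark, following it does not close the gap.
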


\begin{proof}
The proof is identical to that of Proposition \ref{prop:Res fin Pi_3},
replacing the use of Dyson's doubles of the lamplighter group by central
quotients of Hall's group, as above. 
\end{proof}

\subsection{Residually finitely presented image }

Because of Higman's Embedding Theorem, we rely on notions coming from
computability theory to classify $\mathrm{Res}_{\mathcal{FP}}$, where
$\mathcal{FP}$ is the set of finitely presented groups.
\begin{lem}
Let $\mathcal{FP}$ be the set of finitely presented groups. Then
$\mathrm{Res}_{\mathcal{FP}}\ge\widehat{\mathrm{LPO}}'$. 
\end{lem}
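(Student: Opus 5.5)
We would follow the same template as for $\mathrm{Res}_{\mathcal{N}}$: reduce $\widehat{\mathrm{LPO}}'$ by sending a double sequence $u=(u_{n,m})\in\{0,1\}^{\mathbb{N}^{2}}$ to a central quotient of Hall's group $G$, continuously in the word-problem representation, and arrange that some central element $c_{n}$ dies in the residually-$\mathcal{FP}$ image exactly when $\exists^{\infty}m,\,u_{n,m}=1$. The key input is that any central quotient $G/N$ of $G$ (with $N\le Z(G)=\bigoplus_{i>0}\mathbb{Z}d_{i}$) is finitely presented \emph{relative to} $\mathcal{FP}$ iff it embeds suitably, and more concretely: a marked quotient $G\to H$ with $H$ finitely presented is determined by finitely many relations, so $H$ is a quotient of some finitely presented cover $\langle a,b\mid \text{finitely many of the }\pi_{1}'\text{ relations},\,R_{0}\rangle$. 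Since $G$ itself is not finitely presented, the finitely presented quotients of $G$ can only kill the center ``in a finitely generated way modulo recursively enumerable data'', which is where Higman's theorem and computability enter.

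Concretely, I would use the following characterization, which I expect to be the heart of the matter: for $N\le Z(G)$, the element $d\in Z(G)$ survives in some finitely presented quotient of $G/N$ iff there is a subgroup $M\ge N$ of $Z(G)$ with $d\notin M$ such that $M$ is recursively enumerable (as a set of words) — more precisely, such that $G/M$ is recursively presented, hence by Higman embeds in a finitely presented group $K$; then one takes the finitely presented group generated by $a,b$ in an amalgam/HNN construction mapping onto $G/M$... This fails as stated (a subgroup of a finitely presented group need not be a quotient of $G$ that is itself finitely presented), so the precise statement I aim for is: $\mathrm{Res}_{\mathcal{FP}}(G/N)=G/\bigcap M$, where $M$ ranges over central subgroups containing $N$ such that $G/M$ is finitely presented; and Hall-type arguments (following \cite{Bartholdi2025}) show that $G/M$ is finitely presented iff $M$ has finite index-free structure, e.g. iff $Z(G)/M$ is finitely generated and $M$ is recursively enumerable. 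I would cite/prove this in the form: a central quotient $G/M$ is finitely presented whenever $Z(G)/M$ is finite (finitely many extra relations killing almost all $d_{i}$ up to a finite generating set suffice, because $[b,b_{i},a]$ propagates relations along $i$).

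With this, the reduction is: add relations $d_{\langle n,m\rangle}=d_{\langle n,0\rangle}$ whenever $u_{n,m}=1$; call the result $G_{u}$, whose word problem depends continuously on $u$ since the center is free abelian on the $d_{i}$. If only finitely many $m$ have $u_{n,m}=1$, the quotient by the subgroup generated by all $d_{i}$ other than $d_{\langle n,0\rangle}$ and its finitely many identified partners, plus $d_{\langle n,0\rangle}^{2}$, is a finitely presented quotient (finite central kernel complement) of $G_{u}$ in which $d_{\langle n,0\rangle}\neq1$; this uses only that such ``cofinite'' central quotients are finitely presented. If infinitely many $m$ have $u_{n,m}=1$, I must show $d_{\langle n,0\rangle}$ dies in every finitely presented quotient $H$ of $G_{u}$: $H$ is a quotient of a finitely presented group $\langle a,b\mid R\rangle$ satisfying only finitely many of Hall's relations and the finitely many relations of $G_{u}$ used; the main obstacle is exhibiting that in such $H$ the centrality/commutator relations force $d_{i}$ for large $i$ to coincide with expressions controlled by small $i$, so that $d_{\langle n,0\rangle}=d_{\langle n,m\rangle}$ for infinitely many $m$ yields a relation killing it. I would try to choose the indexing (e.g. replace $d_{\langle n,m\rangle}$ by suitable basis elements like the $f_{j}$) so that in any finitely presented quotient a single relation $d_{j}\in$ span of earlier ones is automatically forced, making the argument parallel to the nilpotent case. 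This last step is where I expect genuine difficulty.
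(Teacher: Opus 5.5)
Your proposal has a genuine gap, and it affects both directions of the reduction.

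\emph{The finitely-many case.} Your witness is not finitely presented. Take $M\le Z(G)$ with $Z(G)/M$ finite, or merely finitely generated. Then $Z(G)/M$ is a finitely generated normal subgroup of $G/M$, with quotient $G/Z(G)\cong\mathbb{Z}\wr\mathbb{Z}$. A quotient of a finitely presented group by a finitely generated normal subgroup is finitely presented. So if $G/M$ were finitely presented, $\mathbb{Z}\wr\mathbb{Z}$ would be too, which it is not. Hence your ``cofinite'' central quotients are never finitely presented, and the characterization of finitely presented central quotients you propose is false. The paper's witnesses are instead nilpotent quotients of $G_u$, which are finitely presented because finitely generated nilpotent groups are. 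This is why it works with the basis $f_j\in\gamma_{j+2}(G)$ adapted to the lower central series rather than with the $d_i$.

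\emph{The infinitely-many case.} The direction you leave open is not merely hard; it fails for your construction. In $\mathbb{F}_2$ we have $d_i=[b,[b,a^i]]\equiv d_1^{\,i}\bmod\gamma_4$. Hall's relators lie in $\gamma_4(\mathbb{F}_2)$, so $G/\gamma_4(G)\cong\mathbb{F}_2/\gamma_4(\mathbb{F}_2)$, in which $d_1$ has infinite order. Now set $u_{1,m}=1$ exactly when $m>0$ and $4\mid m$, and all other entries $0$. Then
\[
d_{\langle 1,m\rangle}d_{\langle 1,0\rangle}^{-1}\equiv d_1^{\,m(m+5)/2}.
\]
These exponents are all even, and the first two are $18$ and $52$, so they generate $2\mathbb{Z}$. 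Hence $(\mathbb{F}_2/\gamma_4(\mathbb{F}_2))/\langle d_1^2\rangle$ is a finitely presented nilpotent quotient of $G_u$ in which $d_{\langle1,0\rangle}=d_1$ survives, even though $u_{1,m}=1$ for infinitely many $m$. Switching to the $f_j$ with plain identifications does not rescue this. It only controls nilpotent quotients, not arbitrary finitely presented ones.

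\emph{The missing idea.} What you need is a computability trick that avoids classifying finitely presented quotients altogether.
\begin{itemize}
\item Fix a simple set $A\subseteq\mathbb{N}$ with $0\notin A$, and let $a_0<a_1<\cdots$ enumerate its complement.
\item Impose $f_{\langle n,p\rangle}=1$ for all $n$ and all $p\in A$.
\item Impose $f_{\langle n,0\rangle}=f_{\langle n,a_m\rangle}$ whenever $u_{n,m}=1$.
\end{itemize}
Continuity of $u\mapsto G_u$ for $\rho_{\mathrm{WP}}$ is unaffected by $A$ being non-computable.

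Suppose infinitely many $u_{n,m}=1$, and let $H$ be any finitely presented quotient of $G_u$. The set $\{p\mid f_{\langle n,0\rangle}=f_{\langle n,p\rangle}\text{ in }H\}$ is c.e., because $H$ has a c.e.\ set of relations. It is also infinite. By simplicity of $A$ it cannot lie in the complement of $A$, so it contains some $p\in A$, and therefore $f_{\langle n,0\rangle}=f_{\langle n,p\rangle}=1$ in $H$.

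If only finitely many $u_{n,m}=1$, a nilpotent quotient separates $f_{\langle n,0\rangle}$ from $1$, as in the residually nilpotent case. Your Higman-embedding considerations are unnecessary: the only computability input is that finitely presented groups have c.e.\ word problem.
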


\begin{proof}
Consider, as input of $\widehat{\mathrm{LPO}}'$, a sequence $u=(u_{n,m})\in\{0,1\}^{\mathbb{N}^{2}}$,
we must construct a sequence $(v_{n})\in\{0,1\}^{\mathbb{N}}$ with
\[
v_{n}=1\iff\exists^{\infty}m,\,u_{n,m}=1.
\]

We again consider a central quotient of Hall's group $G$, using the
basis $(f_{n})_{n\in\mathbb{N}}$ of the center which is appropriate
to study nilpotent quotients (see Section \ref{subsec:Hall's-central-by-metabelian-gro}). 

Let $A\subseteq\mathbb{N}$ be a simple set, i.e. a c.e. set whose
complement, while infinite, does not contain an infinite c.e. set. 

We suppose $0\notin A$. Let $a_{0}\le a_{1}\le a_{2}...$ be the
(non-computable) enumeration of the complement of $A$. 

For every $n\in\mathbb{N}$ and $m\in A$, we add the relation $f_{\langle n,m\rangle}=1$
to $G$. 

For every $n\in\mathbb{N}$ and $m\in\mathbb{N}$, if $u_{n,m}=1$,
we add the relation $f_{\langle n,0\rangle}=f_{\langle n,a_{m}\rangle}$.

Denote by $G_{u}$ the obtained group. It is easy to see that the
word-problem of $G_{u}$ can be continuously constructed from the
sequence $u$. 

And: 
\begin{itemize}
\item If there are only finitely many $m$ with $u_{n,m}=1$, $f_{\langle n,0\rangle}$
has a non-identity image in a nilpotent quotient of $G_{u}$ (see
Section \ref{subsec:Residually-nilpotent-image}), this quotient is
finitely presentable.
\item If there are infinitely many $m$ with $u_{n,m}=1$, an infinite number
of relations of the form $f_{\langle n,0\rangle}=f_{\langle n,a_{m}\rangle}$
were added. In any recursively presented quotient of $G_{u}$ (and
in particular any finitely presented quotient), the set $\{p\mid f_{\langle n,0\rangle}=f_{\langle n,p\rangle}\}$
is a c.e. set. Thus it cannot be contained in the complement of $A$,
and it must meet $A$, and thus $f_{\langle n,0\rangle}=1$ in this
group. 
\end{itemize}
It follows from this that $f_{\langle n,0\rangle}=1$ in the residually
finitely presented image of $G_{u}$ if and only if $\exists^{\infty}m,\,u_{n,m}=1$.
This gives the desired reduction. 
\end{proof}
\begin{prop}
The set of residually finitely presented groups is $\boldsymbol{\Pi}_{3}^{0}$-complete
in the space of marked groups. 
\end{prop}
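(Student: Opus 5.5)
The claim is that the set of residually finitely presented groups is $\boldsymbol{\Pi}_{3}^{0}$-complete. The proof has two parts, mirroring Proposition \ref{prop:Res fin Pi_3}: an upper bound showing membership in $\boldsymbol{\Pi}_{3}^{0}$, and a Wadge reduction from $S_{3}$ built from the central quotients $G_{u}$ of Hall's group in the preceding lemma.

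\emph{Upper bound.} Finitely presented marked groups are countable, and each such group is given by a finite set of relators. So I write: $(G,S)$ is residually finitely presented iff
\[
\forall w\in\mathbb{F}_{S},\;\Bigl(w=_{(G,S)}1\;\vee\;\exists\text{ finite }R\subseteq\mathbb{F}_{S}\,\bigl(w\notin\langle\langle R\rangle\rangle\wedge\forall r\in R,\,r=_{(G,S)}1\bigr)\Bigr).
\]
Here $w\notin\langle\langle R\rangle\rangle$ does not depend on $(G,S)$; it is just a fixed property of the parameter $R$. The set of groups in which all $r\in R$ vanish is clopen, since $R$ is finite. Note that an onto map to a finitely presented group corresponds exactly to a finitely presented marked quotient $\langle S\mid R\rangle$ with all of $R$ holding in $G$. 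The condition therefore has the form $\forall(\text{clopen}\vee\exists\,\text{clopen})$, which is $\boldsymbol{\Pi}_{2}^{0}$, and in particular $\boldsymbol{\Pi}_{3}^{0}$. Even so, I only need $\boldsymbol{\Pi}_{3}^{0}$ membership.

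\emph{Lower bound.} I reduce $S_{3}=\{x\mid\forall n,\forall^{\infty}m,\,x(n,m)=0\}$ using the map $x\mapsto G_{x}$ from the previous lemma. Take $G_{x}$ to be Hall's group with the following relations added: $f_{\langle n,m\rangle}=1$ for $m\in A$, where $A$ is the fixed simple set, and $f_{\langle n,0\rangle}=f_{\langle n,a_{m}\rangle}$ whenever $x(n,m)=1$. That lemma gives two facts. First, $x\mapsto G_{x}$ is continuous into $\mathcal{G}_{\mathrm{WP}}$. Second, $f_{\langle n,0\rangle}$ survives in the residually finitely presented image iff $x(n,m)=1$ for only finitely many $m$. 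Consequently, if $x\notin S_{3}$, some $f_{\langle n,0\rangle}\neq1$ in $G_{x}$ dies in every finitely presented quotient, so $G_{x}$ is not residually finitely presented.

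The remaining step is the converse, and I expect it to be the main obstacle: when $x\in S_{3}$, every nontrivial element of $G_{x}$, not just the $f_{\langle n,0\rangle}$, must survive in some finitely presented quotient. I would argue this as follows.

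First I establish that the quotient is genuinely central. The central quotient of $G$ by relations among the $f_{i}$ is determined by a subgroup $N$ of the free abelian center $\bigoplus\mathbb{Z}f_{i}$. Fix a nontrivial $g\in G_{x}$. If $g$ has nontrivial image in the metabelian quotient $\mathbb{Z}\wr\mathbb{Z}$, I use that $\mathbb{Z}\wr\mathbb{Z}$ is residually finite, hence residually finitely presented. Otherwise $g$ is central, say $g=\prod f_{i}^{e_{i}}$ modulo $N$.

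When $x\in S_{3}$, each class $\{f_{\langle n,0\rangle}\}\cup\{f_{\langle n,a_{m}\rangle}\mid x(n,m)=1\}$ is finite. So the center of $G_{x}$ is a direct sum of copies of $\mathbb{Z}$, one per class, with the classes of indices in $A$ killed. By Lemma 68/70 of \cite{Bartholdi2025}, as used in Section \ref{subsec:Residually-nilpotent-image}, each $f_{i}$ lies in $\gamma_{i+2}$ and survives modulo $\gamma_{h(i)}$. The plan is then to pick $c$ larger than $h$ of the finitely many indices involved in $g$ and pass to $G_{x}/\gamma_{c}(G_{x})$. This is a finitely generated nilpotent group, hence finitely presented. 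I would check that $g$ survives there, which follows from the residual-nilpotence argument of the previous subsection. Thus $G_{x}$ is residually nilpotent, and hence residually finitely presented, exactly when $x\in S_{3}$. This gives the Wadge reduction and completes the proof.
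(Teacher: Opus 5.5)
Your lower bound follows the paper's route: a Wadge reduction of $S_3$ through the central quotients $G_x$ of Hall's group from the $\mathrm{Res}_{\mathcal{FP}}$ lemma. The upper bound, however, is wrong.

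\textbf{The upper bound reverses the quotient condition.} A finitely presented marked group $\langle S\mid R\rangle$ is a marked quotient of $(G,S)$ when every relation of $(G,S)$ lies in $\langle\langle R\rangle\rangle$. Your condition, that every $r\in R$ holds in $G$, expresses the opposite: $(G,S)$ is a quotient of $\langle S\mid R\rangle$. With that reversal your formula is satisfied by every marked group (for $w\neq_{(G,S)}1$ take $R=\emptyset$). The reversal is also what made each inner condition clopen and gave you $\boldsymbol{\Pi}^0_2$. That conclusion should have alarmed you, since a $\boldsymbol{\Pi}^0_3$-complete set cannot be $\boldsymbol{\Pi}^0_2$. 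The correct condition is
\[
\forall w\,\Bigl(w=_{(G,S)}1\;\vee\;\exists R\text{ finite}\,\bigl(w\notin\langle\langle R\rangle\rangle\wedge\forall r\in\mathbb{F}_S\,(r=_{(G,S)}1\Rightarrow r\in\langle\langle R\rangle\rangle)\bigr)\Bigr).
\]
Its innermost clause is closed, so the description is $\forall\exists\forall$, i.e.\ $\boldsymbol{\Pi}^0_3$. This is exactly parallel to the paper's formula for residual finiteness.

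\textbf{The central-element step is asserted rather than proved.} This is the step you yourself flag as the main obstacle. A central $g$ survives in $G_x/\gamma_c(G_x)$ iff $g\notin N\cdot(Z(G)\cap\gamma_c(G))$, where $N$ contains the infinitely many killed $f_{\langle n,m\rangle}$, $m\in A$. The facts $f_i\in\gamma_{i+2}(G)\setminus\gamma_{h(i)}(G)$ concern individual basis elements and do not control this subgroup. Choosing $c$ from the indices occurring in $g$ cannot work in general, as the following shows.
\begin{itemize}
\item Identify $Z(G)$ with the antisymmetric Laurent polynomials via $d_k\mapsto t^k-t^{-k}$. Then $f_n\mapsto(t^{-1}-1)^n-(t-1)^n$.
\item The commutators $[b,y]$ with $y\in\gamma_{2J+2}(G)$ already realize every antisymmetric polynomial divisible by $(t-1)^{2J+1}$.
\item From this one gets $f_2f_3^{-1}\in\gamma_7(G)$, and for every $J$, $f_2\in\langle f_3,f_5,\dots,f_{2J-1}\rangle\,\gamma_{2J+3}(G)$.
\item Hence killing all $f_{2k+1}$, $k\ge1$, gives a central quotient of $G$ in which $f_2\neq1$ (compare $t$-degrees: $f_n$ has degree exactly $n$). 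Yet $f_2$ lies in every term of the lower central series.
\end{itemize}
To close the gap you need an explicit description of $Z(G)\cap\gamma_c(G)$ in the basis $(f_i)$. You then need to check that the particular $N$ attached to $x\in S_3$ is closed for the induced filtration. The paper does not supply this either: it declares the proof identical to the residually finite case and relies on the remark closing the subsection on Hall's group. So you are not missing an argument the paper contains, but as written this part of your proposal is not a proof.
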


\begin{proof}
The proof is identical to that of Proposition \ref{prop:Res fin Pi_3},
replacing the use of Dyson's doubles of the lamplighter group by central
quotients of Hall's group, as above. 
\end{proof}

\section{Above $\mathrm{Lim}'_{\mathbb{N}^{\mathbb{N}}}$}

Proposition \ref{prop:Arbitrarily-complicated Res C} provides examples
of sets $\mathcal{C}$ such that $\mathrm{Res}_{\mathcal{C}}$ is
not measurable, and thus in particular not not $\Pi_{3}^{0}$-measurable. 

It would be interesting to classify precisely some problems $\mathrm{Res}_{\mathcal{C}}$
that are strictly above $\mathrm{Lim}'_{\mathbb{N}^{\mathbb{N}}}$.
We leave the following as a problem. 
\begin{problem}
Prove that, if $\mathcal{C}$ is one of: the set of finitely generated
solvable groups, the set of finitely generated torsion groups, then
$\mathrm{Res}_{\mathcal{C}}$ is not $\Pi_{3}^{0}$-measurable.
\end{problem}

\bibliographystyle{alpha}
\bibliography{TheOneBib}

\end{document}